\newtheorem{theorem}{Theorem}[section]
\newtheorem{lemma}[theorem]{Lemma}
\newtheorem{corollary}[theorem]{Corollary}
\newtheorem{definition}[theorem]{Definition}
\newcommand{\Proof}{\par\noindent{\em Proof. }}
\newcommand{\eop}{\nopagebreak\hspace*{\fill}$\Box$\smallskip}
\newtheorem{rem}[theorem]{Remark}
\newcommand{\N}{\mathbb N}
\newcommand{\R}{\mathbb R}
\newcommand{\C}{\mathbb C}
\newcommand{\hn}{{\mathcal H}^{N-1}}
\newcommand{\wto}{\rightharpoonup}
\def\calL{\mathcal{L}}
\def\eps{\varepsilon}
\def\diam{{\rm diam}}
\def\dist{\operatorname{dist}}
\def\Xint#1{\mathchoice
{\XXint\displaystyle\textstyle{#1}}%
{\XXint\textstyle\scriptstyle{#1}}%
{\XXint\scriptstyle\scriptscriptstyle{#1}}%
{\XXint\scriptscriptstyle\scriptscriptstyle{#1}}%
\!\int}
\def\XXint#1#2#3{{\setbox0=\hbox{$#1{#2#3}{\int}$ }
\vcenter{\hbox{$#2#3$ }}\kern-.6\wd0}}
\def\dashint{\Xint-}
\begin{document}

\begin{center}
\begin{Large}
{\bf Quasistatic crack growth in 2d-linearized elasticity}
\end{Large}
\end{center}

\begin{center}
\begin{large}
Manuel Friedrich\footnote{Universit{\"a}t Wien, Fakult\"at f\"ur Mathematik, 
Oskar-Morgenstern-Platz 1, 1090 Wien, Austria. {\tt manuel.friedrich@univie.ac.at}}
and 
{Francesco Solombrino\footnote{Zentrum Mathematik, Technische Universit{\"a}t M{\"u}nchen, 
Boltzmannstr.\ 3, 85747 Garching, Germany. {\tt francesco.solombrino@ma.tum.de}} \footnote{New: Dipartimento di Matematica e Applicazioni, Universita di Napoli “Federico II”, `
Via Cintia, 80126 Napoli, Italy. {\tt francesco.solombrino@unina.it} }}
\\
\end{large}
\end{center}

\bigskip

\begin{abstract}
In this paper we prove a two-dimensional existence result for a variational model of crack growth for brittle materials in the realm of linearized elasticity. Starting with a time-discretized version of the evolution driven by a prescribed boundary load, we derive a time-continuous quasistatic crack growth in the framework of generalized special functions of bounded deformation ($GSBD$). As the time-discretization step tends to zero, the major difficulty lies in showing the stability of the static equilibrium condition, which is achieved by means of a Jump Transfer Lemma generalizing the result of \cite{Francfort-Larsen:2003} to the $GSBD$ setting. Moreover, we present a general compactness theorem for this framework and prove existence of the evolution without imposing a-priori bounds on the displacements or applied body forces. 
\end{abstract}
\bigskip

\begin{small}
\noindent{\bf Keywords.} Brittle materials, variational fracture, free discontinuity problems, quasistatic evolution, crack propagation. 

\noindent{\bf AMS classification.} 74R10, 49J45, 70G75 
\end{small}

\tableofcontents

\section{Introduction}\label{sec:intro}
The mathematical foundations of the theory of brittle fracture were laid by the work of A. Griffith \cite{Griffith:20} in the 1920s. The fundamental idea is that the formation of cracks may be seen as the result of the competition between the elastic bulk energy of the body and the work needed to produce a new crack. This latter is modeled as a surface energy, which, in its  simplest form, is proportional to the surface measure of the crack via a material constant, called the {\it toughness} of the material. The rigorous mathematical formulation of the problem, introduced in \cite{frma98}, requires that the function $t\to (u(t), \Gamma(t))$, associating to each time $t$ a deformation $u(t)$ of the reference configuration and a crack set $\Gamma(t)$, is a quasistatic evolution satisfying the following conditions:
\begin{itemize}
\item (a) static equilibrium: for every $t$ the pair $(u(t), \Gamma(t))$  minimizes the energy at time $t$ among all admissible competitors;
\item (b) irreversibility: $\Gamma(s)$ is contained in $\Gamma(t)$ for $0 \le s < t$;
\item (c) nondissipativity: the derivative of the internal energy equals the power of the applied
forces.
\end{itemize}
Remarkable features of this approach are that it is able to show crack initation, as well as a discontinuous evolution of the crack path, which {\it needs not to be a priori prescribed}. On the other hand, establishing a rigorous mathematical framework for the existence of a continuous-time evolution has proved to be quite a hard task.

\subsection*{Existence results for continuous-time evolution} 
The first breakthrough results in this direction are the ones in \cite{DM-Toa} and \cite{Chambolle:2003} tackling in a {\it planar setting} the case of anti-plane shear and linearized elasticity, respectively.   The evolution is driven by a given prescribed load $g(t)$ on a Dirichlet part $\partial_D \Omega$ of the boundary of the reference configuration $\Omega$. Namely, in the case considered in \cite{Chambolle:2003}, the energy associated to a displacement $u$ and a crack $\Gamma$ is given by
\begin{equation}\label{eq: model}
\mathcal E(u,\Gamma):=\int_{\Omega \setminus \Gamma} Q(e(u))\,\mathrm{d}x + \mathcal{H}^1(\Gamma)\,,
\end{equation}
where $Q$ is a quadratic form acting on the symmetrized gradient $e(u)$. At each time $t$, the deformation $u(t)$, which fulfills the boundary condition $u(t)=g(t)$ on $\partial_D \Omega\setminus \Gamma(t)$ has to satisfy the minimality property
\begin{equation}\label{eq: stability}
\mathcal E(u(t),\Gamma(t))\le \mathcal E(v,\Gamma)
\end{equation}
for all $\Gamma \supset \bigcup_{s<t}\Gamma(s)$ and all $v \in LD(\Omega \setminus \Gamma)$ with $v=g(t)$ on $\partial_D \Omega\setminus \Gamma$. Here $LD$ is the space of displacements with square-integrable strains. The existence of an evolution is proved by following the natural idea, in the context of quasistatic brittle fracture, of starting with a time-discretized evolution, and then letting the time-step go to $0$. Namely, for a given time step $\delta>0$ and $n\in \N$, the pair $(u(n\delta), \Gamma(n\delta))$ is inductively defined as a solution for the problem
\begin{equation}\label{eq: strongformulation}
\arg\min \left\{\int_{\Omega \setminus \Gamma} Q(e(u))\,\mathrm{d}x + \mathcal{H}^1(\Gamma)\right\}
\end{equation}
among all cracks $\Gamma\supset\Gamma((n-1)\delta)$ and displacements $u \in LD(\Omega \setminus \Gamma)$ with  $u=g(n\delta)$ on $\partial_D \Omega\setminus \Gamma$. Notice that the existence for the above minimum problems can be proved under the {\it additional restriction} that the admissible cracks have {\it at most a fixed number}  of connected components. Indeed, in this case the direct method proves succesful: crack sets are compact and lower semicontinuous with respect to the Hausdorff topology of sets via Go{\l}ab's Theorem (see \cite{Golab:28}), while compactness of the displacements is recovered via the Poincar\'e-Korn inequality, upon noticing that the energy stays invariant under subtraction of rigid movements in the connected components of $\Omega\setminus \Gamma$ whose boundary has no intersection with $\partial_D \Omega\setminus \Gamma$.

The aforementioned important restriction plays furthermore a fundamental role in overcoming a stability issue, which arises when taking the limit for a time step $\delta$ going to $0$. Indeed, if this hypothesis is dropped, the convergence in the Hausdorff metric of the approximating cracks $\Gamma_\delta(t)$ (obtained as piecewise constant interpolations of $\Gamma(n\delta)$, $n \in \N$) to a set $\Gamma(t)$ does not imply that piecewise constant interpolations of the time-discretized displacements $u_\delta(t)$ converge to a solution of the minimum problem \eqref{eq: strongformulation}. This issue, which is due to a {\it Neumann-sieve-type} phenomenon (see \cite{Murat}), can be overcome in a planar setting imposing an a-priori bound on the connected components of the cracks and using some results from the analysis of Neumann problems in varying domains, contained in \cite{Buc-Var:2000, Chamb-Dov:1997}.

To avoid this restriction, a different and more powerful approach has been proposed in \cite{Francfort-Larsen:2003}, and successfully applied to the case of {\it anti-plane shear} in arbitrary dimension $N$. In this case, the reference configuration is an infinite cylinder $\Omega \times \R$ with $\Omega \subset \R^N$ open and bounded,  and admissible displacements are of the form $(0,\dots,0, u(x))$ where $x$ varies in $\Omega$ and the only nonzero component $u(x)$ is scalar-valued. In this case, the linear elastic energy reduces to the Dirichlet energy
$
\int_{\Omega \setminus \Gamma} |\nabla u|^2\,\mathrm{d}x
$
and the incremental minimum problems become very similar to the strong formulation of the Mumford-Shah functional in image segmentation proposed in \cite{Mum-Sha:1989}. Inspired by De Giorgi's weak formulation in the space of special functions of bounded variation $SBV(\Omega)$ (see \cite{dg91, DeGiorgi-Carriero-Leaci:1989}), the authors model crack sets as (union of) jump sets of admissible displacements. The minimum problems to be solved at every time step essentially reduce (up to some modifications in order to allow for cracks running alongside the boundary) to
\begin{equation}\label{eq: mum-shah}
\arg\min \left\{\int_{\Omega} |\nabla u|^2\,\mathrm{d}x + \mathcal{H}^{N-1}(J_u\setminus \Gamma((n-1)\delta))\right\}\,,
\end{equation}
with $J_u$ denoting the jump set of $u$, among all displacement satisfying $u=g(n\delta)$ on $\partial_D \Omega$. Provided one assumes an $L^\infty$ bound on the boundary datum, the maximum principle and Ambrosio's compactness theorem in $SBV$ (see \cite{Ambrosio:89}) ensure well-posedness for the above problem. If $u$ is a solution thereof, the crack set is then updated by setting $\Gamma(n\delta):=J_u\cup \Gamma((n-1)\delta)$.  

A key tool introduced in the paper \cite{Francfort-Larsen:2003} in order to deal with the above mentioned stability issues, when the time step $\delta$ tends to $0$, is the so-called {\it Jump Transfer Lemma}. It allows to transer most of the jump set of any function in $SBV$ that lies inside of the jump set of a function $u$ onto that of $u_n$, if $u_n$ is a  sequence in $SBV$ weakly converging to $u$. As a consequence of this lemma, the authors are able to recover a weak form of \eqref{eq: stability} in the limit. 
The existence result has been later generalized to finite hyperelastic energies and vector-valued deformations in \cite{DFT}, whereas the existence of a weak quasistatic evolution for the fully linear elastic model \eqref{eq: model} has remained an open issue, due to at least two major difficulties.

\bigskip

\subsection*{Challenges for linear elastic models}

As a first point, even in the static setting the existence of minimizers for the weak formulation is not clear. A natural attempt of generalising \eqref{eq: mum-shah} consists indeed in considering problems of the type
\begin{equation}\label{eq: linearly-elastic}
\arg\min \left\{\int_{\Omega} Q(e(u))\,\mathrm{d}x + \mathcal{H}^{N-1}(J_u\setminus \Gamma)\right\}\,,
\end{equation}
under some prescribed boundary condition $g$, in the space $SBD$ of special functions of bounded deformation (see \cite{ACD, BCD}), for which a symmetrized gradient and an $\mathcal{H}^{N-1}$-rectifiable jump set are well defined. However, weak sequential compactness in $SBD$ requires (see \cite[Theorem 1.1]{BCD}) a uniform bound on the $L^\infty$ norm of the sequence, similarly to the $SBV$-case, which in this setting is not guaranteed along a minimizing sequence, due to the lack of a maximum principle. The addition of lower order terms, related for instance to the action of bulk forces, can at least provide some uniform bound on the $L^p$ norm of the minimizing sequences, so that, mimicking a succesful approach to similar problems in spaces of functions of bounded variation, one can recover an existence result in the space $GSBD$ of {\it generalized special functions of bounded deformation}. A correct definition of this space and the investigation of the related compactness and lower 
semicontinuity properties have proved to be a quite delicate issue, which has been  overcome only recently in the paper \cite{DM}. On the other hand, it would be highly desirable to have an existence result also for the model \eqref{eq: linearly-elastic} without the addition of lower order terms. This requires a suitable  Korn-type inequality in $GSBD$ to be  available, allowing in some sense to reproduce the steps of the existence proof for \eqref{eq: strongformulation} in a weak setting.

The other major issue to be faced in order to give an existence proof of a quasistatic evolution with values in $GSBD$ is the generalisation of the Jump Transfer Lemma to this setting. Actually, the proof strategy devised in \cite{Francfort-Larsen:2003} cannot be straightforwardly reproduced in this context. Indeed, there the jump set $J_u$ is written as a countable union of pairwise intersections of level sets of $u$. The parts of the corresponding level sets for $u_n$ lying outside $J_{u_n}$ are then shown to have small length. With this, one can transer onto pieces of these sets the jump $J_{\phi}\cap J_{u}$ for a given competitor $\phi$. In this procedure, the \emph{coarea formula} and the equiintegrability of $\nabla u_n$ play a crucial role.  In the framework of linearized elasticity, however, only an a-priori control on the symmetrized gradient is available. Again, being able to estimate gradients in terms of their symmetrized part via a Korn-type inequality would remove 
 parts of these obstacles and be a good starting point for proving an analog of the lemma in the $GSBD$ setting.

\bigskip
\subsection*{The present paper}

This preliminary discussion leads us to the purpose of the present paper. Our goal is to provide an existence result, in dimension $N=2$, for quasistatic crack growth in the sense of Griffith in a linearly elastic material.  In Theorem \ref{th: main} we show the existence of a pair $(u(t), \Gamma(t))$, with $u(t) \in  GSBD^2(\Omega)$, $J_{u(t)}\subset \Gamma(t)$, and $\Gamma(t)$ nondecreasing in time, such that $u(t)$ minimizes
\begin{equation*}
 \int_{\Omega} Q(e(v))\,\mathrm{d}x +\mathcal{H}^{1}(J_v\setminus \Gamma(t))
\end{equation*}
among all $v\in GSBD^{2}(\Omega)$ satisfying the prescribed time-dependent Dirichlet condition  $g(t)$, 
and the total energy satisfies the energy-dissipation balance
\begin{equation*}
\mathcal{E}(u(t), \Gamma(t))=\mathcal{E}(u(0), \Gamma(0))+\int_0^t\int_{\Omega}\C e(u(s,x))\cdot e(\dot g(s,x))\,\mathrm{d}s\,\mathrm{d}x\,.
\end{equation*}
In the above equality $\C$ is the elastic tensor generating the quadratic form $Q$, so that the integral term can be interpreted as the virtual work of the applied boundary load. We also mention that, as it is typical of variational problems in spaces of functions of bounded deformation, the boundary condition has to be understood in a relaxed sense (see Section \ref{sec: general} for details).

A starting point for our proof strategy is the use of a piecewise Korn inequality for $GSBD$ functions, proved in the planar setting in \cite{Friedrich:15-4}, extending other recent results in the literature (\cite[Theorem 1.1]{Friedrich:15-2} and \cite[Theorem 1.2]{Conti-Focardi-Iurlano:15}). For every $1\le p<2$ it allows to control the $L^p$-norm of a displacement and  its gradient  in terms of the square norm of the symmetrized gradient, provided a suitable piecewise infinitesimal rigid motion is subtracted. With this construction the jump set is enlarged, but still controlled by the length of the original jump set.

A major ingredient is then a sharp version of the piecewise Korn inequality proved in Theorem \ref{th: kornpoin-sharp}. We show that the jump set  can even only be enlarged by a small length at the prize of having only an $L^1$-control on the gradient. This control, however, involves constants which behave well with respect to scaling and particularly are small on small squares (see Remark \ref{rem:square}).

Equipped with this result, we can prove  Theorem \ref{th: approx}, where, up to an arbitrarily small error $\theta$, the jump set of a weakly compact sequence $(u_n)_n$ in  $GSBD$   is shown to  coincide with the one of a sequence $(v_n)_n$ of $SBV$ functions, still $L^1$-converging to $u$ up to some small exceptional sets. Furthermore, the $L^1$-norm of $\nabla v_n$  is uniformly small in a tubular neighborhood of the jump set $J_u$. Notice that the construction of $v_n$ is quite involved and depends on the given covering of $J_u$ (see Section \ref{sec:jump} for details).

 This allows  to prove a Jump Transfer Lemma also in this setting (Theorem \ref{th: JTransf}), adapting the arguments of \cite[Theorem 2.1]{Francfort-Larsen:2003}. The reflection procedure that the authors use there in order to define the sequence  $(\phi_n)_n$ corresponding to the competitor $\phi$, which is not compatible with a control only on the symmetrized gradient $e(\phi)$, is here replaced by a suitable generalization introduced in \cite{Nit} and adjusted to our purposes in Lemma \ref{lemma: nitsche}.

The existence proof for the minimum problem \eqref{eq: linearly-elastic} requires an additional step, namely a version of the sharp piecewise Korn inequality proved in Theorem \ref{th: kornpoin-sharp} which also takes into account the relaxed boundary conditions. This is proved in Theorem \ref{th: korn-boundary}. With this, we can derive a  general compactness result for minimizing sequences of the energy \eqref{eq: linearly-elastic} drawing some ideas from \cite{Friedrich:15-2}: while typically sequences are not compact, it is always possible to pass to modifications  by subtracting suitable piecewise infinitesimal rigid motions (which do not change the elastic part of the energy) at the expense of arbitrarily small additional fracture  energy.  This allows us to construct a minimizing sequence $(y_n)_n$ which satisfies the uniform bound
$$\int_{\Omega} \psi(|y_n|)\mathrm{d}x + \int_{\Omega} |e(y_n)|^2\mathrm{d}x + {\cal H}^{1}(J_{y_n}) \le M $$
for an increasing, continuous functions $\psi:[0,\infty) \to [0,\infty)$ with $\psi(s) \to \infty$ as $s \to \infty$.  This bound, in general weaker than any $L^p$-bound, is enough to apply the compactness result in \cite[Theorem 11.3]{DM} deducing the existence of a minimizer (see Theorem \ref{th: comp} and Theorem \ref{th: existence} below). An additional delicate point of the proof is showing that the function $\psi$ is only depending on  the reference configuration $\Omega$ and the $H^1$ norm of the boundary displacement $g(t)$, so that, under the usual regularity assumptions on the boundary load, it is independent from the time $t$ along an evolution. This is crucial in the proof of Theorem \ref{th: extension} where the global stability property is derived.

Once this two major hurdles have been fixed, the by now well-known machinery succesfully exploited in \cite{Francfort-Larsen:2003} and in \cite{DFT}, in the linear antiplane and in the finite elastic context, respectively, can be adapted to our setting with minor modifications, which we however detail to some extent in Section \ref{se: exist result}. This leads to the proof of our main result stated in  Theorem \ref{th: main}.

As already mentioned, we establish the result only in two dimensions as we make a heavy use of the piecewise Korn inequality of \cite{Friedrich:15-4} which has been only derived in a planar setting due to technical difficulties, concerning the topological structure of crack geometries in higher dimensions. Additionally, also its generalisation to the sharp version (Theorem \ref{th: kornpoin-sharp}) and the case of prescribed boundary conditions (Theorem \ref{th: korn-boundary}) makes use of  estimates holding in a planar setting (see  Lemma \ref{lemma: rigid motion}, Lemma \ref{lemma: interface}, and Lemma \ref{lemma: maggi}). Without these restrictions, the methods we use actually hold in any dimension. We therefore believe that our results can be extended to the $N$-dimensional case and  that the proof provides the principal techniques being necessary to establish the result in arbitrary space dimension.

\section{Preliminaries}\label{sec:pre}

In this section we introduce basic definitions and the function spaces which we will use in the paper. Moreover, we recall a piecewise Korn inequality for $GSBD$ functions proved in \cite{Friedrich:15-4}. 

\subsection{Basic definitions}\label{sec:pre1}
For a bounded, measurable set $E \subset\R^N$ we define 
$$\diam(E) = {\rm ess }\sup\lbrace \vert x-y \vert: x,y \in E \rbrace.$$
The above definition is independent of the particular Lebesgue representative. If $U$ is an open set in $\R^N$, and $u: U \to \R^m$ is a $\mathcal L^N$-measurable function, $u$ is said to have an approximate limit $a \in \R^m$ at a point $x \in U$ if and only if
$$
\lim_{\varrho \to 0^+}\frac{\mathcal L^N\left(\{|u-a|\ge \varepsilon\}\cap B_\varrho(x)\right)}{\varrho^N}=0\hbox{ for every }\varepsilon >0\,,
$$
where $B_\varrho(x)$ is the ball of radius $\varrho$ centered at $x$. In this case, one writes $\mathrm{ap }\lim_{y\to x}u(y)=a$. The approximate jump set $J_u$ is defined as the set of points $x \in U$ such that there exist $a\neq b \in \R^m$ and $\nu \in S^{N-1}:=\{\xi \in \R^{N}: |\xi|=1\}$ with
$$
\mathrm{ap}\lim_{\substack{y\to x\\(y-x)\cdot \nu >0}}u(y)=a\,,\quad \mathrm{ap}\lim_{\substack{y\to x\\(y-x)\cdot \nu <0}}u(y)=b\,.
$$
The triplet $(a, b, \nu)$ is uniquely determined up to a permutation of $(a, b)$ and a change of sign of $\nu$, and is denoted by $(u^+ (x), u ^-(x), \nu_u (x))$. The jump of  $u$ is the function $[u]:J_u \to \R^m$ defined by $[u](x) := u^+ (x)-u^- (x)$ for every $x \in J_u$. It follows from Lusin's Theorem that $u$ has $u(x)$ as approximate limit at $\mathcal L^N$-a.e.\ $x \in U$, in which case one says that $u$ is approximately continuous at $x$, and therefore $J_u$ is a $\mathcal L^N$-null set.
Given $x\in U$ such that $u$ is approximately continuous at $x$, an $m\times N$ matrix $\nabla u(x)$ is said to be an approximate gradient of $u$ at $x$ if and only if
\begin{equation*}
\mathrm{ap }\lim_{y\to x}\frac{u(y)- u(x)-\nabla u(x)(y-x)}{|y-x|}=0\,.
\end{equation*}
We say that $u$ has an approximate symmetric differential $e(u)(x)\in \mathbb R_{\mathrm{sym}}^{N\times N} $ at $x$ if
\begin{equation*}
\mathrm{ap }\lim_{y\to x}\frac{(u(y)-u(x)-e(u)(x)(y-x))\cdot(y-x)}{|y-x|^2}=0\,.
\end{equation*}

We will make use of the following measure-theoretical result from \cite{Friedrich:15-2}. A short proof is reported for the reader's convenience.
\begin{lemma}\label{rig-lemma: concave function2}
Let $F\subset \R^N$ with $\mathcal L^N(F)<+\infty$ and let $(s_n)_n$, $(t_n)_n$ be nonnegative, monotone sequences with $s_n \to \infty$ and $t_n \to 0$ as $n \to \infty$. Then there is a nonnegative, increasing, concave function $\psi$ with 
\begin{equation}\label{coerc}
\lim_{s\to+\infty}\psi(s)=+\infty
\end{equation} 
only depending on $F$, $(s_n)_n$, $(t_n)_n$ such that for every sequence $(u_n)_n \subset L^1(F;\R^m)$ with
$$\Vert u_n\Vert_{L^1(F)} \le s_n, \ \ \ \ \mathcal L^N\left(\bigcup\nolimits_{m \ge n} \lbrace |u_m - u_n| > 1 \rbrace\right) \le t_n$$  
for all $n \in \N$ there is a not relabeled subsequence such that $$\sup_{n \ge 1} \int_{F}\psi(|u_n|)\mathrm{d}x\le 1\,.$$ 
\end{lemma}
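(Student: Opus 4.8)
\noindent\emph{Proof proposal.} First I would record an elementary super-level estimate for the $u_m$. By Chebyshev's inequality and the first hypothesis, $\mathcal L^N(\{x\in F:|u_m(x)|>t\})\le s_m/t$ for every $t>0$; and since, by the second hypothesis, $|u_m-u_n|<1$ off a set of measure at most $t_n$ whenever $m\ge n$, one also has, for $t>1$ and every $n\le m$,
\[
\mathcal L^N(\{|u_m|>t\})\ \le\ \mathcal L^N(\{|u_n|>t-1\})+t_n\ \le\ \frac{s_n}{t-1}+t_n .
\]
The plan is then to fix thresholds along which both right-hand terms are small and to let $\psi$ grow so slowly between consecutive thresholds that the layer-cake contributions $\int_F\psi(|u_m|)\,\mathrm dx=\int_0^\infty\psi'(t)\,\mathcal L^N(\{|u_m|>t\})\,\mathrm dt$ stay summable, uniformly in $m$.

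\noindent Concretely, for $j\in\N$ I would set $\varepsilon_j:=2^{-j}$, pick indices $k_j\nearrow\infty$ with $t_{k_j}<\varepsilon_j/2$ (possible since $t_n\to0$), and put $t_j^*:=1+2^{j+1}s_{k_j}$, a strictly increasing sequence. For $m\ge k_j$ the displayed estimate with $n=k_j$ gives $\mathcal L^N(\{|u_m|>t_j^*\})\le 2^{-(j+1)}+t_{k_j}<\varepsilon_j$, whereas Chebyshev's inequality gives, for \emph{every} $m$, $\mathcal L^N(\{|u_m|>t_j^*\})\le s_m/t_j^*\le s_m/(2^{j+1}s_{k_j})$. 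Next I would choose $\psi\colon[0,\infty)\to[0,\infty)$ concave, increasing, with $\psi(0)=0$, $\psi(s)\to\infty$ as $s\to\infty$, and $\psi(t_{j+1}^*)\le 2^j/j^2$ for all $j\ge1$; such a $\psi$ exists because $t_j^*\nearrow\infty$ and $2^j/j^2\nearrow\infty$, and — enlarging the $t_j^*$ if necessary, since they are only bounded below — the piecewise-linear interpolation can be made concave. This $\psi$ depends only on $F,(s_n)_n,(t_n)_n$. Decomposing $F$ into $\{|u_m|\le t_1^*\}$ and the shells $\{t_j^*<|u_m|\le t_{j+1}^*\}$, $j\ge1$, and using that $\psi$ is increasing, one obtains
\[
\int_F\psi(|u_m|)\,\mathrm dx\ \le\ \psi(t_1^*)\,\mathcal L^N(F)+\sum_{j\ge1}\psi(t_{j+1}^*)\,\mathcal L^N(\{|u_m|>t_j^*\}) .
\]
If $k_j\le m$, the $j$-th summand is at most $\psi(t_{j+1}^*)\,\varepsilon_j\le j^{-2}$; if $k_j>m$, then $s_m\le s_{k_j}$ and it is at most $\psi(t_{j+1}^*)\,s_m/(2^{j+1}s_{k_j})\le j^{-2}$. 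Hence $\int_F\psi(|u_m|)\,\mathrm dx\le\psi(t_1^*)\,\mathcal L^N(F)+\sum_{j\ge1}j^{-2}=:C_0$ for \emph{every} $m$, with $C_0$ depending only on the data; replacing $\psi$ by $\psi/C_0$ then yields $\sup_m\int_F\psi(|u_m|)\,\mathrm dx\le1$. (A subsequence is in fact not needed here, though one may additionally pass to a tail if a null limit is also desired.)

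\noindent\emph{Where the difficulty sits.} No single step is hard; the construction crucially exploits that $\psi$ may be chosen \emph{after} the sequences are given. The first hypothesis alone is useless, since $\|u_m\|_{L^1}\le s_m$ with $s_m\to\infty$ gives no uniform super-level bound — it is the $t_n$-quantified closeness to earlier terms in the second hypothesis that supplies the missing estimate. The thresholds $t_j^*$ have to be matched to the $L^1$-bounds $s_{k_j}$ precisely so that the crude Chebyshev tails $s_m/(2^{j+1}s_{k_j})$ remain controlled for the finitely many indices $m<k_j$; the only genuine care is in realizing the prescribed slow profile $\psi(t_{j+1}^*)\approx 2^j/j^2$ by an honestly concave function, which is possible because the $t_j^*$ can be spread out arbitrarily far.
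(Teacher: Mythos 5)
Your proof is correct and takes a genuinely different route from the paper. The paper's argument is spatial: it introduces the sets $A_n=\bigcap_{m\ge n}\{|u_n-u_m|\le 1\}$, a disjoint refinement $(B_n)_n$ of $F$, and a subsequence $(n_i)_i$ along which the tails $F\setminus\bigcup_{n\le n_i}B_n$ decay like $4^{-i}$; on each block $B^j=\bigcup_{n_j<n\le n_{j+1}}B_n$ it uses $|u_l|\le|u_{n_{j+1}}|+2$ and then controls the concave $\psi$ block by block via Jensen's inequality. Your argument works instead on the superlevel sets of the individual functions: Chebyshev applied to a well-chosen earlier index, combined with the $t_n$-perturbation control, gives $\mathcal L^N(\{|u_m|>t\})\le s_n/(t-1)+t_n$ for $m\ge n$, which together with raw Chebyshev for $m<k_j$ is fed into a shell (layer-cake) decomposition at thresholds $t_j^*$ matched to $(s_{k_j})_j$. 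This bypasses the spatial partition of $F$ and Jensen entirely, and delivers the uniform bound on the full sequence rather than on a subsequence — a mild strengthening. Note that both arguments implicitly read the second hypothesis as implying $\mathcal L^N(\{|u_m-u_n|\ge1\})\le t_n$ for each $m\ge n$; this is what the paper's proof needs to make $\bigcup_n A_n$ exhaust $F$, and it follows from the union form $\mathcal L^N\bigl(\bigcup_{m\ge n}\{|u_m-u_n|\ge1\}\bigr)\le t_n$, which is exactly what Remark~\ref{meas-conv} actually verifies (the literal $\bigcap_{m\ge n}$, since it contains $m=n$, is vacuous).

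Two small points to tighten. The target values $2^j/j^2$ are not monotone for $j=1,2,3,4$; replace them by $c_j:=\min_{i\ge j}2^i/i^2$, which is nondecreasing, still tends to infinity and only strengthens your constraint $\psi(t_{j+1}^*)\le c_j\le 2^j/j^2$. Also choose $k_1$ large enough that $s_{k_1}>0$ (harmless since $s_n\to\infty$), so that $t_j^*>1$ and the ratios $s_m/(2^{j+1}s_{k_j})$ are well defined; and when enlarging the $t_j^*$ to enforce concavity of the piecewise-linear interpolant, the key observation — which you state but is worth spelling out — is that both the perturbed estimate $s_{k_j}/(t_j^*-1)+t_{k_j}$ and the raw Chebyshev bound only improve as $t_j^*$ grows.
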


\Proof  Let $A_n = \bigcap_{m \ge n} \lbrace |u_n - u_m|\le 1 \rbrace$ and set $B_1 = A_1$ as well as $B_n = A_n \setminus \bigcup^{n-1}_{m=1} B_{m}$ for all $n \in \N$. The sets $(B_n)_n$ are pairwise disjoint with $\sum_n \mathcal L^N(B_n) = \mathcal L^N(F)$. We choose $0=n_1 < n_2 < \ldots$ such that $\sum_{1\le n \le n_{i}} \frac{\mathcal L^N(B_n)}{\mathcal L^N(F)} \ge 1 - 4^{-i}$. We let $B^i = \bigcup^{n_{i+1}}_{n=n_i+1} B_n$ and observe $\mathcal L^N(B^i)\le  4^{-i}\mathcal L^N(F) $. 

From now on we consider the subsequence $(n_i)_{i \in \N}$ and observe that the choice of $(n_i)_{i \in \N}$ only depends on the sequence $(t_n)_n$.  Choose $ E^i \supset B^i$ such that $\mathcal L^N( E^i) = 4^{-i} \mathcal L^N(F)$. Let $b_i = \frac{s_{n_{i+1}}}{\mathcal L^N(E^i)} + 2=  4^{i}\frac{s_{n_{i+1}}}{\mathcal L^N(F)}+ 2$ for $i\in \N$  and note that $(b_i)_i$ is increasing with $b_i \to \infty$.  By an elementary construction (see \cite[Lemma 4.1]{Friedrich:15-2}) we find an increasing concave function $\psi:[0,\infty) \to [0,\infty)$ with $\lim_{s \to \infty} \psi(s) = \infty$ and $\psi(b_i) \le \frac{ 2^{i}}{\mathcal L^N(F)}$ for all $i \in \N$.    

For $\hat{B}^{ i} := \Omega \setminus \bigcup^{n_i}_{n=1} B_n$ we have $\mathcal L^N(\hat{B}^{ i}) \le 4^{-i}\mathcal L^N(F)$ and choose $\hat{E}^{ i} \supset \hat{B}^i$ with $\mathcal L^N(\hat{E}^{ i}) = 4^{-i} \mathcal L^N(F)$. We then obtain  $\frac{s_{n_i}}{\mathcal L^N(\hat{E}^{ i})}=  4^{i}\frac{s_{n_{i}}}{\mathcal L^N(F)} \le b_{i}$.  Now let $l = n_i$. Using Jensen's inequality, the definition of the sets $B^i$, $\Vert u_l \Vert_{1} \le s_l$  and the monotonicity of $\psi$ we compute
\begin{align*}
\int_{F} \psi(|u_l|) &=\sum\nolimits_{1 \le j \le i-1} \int_{B^j} \psi(|u_l|)\mathrm{d}x + \int_{\hat{B}^{ i}} \psi(|u_l|)\mathrm{d}x   \\
& \le \sum\nolimits_{1 \le j \le i-1}  \int_{B^j} \psi(|u_{n_{j+1}}|+ 2)\mathrm{d}x + \int_{\hat{B}^{i}} \psi(|u_l|)\mathrm{d}x \\
& \le \sum\nolimits_{1 \le j \le i-1}  \mathcal L^N(E^j) \psi\Big(\dashint_{E^j} |u_{n_{j+1}}|+2\Big) + \mathcal L^N(\hat{E}^{ i}) \psi\Big(\dashint_{\hat{E}^{ i}} |u_l|\Big) \\
& \le \sum\nolimits_{1 \le j \le i-1}  \mathcal L^N(F)4^{-j}  \tfrac{2^{j}}{\mathcal L^N(F)} + \mathcal L^N(F)4^{-i}  \tfrac{2^{i}}{\mathcal L^N(F)} \le \sum\nolimits_{j \in \N} 2^{-j} = 1.
\end{align*}   
As the estimate is independent of $l \in (n_i)_i$, this yields $\int_F \psi(|u_l|)\mathrm{d}x \le 1$ uniformly in $l$, as desired. \eop

\begin{rem}\label{meas-conv}
Let $u$ be a measurable function and $(u_n)_n \subset L^1(F;\R^m)$ a sequence such that $u_n \to u$ in measure. Then it follows from the previous lemma that there exist a subsequence $(u_{n_k})_k$ of $(u_n)_n$ and a nonnegative, increasing, concave function $\psi$ satisfying \eqref{coerc}, such that
$$
\sup_{k \ge 1} \int_{F}\psi(|u_{n_k}|)\mathrm{d}x\le 1\,.
$$
Indeed, by definition of convergence in measure we can always find a subsequence $(u_{n_k})_k$ with the property that, setting
$
E_k:=\{|u_{n_k}-u|\ge\frac{1}{2^k}\}\,
$,
one has $\mathcal L^N(E_k)\le \frac{1}{2^k}$. Now, for all $k\in \N$ we have by the triangle inequality that
$$
\bigcup_{m\ge k}\lbrace |u_{n_m} - u_{n_k}| \ge 1 \rbrace \subseteq \bigcup_{m\ge k} E_m
$$
and therefore
$$\mathcal L^N\left(\bigcup\nolimits_{m\ge k}\lbrace |u_{n_m} - u_{n_k}| \ge 1 \rbrace\right)\le\sum\nolimits_{m=k}^{+\infty}\frac{1}{2^m}=\frac{1}{2^{k-1}}\,.
$$
Now it suffices to apply the previous lemma with $s_k:=\max\{\max_{1\le i \le k}\Vert u_{n_i}\Vert_{L^1(F)}, k\}$ and $t_k:=\frac{1}{2^{k-1}}$.
\end{rem}

In a two-dimensional setting, we will often make use of the following simple lemma.

\begin{lemma}\label{lemma: rigid motion}
Let  $A \in \R^{2 \times 2}_{\rm skew}$, $b \in \R^2$.

(a) There is a universal constant $c>0$ independent of $A$ and $b$ such that for all measurable  $E \subset \R^2$ we have $(\mathcal{L}^2(E))^{\frac{1}{2}} |A| \le  c\Vert A\cdot +b \Vert_{L^\infty(E;\R^2)}$.

(b) Let $F$ be a bounded measurable subset of $\R^2$, $\delta>0$ and let a continuous nondecreasing function $\psi \colon \R^+\to \R^+$ satisfying \eqref{coerc} be given. Consider a measurable subset $E\subset F$ with $\calL^2(E)\ge \delta$. Then, if
$$
M\ge \int_E \psi(|Ax+b|)\,\mathrm{d}x\,,
$$
there exists a constant $C$ only depending on $M$, $\delta$, $\psi$, and $F$ such that
\begin{equation}\label{stimasemplice}
|A|+|b|\le C\,.
\end{equation}
If $\psi(s) =s^p$ for $p \in [1,\infty)$ we get $|A|+|b|\le \tilde{C} M^{\frac{1}{p}}$ for a constant $\tilde{C}$ only depending on  $\delta$, $p$ and $F$.

\end{lemma}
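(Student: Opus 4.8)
For part (a), the plan is to use that, when $A\neq 0$, the affine map $x\mapsto Ax+b$ is a scaled isometry: since a nonzero skew $2\times2$ matrix is invertible, it has the unique fixed point $x_0:=-A^{-1}b$, so $Ax+b=A(x-x_0)$, and $|A(x-x_0)|=\kappa\,|A|\,|x-x_0|$ for a dimensional constant $\kappa>0$ (the value of $\kappa$ depending on the chosen matrix norm). Setting $R:=\operatorname{ess\,sup}_{x\in E}|x-x_0|$, one has $\Vert A\cdot+b\Vert_{L^\infty(E;\R^2)}=\kappa\,|A|\,R$, while, by definition of the essential supremum, $E$ is contained in $B_R(x_0)$ up to an $\mathcal L^2$-null set, hence $\mathcal L^2(E)\le\pi R^2$ and $(\mathcal L^2(E))^{1/2}|A|\le\sqrt\pi\,R\,|A|=(\sqrt\pi/\kappa)\Vert A\cdot+b\Vert_{L^\infty(E;\R^2)}$. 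The case $A=0$ is trivial, so (a) follows with $c=\sqrt\pi/\kappa$.

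For part (b) I would argue by contradiction and compactness. The key preliminary step is a \emph{uniform nondegeneracy estimate}: there exists $\eta=\eta(\delta,F)>0$ such that
$$\mathcal L^2\big(\{x\in F:\ |A'x+b'|<\eta\}\big)<\delta/2$$
for every $A'\in\R^{2\times2}_{\rm skew}$ and $b'\in\R^2$ with $|A'|+|b'|=1$. If this failed, one could choose such $(A'_n,b'_n)$ and $\eta_n\to0$ violating the bound; by compactness of the unit sphere of parameters $(A'_n,b'_n)\to(A',b')$ with $|A'|+|b'|=1$, and since $x\mapsto|A'_nx+b'_n|$ converges uniformly on the bounded set $F$ to $x\mapsto|A'x+b'|$, for each fixed $\varepsilon>0$ and $n$ large one gets $\{|A'_nx+b'_n|<\eta_n\}\cap F\subset\{|A'x+b'|<\varepsilon\}\cap F$, whence $\mathcal L^2(\{|A'x+b'|<\varepsilon\}\cap F)\ge\delta/2$ for all $\varepsilon>0$. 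Letting $\varepsilon\to0$ (using $\mathcal L^2(F)<\infty$ and continuity of measures along decreasing sequences) yields $\mathcal L^2(\{A'x+b'=0\}\cap F)\ge\delta/2$; but $\{A'x+b'=0\}$ is either a single point (when $A'\neq0$, as it is then invertible) or empty (when $A'=0$, since then $b'\neq0$), so this set is $\mathcal L^2$-null, a contradiction.

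Granting this, suppose the conclusion of (b) is false: there exist triples $(A_k,b_k,E_k)$ with $E_k\subset F$, $\mathcal L^2(E_k)\ge\delta$, $\int_{E_k}\psi(|A_kx+b_k|)\,\mathrm dx\le M$ and $t_k:=|A_k|+|b_k|\to\infty$. Normalizing, $(\tilde A_k,\tilde b_k):=(A_k/t_k,b_k/t_k)$ lies on the unit sphere, so by the nondegeneracy estimate the set $G_k:=E_k\cap\{|\tilde A_kx+\tilde b_k|\ge\eta\}$ satisfies $\mathcal L^2(G_k)\ge\mathcal L^2(E_k)-\delta/2\ge\delta/2$, and on $G_k$ one has $|A_kx+b_k|=t_k|\tilde A_kx+\tilde b_k|\ge t_k\eta$. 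Since $\psi$ is nondecreasing,
$$M\ \ge\ \int_{G_k}\psi(|A_kx+b_k|)\,\mathrm dx\ \ge\ \tfrac{\delta}{2}\,\psi(t_k\eta)\ \xrightarrow[k\to\infty]{}\ +\infty$$
by \eqref{coerc}, which is the desired contradiction; this proves existence of $C=C(M,\delta,\psi,F)$. In the power case $\psi(s)=s^p$ the same chain, applied to a single admissible triple (no subsequence needed), reads $M\ge\frac{\delta}{2}(t\eta)^p$ with $t=|A|+|b|$, i.e.\ $|A|+|b|\le(2/(\delta\eta^p))^{1/p}M^{1/p}$, giving the stated bound with $\tilde C$ depending only on $\delta$, $p$ and $F$.

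I expect the only genuinely delicate point to be the uniform nondegeneracy estimate, namely making the measure of the sublevel set $\{|A'x+b'|<\eta\}$ small \emph{uniformly} over the compact set of normalized parameters; the remaining steps are essentially bookkeeping.
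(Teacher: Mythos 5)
Your proof is correct. Part (a) is essentially the paper's argument: both exploit the fact that a nonzero $2\times2$ skew matrix is a rotation--scaling (so $|Ay|=\kappa|A||y|$) with unique zero $x_0=-A^{-1}b$, and then estimate the area of $E$ in terms of the distance from $E$ to $x_0$; the paper phrases this via the essential diameter and the isodiametric inequality, while you phrase it via the trivial containment $E\subset B_R(x_0)$, which is a small simplification but not a different idea.

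Part (b) takes a genuinely different route. The paper gives a direct, quantitative argument in two steps: remove a ball of radius $\lambda=\sqrt{\delta/2\pi}$ around $z=-A^{-1}b$, use that $\psi$ is nondecreasing to deduce $\frac{\delta}{2}\psi\big(\frac{\sqrt2}{2}|A|\lambda\big)\le M$ and hence bound $|A|$, then bound $|b|$ with a second use of monotonicity of $\psi$ on $F$. You instead normalize $(A,b)$ to the unit sphere $\{|A'|+|b'|=1\}$, prove a uniform nondegeneracy estimate by compactness and continuity of measure (the sublevel set $\{|A'x+b'|<\eta\}\cap F$ is uniformly small over the compact parameter sphere because the zero sets $\{A'x+b'=0\}$ are $\mathcal L^2$-null), and derive a contradiction if $|A|+|b|$ were unbounded. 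Your argument treats $A$ and $b$ jointly and avoids the two-case split, but it produces the constant $C$ nonconstructively; the paper's argument is more explicit and closer to the style used elsewhere in the paper (e.g.\ to track dependence on $\theta$ in the Korn estimates), and it extends to the power case without invoking any compactness. Both are sound; your compactness lemma is correct as stated, including the careful use of uniform convergence of $x\mapsto|A'_nx+b'_n|$ on the bounded set $F$ and the continuity of $\mathcal L^2$ along the decreasing family of sublevel sets.
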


\begin{proof}
(a) It suffices to consider the case $A \neq 0$. If $A\neq 0$, the assumption $A \in \R^{2 \times 2}_{\rm skew}$ implies that $A$ is invertible and that $|Ay|=\frac{\sqrt2}{2}|A| |y|$ for all $y \in \R^{2}$. We notice that for all $z \in \R^2$ there exists $x \in E$ with $|x-z| \ge \frac{1}{4}\diam(E)$. For the special choice  $z= - A^{-1}b$ we obtain $|A\,x+b| = |A\,(x - z)| = \frac{\sqrt2}{2}|A| |x-z| \ge \frac{\sqrt2}{8}|A| \diam(E)$ which implies the result due to the isodiametric inequality. 

(b) If $A=0$, we have
$$
\frac{M}{\delta}\ge \psi(|b|)
$$
and the result follows from \eqref{coerc}. If $A\neq 0$, we set $z:= - A^{-1}b$ and $\lambda:=\sqrt{\frac{\delta}{2\pi}}$. Then we have that $\calL^2(E \setminus B_\lambda(z))\ge \frac{\delta}2$. Since $\psi$ is nonnegative and increasing, we get
\begin{align*}
M&\ge \int_E \psi\left(\frac{\sqrt2}{2}|A| |x-z|\right)\,\mathrm{d}x\\
& \ge\int_{E\setminus B(z,\lambda)} \psi\left(\frac{\sqrt2}{2}|A| |x-z|\right)\,\mathrm{d}x
\ge \frac{\delta}2 \psi\left(\frac{\sqrt2}{2}|A| \lambda\right)\,.
\end{align*}
 By this and \eqref{coerc} it exists a constant $\hat C$ only depending on $M$, $\delta$, and $\psi$ such that
\begin{align}\label{eq: A}
|A|\le \hat{C}.
\end{align}
It also follows that $|Ax|\le C'$ for all $x\in F$, where $C'$ is allowed to depend on $F$, too. If now $|b|\le C'$ we are done, otherwise it holds $|Ax+b|\ge |b|-C'>0$ for all $x\in F$. The monotonicty of $\psi$ yields then 
$$
\frac{M}{\delta}\ge \psi(|b|-C')
$$
and again \eqref{coerc} implies the conclusion. The case $\psi(s) = s^p$ may be proved along similar lines taking into account that \eqref{eq: A} can be replaced by $|A|\le \tilde{C}M^{\frac{1}{p}}$ for $\tilde{C}$ independent of $M$. 
\end{proof}

 
\subsection{Function spaces}\label{sec: gsbd}

In the whole paper we use standard notations for the spaces $SBV$ and $SBD$. We refer the reader to \cite{AFP} and \cite{ACD, BCD, Tem}, respectively, for definitions and basic properties. In this section we only give the definition and some properties of \emph{generalized functions of bounded deformation} introduced in \cite{DM}, being the setting of our existence result.  For fixed $\xi \in S^{N-1}$, we set
\begin{align*}
\Pi^\xi:=\{y \in \R^{N}: y\cdot \xi=0\}\,,&\quad U^\xi_y:=\{t \in \R: y+t\xi\in U\}\hbox{ for }y\in \Pi^\xi.
\end{align*}

\begin{definition}
An $\mathcal L^{N}$-measurable function $u:U\to \R^{N}$ belongs to $GBD(U)$ if there exists a positive bounded Radon measure $\lambda_u$ such that, for all $\tau \in C^{1}(\R^{N})$ with $-\frac12 \le \tau \le \frac12$ and $0\le \tau'\le 1$, and all $\xi \in S^{N-1}$, the distributional derivative $D_\xi (\tau(u\cdot \xi))$ is a bounded Radon measure on $U$ whose total variation satisfies
$$
\left|D_\xi (\tau(u\cdot \xi))\right|(B)\le \lambda_u(B)
$$
for every Borel subset $B$ of $U$. A function $u \in GBD(U)$ belongs to the subset $GSBD(U)$ of special functions of bounded deformation if in addition for every $\xi \in S^{N-1}$ and $\mathcal H^{N-1}$-a.e.\ $y \in \Pi^\xi$, the function $u^\xi_y(t):=u(y+t\xi)$ belongs to $SBV_{\mathrm{loc}}(U^\xi_y)$.
\end{definition}

By \cite[Remark 4.5]{DM} one has the inclusions $BD(U)\subset GBD(U)$ and $SBD(U)\subset GSBD(U)$, which are in general strict. Some relevant properties of functions with bounded deformation can be generalized to this weak setting: in particular, in \cite[Theorem 6.2 and Theorem 9.1]{DM} it is shown that the jump set $J_u$ of a $GBD$-function is $\mathcal H^{N-1}$-rectifiable and that $GBD$-functions have an approximate symmetric differential $e(u)(x)$ at $\mathcal L^{N}$-a.e.\ $x\in U$, respectively. The space $GSBD^2(U)$ is defined through:
$$
GSBD^2 (U):= \{u \in GSBD(U): e(u) \in L^2 (U; \mathbb R_{\mathrm{sym}}^{N\times N})\,,\,\mathcal H^{N-1}(J_u) < +\infty\}\,.
$$

Furthermore, the following compactness theorem has been proved in \cite{DM}, which we slightly adapt for our purposes.
\begin{theorem}\label{th: GSBD comp}
Let $\Gamma$ be a measurable set with ${\cal H}^{N-1}(\Gamma) < + \infty$.  Let $(y_k)_k$ be a sequence in $GSBD^2(U)$. Suppose that there exist a constant $M>0$ and an increasing continuous functions $\psi:[0,\infty) \to [0,\infty)$ with $\lim_{s \to \infty} \psi(s) = + \infty$ such that 
$$\int_{U} \psi(|y_k|)\mathrm{d}x + \int_{U} |e(y_k)|^2\mathrm{d}x + {\cal H}^{N-1}(J_{y_k}) \le M $$
for every $k \in \N$. Then there exist a subsequence, still denoted by $(y_k)_k$, and a function $y \in GSBD^2(U)$ such that
\begin{align}\label{eq: convergence sense}
\begin{split}
& y_k \to y \ \ \ \text{in measure in} \ \ \ U,\\ 
&e (y_k) \rightharpoonup  e (y)  \ \ \text{ weakly in} \ L^2(U;\R^{N\times N}_{\rm sym}),\\
& {\cal H}^{N-1}(J_y \setminus \Gamma) \le \liminf_{k \to \infty} {\cal H}^{N-1}(J_{y_k} \setminus  \Gamma).
\end{split}
\end{align}
\end{theorem}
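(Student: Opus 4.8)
The plan is to deduce Theorem \ref{th: GSBD comp} from the compactness result \cite[Theorem 11.3]{DM}, which gives essentially the same statement but with the lower semicontinuity inequality for $\mathcal H^{N-1}(J_{y_k})$ rather than for $\mathcal H^{N-1}(J_{y_k}\setminus\Gamma)$. The only genuinely new content here is the refinement that counts the jump relative to a fixed measurable set $\Gamma$ with $\mathcal H^{N-1}(\Gamma)<\infty$; the convergence $y_k\to y$ in measure and $e(y_k)\rightharpoonup e(y)$ weakly in $L^2$, as well as $y\in GSBD^2(U)$, are immediate from \cite{DM}. So after citing \cite[Theorem 11.3]{DM} to extract the subsequence and the limit $y$, the work is entirely in establishing the third line of \eqref{eq: convergence sense}.

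First I would reduce to the case where $\Gamma$ is itself (contained in) a countably $\mathcal H^{N-1}$-rectifiable set: since $J_{y_k}$ and $J_y$ are $\mathcal H^{N-1}$-rectifiable (by \cite[Theorem 6.2]{DM}) and have finite measure, only the rectifiable part of $\Gamma$ can interact with them — more precisely, writing $\Gamma = \Gamma^r \cup \Gamma^u$ with $\Gamma^r$ rectifiable and $\Gamma^u$ purely unrectifiable, one has $\mathcal H^{N-1}(J_{y_k}\cap\Gamma^u)=0$ and $\mathcal H^{N-1}(J_y\cap\Gamma^u)=0$, so $J_{y_k}\setminus\Gamma$ and $J_{y_k}\setminus\Gamma^r$ agree up to $\mathcal H^{N-1}$-null sets, and likewise for $y$. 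Hence we may assume $\Gamma$ rectifiable. Then the standard trick is to test against a competitor that records the jump on $\Gamma$ for free: following the approach used in the $SBV$ and $SBD$ literature (and in \cite{DM}), one considers the function $\tilde y_k$ defined by reflecting/modifying $y_k$ across $\Gamma$, or more robustly, one works on the open set $U\setminus\Gamma$ after opening up $\Gamma$. Concretely, applying \cite[Theorem 11.3]{DM} on a domain where $\Gamma$ has been removed forces the limiting jump set to avoid $\Gamma$, and the lower semicontinuity of $\mathcal H^{N-1}(J_{y_k})$ there translates into lower semicontinuity of $\mathcal H^{N-1}(J_{y_k}\setminus\Gamma)$ on $U$.

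The cleanest route, which I would take, is instead the localization argument: fix the rectifiable set $\Gamma$ and note that it suffices to prove, for every Borel set $A\subset U$,
\[
\mathcal H^{N-1}(J_y\cap A)\le \liminf_{k\to\infty}\mathcal H^{N-1}(J_{y_k}\cap A)
\]
as a set function, i.e. the limit measure $\mu := \mathrm{weak}^*\text{-}\lim \mathcal H^{N-1}\llcorner J_{y_k}$ (a subsequence of which exists since the total masses are bounded by $M$) satisfies $\mu \ge \mathcal H^{N-1}\llcorner J_y$; then in particular $\mu(U\setminus\Gamma)\ge \mathcal H^{N-1}(J_y\setminus\Gamma)$, while $\liminf_k\mathcal H^{N-1}(J_{y_k}\setminus\Gamma)\ge \limsup_k\mu_k(U\setminus\Gamma)$ is false in general, so one must be slightly more careful: one uses that $U\setminus\Gamma$ can be exhausted by open sets $V$ with $\mu(\partial V)=0$ and applies the open-set version $\mu(V)\le\liminf_k\mathcal H^{N-1}(J_{y_k}\cap V)\le\liminf_k\mathcal H^{N-1}(J_{y_k}\setminus\Gamma)$. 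The local lower semicontinuity $\mu\ge\mathcal H^{N-1}\llcorner J_y$ is exactly what the proof of \cite[Theorem 11.3]{DM} establishes (it is proven by a slicing argument that is inherently local in $U$), so one invokes it in localized form.

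The main obstacle, and the only point requiring real care, is the passage from the global statement in \cite[Theorem 11.3]{DM} to its localized/relative version: one must check that the slicing-based lower semicontinuity argument of \cite{DM} indeed yields $\mathcal H^{N-1}(J_y\cap B)\le\liminf_k\mathcal H^{N-1}(J_{y_k}\cap B)$ for open $B$, and then combine this with a careful exhaustion of $U\setminus\Gamma$ by such open sets whose boundaries are negligible for the limit measure $\mu$ — using that $\Gamma$ is $\mathcal H^{N-1}$-rectifiable of finite measure so that $U\setminus\Gamma$ is "large" in the appropriate sense. Everything else — the reduction to rectifiable $\Gamma$, the extraction of the weak$^*$ limit measure, the discarding of the purely unrectifiable part — is routine measure theory. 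I would therefore present the proof as: (i) cite \cite[Theorem 11.3]{DM} for the subsequence, $y$, the in-measure and weak-$L^2$ convergence, and membership in $GSBD^2(U)$; (ii) reduce $\Gamma$ to a rectifiable set; (iii) extract $\mu$ and invoke the localized lower semicontinuity; (iv) conclude via exhaustion by open subsets of $U\setminus\Gamma$.
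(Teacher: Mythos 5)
There is a genuine gap in your preferred route (iii)--(iv). After extracting the weak-$*$ limit measure $\mu$ of $\mathcal{H}^{N-1}\llcorner J_{y_k}$, you try to conclude by exhausting $U\setminus\Gamma$ by open sets $V\subset U\setminus\Gamma$. But $\Gamma$ is only assumed measurable with finite $\mathcal{H}^{N-1}$-measure, and such a set can be dense in $U$ (a dense countable union of segments of summable lengths, say); in that case $U\setminus\Gamma$ has empty interior, the only admissible $V$ is $\emptyset$, and the exhaustion gives nothing. Your appeal to ``$\Gamma$ rectifiable of finite measure so $U\setminus\Gamma$ is large in the appropriate sense'' does not resolve this: largeness in Lebesgue measure does not produce open subsets of $U\setminus\Gamma$, and the reduction to rectifiable $\Gamma$ does not change the situation. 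The correct repair is to approximate $\Gamma$ from within by \emph{compact} sets $K\subset\Gamma$ with $\mathcal{H}^{N-1}(\Gamma\setminus K)\le\eps$ (inner regularity of the finite Borel measure $\mathcal{H}^{N-1}\llcorner\Gamma$) and to use the open set $U\setminus K$ in place of $U\setminus\Gamma$, which is the step you do not identify.

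Once that repair is made, the weak-$*$ measure machinery is also unnecessary: the paper simply applies the case $\Gamma=\emptyset$ of \cite[Theorem 11.3]{DM} on the open set $U\setminus K$ (the hypotheses hold there, $\mathcal{L}^N(\Gamma)=0$, and convergences on $U$ restrict), obtaining $\mathcal{H}^{N-1}(J_y\setminus K)\le\liminf_k\mathcal{H}^{N-1}(J_{y_k}\setminus K)$, and then concludes via the elementary inclusions $J_y\setminus\Gamma\subset J_y\setminus K$ and $J_{y_k}\setminus K\subset(J_{y_k}\setminus\Gamma)\cup(\Gamma\setminus K)$, letting $\eps\to 0$. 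This is exactly the route you mentioned and then set aside in favor of the localization argument. Your step (ii), reducing $\Gamma$ to its rectifiable part, is correct but not needed for this argument.
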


\Proof In \cite{DM} the assertion has been proved in the case $\Gamma = \emptyset$. We briefly indicate the necessary adaption for the derivation of \eqref{eq: convergence sense}(iii) following the argumentation in \cite[Theorem 2.8]{DFT}. If $\Gamma$ is compact, it suffices to replace $\Omega$ by $\Omega \setminus \Gamma$. In the general case let $K \subset \Gamma$ compact with ${\cal H}^1(\Gamma \setminus K) \le \eps$. Since $J_y \setminus \Gamma \subset J_y \setminus K$ and $J_{y_k} \setminus K \subset (J_{y_k} \setminus \Gamma) \cup (\Gamma \setminus K)$ we have
\begin{align*}
{\cal H}^1(J_y \setminus \Gamma) &\le {\cal H}^1(J_y \setminus K) \le \liminf\nolimits_{k\to\infty}{\cal H}^1(J_{y_k} \setminus K) \\
& \le \liminf\nolimits_{k\to\infty}{\cal H}^1(J_{y_k} \setminus \Gamma) + {\cal H}^1(\Gamma \setminus K) \le \liminf\nolimits_{k\to\infty}{\cal H}^1(J_{y_k} \setminus \Gamma) + \eps.
\end{align*}
We conclude by letting $\eps \to 0$. \eop

We now define a class of displacements with regular jump set. We say  that $u \in L^1(U; \R^N)$ is a displacements with regular jump set if the following properties are satisfied
\begin{align}\label{def: regular}
(i) & \nonumber \ \ u \in SBV^2(U; \R^N),\\
(ii) & \ \ J_{u}  = \bigcup_{k=1}^m\Sigma_k,\quad \Sigma_k \hbox{ closed connected pieces of }C^1\hbox{-hypersurfaces}, \\
(iii) &  \nonumber \ \ u \in H^{1}(U \setminus J_{u};\R^N).
\end{align}
Displacements with regular jump set are dense in $GSBD^2(U)\cap L^2(U; \R^N)$ in the sense given by the following statement, proved in \cite{Iur} (cf. also \cite[Theorem 3, Remark 5.3]{Chambolle:2004})).

\begin{theorem}\label{th: density-l2}
Let $U \subset \R^N$ open, bounded with Lipschitz boundary. Let $u \in GSBD^2(U)\cap L^2(U; \R^N)$. Then there exists a sequence $(u_k)_k$ of displacements with regular jump set so that
\begin{align*}
(i) & \ \ \Vert u_k-u  \Vert_{L^2(\Omega;\R^N)} \to 0\\
(ii) & \ \ \Vert e(u_k) - e(u) \Vert_{L^2(\Omega;\R^{N\times N}_{\rm sym})} \to 0,\\
(iii) &  \ \ {\cal H}^{N-1}(J_{u_k} \triangle J_u) \to  0.
\end{align*}
\end{theorem}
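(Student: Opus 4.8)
\emph{Strategy.} The plan is to argue in two stages, linked by a diagonal argument. In the first — and substantial — stage one shows that any $u\in GSBD^2(U)\cap L^2(U;\R^N)$ can be approximated, in the senses $(i)$--$(iii)$, by functions belonging to $SBV^2(U;\R^N)$; that is, one ``removes'' the genuinely $GSBD$ character of $u$. In the second stage, working entirely inside $SBV^2$, one upgrades such an approximant to a displacement with regular jump set as in \eqref{def: regular} by the classical polyhedralisation-and-smoothing procedure of Cortesani--Toader type (see also \cite[Theorem 3, Remark 5.3]{Chambolle:2004}), which perturbs $\int_U|e(\cdot)|^2\,\mathrm{d}x$, the $L^2$-norm of the function and ${\cal H}^{N-1}$ of its jump set by an arbitrarily small amount while keeping the jump set inside a vanishing neighbourhood of the original one. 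A diagonal sequence over the two stages and the small parameters then gives the statement.

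\emph{Stage one: leaving $GSBD$.} Fix $\eta>0$. Since $J_u$ is ${\cal H}^{N-1}$-rectifiable with ${\cal H}^{N-1}(J_u)<\infty$, choose a finite union $R_\eta$ of balls with ${\cal H}^{N-1}(J_u\setminus R_\eta)<\eta$ and $\mathcal L^N(R_\eta)$ arbitrarily small. On $U\setminus R_\eta$ the symmetrised differential is in $L^2$ and the residual jump has ${\cal H}^{N-1}$-measure below $\eta$; here the decisive tool is a piecewise Korn--Poincar\'e inequality — in the planar case exactly the inequality of \cite{Friedrich:15-4} together with its sharp refinement Theorem~\ref{th: kornpoin-sharp}, in general dimension the corresponding $GSBD$ estimates or the direct construction of \cite{Iur} — which, at the price of enlarging the jump set by an additional length of order $\eta$ (and along sets lying close to $J_u$), exhibits $u$ on finitely many subdomains as an $H^1$-function modulo an infinitesimal rigid motion, outside an exceptional set of small Lebesgue measure. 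Classical Korn on each subdomain then promotes $\nabla u$ to $L^2$ there, so that the glued competitor $u_\eta$ lies in $SBV^2(U;\R^N)$. The additive constants of the rigid motions are fixed exploiting $u\in L^2$; together with the absolute continuity of the integrals $\int|u|^2$ and $\int|e(u)|^2$ over the small sets $R_\eta$ and the exceptional sets, this gives $\Vert u_\eta-u\Vert_{L^2}\to0$, $e(u_\eta)\to e(u)$ in $L^2$, and ${\cal H}^{N-1}(J_{u_\eta}\triangle J_u)\to0$ as $\eta\to0$.

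\emph{Stage two and conclusion.} Given $w\in SBV^2(U;\R^N)$, one first replaces $J_w$ by a polyhedral set $\Pi$ with $w_\eta\in SBV^2$, $J_{w_\eta}\subset\Pi$, ${\cal H}^{N-1}(\Pi\triangle J_w)<\eta$ and $w_\eta\to w$ in the relevant topologies (again absorbing the small residual jump $J_w\setminus\Pi$ by a rigid-motion interpolation), and then mollifies $w_\eta$ componentwise on each connected component of $U\setminus\Pi$ with a kernel adapted to the distance from $\Pi$ — a local reflection near the smooth part of $\Pi$ and a negligible extra relative-measure loss near the lower-dimensional skeleton where faces meet — to obtain $w_{\eta,k}\in H^1(U\setminus\Pi;\R^N)$, indeed piecewise $W^{1,\infty}$. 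Since each face of $\Pi$ is a closed connected piece of a hyperplane, hence of a $C^1$-hypersurface, $w_{\eta,k}$ is a displacement with regular jump set in the sense of \eqref{def: regular}. Concatenating the two stages and diagonalising yields $(i)$--$(iii)$; the argument follows \cite{Iur} (cf.\ also \cite{Chambolle:2004}).

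\emph{Main obstacle.} The crux is Stage one: a $GSBD^2$ function need not be integrable and its symmetrised distributional gradient need not be a Radon measure, so neither mollification nor the $SBD$/$SBV$ approximation machinery applies directly. What makes the step work is precisely a (piecewise) Korn--Poincar\'e inequality allowing one to write $u$, on large-measure subsets, as a Sobolev function plus infinitesimal rigid motions — the unavoidable side effect being extra jump along the gluing interfaces, which one must be able to localise near $J_u$ and bound by $o(1)$ in length so that the symmetric-difference convergence $(iii)$ is not destroyed. Coordinating this with the $L^2$- and the $e(\cdot)$-convergence, and in particular producing a jump set that simultaneously captures almost all of $J_u$ and overshoots its length only negligibly, is what makes the construction (as carried out in \cite{Iur}) delicate.
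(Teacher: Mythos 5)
You should first note that the paper does not prove Theorem \ref{th: density-l2}: it is quoted as an external result from \cite{Iur} (cf.\ also \cite{Chambolle:2004}), so there is no ``paper's proof'' to compare against. The question is whether your sketch reproduces Iurlano's argument or offers a sound alternative, and here Stage one has a real gap.

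The decisive step in your Stage one is that a piecewise Korn--Poincar\'e inequality exhibits $u$ on finitely many subdomains as an $H^1$-function modulo rigid motion, and that ``classical Korn on each subdomain then promotes $\nabla u$ to $L^2$.'' Neither claim is justified. The piecewise Korn inequality (Theorem \ref{th: korn}) gives $\nabla v\in L^p(\Omega)$ only for $p<2$, and its refinement Theorem \ref{th: kornpoin-sharp} only an $L^1$-bound; none of these is $L^2$, and the loss from $L^2$ to $p<2$ is intrinsic to the mechanism. Nor can one invoke classical Korn on the pieces $P_j$ of the Caccioppoli partition: they are merely sets of finite perimeter, not (uniformly) Lipschitz domains, so no Korn constant is available on them. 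On top of that, the piecewise Korn inequality is a purely two-dimensional result, while Theorem \ref{th: density-l2} is stated for arbitrary $N$; the hedge ``in general dimension the corresponding $GSBD$ estimates or the direct construction of \cite{Iur}'' simply defers the proof of the theorem rather than supplying it.

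Iurlano's construction, which is what is being cited, does not use a global piecewise Korn inequality at all. She covers $U$ by small cubes of side $\epsilon$, separates the good cubes where $\mathcal H^{N-1}(J_u\cap Q)\ll\epsilon^{N-1}$ from the bad ones (whose total volume is controlled by the jump energy), applies on good cubes a Korn-type estimate for functions with small jump set to subtract a rigid motion, and then mollifies with a kernel at scale $\epsilon$; the approximant is assembled through a partition of unity and is $H^1$ away from a polyhedral jump set contained in the cube skeleton, which is subsequently smoothed. This local scheme sidesteps both obstructions above (arbitrary $N$, and $L^2$-control of the gradient). Your Stage two is compatible with this, but Stage one should be replaced by such a local cube-based construction rather than by a global piecewise Korn argument.
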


\subsection{Caccioppoli partitions}\label{sec: caccio}

We say that a partition ${\cal P} = (P_j)_j$ of an open set $U\subset \R^N$ is a \textit{Caccioppoli partition} of $U$ if 
$$
\sum\nolimits_j {\cal H}^1(\partial^* P_j) < + \infty,
$$
where $\partial^* P_j$ denotes  the \emph{essential boundary} of $P_j$ (see \cite[Definition 3.60]{AFP}). We say a  partition is \textit{ordered} if $\mathcal L^N\left(P_i\right) \ge \mathcal L^N\left(P_j\right)$ for $i \le j$. In the whole article, when dealing with infinite partitions, we will always tacitly assume that they are ordered. 
Moreover, we say that a set of finite perimeter $P_j$ is \emph{indecomposable} if it cannot be written as $P^1 \cup P^2$ with $P^1 \cap P^2 = \emptyset$, ${\cal L}^N(P^1), {\cal L}^N(P^2) >0$ and ${\cal H}^{N-1}(\partial^* P_j) = {\cal H}^{N-1}(\partial^* P^1) + {\cal H}^{N-1}(\partial^* P^2)$. The  local structure of Caccioppoli partitions can be characterized as follows (see \cite[Theorem 4.17]{AFP}).
\begin{theorem}\label{th: local structure}
Let $(P_j)_j$ be a Caccioppoli partition of $U$. Then 
$$\bigcup\nolimits_j (P_j)^1 \cup \bigcup\nolimits_{i \neq j} (\partial^* P_i \cap \partial^* P_j)$$
contains ${\cal H}^{N-1}$-almost all of $U$.
\end{theorem}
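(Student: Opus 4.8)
This is a classical fact from the structure theory of Caccioppoli partitions (see \cite[Theorem 4.17]{AFP}), and I would prove it by encoding the partition into a single scalar $BV$ function and then reading off the assertion from the fine structure of $BV$ functions. The plan is as follows. Fix a bounded, injective sequence $(c_j)_j\subset(0,1)$ whose only accumulation point is $0$, say $c_j=2^{-j}$, and set $v:=\sum_j c_j\,\mathbf 1_{P_j}$. Since $\sum_j|c_j|\,\mathcal{H}^{N-1}(\partial^* P_j)\le\sum_j\mathcal{H}^{N-1}(\partial^* P_j)<+\infty$, the partial sums are Cauchy in $BV(U)$, so $v\in BV(U)$; moreover $v$ attains the value $c_j$ on $P_j$ up to an $\mathcal{L}^N$-null set, and $v>0$ $\mathcal{L}^N$-a.e.\ on $U$ since $(P_j)_j$ covers $U$.

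First I would invoke the Federer--Vol'pert theorem (see \cite{AFP}): there is an $\mathcal{H}^{N-1}$-negligible set $\mathcal{N}_0\subset U$ such that every $x\in U\setminus\mathcal{N}_0$ is either an approximate continuity point of $v$, with approximate limit $\tilde v(x)$, or a jump point, with distinct one-sided approximate limits $v^+(x)\ne v^-(x)$. Because $v$ takes only the values $(c_j)_j$, whose sole accumulation point is $0$, each of these (one- or two-sided) approximate limits lies in $\{c_j\}_j\cup\{0\}$; and if such a limit equals an attained value $c_j$ on a (half-)ball, then $P_j$ fills that (half-)ball in density, so $P_j$ has density $\tfrac12$ or $1$ at $x$. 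Hence, for $x\in U\setminus\mathcal{N}_0$: if $\tilde v(x)=c_j$ then $x\in(P_j)^1$; if $v^+(x)=c_i$ and $v^-(x)=c_j$ with $i\ne j$, then $x$ has density $\tfrac12$ with respect to both $P_i$ and $P_j$, hence $x\in\partial^* P_i\cap\partial^* P_j$. Therefore
$$
U\setminus\Big(\bigcup\nolimits_j (P_j)^1\ \cup\ \bigcup\nolimits_{i\ne j}(\partial^* P_i\cap\partial^* P_j)\Big)\ \subseteq\ \mathcal{N}_0\cup Z,
$$
where $Z$ is the ``escaping--mass'' set of points at which the relevant approximate limit of $v$ equals the non-attained value $0$; concretely, $Z=\{x\in U:\ \text{every }P_j\text{ has density }0\text{ at }x\}$ together with its one-sided analogue at jump points.

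The hard part will be showing $\mathcal{H}^{N-1}(Z)=0$; everything else is soft. Here I would argue as follows. Writing $F_m:=\bigcup_{j>m}P_j=U\setminus\bigcup_{j\le m}P_j$, subadditivity of the perimeter gives $\mathcal{H}^{N-1}(\partial^* F_m)\le\sum_{j>m}\mathcal{H}^{N-1}(\partial^* P_j)\to0$ and $\mathcal{L}^N(F_m)\to0$ as $m\to\infty$; and if $x\in Z$ then $\bigcup_{j\le m}P_j$ is a finite union of sets of density $0$ at $x$, so $x\in(F_m)^1$ for every $m$. I would then conclude $\mathcal{H}^{N-1}(Z)=0$ by combining, for each fixed large $m$, the relative isoperimetric inequality on small balls centred at points of $Z$ with the smallness of $\mathcal{H}^{N-1}(\partial^* F_m)$ (alternatively, by slicing $U$ in $\mathcal{H}^{N-1}$-almost every direction and invoking the one-dimensional structure of the partition); see \cite{AFP}. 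The crux is exactly this upgrade: the easy observation $\mathcal{L}^N(Z)=0$ (Lebesgue's density theorem applied to each $P_j$) must be promoted to $\mathcal{H}^{N-1}(Z)=0$, and this genuinely uses the summability $\sum_j\mathcal{H}^{N-1}(\partial^* P_j)<+\infty$ defining a Caccioppoli partition — through the vanishing of the perimeters of the tails $F_m$ — and cannot be extracted from Federer's theorem alone. Combining the two displayed inclusions with $\mathcal{H}^{N-1}(\mathcal{N}_0\cup Z)=0$ then gives the assertion.
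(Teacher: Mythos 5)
The paper does not give its own proof of Theorem \ref{th: local structure}; it quotes it from \cite[Theorem 4.17]{AFP}, so there is no in-paper argument to compare against, and I will simply evaluate your sketch on its own terms. Your reconstruction via the auxiliary scalar function $v=\sum_j 2^{-j}\chi_{P_j}\in BV(U)$ and Federer's theorem on the structure of $BV$ functions is the standard route, and the soft steps (the possible approximate limits all lie in $\{c_j\}_j\cup\{0\}$; a Lebesgue point with limit $c_j$ lies in $(P_j)^1$; a jump point with both one-sided limits in $\{c_j\}_j$ lies in some $\partial^* P_i\cap\partial^* P_j$) are correct. You also correctly isolate the crux: the escaping-mass set $Z$ must be shown ${\cal H}^{N-1}$-negligible, and this is where the summability $\sum_j {\cal H}^{N-1}(\partial^* P_j)<\infty$ must enter.

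There is, however, a gap in your treatment of $Z$. You set $Z=Z'\cup Z''$, with $Z'$ the set of points at which every $P_j$ has density $0$ and $Z''$ its one-sided analogue at jump points (say $v^+(x)=c_i$, $v^-(x)=0$), and then assert that every $x\in Z$ satisfies $x\in(F_m)^1$ for all $m$. This is true on $Z'$ but fails on $Z''$ as soon as $m\ge i$: there $P_i$ has density $\tfrac12$ at $x$, not $0$, and the one-sided density of $F_m$ is $0$ on the side filled by $P_i$ and $1$ on the escaping side, so $F_m$ has density $\tfrac12$ and $x\in\partial^*F_m$, not $(F_m)^1$. Your relative-isoperimetric/covering argument (which, run carefully, estimates ${\cal H}^{N-1}_\delta$ of $Z'$ using balls of radius at which the density of $F_m$ first drops to $\tfrac12$; these radii are at most of order $\mathcal{L}^N(F_m)^{1/N}\to 0$, so the covers do become fine) therefore only disposes of $Z'$. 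The omitted case is easy to repair once noticed: for each $m_0$, the set $\{x\in Z'':\ i(x)\le m_0\}$ is contained in $\bigcap_{m\ge m_0}\partial^*F_m$, which is ${\cal H}^{N-1}$-null since ${\cal H}^{N-1}(\partial^* F_m)\to 0$, and $Z''$ is the countable union over $m_0$ of these. Separately, be careful with your parenthetical slicing alternative: that ${\cal H}^{N-1}$-almost every one-dimensional slice of $Z$ is empty only says that the projection of $Z$ onto almost every hyperplane is ${\cal H}^{N-1}$-null, which by the Besicovitch--Federer projection theorem is compatible with $Z$ being a purely unrectifiable set of positive finite ${\cal H}^{N-1}$-measure; that route would need an additional ingredient to close.
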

Here $(P)^1$ denote the points where $P$ has density one (see again \cite[Definition 3.60]{AFP}). 
 Essentially, the  theorem states that ${\cal H}^{N-1}$-a.e. point of $U$ either belongs to exactly one element of the partition or to the intersection of exactly two sets $\partial^* P_i$, $\partial^* P_j$. 
We now state a compactness result for ordered Caccioppoli partitions (see \cite[Theorem 4.19, Remark 4.20]{AFP}) slightly adapted for our purposes. 

\begin{theorem}\label{th: comp cacciop}
Let $U \subset \R^N$ open, bounded with Lipschitz boundary.  Let ${\cal P}_i = (P_{j,i})_j$, $i \in \N$, be a sequence of ordered Caccioppoli partitions of $U$ with $$\sup\nolimits_{i \ge 1} \sum\nolimits_{j\ge 1}{\cal H}^{N-1}(\partial^* P_{j,i}) < + \infty.$$
Then there exists a Caccioppoli partition ${\cal P} = (P_j)_j$ and a not relabeled subsequence such that  
$\sum_{j \ge 1} \mathcal L^N\left(P_{j,i} \triangle  P_j\right) \to 0$  as $i \to \infty$.
\end{theorem}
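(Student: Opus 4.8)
The plan is to adapt the classical closure argument for Caccioppoli partitions (as in \cite[Theorem 4.19, Remark 4.20]{AFP}): first extract, by a diagonal procedure, a subsequence along which each component converges in $L^1$, and then upgrade this to the summable convergence by exploiting the ordering of the partitions. \emph{Step 1 (componentwise compactness).} Denote by $M$ the uniform bound in the statement. For each fixed $j$ the characteristic functions $\chi_{P_{j,i}}$ satisfy $\Vert \chi_{P_{j,i}}\Vert_{L^1(U)}\le \mathcal L^N(U)$ and $|D\chi_{P_{j,i}}|(U) = {\cal H}^{N-1}(\partial^* P_{j,i}\cap U)\le M$; since $U$ is bounded with Lipschitz boundary, $BV(U)$ embeds compactly into $L^1(U)$, so along a subsequence $\chi_{P_{j,i}}\to \chi_{P_j}$ in $L^1(U)$ and a.e., for a set of finite perimeter $P_j\subset U$. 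A diagonal argument over $j\in\N$ gives a single (not relabeled) subsequence for which this holds simultaneously for all $j$.

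\emph{Step 2 (the limit is an admissible candidate).} Passing to the limit in $\mathcal L^N(P_{i,k}\cap P_{j,k})=0$ ($i\neq j$) along the a.e.\ convergent subsequence yields $\mathcal L^N(P_i\cap P_j)=0$. Lower semicontinuity of the perimeter together with Fatou's lemma for series gives
\[
\sum\nolimits_j {\cal H}^{N-1}(\partial^* P_j) \le \sum\nolimits_j \liminf\nolimits_i {\cal H}^{N-1}(\partial^* P_{j,i}) \le \liminf\nolimits_i \sum\nolimits_j {\cal H}^{N-1}(\partial^* P_{j,i}) \le M .
\]
It only remains to check $\mathcal L^N\big(U\setminus\bigcup_j P_j\big)=0$, which will follow from the next step.

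\emph{Step 3 (uniform decay of the tails — the key point).} Here the ordering is essential. For $j\ge 2$, disjointness of $P_{1,i}$ and $P_{j,i}$ together with $\mathcal L^N(P_{1,i})\ge\mathcal L^N(P_{j,i})$ forces $\mathcal L^N(P_{j,i})\le\tfrac12\mathcal L^N(U)$; hence the relative isoperimetric inequality on the Lipschitz domain $U$ yields $\mathcal L^N(P_{j,i})^{(N-1)/N}\le c_U\,{\cal H}^{N-1}(\partial^* P_{j,i}\cap U)$. Summing over $j\ge 2$ and using $\mathcal L^N(P_{j,i})\le\mathcal L^N(P_{k,i})$ for $k\le j$ we get $(j-1)\,\mathcal L^N(P_{j,i})^{(N-1)/N}\le c_U M$, so $\mathcal L^N(P_{j,i})\le (c_U M/(j-1))^{N/(N-1)}$ for every $i$. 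As $N/(N-1)>1$, the quantity $\rho_J:=\sup_i\sum_{j\ge J}\mathcal L^N(P_{j,i})$ tends to $0$ as $J\to\infty$.

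\emph{Step 4 (conclusion).} From $\sum_{j<J}\mathcal L^N(P_j)=\lim_i\sum_{j<J}\mathcal L^N(P_{j,i})\ge \mathcal L^N(U)-\rho_J$, letting $J\to\infty$ and combining with the disjointness we obtain $\sum_j\mathcal L^N(P_j)=\mathcal L^N(U)$; thus ${\cal P}=(P_j)_j$ is a Caccioppoli partition (automatically ordered). For the strong convergence, fix $\eps>0$ and choose $J$ with $\rho_J<\eps$ and $\sum_{j\ge J}\mathcal L^N(P_j)<\eps$. Then
\[
\sum\nolimits_j \mathcal L^N(P_{j,i}\triangle P_j) \le \sum\nolimits_{j<J}\Vert \chi_{P_{j,i}}-\chi_{P_j}\Vert_{L^1(U)} + \sum\nolimits_{j\ge J}\big(\mathcal L^N(P_{j,i})+\mathcal L^N(P_j)\big) \le \sum\nolimits_{j<J}\Vert \chi_{P_{j,i}}-\chi_{P_j}\Vert_{L^1(U)} + 2\eps ,
\]
and the finite sum on the right vanishes as $i\to\infty$ by Step 1; letting $i\to\infty$ and then $\eps\to 0$ gives the claim. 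The main obstacle is Step 3: without the ordering hypothesis mass could escape into components of arbitrarily high index and the limit would fail to be a partition of all of $U$. The combination of the relative isoperimetric inequality with the monotonicity of the measures is exactly what produces the uniform tightness that rules this out and bridges componentwise to summable convergence.
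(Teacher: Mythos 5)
Your proof is correct. It follows the same overall skeleton as the paper (componentwise extraction by $BV$ compactness along a diagonal subsequence, disjointness and perimeter bound of the limit, then split the $\ell^1$ sum at a large index $J$ and control the tail), but the mechanism for tail control is genuinely different. The paper simply invokes \cite[Theorem 4.19, Remark 4.20]{AFP}, which already supplies componentwise $L^1$ convergence \emph{and} that the limit $(P_j)_j$ is a Caccioppoli partition of $U$; from the latter fact it picks $j_0$ with $\sum_{j<j_0}\mathcal L^N(P_j)\ge\mathcal L^N(U)-\eps$ and then uses the already-established convergence of the first $j_0$ components to deduce $\sum_{j\ge j_0}\mathcal L^N(P_{j,i})\le 2\eps$ for $i$ large. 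Your Step 3 instead derives a tail bound that is \emph{uniform in $i$} by combining the ordering, the relative isoperimetric inequality on the Lipschitz domain $U$, and the uniform perimeter bound, yielding $\mathcal L^N(P_{j,i})\le (c_U M/(j-1))^{N/(N-1)}$; this both re-proves the part of the AFP result the paper cites (no mass escapes to the tail, so the limit is a partition) and gives the tail control needed for summable convergence in one stroke. What each approach buys: the paper's version is shorter because it delegates the hard compactness statement to a standard reference and adds only the upgrade from componentwise to summable convergence; your version is self-contained, makes explicit where the ordering hypothesis is used, and the uniform tightness estimate $\rho_J\to 0$ is a slightly stronger intermediate fact (it holds before passing to the limit, not only for $i$ large).
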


\Proof In \cite{AFP} it was proved that $P_{j,i} \to P_j$ in measure for all $j \in \N$ as $i \to \infty$. We briefly show that this already implies $\sum_j \mathcal L^N\left(P_{j,i} \triangle  P_j\right) \to 0$  as $i \to \infty$. Let $\eps >0$ and choose $j_0 \in \N$ sufficiently large such that $\sum_{j < j_0}\mathcal L^N\left(P_j\right) \ge \mathcal L^N\left(U\right) - \eps$. Then the convergence in measure implies that for $i_0$ large enough depending on $j_0$ we have $\sum_{j < j_0} \mathcal L^N\left(P_{j,i} \triangle  P_j\right)\le \eps$ for all $i \ge i_0$. Moreover, this overlapping property and the choice $j_0$ imply $\sum_{j \ge j_0} \mathcal L^N\left(P_{j,i}\right) \le 2\eps$ for $i \ge i_0$. Consequently, we find $\sum_j \mathcal L^N\left(P_{j,i} \triangle  P_j\right) \le 4\eps$ for $i \ge i_0$. As $\eps >0$ was arbitrary, the assertion follows.   \eop

\subsection{Piecewise Korn inequality in GSBD}\label{sec:korn}
In this section we recall  a piecewise Korn inequality for $GSBD$ functions, proved in the planar setting in \cite{Friedrich:15-4} and being one of the major ingredients of our proofs. It implies in particular a density result Theorem \ref{th: density} which in the planar case improves upon Theorem \ref{th: density-l2}. Here and henceforth we will call an affine mapping of the form $a_{A,b}(x):=Ax+b$  with $A\in \R_{\mathrm{skew}}^{2\times 2}$ and $b \in \R^2$ an  {\it infinitesimal rigid motion}.

\begin{theorem}\label{th: korn}
Let $\Omega \subset \R^2$ open, bounded with Lipschitz boundary.  Let $p \in [1,2)$. Then there is a constant $c=c(p)>0$ and   $C_{\rm korn}=C_{\rm korn}(p,\Omega)>0$   such that for  each $u \in GSBD^2(\Omega)$  there is a Caccioppoli partition $\Omega = \bigcup^\infty_{j=1} P_j$ and corresponding infinitesimal rigid motions $(a_j)_j = (a_{A_j,b_j})_j$ such that  
$$v := u - \sum\nolimits^\infty_{j=1} a_j \chi_{P_j} \in SBV^p(\Omega; \R^2) \cap { L^\infty}(\Omega;\R^2)$$ 
and 
\begin{align}\label{eq: small set main}
\begin{split}
(i) & \ \   \sum\nolimits_{j=1}^\infty {\cal H}^1( \partial^* P_j ) \le  c({\cal H}^1(J_u) + {\cal H}^1(\partial \Omega)),\\
(ii) &\ \ \Vert v \Vert_{L^{\infty}(\Omega; \R^2)} \le C_{\rm korn} \Vert  e(u) \Vert_{L^2(\Omega; \R_{\mathrm{sym}}^{2\times 2})},\\ 
(iii) &\ \  \Vert \nabla v\Vert_{L^p(\Omega; \R^{2\times 2})} \le C_{\rm korn} \Vert  e(u) \Vert_{L^2(\Omega; \R_{\mathrm{sym}}^{2\times 2})}.
\end{split}
\end{align}
\end{theorem}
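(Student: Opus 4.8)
The plan is to follow the structure of the proof in \cite{Friedrich:15-4}, reducing the global statement to a local construction on small ``good'' squares obtained by a Whitney-type covering of $\Omega \setminus J_u$, and then to patch these local rigid motions together into a Caccioppoli partition whose interfacial length is controlled by ${\cal H}^1(J_u) + {\cal H}^1(\partial\Omega)$. First I would recall the slicing characterization of $GSBD^2$ functions: for a.e.\ line in each coordinate direction $\xi \in S^1$, the one-dimensional restriction $u^\xi_y$ lies in $SBV^2$ with finitely many jumps on average, and $\int |(u^\xi_y)'|^2$ is controlled by the projection of $|e(u)|^2$. The key geometric input is that, away from the jump set, one can cover $\Omega$ by a countable family of dyadic-type squares $(Q_k)_k$ of size comparable to the distance to $J_u \cup \partial\Omega$, with bounded overlap, so that on each $Q_k$ very few slicing lines meet $J_u$; on such a square a standard Korn inequality (for functions whose slices have no jumps on a set of lines of small measure) produces an infinitesimal rigid motion $a_k = a_{A_k,b_k}$ with $\|u - a_k\|_{L^p(Q_k)}$ and $\|\nabla u - A_k\|_{L^p(Q_k)}$ controlled by $\|e(u)\|_{L^2(Q_k)}$, with the $L^\infty$ bound (ii) coming from the fact that $u - a_k$, being $SBV$ with small jump on each square, is in fact close to a Sobolev function there and one can use the $L^\infty$ interior estimate.

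Next I would organize the squares into clusters via a graph on which two squares are adjacent if they are neighbors in the covering and the ``local'' rigid motions $a_k$, $a_l$ are close in the sense that $\|a_k - a_l\|$ on the overlap is small compared to the local elastic energy; two adjacent squares in the same cluster get identified to the \emph{same} rigid motion (say that of the largest square in the cluster), and the sets $P_j$ of the partition are defined as the unions of the squares in each cluster (intersected with $\Omega$, and with boundary pieces of $J_u$ and $\partial\Omega$ added to close up the partition). The essential boundary $\partial^* P_j$ then consists of pieces of $\partial\Omega$, pieces of $J_u$, and the ``cut'' interfaces separating squares that belong to different clusters. The crux of the whole argument — and the part that genuinely uses the two-dimensional topology, hence Lemma \ref{lemma: rigid motion}, Lemma \ref{lemma: interface}, and Lemma \ref{lemma: maggi} of \cite{Friedrich:15-4} — is the estimate (i): one must show that the total length of the cut interfaces is bounded by $c({\cal H}^1(J_u)+{\cal H}^1(\partial\Omega))$. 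The idea is that whenever two neighboring squares get separated, the rigid motions differ substantially, which by a telescoping/triangle-inequality argument along chains of squares forces the elastic energy or the jump set to ``pay'' for the discrepancy; quantitatively one charges each separating interface to a definite amount of ${\cal H}^1(J_u)$ nearby (using that the size of the square is comparable to $\dist(\cdot, J_u)$, so a long separating interface must be near a correspondingly long piece of the jump set), and a covering/packing argument ensures no piece of $J_u$ is overcharged more than boundedly often.

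With (i) in hand, the remaining estimates (ii) and (iii) follow by summing the local Korn estimates over the covering using bounded overlap: since on each $P_j$ the global $v = u - a_j$ agrees with $u - a_k$ on each constituent square up to the (controlled) mismatch of the local and cluster rigid motions, one gets $\|\nabla v\|_{L^p(\Omega)}^p \lesssim \sum_k \|\nabla u - A_k\|_{L^p(Q_k)}^p \lesssim \sum_k \|e(u)\|_{L^2(Q_k)}^p \lesssim (\sum_k \|e(u)\|_{L^2(Q_k)}^2)^{p/2} \lesssim \|e(u)\|_{L^2(\Omega)}^p$, where the passage from the $\ell^p$ to the $\ell^2$ sum uses $p<2$ together with a bound on the number of squares, and the last step uses bounded overlap; the $L^\infty$ bound (ii) is local and requires only that on each square $v$ is comparable to a harmonic-type competitor. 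One must also verify $v \in SBV^p$: the jump set of $v$ is contained in $J_u$ together with the (finitely locally, hence $\sigma$-finite) interfaces of the partition, and the approximate gradient is $\nabla u - A_j$ on $P_j$, which is in $L^p$ by the above; the $L^\infty$ bound rules out the Cantor part. I expect the length estimate (i) — the combinatorial charging of separating interfaces to the jump set in a scale-invariant way — to be by far the main obstacle, and the reason the result is currently confined to $N=2$.
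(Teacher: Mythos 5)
The paper does not prove Theorem~\ref{th: korn}: it is recalled from \cite{Friedrich:15-4} without proof, so there is no argument here to compare against. Your sketch, however, contains two genuine gaps.

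First, a Whitney covering of $\Omega\setminus J_u$ by squares of side comparable to $\dist(\cdot, J_u\cup\partial\Omega)$ is not well defined for a general $u\in GSBD^2(\Omega)$: the set $J_u$ is merely $\mathcal{H}^1$-rectifiable with $\mathcal{H}^1(J_u)<+\infty$, it need not be closed, and it can be dense in $\Omega$, in which case your would-be Whitney squares all have zero side length. Any multiscale decomposition here has to be selected by a stopping time keyed to the \emph{density} $\mathcal{H}^1(J_u\cap Q)/\ell(Q)$ in a dyadic square $Q$ of side $\ell$, not to the pointwise distance to $J_u$; nor can one reduce to closed $J_u$ by density in advance, because the relevant density result (Theorem~\ref{th: density}) is itself derived from Theorem~\ref{th: korn}. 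Second, and more seriously, the reasoning you give for the $L^\infty$ bound \eqref{eq: small set main}(ii) does not hold. Being $SBV$ with a small \emph{jump set} on a square does not make $u-a_k$ close to a Sobolev function in $L^\infty$: smallness of $\mathcal{H}^1(J_u\cap Q)$ controls neither the number of pieces into which $J_u$ disconnects $Q$ nor the jump \emph{heights}. For $u=M\chi_E$ with $\mathcal{H}^1(\partial^* E)$ tiny and $M$ arbitrarily large, $\|u-a\|_{L^\infty(Q)}$ is of order $M$ for every single rigid motion $a$, although $e(u)=0$. The Korn-type inequalities for small jump sets you would need as local inputs (\cite{Friedrich:15-3}, \cite{Conti-Focardi-Iurlano:15}) only bound a \emph{modification} of $u$ outside an exceptional set of small measure and controlled perimeter, not $u$ itself. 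Since Theorem~\ref{th: korn} does not permit modifying $u$ — only subtracting a piecewise rigid motion — the exceptional sets must themselves become new components of the Caccioppoli partition and the construction must be iterated on them (using that the restriction of a $GSBD^2$ function to a finite-perimeter subset is again $GSBD^2$, with the subset's reduced boundary added to its jump set). Without this iterative accounting your construction yields neither \eqref{eq: small set main}(ii) nor, ultimately, the interfacial length bound \eqref{eq: small set main}(i). Finally, a small misreading: the planar Lemmas~\ref{lemma: rigid motion}, \ref{lemma: interface}, \ref{lemma: maggi} belong to the present paper and are used for the sharp version Theorem~\ref{th: kornpoin-sharp} and the boundary version Theorem~\ref{th: korn-boundary}, not in the proof of Theorem~\ref{th: korn} itself.
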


Below in Section \ref{sec:korn-poincare} we prove a refined version of Theorem \ref{th: korn} which (a) provides a sharp estimate for the boundary of the partition in \eqref{eq: small set main}(i) and (b) takes into account boundary data. This refined result will then be fundamental in proving the jump transfer lemma and  the existence theorem for the time-incremental minimum problems.

Applying the above result, approximating $u$ by the sequence   $v_n := u - \sum\nolimits^\infty_{j=n+1} a_j \chi_{P_j} \in GSBD^2(\Omega) \cap { L^\infty}(\Omega;\R^2)$, and using Theorem \ref{th: density-l2}, we  obtain the following density result for $GSBD$ functions (see again \cite{Friedrich:15-4}).

\begin{theorem}\label{th: density}
Let $\Omega \subset \R^2$ open, bounded with Lipschitz boundary. Let $u \in GSBD^2(\Omega)$. Then there exists a sequence $(u_k)_k$ of displacements with regular jump set such that
\begin{align*}
(i) & \ \ u_k \to u \ \text{ in measure},\\
(ii) & \ \ \Vert e(u_k) - e(u) \Vert_{L^2(\Omega;\R^{2\times 2}_{\rm sym})} \to 0,\\
(iii) &  \ \ {\cal H}^{1}(J_{u_k} \triangle J_u) \to  0.
\end{align*}
\end{theorem}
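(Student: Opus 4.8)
\textbf{Proof plan for Theorem \ref{th: density}.}

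The plan is to combine Theorem \ref{th: korn} with the already-available density result Theorem \ref{th: density-l2}, the essential point being that Theorem \ref{th: density-l2} requires an $L^2$-ingredient which a generic $u \in GSBD^2(\Omega)$ need not have, whereas the Korn decomposition of Theorem \ref{th: korn} produces exactly such an ingredient. Concretely, given $u \in GSBD^2(\Omega)$, I would apply Theorem \ref{th: korn} with some fixed $p \in [1,2)$ to obtain a Caccioppoli partition $\Omega = \bigcup_{j=1}^\infty P_j$, infinitesimal rigid motions $(a_j)_j$, and a function $v = u - \sum_{j=1}^\infty a_j \chi_{P_j} \in SBV^p(\Omega;\R^2) \cap L^\infty(\Omega;\R^2)$ satisfying \eqref{eq: small set main}. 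For $n \in \N$ I then set
$$
v_n := u - \sum_{j=n+1}^\infty a_j \chi_{P_j} = v + \sum_{j=1}^n a_j \chi_{P_j}.
$$
Since $v \in L^\infty(\Omega;\R^2)$ and each $a_j\chi_{P_j}$ is bounded on the bounded set $\Omega$, we have $v_n \in L^\infty(\Omega;\R^2) \subset L^2(\Omega;\R^2)$; moreover $v_n \in GSBD^2(\Omega)$ because it differs from $u$ only by a piecewise infinitesimal rigid motion supported on the Caccioppoli partition $(P_j)_{j>n}$, which contributes finite symmetric gradient (zero, since rigid motions are symmetric-gradient-free) and jump set contained in $\bigcup_{j>n}\partial^* P_j$, of finite length by \eqref{eq: small set main}(i).

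The second step is to let $n \to \infty$ and verify $v_n \to u$ in the appropriate topology. Since $\sum_j \calL^2(P_j) = \calL^2(\Omega) < \infty$, the tail $\calL^2\big(\bigcup_{j>n} P_j\big) \to 0$; on each $P_j$ with $j>n$ the difference $u - v_n = a_j$ is a fixed rigid motion, so $u - v_n \to 0$ in measure, giving (i). For (ii), $e(v_n) = e(u) - \sum_{j>n} e(a_j)\chi_{P_j} = e(u)$ for every $n$ (rigid motions have vanishing symmetric gradient on the interior of each $P_j$), so $\|e(v_n) - e(u)\|_{L^2} = 0$ trivially. For (iii), $J_{v_n} \subset J_u \cup \bigcup_{j>n}\partial^* P_j$ and $J_u \subset J_{v_n} \cup \bigcup_{j>n}\partial^*P_j$, whence ${\cal H}^1(J_{v_n} \triangle J_u) \le {\cal H}^1\big(\bigcup_{j>n}\partial^* P_j\big) \le \sum_{j>n}{\cal H}^1(\partial^* P_j) \to 0$ by summability from \eqref{eq: small set main}(i).

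The third and final step is a diagonal argument. For each fixed $n$, the function $v_n$ lies in $GSBD^2(\Omega) \cap L^2(\Omega;\R^2)$, so Theorem \ref{th: density-l2} provides displacements with regular jump set $(v_n^k)_k$ with $v_n^k \to v_n$ in $L^2$ (hence in measure), $e(v_n^k) \to e(v_n)$ in $L^2$, and ${\cal H}^1(J_{v_n^k}\triangle J_{v_n}) \to 0$ as $k \to \infty$. Combining with the convergences of $v_n$ to $u$ established above and using the triangle inequality for the symmetric difference of jump sets and for convergence in measure (via, e.g., a metric for convergence in measure on the finite-measure set $\Omega$), a standard diagonal extraction $u_k := v_{n(k)}^{m(k)}$ yields a sequence of displacements with regular jump set satisfying (i)--(iii). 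The only mild subtlety — which I would mention but not belabor — is that convergence in measure is metrizable on a finite-measure space, so the diagonalization is legitimate; no genuine obstacle arises, as all three convergences are with respect to (pseudo)metrizable topologies. \eop
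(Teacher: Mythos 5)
Your argument is correct and follows exactly the route the paper itself sketches (the paper writes down precisely your $v_n := u - \sum_{j>n} a_j \chi_{P_j} \in GSBD^2(\Omega)\cap L^\infty(\Omega;\R^2)$ and then invokes Theorem \ref{th: density-l2}). The verifications — $e(v_n)=e(u)$ a.e., $J_{v_n}\triangle J_u \subset \bigcup_{j>n}\partial^* P_j$, convergence in measure from $\mathcal{L}^2(\bigcup_{j>n}P_j)\to 0$, and the diagonal extraction — are all sound.
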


Note that in contrast to the original density result reported in Theorem \ref{th: density-l2} the assumption that $u \in L^2(\Omega)$ is not needed in the planar setting. 

\section{The model and statement of the main result}\label{sec: general}
In this section we introduce the model we study and we fix the related notations. This preliminary discussion is still conducted in a general $N$-dimensional setting, while our main result, given at the end of the section, is stated and proved only in the planar case $N=2$.

We analyze the evolution of a brittle material in the sense of Griffith \cite{Griffith:20} whose total energy consists of a linear elastic bulk term and a surface term proportional to the  $(N-1)$-dimensional measure of the crack. The body is under the action of a time-dependent {\it prescribed boundary displacement} $g(t)$ on a relatively open part $\partial_D \Omega$ of the boundary ({\it Dirichlet part}) of the reference configuration $\Omega \subset \R^N$, which is supposed to be open, bounded with Lipschitz boundary. The rest of the boundary will be instead assumed to be force-free for simplicity. The variables of the model are a $GSBD$-valued displacement $u$ and a (not a priori prescribed) crack $\Gamma$ with finite $\mathcal H^{N-1}$ measure. The uncracked part of the body has a linear elastic stored energy of the form
$$
\int_{\Omega \setminus \Gamma} Q(e(u))\,\mathrm{d}x.
$$
In the above expression $e(u)$ is the approximate symmetrized gradient of $u$ and $Q: \mathbb R_{\mathrm{sym}}^{N\times N} \to \R$ is the quadratic form associated to a symmetric bounded and positive definite stiffness tensor $\C:\mathbb R_{\mathrm{sym}}^{N\times N}\to \mathbb R_{\mathrm{sym}}^{N\times N}$, that is
\begin{equation}\label{eq: elasticity-tensor}
Q(e):=\frac12\C e \colon e, 
\end{equation}
with the colon denoting the Euclidean product between matrices.

The prescribed boundary displacement $g$ is a time dependent function \\$g\in~W^{1,1}_{\mathrm{loc}}([0,+\infty);H^{1}(\R^N;\R^N))$. As it is typical for the weak formulation of evolutionary problems in spaces of functions of bounded deformation, the boundary condition will be relaxed as follows. We will  assume that it exists an open, bounded Lipschitz set $\Omega' \supset \Omega$ such that
\begin{equation}\label{eq: omega'}
\Omega' \cap \overline\Omega=\partial_D \Omega\,\qquad \Omega'\setminus \overline\Omega \hbox{ has Lipschitz boundary}
\end{equation}
and impose, for every time $t$, that an admissible displacement $u(t)$ satisfies $u(t)=g(t)$ a.e.\ in $\Omega'\setminus \overline\Omega$. A competing crack may choose indeed to run alongside $\partial_D \Omega$, in which case the boundary condition is not attained in the sense of traces, at the expense of a crack energy. 

The energy of a crack $\Gamma \subset \overline\Omega$ will be proportional to its ($N-1)$-dimensional Hausdorff measure, namely of the form
$$
\kappa \hn(\Gamma\cap \Omega'),
$$
where the material parameter $\kappa$ represents the toughness of the material. Within this choice, and because of \eqref{eq: omega'}, formation of cracks along $\partial_D \Omega$ is penalized, while no energy is spent for a crack sitting on the load-free part of the boundary $\partial \Omega\setminus \partial_D \Omega$. In the following we will set $\kappa=1$ without loss of generality. 

The quasistatic evolution problem associated to the model with the prescribed boundary displacement $g(t)$ consists in finding a displacement and crack path $(u(t), \Gamma(t))$ with $J_{u(t)}\subset \Gamma(t)\subset \overline{\Omega}$ and $u(t)=g(t)$ a.e.\ in $\Omega'\setminus \overline\Omega$ such that $\Gamma(t)$ is irreversible, namely $\Gamma(t)\supset \Gamma(s)$ whenever $t>s$, and the following two conditions hold:
\begin{itemize}
 \item {\it global stability}. For each $t$, $u(t)$ minimizes
\begin{equation}\label{eq: global stability}
 \int_{\Omega} Q(e(v))\,\mathrm{d}x +\hn(J_v\setminus \Gamma(t))
\end{equation}
 among all $v\in GSBD^{2}(\Omega')$ such that $v=g(t)$ on $\Omega'\setminus \overline{\Omega}$;
\item{\it energy-dissipation balance}. The total energy 
\begin{equation}\label{eq: total energy}
\mathcal{E}(t):=\int_{\Omega} Q(e(u(t)))\,\mathrm{d}x +\hn(\Gamma(t)\cap \Omega')
\end{equation}
is absolutely continuous and satisfies for all $t>0$
\begin{equation}\label{eq: energy balance}
\mathcal{E}(t)=\mathcal{E}(0)+\int_0^t\langle\sigma(s), e(\dot g(s))\rangle\,\mathrm{d}s,
\end{equation}
where $\sigma(s)=\C e(u(s))$, $\langle \cdot,\cdot \rangle$ is the duality pairing in $L^2(\Omega; \R^{N\times N}_{\mathrm{sym}})$, and  $\dot g(s)$ denotes the   Fr\`{e}chet derivative of $g$ with respect to $s$.
\end{itemize}

Notice that even for a given $\Gamma$, the existence of a minimizer for the problem considered in \eqref{eq: global stability} is a nontrivial issue, which we are able to overcome for the moment only in the planar case $N=2$ (Theorem \ref{th: existence}).  
Indeed, in the planar case we are able to show the existence of a quasistatic evolution according to the following statement, which constitutes the main result of the paper.

\begin{theorem}\label{th: main}
Let $N=2$. Let $\Omega\subset\Omega'$ be bounded domains in $\R^2$ with Lipschitz boundary satisfying \eqref{eq: omega'}, $g\in W^{1,1}_{\rm loc}([0,+\infty); H^{1}(\R^2;\R^2))$, and consider $Q$ as in \eqref{eq: elasticity-tensor}. Then, for all $t\ge 0$ it exists an $\mathcal H^1$-rectifiable crack $\Gamma(t) \subset \overline{\Omega}$ and a field $u(t)\in GSBD^{2}(\Omega')$ such that
\begin{itemize}
\item $\Gamma(t)$ is nondecreasing in $t$;
\item $u(0)$ minimizes
 $$
 \int_{\Omega} Q(e(v))\,\mathrm{d}x +\mathcal H^1(J_v)
 $$
 among all $v\in GSBD^{2}(\Omega')$ such that $v=g(0)$ on $\Omega'\setminus \overline{\Omega}$;
 \item for all $t>0$, $u(t)$ satisfies the global stability \eqref{eq: global stability} for $N=2$;
 \item $J_{u(0)}=\Gamma(0)$ and $J_{u(t)}\subset \Gamma(t)$ up to a set of $\mathcal H^1$-measure $0$.
\end{itemize}

Furthermore, the total energy $\mathcal{E}(t)$ defined by \eqref{eq: total energy} satisfies the energy dissipation balance \eqref{eq: energy balance}.
Finally, for any countable, dense subset $I\subset [0,+\infty)$ containing zero, we have
$$
\Gamma(t)=\bigcup_{\tau \in I\,,\,\tau \le t}J_{u(\tau)}
$$
for all $t>0$.
\end{theorem}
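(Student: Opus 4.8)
The plan is to follow the now-classical scheme for proving existence of quasistatic crack evolutions by time-discretization, as pioneered in \cite{frma98, DM-Toa, Francfort-Larsen:2003} and \cite{DFT}, and to explain carefully how the two novel ingredients of this paper---the sharp piecewise Korn inequality (Theorem \ref{th: kornpoin-sharp}, together with its boundary version Theorem \ref{th: korn-boundary}) and the $GSBD$ Jump Transfer Lemma (Theorem \ref{th: JTransf})---enter at the critical points. First I would fix a sequence of partitions of $[0,T]$ with vanishing mesh $\delta$, and define inductively the discrete-time configurations $(u_\delta(t_i^\delta),\Gamma_\delta(t_i^\delta))$ by solving the incremental minimum problem for the energy $\int_\Omega Q(e(v))\,\mathrm{d}x + \mathcal{H}^1(J_v\setminus \Gamma_\delta(t_{i-1}^\delta))$ among $v\in GSBD^2(\Omega')$ with $v=g(t_i^\delta)$ on $\Omega'\setminus\overline\Omega$, updating $\Gamma_\delta(t_i^\delta):=\Gamma_\delta(t_{i-1}^\delta)\cup J_{u_\delta(t_i^\delta)}$; the solvability of each incremental problem is exactly Theorem \ref{th: existence}, whose proof rests on the compactness Theorem \ref{th: comp} and the fact (emphasized in the introduction and crucial here) that the coercivity function $\psi$ depends only on $\Omega$, $\Omega'$ and the $H^1$-norm of $g(t_i^\delta)$, hence is uniformly controlled along the evolution by the regularity of $g$.

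Next I would establish the standard discrete energy inequality: comparing the minimizer $u_\delta(t_i^\delta)$ against the competitor $u_\delta(t_{i-1}^\delta)+g(t_i^\delta)-g(t_{i-1}^\delta)$ (which is admissible with the updated crack) and using the quadratic structure of $Q$, one obtains $\mathcal{E}_\delta(t_i^\delta)\le \mathcal{E}_\delta(t_{i-1}^\delta)+\int_{t_{i-1}^\delta}^{t_i^\delta}\langle \sigma_\delta(s),e(\dot g(s))\rangle\,\mathrm{d}s + \text{(error)}$, with a telescoping error vanishing as $\delta\to 0$. This gives a uniform bound on the total discrete energy, hence via the uniform $\psi$-coercivity and Theorem \ref{th: GSBD comp} a subsequence converging at every $t$ in a countable dense set $I$; a Helly-type / monotonicity argument in the irreversibility variable $\Gamma$ then extends the convergence to all $t$, defining the candidate limit $(u(t),\Gamma(t))$ with $\Gamma(t)$ nondecreasing, $J_{u(t)}\subset\Gamma(t)$, and the representation $\Gamma(t)=\bigcup_{\tau\in I,\tau\le t}J_{u(\tau)}$. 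The lower semicontinuity of surface energy along increasing crack sets provided by Theorem \ref{th: GSBD comp}(iii) is used repeatedly here.

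The heart of the argument, and the step I expect to be the main obstacle, is proving that the limit satisfies the \emph{global stability} \eqref{eq: global stability}: given any admissible competitor $v\in GSBD^2(\Omega')$ with $v=g(t)$ on $\Omega'\setminus\overline\Omega$, one must produce, from the discrete minimality at times $t_i^\delta\uparrow t$, a sequence of competitors $v_\delta$ with $\limsup_\delta \big(\int_\Omega Q(e(v_\delta))\,\mathrm{d}x+\mathcal{H}^1(J_{v_\delta}\setminus\Gamma_\delta)\big)\le \int_\Omega Q(e(v))\,\mathrm{d}x+\mathcal{H}^1(J_v\setminus\Gamma(t))$, so that passing to the limit in the discrete minimality and using lower semicontinuity yields stability for $u(t)$. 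This is precisely where the Jump Transfer Lemma is needed: one wants to transfer the ``new'' part $J_v\setminus\Gamma(t)$ of the competitor's jump onto a set compatible with $\Gamma_\delta$, up to vanishing error, while simultaneously fixing the boundary condition $g(t)$ versus $g(t_i^\delta)$ (handled by adding $g(t)-g(t_i^\delta)\to 0$ in $H^1$) and correcting for the discrepancy between $u(t)$ and $u_\delta$ near $\Gamma_\delta$---the latter being exactly the role of Lemma \ref{lemma: nitsche} and the Nitsche-type reflection replacing the extension used in \cite{Francfort-Larsen:2003}. I would apply Theorem \ref{th: JTransf} with the role of the converging sequence played by $u_\delta(t_i^\delta)$ (up to the piecewise rigid motions furnished by Theorem \ref{th: kornpoin-sharp}, which is why the sharp length control there is essential: the enlargement of the jump set must be by an arbitrarily small length $\theta$) and $u$ by $u(t)$.

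Finally, having the global stability in hand, I would upgrade the energy inequality to the \emph{energy--dissipation balance} \eqref{eq: energy balance} by the usual two-sided argument: the $\le$ direction follows by passing to the liminf in the discrete inequality (using weak $L^2$-convergence of the stresses and lower semicontinuity of the crack measure), while the $\ge$ direction follows from global stability by a by-now-standard Riemann-sum argument---testing stability of $u(s)$ against $u(s)+g(s+h)-g(s)$, dividing by $h$, integrating in $s$, and using the absolute continuity of $s\mapsto g(s)$ in $H^1$ together with the continuity in time of $s\mapsto e(u(s))$ in $L^2$ that is a consequence of stability. The base case $u(0)$ minimizing $\int_\Omega Q(e(v))\,\mathrm{d}x+\mathcal{H}^1(J_v)$ with $J_{u(0)}=\Gamma(0)$ is just Theorem \ref{th: existence} applied at $t=0$ together with the definition of $\Gamma(0)$. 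Throughout, the only genuinely new work beyond \cite{Francfort-Larsen:2003, DFT} is the systematic substitution of $GSBV$/Ambrosio compactness by Theorem \ref{th: GSBD comp}, of Korn-type control of full gradients by Theorems \ref{th: kornpoin-sharp} and \ref{th: korn-boundary}, and of the reflection step by Lemma \ref{lemma: nitsche}; everything else is the standard machinery, which I would indicate rather than reproduce in full, as the authors announce they will do in Section \ref{se: exist result}.
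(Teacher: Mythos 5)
Your proposal follows essentially the same route as the paper's Section~\ref{se: exist result}: time-incremental minimization solved via Theorem~\ref{th: existence}, uniform a-priori bounds (including the $\psi$-coercivity with $\psi$ depending only on $\Omega,\Omega'$ and $\sup_t\|g(t)\|_{H^1}$) leading to compactness through Theorem~\ref{th: GSBD comp}, global stability via the $GSBD$ Jump Transfer Lemma applied to $\phi=v-u(t)$ and finite approximations $\Gamma_\ell(t)$ of the crack, and the energy balance by combining the discrete energy inequality with a Riemann-sum argument from stability. The only minor imprecision is attributional: the piecewise-rigid-motion subtractions from Theorems~\ref{th: kornpoin-sharp} and~\ref{th: korn-boundary} operate \emph{inside} the proofs of Theorems~\ref{th: approx}, \ref{th: JTransf} and~\ref{th: comp} rather than being applied explicitly to the sequences $u^n(\tau)$ when invoking the Jump Transfer Lemma, and the Nitsche extension of Lemma~\ref{lemma: nitsche} is used inside the proof of Theorem~\ref{th: JTransf} to build the transferred competitor $\phi_n$, not directly as a separate correction step in Theorem~\ref{th: dense subset}.
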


\section{A sharp piecewise Korn  inequality in GSBD}\label{sec:korn-poincare}

In this section we derive a  piecewise Korn  inequality with a sharp estimate for the  surface energy and also prove a version taking Dirichlet boundary conditions into account. 

\subsection{A refined piecewise Korn inequality}

The goal of this section is to prove the following result.

\begin{theorem}\label{th: kornpoin-sharp}
Let $\Omega \subset \R^2$ open, bounded with Lipschitz boundary and $0 < \theta<1$. Then there is a universal constant $c>0$, some $C_\Omega=C_\Omega(\Omega)>0$ and some $C_{\theta,\Omega}=C_{\theta,\Omega}(\theta,\Omega)>0$   such that the following holds: For  each $u \in GSBD^2(\Omega)$ we find $u^\theta \in SBV(\Omega;\R^2) \cap L^\infty(\Omega;\R^2)$ such that $\lbrace u \neq u^\theta \rbrace$ is a set of finite perimeter  with
\begin{align}\label{eq: kornpoinsharp1}
\begin{split}
(i) & \ \ 
{\cal L}^2(\lbrace u \neq u^\theta \rbrace)  \le c\theta ({\cal H}^1(J_u)+ {\cal H}^1(\partial \Omega))^2, \\
(ii) & \ \    {\cal H}^1((\partial^* \lbrace u \neq u^\theta \rbrace  \cap \Omega) \setminus J_u)  \le c\theta ({\cal H}^1(J_u)+ {\cal H}^1(\partial \Omega)),
\end{split}
\end{align}
a (finite) Caccioppoli partition $\Omega = \bigcup^{I}_{i=0} P_i$, and corresponding infinitesimal rigid motions $(a_i)_{i=0}^I$ such that  
 $v := u^\theta - \sum\nolimits_{i=0}^{I} a_i \chi_{P_i} \in SBV(\Omega;\R^2)\cap L^\infty(\Omega;\R^2)$ and
\begin{align}\label{eq: kornpoinsharp2}
\begin{split}
(i) & \ \  \sum\nolimits_{i=0}^{I}{\cal H}^1( (\partial^* P_i \cap \Omega) \setminus J_u ) \le c\theta ({\cal H}^1(J_u) + {\cal H}^1(\partial \Omega)),\\
(ii) & \ \ \mathcal{L}^2(P_i) \ge C_\Omega\theta^2 \ \ \text{ for all } \ \ 1 \le i \le I, \ \ \ \mathcal{L}^2(\lbrace u \neq u^\theta \rbrace \triangle P_0) = 0,\\
(iii)& \ \ \Vert v \Vert_{L^\infty(\Omega; \R^2)} + \Vert \nabla v \Vert_{L^1(\Omega;\R^{2\times 2})} \le C_{\theta,\Omega} \Vert  e(u) \Vert_{L^2(\Omega; \R_{\mathrm{sym}}^{2\times 2})}.
\end{split}
\end{align}
\end{theorem}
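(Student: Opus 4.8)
The starting point is the non-sharp piecewise Korn inequality of Theorem~\ref{th: korn}, applied with some fixed $p\in(1,2)$: it produces an ordered, countable Caccioppoli partition $\Omega=\bigcup_{j\ge1}Q_j$, infinitesimal rigid motions $a_j=a_{A_j,b_j}$, and $w:=u-\sum_j a_j\chi_{Q_j}\in SBV^p(\Omega;\R^2)\cap L^\infty$ with $\sum_j\mathcal H^1(\partial^*Q_j)\le c(\mathcal H^1(J_u)+\mathcal H^1(\partial\Omega))=:cL$ and $\|w\|_{L^\infty}+\|\nabla w\|_{L^p}\le C_{\rm korn}\sigma$, where $\sigma:=\|e(u)\|_{L^2}$. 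The plan is to \emph{coarsen} this into a \emph{finite} partition $\Omega=\bigcup_{i=0}^I P_i$ whose internal interfaces, apart from the part already lying in $J_u$, have total length only $c\theta L$; the price one accepts is that the rigid motions attached to the coarsened pieces only yield an $L^1$-control of $\nabla v$, with a constant $C_{\theta,\Omega}$ blowing up as $\theta\to0$.

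The heart of the argument is the coarsening. I would fix an area threshold $\sim C_\Omega\theta^2$, below which a piece counts as small, and merge small pieces into larger ones. The crucial geometric observation is: if $Q_i$ and $Q_j$ share an interface $\Gamma_{ij}$ with $\mathcal H^1(\Gamma_{ij}\setminus J_u)>0$, then $u$ has no jump across $\Gamma_{ij}\setminus J_u$, so the two one-sided traces of $u=a_i+w$ resp.\ $u=a_j+w$ agree there; since $\|w\|_{L^\infty}\le C_{\rm korn}\sigma$, the affine map $a_i-a_j$ is bounded by $2C_{\rm korn}\sigma$ on the positive-length rectifiable set $\Gamma_{ij}\setminus J_u$. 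Since we are in two dimensions, the linear part $A_i-A_j$ is skew, hence its norm is already controlled by its action on a single tangent direction of $\Gamma_{ij}$; thus the bound on the arc propagates to a bound for $|A_i-A_j|$, and then for $|b_i-b_j|$, on all of $Q_i\cup Q_j$, via Lemma~\ref{lemma: rigid motion} together with the planar interface estimates (Lemma~\ref{lemma: interface}, Lemma~\ref{lemma: maggi}), the deterioration being measured only by $1/\mathcal H^1(\Gamma_{ij}\setminus J_u)$ and $\diam Q_j$, quantities ultimately controlled by $\theta$ and $\Omega$. This makes it legitimate to merge $Q_i$ and $Q_j$ under a single rigid motion: on the merged set $v=u-a_i=w+(a_j-a_i)$ stays $L^\infty$-bounded with $L^1$-bounded gradient, the loss being absorbed into $C_{\theta,\Omega}$. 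I would then run a maximal merging of this type — form the graph whose edges are the $J_u$-free interfaces of length above a small $\theta$-dependent threshold, and take connected components — obtaining finitely many clusters, each of area $\ge C_\Omega\theta^2$ (hence at most $\sim\theta^{-2}$ of them), whose mutual interfaces are either contained in $J_u$ or $J_u$-free of length below the threshold; choosing the threshold proportional to $\theta L$ divided by a suitable power of the cluster bound makes the total surviving ``new'' length at most $c\theta L$, which is \eqref{eq: kornpoinsharp2}(i).

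All pieces not absorbed into a cluster are collected into $P_0$; by construction they are bounded by $J_u$ up to an $\mathcal H^1$-amount $\le c\theta L$ (giving \eqref{eq: kornpoinsharp1}(ii) and the part of \eqref{eq: kornpoinsharp2}(i) due to $P_0$), and $\mathcal L^2(P_0)\le c\theta L^2$ follows from the area threshold and the planar isoperimetric inequality (giving \eqref{eq: kornpoinsharp1}(i)). I would then set $u^\theta:=u$ on $\Omega\setminus P_0$ and redefine $u^\theta$ on $P_0$ — e.g.\ as a bounded $SBV$ extension of $u|_{\Omega\setminus P_0}$, or simply the zero extension — so that $\{u\ne u^\theta\}$ coincides with $P_0$ up to a null set and so that the rigid motion $a_0$ attached to $P_0$ keeps $v=u^\theta-a_0$ under control there; this yields \eqref{eq: kornpoinsharp2}(ii) and, assembling all pieces, $v=u^\theta-\sum_{i=0}^I a_i\chi_{P_i}\in SBV(\Omega;\R^2)\cap L^\infty$ with \eqref{eq: kornpoinsharp2}(iii). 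Throughout one must track the dependence of constants on the size of the domain; the point — recorded separately in Remark~\ref{rem:square} — is that the $L^1$-gradient bound scales favorably, so that when the construction is localized on squares of side $\sim\theta L$ covering a neighborhood of $J_u$ the constants are small, which is what keeps the merged estimates compatible with the $c\theta L$ surface-energy budget.

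The main obstacle is precisely the merging step: turning the one-dimensional information ``$u$ does not jump across $\Gamma_{ij}\setminus J_u$'' into a two-dimensional, quantitative $L^\infty$-closeness of the rigid motions $a_i,a_j$ that is good enough to be absorbed into $C_{\theta,\Omega}$, while at the same time organizing the merging so that only $c\theta L$ of new interface survives and every surviving cluster still has area $\ge C_\Omega\theta^2$. This is where two-dimensionality enters in an essential way and where the planar auxiliary lemmas do the real work; keeping the various $\theta$-dependent thresholds (area threshold, interface-length threshold, number of clusters) mutually consistent is the delicate book-keeping part.
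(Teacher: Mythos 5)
Your proposal has the right starting point (Theorem~\ref{th: korn}), the right area threshold $\sim\theta^2$ separating large from small pieces, the right observation that $J_u$-free interfaces impose $L^\infty$-closeness of neighbouring rigid motions, and the right endgame (collect leftover pieces into $P_0$, redefine $u^\theta$ there, use the isoperimetric inequality for the area bound). But there is a genuine gap in the central merging step, and it stems from using the \emph{interface length} $\mathcal{H}^1(\Gamma_{ij}\setminus J_u)$ as the merging/separation criterion instead of the \emph{rigid-motion distance} $\|a_i-a_j\|_{L^\infty(Q_j)}$ that the paper uses.

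Concretely, consider a large piece $Q_1$ with a chain of small pieces $Q_2,\dots,Q_n$, each sharing a $J_u$-free interface with $Q_1$ of length just below your merging threshold $\tau$. With $n\sim L/\tau$ such pieces, none of them is absorbed into the cluster of $Q_1$, so all of them land in $P_0$; then $\partial^* P_0\setminus J_u$ contains all these $n$ interfaces, and its total length can be as large as $n\tau\sim L$, not $c\theta L$. Your remark ``choosing the threshold proportional to $\theta L$ divided by a suitable power of the cluster bound makes the total surviving new length at most $c\theta L$'' implicitly counts a bounded number of cluster-pair interfaces, but the interfaces are indexed by the (countably many) \emph{original pieces}, and nothing in your construction caps that count. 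A separate issue is that the graph-merging may chain through a long sequence of small pieces, so that the accumulated rigid-motion drift, and hence the number and area of clusters, is not controlled a priori.

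The paper avoids both problems by making the dichotomy depend on the \emph{rigid-motion distance}: if $\|a'_j-a'_i\|_{L^\infty(P'_j)}$ lies below a $\theta$-dependent scale, merge the small piece into the large one (this is absorbed into $C_{\theta,\Omega}$ by Lemma~\ref{lemma: rigid motion}); if it lies above a higher scale, then Lemma~\ref{lemma: interface} shows that the $J_u$-free part of the interface is not merely small in measure but is \emph{contained in a ball} whose diameter is $\lesssim\theta^{3}\,\diam(P'_j)$ — and, crucially, $\diam(P'_j)\le\mathcal{H}^1(\partial^*P'_j)$ for indecomposable planar sets (Lemma~\ref{lemma: diam}), so summing these balls over all small pieces and all $\le\theta^{-2}$ large pieces yields $\le\theta^{3}\theta^{-2}\sum_j\mathcal{H}^1(\partial^*P'_j)\lesssim\theta L$, exactly the desired budget. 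The ``middle band'' of pieces whose rigid-motion distance falls between the two scales is eliminated by a pigeonhole argument on the dyadic scale $\theta^{-4k}$: some bin $\mathcal{J}^{K_\theta}$ must carry only $\sim\theta L$ worth of perimeter, and that bin is thrown into the exceptional set $R_1$. Your proof sketch lacks both this dichotomy and the diameter-based summation that controls the countably many small pieces; without them \eqref{eq: kornpoinsharp1}(ii) and \eqref{eq: kornpoinsharp2}(i) are not established.
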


Note that the refined estimate \eqref{eq: kornpoinsharp2}(i) comes at the expense of the fact that we have to pass to a slightly modified function (see \eqref{eq: kornpoinsharp1}) and that in \eqref{eq: kornpoinsharp1}(iii) only the $L^1$-norm of the derivative is controlled.

\begin{rem}\label{rem:square}
Let $C_{Q_1}$ and $C_{\theta,Q_1}$ be the constants in Theorem \ref{th: kornpoin-sharp} for the unit square $\Omega = Q_1 = (0,1)^2$. Using a rescaling argument, \eqref{eq: kornpoinsharp2}(ii),(iii) in Theorem \ref{th: kornpoin-sharp} applied for any square $\Omega =Q \subset \R^2$ read as 
\begin{align}\label{eq:local1-new}
\begin{split}
(i) & \ \ \mathcal{L}^2(P_i) \ge C_{\rm Q_1}\mathcal{L}^2(Q)\theta^2 \ \ \text{ for } \ \ 1 \le i \le I, \\
(ii) &  \ \   \Vert v \Vert_{L^\infty(Q; \R^2)} + (\diam(Q))^{-1} \Vert \nabla v \Vert_{L^1(Q;\R^{2\times 2})} \le C_{\theta,Q_1} \Vert  e(u) \Vert_{L^2(Q; \R_{\mathrm{sym}}^{2\times 2})}.
\end{split}
\end{align}
\end{rem}

Below after the proof of Theorem \ref{th: kornpoin-sharp} we briefly indicate how Remark \ref{rem:square} can be derived from \eqref{eq: kornpoinsharp2} for convenience of the reader. As a preparation we formulate two lemmas. Recall the notion of decomposable sets in Section \ref{sec: caccio}  and the definition of  ${\rm diam}$ in Section \ref{sec:pre1}. 

\begin{lemma}\label{lemma: diam}
Let $B \subset \R^2$ be an indecomposable, bounded set with finite perimeter. Then  ${\rm diam}(B) \le {\cal H}^1(\partial^* B)$.
\end{lemma}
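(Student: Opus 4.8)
The plan is to exploit the indecomposability of $B$ by reducing the diameter bound to a statement about a single planar set and its essential boundary, using the fact that indecomposable sets of finite perimeter in $\R^2$ are ``connected'' in the measure-theoretic sense. First I would recall that, by Ambrosio–Caldiroli–Maddalena–Maddalena type results (or directly from the structure theory in \cite{AFP}), an indecomposable set $B$ of finite perimeter is, up to a Lebesgue-null modification, a set whose one-dimensional sections behave coherently. The key geometric idea: fix a direction $\xi \in S^1$ and consider the projection of $B$ onto the line $\R\xi$. If the essential projection has length $\ell_\xi$, then by the coarea/slicing formula for sets of finite perimeter, $\mathcal{H}^1(\partial^* B) \ge \int_{\Pi^\xi} \mathcal H^0(\partial^*(B^\xi_y))\,\mathrm d\mathcal H^1(y) \ge$ (something controlling $\ell_\xi$), since for a.e.\ slice that meets $B$ nontrivially the slice $B^\xi_y$ is a nonempty set of finite perimeter in $\R$, hence has at least two boundary points (or one, contributing length via the spread in the orthogonal direction).

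The cleaner route, which I would actually carry out: suppose for contradiction (or directly) that $\mathrm{diam}(B) > \mathcal H^1(\partial^* B)$. Choose $x_1, x_2$ essential points of $B$ (density-one points) with $|x_1 - x_2|$ close to $\mathrm{diam}(B)$, and let $\xi = (x_2-x_1)/|x_2-x_1|$. For each hyperplane $\Pi^\xi_t = \{x : x\cdot \xi = t\}$ with $x_1\cdot\xi < t < x_2\cdot\xi$, I claim $\mathcal H^{0}$-a.e.\ that the slice $\{y\in \Pi^\xi : y\cdot\xi = t,\ (\text{corresponding section of }B)\neq\emptyset\}$ forces $\partial^* B$ to intersect $\Pi^\xi_t$: if it did not for a set of $t$ of positive measure, one could split $B$ into $\{x\cdot\xi < t\}$ and $\{x\cdot\xi > t\}$ parts without adding perimeter (the interface has zero $\mathcal H^1$-measure), contradicting indecomposability since both parts have positive measure (one contains a neighborhood of $x_1$, the other of $x_2$, both being density-one points). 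Hence for a.e.\ $t\in (x_1\cdot\xi, x_2\cdot\xi)$ the slice $\partial^* B \cap \Pi^\xi_t$ is nonempty, and by the coarea formula for the Lipschitz map $x\mapsto x\cdot\xi$ restricted to $\partial^* B$,
\begin{equation*}
\mathcal H^1(\partial^* B) \ge \int_{x_1\cdot\xi}^{x_2\cdot\xi} \mathcal H^0\big(\partial^* B \cap \Pi^\xi_t\big)\,\mathrm d t \ge x_2\cdot\xi - x_1\cdot\xi = |x_1 - x_2|.
\end{equation*}
Letting $|x_1-x_2| \to \mathrm{diam}(B)$ gives the claim.

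The main obstacle I anticipate is making the ``splitting without adding perimeter'' argument fully rigorous at the level of sets of finite perimeter: one needs that for a.e.\ $t$, slicing $B$ by the hyperplane $\Pi^\xi_t$ produces two sets of finite perimeter whose perimeters add up to $\mathcal H^1(\partial^* B \cap \{x\cdot\xi\neq t\}) + \mathcal H^1(\Pi^\xi_t \cap B^1)$, and that the last term vanishes for a.e.\ $t$ (true since $\int \mathcal H^1(\Pi^\xi_t\cap B^1)\,\mathrm dt = \mathcal L^2(B) < \infty$ would be the wrong estimate — one instead uses that $\mathcal H^1(\Pi^\xi_t \cap B^1) < \infty$ for a.e.\ $t$, and then the set of $t$ where it is positive AND $\partial^* B\cap\Pi^\xi_t = \emptyset$ would still allow a decomposition, so such $t$ cannot fill a positive-measure subset of $(x_1\cdot\xi, x_2\cdot\xi)$). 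Care is needed with the interplay of the reduced boundary, the essential boundary, and the one-dimensional restrictions; I would lean on \cite[Section 3.11 and Theorem 4.17]{AFP} and the characterization of indecomposable components to handle these technicalities, but the skeleton above is the proof.
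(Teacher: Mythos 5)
The paper's ``proof'' of this lemma is simply a citation to \cite[Proposition 12.19, Remark 12.28]{maggi}, so there is no in-paper argument to compare against; your slicing-plus-coarea strategy is the standard route, and the overall skeleton is correct. Concretely: pick density-one points $x_1,x_2\in B$ with $|x_1-x_2|$ close to $\diam(B)$, set $\xi=(x_2-x_1)/|x_2-x_1|$, show that for a.e.\ $t\in(x_1\cdot\xi,x_2\cdot\xi)$ one has $\partial^*B\cap\Pi^\xi_t\neq\emptyset$, and then the coarea formula for the $1$-Lipschitz map $x\mapsto x\cdot\xi$ restricted to the $\mathcal H^1$-rectifiable set $\partial^*B$ gives $\mathcal H^1(\partial^*B)\ge\int_{x_1\cdot\xi}^{x_2\cdot\xi}\mathcal H^0(\partial^*B\cap\Pi^\xi_t)\,\mathrm dt\ge|x_1-x_2|$; letting $|x_1-x_2|\to\diam(B)$ finishes.

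The one genuine gap is in the justification that a positive-measure set of $t$ with $\partial^*B\cap\Pi^\xi_t=\emptyset$ would yield a perimeter-neutral split. Your ``obstacles'' paragraph contains a misstatement: a $t$ for which $\mathcal H^1(\Pi^\xi_t\cap B^{(1)})>0$ (where $B^{(1)}$ denotes the density-one points of $B$) but $\partial^*B\cap\Pi^\xi_t=\emptyset$ does \emph{not} ``still allow a decomposition'' --- splitting there adds $2\,\mathcal H^1(\Pi^\xi_t\cap B^{(1)})>0$ to the total perimeter (take $B=(0,1)^2$, $\xi=e_1$, $t=\tfrac12$). The correct observation is that this combination cannot happen for a.e.\ $t$: by one-dimensional slicing of $BV$ functions (\cite[Theorem 3.108]{AFP}), for a.e.\ $t$ the slice $B_t:=\{y:(t,y)\in B\}$ (in coordinates with $\xi=e_1$) is a set of finite perimeter in $\R$ with $\partial^*(B_t)\subset(\partial^*B)\cap\Pi^\xi_t$; hence $\partial^*B\cap\Pi^\xi_t=\emptyset$ forces $\partial^*(B_t)=\emptyset$, and since $B_t$ is bounded this gives $\mathcal L^1(B_t)=0$, so by Fubini $\mathcal H^1(B^{(1)}\cap\Pi^\xi_t)=0$ for a.e.\ such $t$. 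Combining this with the a.e.\ additivity $P(B)=P(B;\{x\cdot\xi<t\})+P(B;\{x\cdot\xi>t\})$, a positive-measure bad set would produce a $t_0$ at which $B$ decomposes into two positive-measure pieces (one near $x_1$, one near $x_2$) with additive perimeter, contradicting indecomposability. With this correction in place your argument is complete.
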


The proof can be found in \cite[Propostion 12.19, Remark 12.28]{maggi}. The following lemma investigates some properties of the jump set of a piecewise-defined function on the interface of two sets of finite perimeter.

  \begin{lemma}\label{lemma: interface}
Let $\Omega \subset \R^2$ open, bounded and $y \in SBV(\Omega;\R^2) \cap L^\infty(\Omega;\R^2)$. Let  $P_1,P_2 \subset \Omega$ be sets of finite perimeter and $a_i = a_{A_i,b_i}$, $i=1,2$, infinitesimal rigid motions. Then  there is a ball $B \subset \R^2$ with 
\begin{align*}
(i) & \ \ \diam(B) \le 4 \diam(P_2) \,  \Vert a_1 - a_2 \Vert^{-1}_{L^\infty(P_2;\R^2)} \, \sum\nolimits_{i=1,2}\Vert y - a_i \Vert_{L^\infty(P_i;\R^2)}, \\ 
(ii) & \ \ \mathcal{H}^1\big( (\partial^* P_1 \cap \partial^* P_2) \setminus (B \cup J_y) \big) = 0.
\end{align*}   
\end{lemma}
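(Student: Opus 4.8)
The key observation is that on the interface $\partial^* P_1 \cap \partial^* P_2$, the function $y$ has one-sided traces coming from both $P_1$ and $P_2$, and (away from $J_y$) these two traces must coincide $\mathcal H^1$-a.e. Thus, for $\mathcal H^1$-a.e.\ point $x$ on $(\partial^* P_1 \cap \partial^* P_2)\setminus J_y$, one has $a_1(x) = a_2(x)$ in an approximate-trace sense. So the plan is to reduce statement $(ii)$ to the assertion that the set $Z := \{x : a_1(x) = a_2(x)\}$ (the zero set of the affine map $a_1 - a_2 = a_{A_1-A_2,\,b_1-b_2}$) is contained in a ball $B$ satisfying the diameter bound $(i)$, up to an $\mathcal H^1$-null set. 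Since an affine map's zero set is either empty, a single point, a line, or all of $\R^2$, and a line has infinite length while $\partial^* P_1 \cap \partial^* P_2 \subset \partial^* P_2$ has finite length, the genuinely problematic cases are limited; but to get a \emph{uniform} ball enclosing the relevant portion we argue quantitatively rather than by case distinction.

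First I would handle the trivial cases: if $a_1 = a_2$ on $P_2$ (i.e.\ $A_1 = A_2$, $b_1 = b_2$), then $\|a_1 - a_2\|_{L^\infty(P_2;\R^2)} = 0$, the right-hand side of $(i)$ is $+\infty$ by convention, and any ball works — but more to the point, in that degenerate case we instead just take $B$ arbitrary and note $(ii)$ may fail, so one should assume $a_1 \neq a_2$; alternatively, restrict attention to the nontrivial case $A_1 \neq A_2$ or $b_1 \neq b_2$, which is the only case where the estimate has content. Assume then $a_1 \neq a_2$. Set $\delta := \|a_1 - a_2\|_{L^\infty(P_2;\R^2)} > 0$ and $\eta := \sum_{i=1,2}\|y - a_i\|_{L^\infty(P_i;\R^2)}$.

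Next, the core trace argument. By the theory of sets of finite perimeter, at $\mathcal H^1$-a.e.\ point $x \in \partial^* P_1 \cap \partial^* P_2$ the function $y$ restricted to $P_i$ has an approximate one-sided limit $y_i(x)$, and since $y \in SBV(\Omega;\R^2)\cap L^\infty$, for $x \notin J_y$ these limits agree: $y_1(x) = y_2(x)$. Moreover, since $a_i$ is continuous and $\|y - a_i\|_{L^\infty(P_i)} \le \eta$, we get $|y_i(x) - a_i(x)| \le \eta$ for $\mathcal H^1$-a.e.\ such $x$, hence $|a_1(x) - a_2(x)| \le 2\eta$ for $\mathcal H^1$-a.e.\ $x \in (\partial^* P_1 \cap \partial^* P_2)\setminus J_y$. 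So it suffices to show that the sublevel set $S := \{x \in P_2 : |a_1(x) - a_2(x)| \le 2\eta\}$ — or rather its relevant part — is contained in a ball of diameter at most $4\,\diam(P_2)\,\delta^{-1}\eta$. Write $a_1 - a_2 = a_{A,b}$ with $A = A_1 - A_2 \in \R^{2\times2}_{\rm skew}$, $b = b_1 - b_2$. If $A = 0$ then $a_1 - a_2 \equiv b$ is constant with $|b| = \delta$, and if $2\eta < \delta$ then $S \cap P_2 = \emptyset$ (so $(ii)$ holds trivially with any small $B$), while if $2\eta \ge \delta$ then $4\diam(P_2)\delta^{-1}\eta \ge 2\diam(P_2)$ and $B \supset P_2$ suffices. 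If $A \neq 0$, then $A$ is invertible with $|Ay| = \frac{\sqrt2}{2}|A||y|$, so $a_1 - a_2$ vanishes exactly at $z_0 := -A^{-1}b$, and $|a_1(x) - a_2(x)| = \frac{\sqrt2}{2}|A|\,|x - z_0|$. Using Lemma \ref{lemma: rigid motion}(a) applied on $P_2$, $\delta = \|a_{A,b}\|_{L^\infty(P_2)} \ge c_0 (\mathcal L^2(P_2))^{1/2}|A|$ for a universal $c_0$; combined with the isodiametric inequality $\mathcal L^2(P_2) \le \frac{\pi}{4}\diam(P_2)^2$ this gives $|A| \le C \delta\,\diam(P_2)^{-1}$ for a universal $C$, hence from $|a_1(x)-a_2(x)|\le 2\eta$ we get $|x - z_0| \le \frac{2\sqrt2}{|A|}\,2\eta \le C'\,\diam(P_2)\,\delta^{-1}\eta$. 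Taking $B := \overline{B_r(z_0)}$ with $r$ chosen (after tracking the universal constants carefully — here one checks the bound $4$ is the right numerical constant) so that $\diam(B) \le 4\,\diam(P_2)\,\delta^{-1}\eta$ gives $(i)$, and since $(\partial^* P_1 \cap \partial^* P_2)\setminus(B \cup J_y) \subset \{x : |a_1(x) - a_2(x)| > 2\eta\} \cap \big((\partial^* P_1 \cap \partial^* P_2)\setminus J_y\big)$, which we showed is $\mathcal H^1$-null, $(ii)$ follows.

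The main obstacle is the careful bookkeeping of universal constants to land exactly on the factor $4$ in $(i)$ — in particular combining the Lemma \ref{lemma: rigid motion}(a) constant, the $\frac{\sqrt2}{2}$ factors from skew matrices, and the isodiametric inequality — together with a clean justification that the $SBV$ one-sided traces of $y$ from $P_1$ and $P_2$ genuinely coincide $\mathcal H^1$-a.e.\ on the interface outside $J_y$ (this uses that $J_y$ contains all discontinuity points in the measure-theoretic sense, so that comparing traces of $y\lfloor P_1$ and $y\lfloor P_2$ against the single $SBV$ trace structure of $y$ on the rectifiable set $\partial^* P_1 \cap \partial^* P_2$ is legitimate; one invokes the standard theory of traces of $BV$ / $SBV$ functions on rectifiable sets, e.g.\ as in \cite{AFP}). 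Everything else is a routine estimate once the affine map $a_1 - a_2$ is analyzed via its zero set.
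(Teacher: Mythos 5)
There is a genuine gap in your argument, and it is not a matter of bookkeeping universal constants: it is a direction error in the estimate for $|A|$. In the case $A := A_1 - A_2 \neq 0$ you want to bound $|x - z_0| = \tfrac{\sqrt 2}{|A|}|a_1(x)-a_2(x)|$ from above, which requires a \emph{lower} bound on $|A|$. Lemma \ref{lemma: rigid motion}(a), however, yields an \emph{upper} bound, $|A| \le c\, \delta\, \diam(P_2)^{-1}$ (and your passage through the isodiametric inequality is also applied in the wrong direction, since $\mathcal L^2(P_2) \le \tfrac{\pi}{4}\diam(P_2)^2$ gives $(\mathcal L^2(P_2))^{-1/2} \ge \tfrac{2}{\sqrt\pi}\diam(P_2)^{-1}$, not $\le$). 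An upper bound on $|A|$ gives a \emph{lower} bound on the radius of the sublevel disk $\{ |a_1 - a_2| \le \eta \}$ around $z_0$, the opposite of what you need. Moreover, a lower bound of the form $|A| \ge c\,\delta\,\diam(P_2)^{-1}$ is simply false in general: writing $\delta = \tfrac{|A|}{\sqrt 2}\sup_{x \in P_2}|x - z_0|$, if $z_0$ lies far from $P_2$ then $|A|$ can be arbitrarily small compared to $\delta/\diam(P_2)$, and the disk $\{|a_1-a_2|\le\eta\}$ (which you take as your ball $B$, centered at $z_0$) then has huge diameter. The part of that disk that meets $\overline{P_2}$ is small, but your ball centered at $z_0$ does not exploit this.

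The repair is to compare \emph{pairs} of points lying on the interface, rather than distances to $z_0$. If $x_1, x_2 \in (\partial^* P_1\cap\partial^* P_2)\setminus J_y$ then both lie in $\overline{P_2}$, so $|(A_1-A_2)(x_1-x_2)| = |(a_1-a_2)(x_1) - (a_1-a_2)(x_2)| \le 2\eta$ directly controls $|x_1-x_2||A|$; and the needed lower bound on $|A|$ follows by comparing with a point $x^* \in P_2$ nearly attaining the supremum $\delta = \|a_1-a_2\|_{L^\infty(P_2)}$, via $\delta - \eta \lesssim |(a_1-a_2)(x^*) - (a_1-a_2)(x_1)| \lesssim |A|\diam(P_2)$. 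This is exactly what the paper does, phrased as a contradiction: assuming two interface points $x_1,x_2$ outside any admissible ball exist, one derives $|A_1-A_2| \le \tfrac{1}{2}\gamma\diam(P_2)^{-1}$ and then $\gamma \le |(a_1-a_2)(x_1)| + \diam(P_2)|A_1-A_2| \le \delta + \tfrac{1}{2}\gamma$, contradicting the standing assumption $\delta < \tfrac{1}{2}\gamma$; the easy case $\delta \ge \tfrac12\gamma$ is handled separately by taking $B \supset P_2$. Your trace discussion and the reduction to the affine map $a_1 - a_2$ are correct; also note $|a_1(x)-a_2(x)| \le \eta$ already suffices (the $2\eta$ is an unnecessary doubling).
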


\begin{proof}
We define $\gamma =  \Vert a_1 - a_2 \Vert_{L^\infty(P_2;\R^2)}$ and  $\delta=  \sum\nolimits_{i=1,2}\Vert y - a_i \Vert_{L^\infty(P_i;\R^2)} $ for shorthand. First, if $\delta \ge \frac{1}{2}\gamma$, we can choose $B$ as a ball containing $P_2$ with $\diam(B) \le 2\diam(P_2)$. Consequently, it suffices to assume $\delta < \frac{1}{2}\gamma$.

For $i=1,2$ we denote by $T_i y$ the trace of $y$ on $\partial^* P_i$, which exists by \cite[Theorem 3.77]{AFP} and satisfies  
$$|T_i y(x) - a_i(x)| \le \Vert y - a_i \Vert_{L^\infty(P_i;\R^2)} \ \ \ \text{for $\mathcal{H}^1$-a.e. } x \in \partial^* P_i.$$
Assume the statement was wrong. Then we would find two points $x_1,x_2$ with $|x_1-x_2|> 4\gamma^{-1}\delta\diam(P_2)$ such that $x_1,x_2 \in (\partial^*P_1 \cap \partial^* P_2) \setminus J_y$ and  for $i,j=1,2$
$$|T_i y(x_j) - a_i(x_j)| \le \Vert y - a_i \Vert_{L^\infty(P_i;\R^2)}.$$ 
Since $x_1,x_2 \notin J_y$ and thus $T_1y(x_1) = T_2y(x_1)$,  $T_1y(x_2) = T_2y(x_2)$ we compute
\begin{align*}
|a_1(x_j) - a_2(x_j)| \le |T_1 y(x_j) - a_1(x_j)| + |T_2 y(x_j) - a_2(x_j)| \le \delta
\end{align*}
 for $j=1,2$. Combining the estimates for $j=1,2$ we get  
\begin{align*}
|x_1-x_2||A_1 - A_2| &\le 2|(A_1\,x_1 + b_1) - (A_2\,x_1 +b_2) - (A_1\,x_2 + b_1) + (A_2\,x_2 + b_2)|
\\& \le 2(|a_1(x_1) - a_2(x_1)| + |a_1(x_2) - a_2(x_2)|) \le 2\delta 
\end{align*}
and therefore $|A_1-A_2| \le \frac{1}{2}(\diam(P_2))^{-1} \gamma$ as well as
\begin{align*}
\gamma = \Vert a_1 - a_2 \Vert_{L^\infty(P_2;\R^2)} \le |a_1(x_1) - a_2(x_1)| + \diam(P_2)|A_1-A_2|  \le    \delta + \tfrac{1}{2}\gamma,
\end{align*}
which contradicts $\gamma >  2 \delta$. 
\end{proof}

\begin{proof}[Proof of Theorem \ref{th: kornpoin-sharp}] 
Let $u \in GSBD^2(\Omega)$ be given and set for shorthand $\mathcal{E} = \Vert  e(u) \Vert_{L^2(\Omega; \R_{\mathrm{sym}}^{2\times 2})}$ and  $J_u' = J_u \cup \partial \Omega$. Without restriction we can assume $\theta^{-1}\in \N$ and that $\Omega$ is connected as otherwise the following arguments are applied for each connected component of $\Omega$. Moreover, we may suppose that $\mathcal{H}^1(J_u) \le  (\theta^{-1}\mathcal{L}^2(\Omega))^{\frac{1}{2}} $ as otherwise the assertion trivially holds with $u^\theta=0$.

In the following $c>0$ stands for a universal constant and $C_\Omega=C_\Omega(\Omega)>0$, $C_{\theta, \Omega}=C_{\theta, \Omega}(\theta,\Omega)>0$ represent generic constants which may vary from line to line. We may further assume that $\theta$ is chosen   (depending on $\Omega$) such that $\theta \le \theta_0 :=\frac{1}{16}C_{\rm korn}^{-1}$,  where $C_{\rm korn}$ is the constant from \eqref{eq: small set main}.\footnote{If $\theta> \theta_0$, the result holds for $u^\theta = u^{\theta_0}$, upon replacing $C_\Omega$ by $C_\Omega \theta_0^2$ in \eqref{eq: kornpoinsharp2}(ii).}

\begin{figure}[!h]
\centering
\begin{overpic}[width=1.0\linewidth,clip]{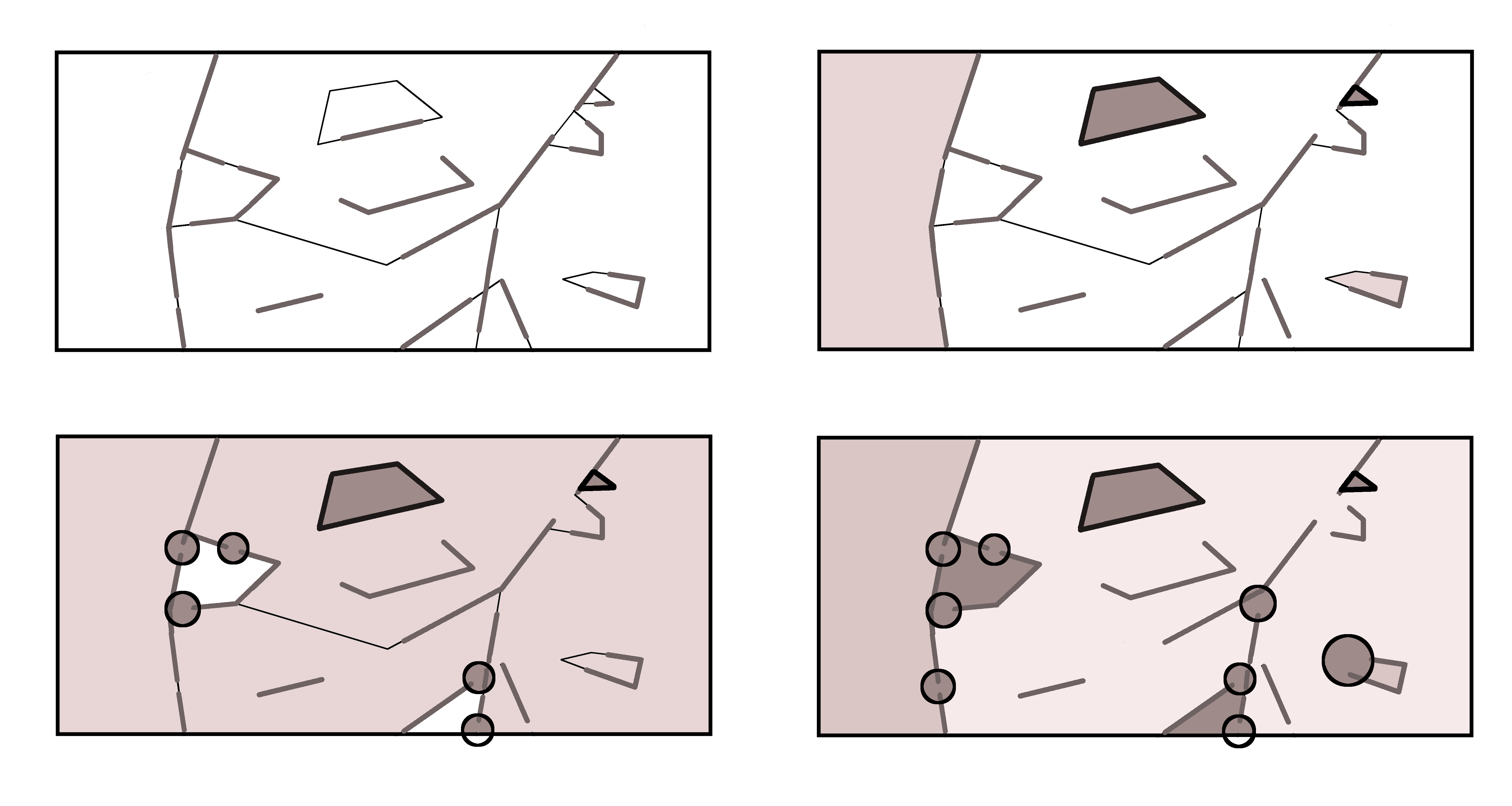}
\put(-1,48){{$(a)$}}
\put(49.5,48){{$(b)$}}
\put(-1,22){{$(c)$}}
\put(49.5,22){{$(d)$}}

\put(5,40){{\small $1$}}
\put(16,45){{\small $2$}} 
\put(42,38){{\small $3$}}
\put(17,34){{\small $4$}} 
\put(13,39.7){{\small $5$}} 
\put(24,45.3){{\small $6$}}
\put(29.5,30.5){{\footnotesize $7$}} 
\put(32.5,30.5){{\footnotesize $8$}}
\put(39,31.8){{\footnotesize $9$}}
\put(38.8,33.3) {\line(0,1){1.5}}
\put(38,40.3){{\footnotesize $10$}}
\put(38.8,42.5) {\line(0,1){1.5}}
\put(41.5,45.3){{\footnotesize $11$}}
\put(40.0,46.7) {\line(1,0){1.5}}

\put(56.5,40){{\small $P_1^1$}}
\put(65,45){{\small $P_2^1$}} 
\put(92,38){{\small $P_3^1$}}
\put(63,34){{\small $P_4^1$}}

\put(77.5,46.7) {\line(1,0){4.7}}
\put(82.5,46){{\small $R_1$}}
\put(85.5,46.7) {\line(1,0){3.8}}

\put(5,15){{\small $P_1^2$}}
\put(16,20){{\small $P_2^2$}} 
\put(42,13){{\small $P_3^2$}}
\put(13,9){{\small $P_4^2$}}

\put(56.5,15){{\small $P_1$}}
\put(65,20){{\small $P_2$}} 
\put(91.2,11){{\small $P_1$}}
\put(92.2,8.5) {\line(0,1){1.8}}

\end{overpic}
\caption{\small Illustration of the constructions in the proof of Theorem  \ref{th: kornpoin-sharp}. (a) The partition $(P'_j)_{j=1}^{11}$ is sketched (for convenience only the indices are given). Note that in general the jump set (depicted in light gray) is not a subset of $\bigcup^{11}_{j=1}\partial^* P_j'$. (b) The \emph{large components} of $(P^1_j)^{6}_{j=1}$ are given by $P_1^1 = P_1' \cup P_9'$, $P_2^1 = P_2'\cup P_{10}'$, $P_3^1 = P_3' \cup P_8'$, $P_4^1 = P_4'$ (i.e. $I'=4$), the exceptional set is $R_1 = P_6' \cup P_{11}'$ and the \emph{small components} are $P_5'$,  $P_7'$. Observe that $P_1^1$,  depicted in light gray, is not connected. (c) The union of balls $R_2$ is illustrated and the set $\Omega_{\rm good} = \bigcup^4_{j=1}P^1_j\setminus R_2 = \bigcup^4_{j=1}P^2_j$ is given in light gray. (d) In this example we have $R_4 = \emptyset$. The set $\Omega_{\rm bad}$ is depicted in dark gray and $\Omega \setminus \Omega_{\rm bad} = P^3_1 \cup P^3_2 = P_1 \cup P_2$ consists of 
two components, i.e. $I''=2$. We further have $\mathcal{Z}_1 = \emptyset$, $\mathcal{Z}_2 = \lbrace (1,2),(1,3),(1,4),(3,4)\rbrace$ and $\mathcal{Z}_3 = \lbrace (2,3),(2,4) \rbrace$.} \label{figure}
\end{figure}

\noindent \textit{Step 0 (Overview of the proof).} The general idea behind the proof is to modify suitably the infinitesimal rigid motions provided by Theorem \ref{th: korn} so that all the sets $P_j$ of the Caccioppoli partition are \emph{almost completely disconnected} by $J_u$: by this we mean that the interface between different components will be contained in the jump set of $u$ up to a small (in area and perimeter) exceptional set. In doing this, we must anyway be able not to lose the estimate in \eqref{eq: small set main}(iii). These are the main observations that allow us to pursue this strategy:
\begin{itemize}
 \item[(O1)] If the $L^\infty$ distance between two infinitesimal rigid motions $a_{j_1}$ and $a_{j_2}$, that are subtracted from $u$ on two sets $P_{j_1}$ and $P_{j_2}$, respectively, lies below a fixed threshold depending on the error parameter $\theta$ (see \eqref{eq:I}(iii)), we can replace $a_{j_2}$ with $a_{j_1}$ on $P_{j_2}$. Indeed, by construction and using Lemma \ref{lemma: rigid motion}(a), \eqref{eq: small set main}(iii) will still hold up to enlarging $C_{\theta, \Omega}$ suitably.
 
 \item[(O2)] If the $L^\infty$ distance between two infinitesimal rigid motions $a_{j_1}$ and $a_{j_2}$, that are subtracted from $u$ on two sets $P_{j_1}$ and $P_{j_2}$, respectively, lies above an (even larger) fixed threshold depending on $\theta$ (see \eqref{eq:I}(iv)), using Lemma \ref{lemma: interface} the interface between $P_{j_1}$ and $P_{j_2}$ not contained in $J_u$ can be covered by a small ball. This will lead to neglecting a small exceptional set with small perimeter, provided this is not done `too often'. Some combinatorial arguments will indeed be needed (cf., for instance, the derivation of \eqref{eq:balls} later in the proof).
 
 \item[(O3)] On neighboring components $P_{j_1}$ and $P_{j_2}$, whose size lies above a fixed threshold depending on $\theta$, and that are {\it not} almost completely disconnected by $J_u$, the $L^\infty$ estimate in \eqref{eq: small set main}(iii), the continuity of $u$ on part of the interface, together with Lemma \ref{lemma: rigid motion}(a), allow us to estimate the $L^\infty$ distance between the corresponding infinitesimal rigid motions $a_{j_1}$ and $a_{j_2}$ basically only in terms of $\theta$, and therefore we may apply (O1) to remove the artificially introduced boundaries.
\end{itemize}

Guided by these observations, the proof is organized as follows. In Step I we reorganize the partition given by Theorem \ref{th: korn} into \emph{large sets}, of size at least $\theta^2\mathcal{L}^2(\Omega)$, \emph{small sets}, covering only a small part of $\Omega$ and a \emph{rest set}, denoted by $R_1$, which has small perimeter (see \eqref{eq:R}). Using (O1) the partition has now the property that the infinitesimal rigid motions given on large and small components, respectively, differ very much (see \eqref{eq:I}(iv)). This is the starting point for Step II, where, in the spirit of (O2),  we show that the part of the interfaces between large and small components not contained in $J_u$ can be covered by an exceptional set which is small in area and perimeter. In Step III we then investigate the difference of the infinitesimal rigid motions given on large components, again employing Lemma \ref{lemma: interface} to completely disconnect various components, and using (O3) on the others. In Step IV we collect 
all estimates and conclude the proof.\\
\smallskip

\noindent \textit{Step I (Identification of large components).}  The goal of this step is to define a set $R_1 \subset \Omega$ with 
\begin{align}\label{eq:R}
\mathcal{H}^1(\partial^* R_1) \le \theta {\cal H}^1(J_u'), \ \ \ \ \mathcal{L}^2(R_1) \le c\theta^2({\cal H}^1(J_u'))^2,
\end{align}
an (ordered) Caccioppoli partition $\Omega \setminus R_1 = \bigcup^\infty_{j=1} P^1_j$  and corresponding infinitesimal rigid motions $(a^1_j)_j$ such that $v_1 := u - \sum_{j \ge 1} a^1_j \chi_{P^1_j}$ satisfies for an index $I' \in \N$ with $I' \le \theta^{-2}$ and some $K_\theta \in \N$, $K_\theta \le \theta^{-1}$,
\begin{align}\label{eq:I}
(i) & \ \ \mathcal{L}^2(P^1_j) \ge \theta^2\mathcal{L}^2(\Omega) \ \   \text{for all} \ \ 1 \le j \le I', \ \  \mathcal{L}^2\big(\Omega \setminus \bigcup\nolimits_{j=1}^{I'} P_j^1 \big) \le c\theta({\cal H}^1(J_u'))^2, \notag\\
(ii) & \ \ \sum\nolimits_{j\ge 1} \mathcal{H}^1(\partial^* P^1_j) \le c\mathcal{H}^1(J_u'), \notag\\
(iii) &  \ \ \Vert v_1 \Vert_{L^\infty(\Omega \setminus R_1; \R^{2})} \le 2C_{\rm korn}\theta^{-4K_\theta}\mathcal{E} , \ \ \ \Vert \nabla v_1 \Vert_{L^1(\Omega \setminus R_1; \R^{2 \times 2})} \le C_{\theta, \Omega}\mathcal{E}, \notag\\
(iv) & \ \ \min\nolimits_{1 \le i \le I'} \Vert a_i^1 - a_j^1 \Vert_{L^\infty(P^1_j;\R^2)} \ge \theta^{-4(K_\theta+1)}\mathcal{E}  \ \ \text{ for all } j>I'.
\end{align}
Moreover, the sets $(P^1_j)_{j>I'}$ are indecomposable,  while the sets $(P^1_j)_{j=1}^{I'}$ are possibly not indecomposable.

We first apply Theorem \ref{th: korn}  to find an ordered Caccioppoli partition $(P'_j)_{j \ge 1}$ of $\Omega$ and corresponding infinitesimal rigid motions $(a'_j)_j = (a_{A'_j,b'_j})_j$ such that $v' := u - \sum_{j \ge 1} a'_j \chi_{P'_j} \in SBV(\Omega;\R^2) \cap L^\infty(\Omega;\R^2)$ satisfies \eqref{eq: small set main}, in particular
\begin{align}\label{eq: kornpoinsharp4.0}
\Vert v' \Vert_{L^{\infty}(\Omega; \R^2)} + \Vert \nabla v'\Vert_{L^1(\Omega; \R^{2\times 2})} \le C_{\rm korn} \mathcal{E}. 
\end{align}
Without restriction we  assume that the sets $(P'_j)_{j \ge 1}$ are indecomposable. Let $I' \in \N$ be the largest index such that ${\cal L}^2(P'_{I'}) \ge \theta^2 {\cal L}^2(\Omega)$. (Recall that the partition is assumed to be ordered.) Then $I' \le \theta^{-2}$  and    by the isoperimetric inequality and  \eqref{eq: small set main}(i) 
\begin{align}\label{eq: kornpoinsharp4}
\begin{split}
(i) & \ \  \sum\nolimits_{j \ge 1} ({\cal L}^2(P'_j))^{\frac{1}{2}}  \le c\sum\nolimits_{j \ge 1} {\cal H}^1(\partial^* P'_j) \le c {\cal H}^1(J_u') \le C_{\theta, \Omega}, \\ 
(ii) & \ \   \sum\nolimits_{j >I'} {\cal L}^2(P'_j) \le \theta({\cal L}^2(\Omega))^{\frac{1}{2}} \sum\nolimits_{j >I'} ({\cal L}^2(P'_j))^{\frac{1}{2}} \\
& \ \ \ \ \ \ \ \ \ \ \  \ \ \ \ \ \ \  \ \ \  \le c\theta {\cal H}^1(\partial \Omega) \sum\nolimits_{j >I'} {\cal H}^1(\partial^* P'_j) \le c\theta ({\cal H}^1(J_u'))^2,
\end{split}
\end{align}
where in the last step of (i) we used the assumption $\mathcal{H}^1(J_u) \le (\theta^{-1}\mathcal{L}^2(\Omega))^{\frac{1}{2}} $. We introduce a decomposition for the small components according to the difference of infinitesimal rigid motions as follows. For $k \in \N$ we introduce the set of indices
\begin{align}\label{eq: kornpoinsharp5}
\begin{split}
&\mathcal{J}^0 = \lbrace j > I':  \min\nolimits_{1 \le i \le I'}\Vert a'_j - a'_i \Vert_{L^\infty(P'_j;\R^2)} \le \mathcal{E} \theta^{-4} \rbrace, \\
& \mathcal{J}^k = \lbrace j > I':  \mathcal{E} \theta^{-4k}< \min\nolimits_{1 \le i \le I'}\Vert a'_j - a'_i \Vert_{L^\infty(P'_j;\R^2)} \le \mathcal{E} \theta^{-4(k+1)} \rbrace
\end{split}
\end{align}
and  define $s_k = \sum_{j \in \mathcal{J}^k} {\cal H}^1(\partial^* P'_j)$ for $k \in \N_0$. In view of  \eqref{eq: kornpoinsharp4}(i) we find some $K_\theta \in \N$, $K_\theta \le \theta^{-1}$, such that $s_{K_\theta} \le c\theta {\cal H}^1(J_u')$.

We let $R_1 := \bigcup_{j \in \mathcal{J}^{K_\theta}} P'_j$ and the choice of $K_\theta$ together with the isoperimetric inequality shows \eqref{eq:R}. We introduce the Caccioppoli partition $(P^1_j)_{j \ge 1}$ of $\Omega \setminus R_1$ by combining different components of $(P'_j)_{j \ge 1}$. We decompose the indices in $\bigcup_{k=0}^{K_\theta-1} \mathcal{J}^k$ into sets $\mathcal{J}'_i$ with $\bigcup_{i=1}^{I'} \mathcal{J}'_i = \bigcup_{k=0}^{K_\theta-1} \mathcal{J}^k$ according to the following rule: an index $j\in \mathcal{J}^k$ is assigned to $\mathcal{J}'_i$ whenever $i$ is the smallest index such that the minimum in \eqref{eq: kornpoinsharp5} is attained.  

Define the \emph{large components} $P^1_i = P'_i \cup \bigcup_{j \in \mathcal{J}'_i} P'_j$ for $1 \le i \le I'$ and by $(P^1_i)_{i > I'}$ we denote the \emph{small components}
\begin{align}\label{eq: small components}
\big\{ P'_j: j > I',  \ j \in \bigcup\nolimits_{k=K_\theta+1}^\infty \mathcal{J}^k \big\}.
\end{align}
Then \eqref{eq:I}(i) holds by \eqref{eq:R}, \eqref{eq: kornpoinsharp4}(ii) and we see that the sets $(P^1_j)_{j>I'}$ are indecomposable. Likewise, \eqref{eq:I}(ii) follows from \eqref{eq: kornpoinsharp4}(i). Moreover, we define $a^1_j = a'_j$ for $1 \le j \le I'$ and let  $a^1_j = a'_{k_j}$ for $j >I'$, where $k_j \in \N$ such that $P^1_j  = P'_{k_j}$. We introduce $v_1 = u - \sum_{j \ge 1} a_j^1 \chi_{P_j^1}$ and observe that by \eqref{eq: kornpoinsharp4.0},   \eqref{eq: kornpoinsharp5} and the definition of $P^1_j$  for $1 \le j \le I'$ we have
\begin{align*}
\Vert v_1 \Vert_{L^\infty(P_j^1;\R^{2})} &\le \Vert  v' \Vert_{L^\infty(P_j^1;\R^{2})} + \Vert v_1 - v'\Vert_{L^\infty(P_j^1;\R^{2})}   \le C_{\rm korn}\mathcal{E} + \theta^{-4K_\theta} \mathcal{E} \\&\le 2C_{\rm korn}\theta^{-4K_\theta} \mathcal{E}.
\end{align*} 
Moreover, by Lemma \ref{lemma: rigid motion}, \eqref{eq: kornpoinsharp4.0}, \eqref{eq: kornpoinsharp4}(i), \eqref{eq: kornpoinsharp5} and the definition of $\mathcal{J}_i'$ we find  
\begin{align*}
\sum\nolimits_{i=1}^{I'}\Vert \nabla v_1 \Vert_{L^1(P_i^1;\R^{2 \times 2})} &\le \sum\nolimits_{i=1}^{I'} \Big( \Vert \nabla v' \Vert_{L^1(P_i^1;\R^{2 \times 2})} + \sum\nolimits_{j \in \mathcal{J}'_i} \mathcal{L}^2(P'_j)|A'_j - A^1_i|\Big) \\&\le \Vert \nabla v' \Vert_{L^1(\Omega;\R^{2 \times 2})} + c\sum_{i=1}^{I'}\sum_{j \in \mathcal{J}'_i} (\mathcal{L}^2(P'_j))^{\frac{1}{2}} \Vert a'_j - a'_i \Vert_{L^\infty(P_j';\R^2)}\\
&\le C_{\rm korn}\mathcal{E} + c\theta^{-4K_\theta} \mathcal{E}\sum\nolimits_{j \ge 1} (\mathcal{L}^2(P'_j))^{\frac{1}{2}}  \le C_{\theta, \Omega}\mathcal{E}.
\end{align*}
Note that the last constant $C_{\theta, \Omega}$ indeed only depends on $\theta$ and $\Omega$ since $K_\theta \le \theta^{-1}$ and $C_{\rm korn}$ only depends on $\Omega$. The last two estimates together with \eqref{eq: kornpoinsharp4.0} show \eqref{eq:I}(iii). Finally, the definition of the small components in \eqref{eq: small components}  together with \eqref{eq: kornpoinsharp5} implies \eqref{eq:I}(iv).\\

\noindent \textit{Step II (Interface between large and small components).} We now show that there is a union of balls $R_2 \subset \Omega $  and a Caccioppoli partition $\bigcup^{I'}_{j=1} P^2_j$ of $\Omega_{\rm good} := \bigcup_{j=1}^{I'} (P^1_j \setminus R_2)$  and corresponding infinitesimal rigid motions $(a^2_j)_{j=1}^{I'}$ such that with $v_2 := u - \sum_{j =1}^{I'} a^2_j \chi_{P^2_j}$ we have
\begin{align}\label{eq:II}
\begin{split}
(i) & \ \ \mathcal{L}^2(\Omega \setminus \Omega_{\rm good}) \le c\theta({\cal H}^1(J_u'))^2,\\
(ii) & \ \ \mathcal{H}^1(\partial^* \Omega_{\rm good} \setminus J_u') \le c\theta\mathcal{H}^1(J_u'), \\
(iii) & \ \ \sum\nolimits_{j= 1}^{I'} \mathcal{H}^1(\partial^* P^2_j) \le c\mathcal{H}^1(J_u'),\\
(iv) &  \ \ \Vert v_2 \Vert_{L^\infty(\Omega_{\rm good}; \R^{2})} +  \Vert \nabla v_2 \Vert_{L^1(\Omega_{\rm good}; \R^{2 \times 2})} \le C_{\theta,\Omega}\mathcal{E}.
\end{split}
\end{align}
First, for each $1 \le i \le I'$ and $j >I'$ we apply Lemma \ref{lemma: interface} for $P_1 = P^1_i$ and $P_2 = P^1_j$ and obtain a ball $B_{i,j}$ with $\mathcal{H}^1( (\partial^* P_i^1 \cap \partial^* P_j^1) \setminus (B_{i,j} \cup J_u))=0$ such that    by \eqref{eq:I}(iii),(iv) 
\begin{align*}
\diam(B_{i,j}) &\le 16 C_{\rm korn} \,\diam(P_j^1) \cdot \theta^{-4K_\theta} \cdot (\theta^{-4(K_\theta+1)})^{-1} \le  \theta^3 \,\diam(P_j^1),
\end{align*}
where the last step follows from the fact that  $\theta \le \frac{1}{16}C_{\rm korn}^{-1}$. Then by Lemma \ref{lemma: diam} and the fact that $P_j^1$ is indecomposable (see below \eqref{eq:I}) we get $\diam(B_{i,j}) \le \theta^3 \mathcal{H}^1(\partial^* P_j^1)$.    

 Define $R_2 = \bigcup_{i \le I' < j} B_{i,j}$ and compute by   \eqref{eq:I}(ii) and $I' \le \theta^{-2}$ (cf. \eqref{eq:I}(i))
\begin{align}\label{eq:balls}
\sum\nolimits_{i \le I' < j} \mathcal{H}^1(\partial B_{i,j}) \le \theta^3 I'\sum\nolimits_{j>I'} \mathcal{H}^1(\partial^* P_j^1) \le c\theta  \mathcal{H}^1(J_u').
\end{align}
Then the isoperimetric inequality yields $\mathcal{L}^2(R_2) \le c\theta^2  (\mathcal{H}^1(J_u'))^2$ and this together with   \eqref{eq:I}(i) shows \eqref{eq:II}(i). Let $P^2_j = P^1_j \setminus R_2$ and $a^2_j = a^1_j$ for $1 \le j \le I'$. Then    \eqref{eq:II}(iii) follows from  \eqref{eq:I}(ii) and \eqref{eq:balls}. To see   \eqref{eq:II}(ii), we calculate by Theorem \ref{th: local structure}, \eqref{eq:R}  and \eqref{eq:balls} recalling that $\Omega_{\rm good} \cup \bigcup_{j > I'} (P^1_j \setminus R_2) \cup  (R_1 \setminus R_2)  \cup R_2$ is a partition of $\Omega$
\begin{align*}
\mathcal{H}^1(\partial^* \Omega_{\rm good} \setminus (J_u \cup \partial \Omega)) &\le \sum_{i \le I' < j}\Big( \mathcal{H}^1\big(  (\partial^* P^1_i \cap \partial^* P^1_j)  \setminus (J_u \cup B_{i,j})\big) + \mathcal{H}^1(\partial B_{i,j}) \Big)  \\
& \ \ \ + \mathcal{H}^1(\partial^* R_1) \le  0 + c\theta  \mathcal{H}^1(J_u') = c\theta  \mathcal{H}^1(J_u'). 
\end{align*}
Finally,  \eqref{eq:II}(iv) follows from \eqref{eq:I}(iii), the definition of $v_2$ and the fact that $K_\theta \le \theta^{-1}$. \\

 \noindent \textit{Step III (Interface between large components).} We now investigate the difference of the infinitesimal rigid motions $(a^2_j)_{j=1}^{I'}$. We show that there is a  union of balls  $R_3 \subset \Omega$ and a Caccioppoli partition $\Omega_{\rm good} \setminus R_3= \bigcup^{I''}_{i=1} P^3_i$ with $I'' \le I'$  and corresponding infinitesimal rigid motions $(a^3_i)_{i=1}^{I''}$ such that with $v_3 := u - \sum_{i =1}^{I''} a^3_i \chi_{P^3_i}$ we have
\begin{align}\label{eq:III}
\begin{split}
(i) & \ \ \mathcal{H}^1(\partial^* R_3) \le c\theta {\cal H}^1(J_u'), \ \ \ \ \mathcal{L}^2(R_3) \le c\theta^2({\cal H}^1(J_u'))^2,\\
(ii) & \ \ \sum\nolimits_{i= 1}^{I''} \mathcal{H}^1(\partial^* P^3_i \setminus J_u') \le c\theta\mathcal{H}^1(J_u'),\\
(iii) &  \ \ \Vert v_3 \Vert_{L^\infty(\Omega_{\rm good} \setminus R_3; \R^{2})} +  \Vert \nabla v_3 \Vert_{L^1(\Omega_{\rm good} \setminus R_3; \R^{2 \times 2})} \le C_{\theta,\Omega}\mathcal{E}.
\end{split}
\end{align}

In the following we  denote  the constant given in \eqref{eq:II}(iv) by $\bar{C} = \bar{C}(\theta,\Omega)$ to distinguish it from other generic constants $C_{\theta,\Omega}$. We introduce the set of indices $\mathcal{Z}_1 = \lbrace 1 \le j \le I': \diam(P^2_j) \le \theta^3 \mathcal{H}^1(\partial \Omega) \rbrace$ \footnote{The introduction of $\mathcal{Z}_1$ is only a technical point due to the fact that by the previous step some large components may have become small after cutting of $R_2$.} and let $\mathcal{Z}_2 = \lbrace (i,j): 1 \le i<j \le I', \ i,j \notin \mathcal{Z}_1 \rbrace$ be the collection of pairs with
\begin{align}\label{eq:Bdef}
\begin{split}
\max\nolimits_{k=i,j} \Vert a^2_i - a^2_j \Vert_{L^\infty(P^2_k;\R^2)} > \bar{C}\theta^{-5} \mathcal{E}.
\end{split}
\end{align}
Finally, let $\mathcal{Z}_3 = \lbrace (i,j): 1 \le i<j \le I', \ i,j \notin \mathcal{Z}_1, (i,j) \notin \mathcal{Z}_2 \rbrace$.

For each $j \in \mathcal{Z}_1$ we find a ball $B^1_j$ with $\mathcal{H}^1(\partial B_j^1) \le c\theta^3\mathcal{H}^1(\partial \Omega)$ and $P_j^2 \subset B^1_j$. Moreover,   by Lemma \ref{lemma: interface}  we find for each $(i,j) \in \mathcal{Z}_2$   a ball $B^2_{i,j}$ satisfying $\mathcal{H}^1\big( (\partial^* P_i^2 \cap \partial^* P_j^2) \setminus (B^2_{i,j} \cup J_u) \big) = 0$ and by \eqref{eq:II}(iv)
\begin{align*}
{\rm diam}(B^2_{i,j})& \le c \max_{k=i,j}\diam(P^2_k) \  \big( \max_{k=i,j} \Vert a^2_i - a^2_j \Vert_{L^\infty(P^2_k;\R^2)}\big)^{-1} \ \Vert v_2 \Vert_{L^\infty(\Omega_{\rm good}; \R^{2})}\\
& \le c\diam(\Omega)  (\bar{C}\theta^{-5} \mathcal{E})^{-1} \bar{C}\mathcal{E}  \le c\theta^5 \mathcal{H}^1(\partial \Omega),
\end{align*}
where in the last step ${\rm diam}(\Omega) \le \mathcal{H}^1(\partial \Omega)$ follows from the fact that  $\Omega$ is assumed to be  connected.

We define $R_3 = \bigcup_{j \in \mathcal{Z}_1} B^1_j \cup \bigcup_{(i,j) \in \mathcal{Z}_2} B^2_{i,j}$ and the fact that $ \# \mathcal{Z}_1 \le \theta^{-2}$, $\# \mathcal{Z}_2 \le I'(I'-1) \le \theta^{-4}$ yields 
\begin{align}\label{eq:balls2}
\sum\nolimits_{j \in \mathcal{Z}_1}\mathcal{H}^1(\partial B^1_{j}) +  \sum\nolimits_{(i,j) \in \mathcal{Z}_2} \mathcal{H}^1(\partial B^2_{i,j}) \le c\theta \mathcal{H}^1(\partial \Omega),
 \end{align}
 which together with the isoperimetric inequality gives \eqref{eq:III}(i). We now combine different components $(P^2_j)_{j=1}^{I'}$: we can find a decomposition  $\mathcal{I}_1 \dot{\cup} \ldots \dot{\cup} \mathcal{I}_{I''}$ of the indices  $\lbrace 1, \ldots, I' \rbrace \setminus \mathcal{Z}_1$  with the property that for each pair $i_1,i_2 \in \mathcal{I}_j$,  $i_1 < i_2$, we find a chain $i_1 =l_1 < l_2 < \ldots <l_n=i_2$ such that $(l_k,l_{k+1}) \in \mathcal{Z}_3$ for all $k=1,\ldots,n-1$.

Then we introduce a partition of $\Omega_{\rm good} \setminus  R_3$ consisting of the sets $P^3_i = \bigcup_{j \in \mathcal{I}_i} (P^2_j \setminus R_3)$, $1 \le i \le I''$. (Note that this is indeed a partition of $\Omega_{\rm good} \setminus  R_3$ since, by construction, $P_j^2 \subset \Omega_{\rm good}$ for $j \in \mathcal{I}_i$ and $P_j^2 \subset R_3$ for $j \in \mathcal{Z}_1$.) To see \eqref{eq:III}(ii), we now compute using the property of the balls $B^1_{j}$,  $B^2_{i,j}$, as well  as \eqref{eq:II}(ii), \eqref{eq:balls2} and Theorem \ref{th: local structure} 
\begin{align*}
\sum\nolimits_{i= 1}^{I''} \mathcal{H}^1(\partial^* P^3_i \setminus J'_u)& \le  \mathcal{H}^1(\partial^* \Omega_{\rm good} \setminus J_u') +  \sum\nolimits_{j \in \mathcal{Z}_1} \mathcal{H}^1(\partial B^1_j)\\
&  \  + \sum\nolimits_{(i,j) \in \mathcal{Z}_2} \Big( \mathcal{H}^1\big(  (\partial^* P^2_i \cap \partial^* P^2_j)  \setminus (  {B}^2_{i,j} \cup J_u \big) + \mathcal{H}^1(\partial {B}^2_{i,j}) \Big)  \\ & \le c\theta\mathcal{H}^1(J_u')  +  c\theta\mathcal{H}^1(\partial \Omega)   + 0\le c\theta\mathcal{H}^1(J_u').
\end{align*}
It remains to define $v_3$ and to show \eqref{eq:III}(iii).  Fix $(i,j) \in \mathcal{Z}_3$. Then by the fact that \eqref{eq:Bdef} does not hold and $\min_{k=i,j}\diam(P^2_k) \ge \theta^3\mathcal{H}^1(\partial \Omega) \ge \theta^3\diam(\Omega)$ a short calculation implies $\Vert a^2_i - a^2_j \Vert_{L^\infty(\Omega;\R^2)} \le C \mathcal{E}$ for some $C=C(\Omega,\theta,\bar{C})$. Then the triangle inequality together with $\#\mathcal{I}_j \le  I' \le  \theta^{-2}$ yields
$$\max\nolimits_{i_1,i_2 \in \mathcal{I}_j} \Vert a^2_{i_1} - a^2_{i_2} \Vert_{L^\infty(\Omega;\R^2)} \le C_{\theta,  \Omega}\mathcal{E}$$
for all $1 \le j \le I''$, which by Lemma \ref{lemma: rigid motion} implies $\max\nolimits_{i_1,i_2 \in \mathcal{I}_j} |A^2_{i_1} - A^2_{i_2}| \le C_{\theta,  \Omega}\mathcal{E}$. For each $P^3_j$,   $1 \le j \le I''$, we choose an infinitesimal rigid motion $a^3_j$, which coincides with an arbitrary $a^2_{i}$, $i \in \mathcal{I}_j$. Then \eqref{eq:III}(iii) follows from  \eqref{eq:II}(iv).\\

 \noindent \textit{Step IV (Conclusion).} We are now in a position to prove the assertion of the theorem. Suppose that the partition $(P^3_j)_{j=1}^{I''}$ is ordered and choose the smallest index $I$ such that $\mathcal{L}^2(P^3_{I+1}) \le (\theta \mathcal{H}^1(J_u'))^2$. Define $R_4 = \bigcup_{j=I+1}^{I''} P^3_j$ and compute by the isoperimetric inequality and \eqref{eq:III}(ii)
  $$\mathcal{L}^2(R_4) \le \theta\mathcal{H}^1(J_u') \sum_{j=I+1}^{I''} (\mathcal{L}^2(P^3_j))^{\frac{1}{2}} \le c\theta \mathcal{H}^1(J_u') \sum_{j=I+1}^{I''} \mathcal{H}^1(\partial^* P^3_j) \le c\theta (\mathcal{H}^1(J_u'))^2.$$  
Then we define $\Omega_{\rm bad} := (\Omega \setminus \Omega_{\rm good}) \cup (R_3 \cup R_4)$ and by   \eqref{eq:II}(i),(ii), \eqref{eq:III}(i),(ii) we get $\mathcal{H}^1(\partial^* \Omega_{\rm bad} \setminus J'_u) \le c\theta\mathcal{H}^1(J_u')$ and $\mathcal{L}^2(\Omega_{\rm bad}) \le c\theta(\mathcal{H}^1(J_u'))^2$.

We define $u^\theta \in SBV(\Omega;\R^2) \cap L^\infty(\Omega;\R^2)$ by $u^\theta = u \chi_{\Omega \setminus \Omega_{\rm bad}} + t_0 \chi_{\Omega_{\rm bad}}$ for some $t_0 \in \R^2$ such that  $\mathcal{L}^2(\lbrace u=t_0 \rbrace)=0$, which is possible since $u$ is measurable. With this, $\mathcal{L}^2(\lbrace u^\theta \neq u \rbrace \triangle \Omega_{\rm bad})=0$. Observe that the previous calculation yields  \eqref {eq: kornpoinsharp1}. Let $(P_i)_{i=0}^{I}$ be the  Caccioppoli partition consisting of the sets $P_0 =  \Omega_{\rm bad}$ and $P_i = P^3_i$ for $1 \le i \le I $. Set $a_i = a^3_i$ for $1 \le i \le I$ and $a_0 = t_0$. Then \eqref{eq: kornpoinsharp2}(iii) follows from \eqref{eq:III}(iii) and \eqref{eq:III}(ii) yields \eqref{eq: kornpoinsharp2}(i). Finally, the choice of the index $I$ together with the fact that $\mathcal{H}^1(J_u') \ge \mathcal{H}^1(\partial \Omega)$ implies   \eqref{eq: kornpoinsharp2}(ii). 
 \end{proof}

\begin{proof}[Proof of Remark \ref{rem:square}]  Let $Q_\lambda = x+ (0,\lambda)^2$ be given and $u \in GSBD^2(Q_\lambda)$. After translation we may assume $x=0$. Define $\bar{u} \in GSBD^2(Q_1)$ by $\bar{u}(x) = u(\lambda x)$ and also note that $\nabla \bar{u}(x) = \lambda \nabla u(\lambda x)$ and $\mathcal{H}^1(J_{\bar{u}}) = \lambda^{-1} \mathcal{H}^1(J_u)$. Applying the above theorem for $\bar{u}$ on $Q_1$ we obtain $\bar{u}^\theta \in SBV(Q_1;\R^2) \cap L^\infty(Q_1;\R^2)$ such that 
\begin{align*}
\begin{split}
(i) & \ \ 
{\cal L}^2(\lbrace \bar{u} \neq \bar{u}^\theta \rbrace)  \le c\theta ({\cal H}^1(J_{\bar{u}})+ \mathcal{H}^1(\partial Q_1))^2, \\
(ii) & \ \    {\cal H}^1((\partial^* \lbrace \bar{u} \neq \bar{u}^\theta \rbrace  \cap Q_1) \setminus J_{\bar{u}})  \le c\theta ({\cal H}^1(J_{\bar{u}})+ \mathcal{H}^1(\partial Q_1)),
\end{split}
\end{align*}
a (finite) Caccioppoli partition $Q_1 = \bigcup^{I}_{i=0} \bar{P}_i$, and corresponding infinitesimal rigid motions $(\bar{a}_i)_{i=0}^I$ such that  
 $\bar{v} := \bar{u}^\theta - \sum\nolimits_{i=0}^{I} \bar{a}_i \chi_{\bar{P}_i} \in SBV(Q_1;\R^2)\cap L^\infty(Q_1;\R^2)$ is constant on $\bar{P}_0$ and  satisfies
\begin{align*}
\begin{split}
(i) & \ \  \sum\nolimits_{i=0}^{I}{\cal H}^1( (\partial^* \bar{P}_i \cap Q_1) \setminus J_{\bar{u}} ) \le c\theta ({\cal H}^1(J_{\bar{u}}) + \mathcal{H}^1(\partial Q_1)),\\
(ii) & \ \ \mathcal{L}^2(\bar{P}_i) \ge C_{Q_1}\theta^2 = C_{Q_1}\theta^2 \mathcal{L}^2(Q_1), \ \ 1 \le i \le I,\\
(iii)& \ \ \Vert \bar{v} \Vert_{L^\infty(Q_1; \R^2)} + \Vert \nabla \bar{v} \Vert_{L^1(Q_1;\R^{2\times 2})} \le C_{\theta,Q_1} \Vert  e(\bar{u}) \Vert_{L^2(Q_1; \R_{\mathrm{sym}}^{2\times 2})}.
\end{split}
\end{align*}
Set $P_i = \lambda \bar{P}_i$, $u^\theta(x) = \bar{u}^\theta(\lambda^{-1}x)$ and $v(x) = \bar{v}(\lambda^{-1}x) \in SBV(Q_\lambda;\R^2)$. The estimates for the modification in \eqref{eq: kornpoinsharp1}  follow since the estimate in (i) is two homogeneous and the estimate in (ii) is one homogeneous. For the same reason \eqref{eq: kornpoinsharp2}(i) and \eqref{eq:local1-new}(i) hold. We finally show \eqref{eq:local1-new}(ii).

By transformation formula and the fact that $\nabla \bar{u}(x) = \lambda \nabla u(\lambda x)$ we have  $\Vert e(u) \Vert^2_{L^2(Q_\lambda)} = \Vert e(\bar{u}) \Vert^2_{L^2(Q_1)}$.  Likewise, $\Vert \nabla v \Vert_{L^1(Q_\lambda)} = \lambda\Vert \nabla \bar{v} \Vert_{L^1(Q_1)}$ and finally we clearly have $\Vert v \Vert_\infty = \Vert \bar{v} \Vert_\infty$. Then (ii) follows as $\diam(Q_\lambda) = \sqrt{2}\lambda\ge~ \lambda$. 
\end{proof}

\subsection{A version with Dirichlet boundary conditions}

We now state a version of the piecewise Korn inequality with Dirichlet boundary conditions, which will be needed for the general existence result in  Section \ref{sec:comp}, but not for the jump transfer lemma in Section \ref{sec:jump}.  The reader more interested in the derivation of the latter may therefore wish to skip the  remainder of this section and to proceed directly with Section \ref{sec:jump}.

\begin{theorem}\label{th: korn-boundary}
Let $\Omega\subset\Omega'$ be bounded domains in $\R^2$ with Lipschitz boundary  such that \eqref{eq: omega'} holds. Let  $\theta>0$.  Then there is a  constant $ \bar{c}=\bar{c}(\Omega,\Omega')>0$ and some $C_{\theta,\Omega'}=C_{\theta,\Omega'}(\theta,\Omega')>0$   such that for  each $w \in H^1(\Omega';\R^2)$  and $u \in GSBD^2(\Omega')$  with $u = w$ on $\Omega' \setminus \overline{\Omega}$ there is a modification $u^\theta \in SBV(\Omega';\R^2)$ satisfying 
 \begin{align}\label{eq: boundary modi}
 \begin{split}
(i) & \ \   {\cal L}^2(\lbrace u \neq u^\theta\rbrace) \le \bar{c}\theta ({\cal H}^1(J_u) + 1)^2, \ \ \ {\cal H}^1(J_{u^\theta} \setminus J_u) \le \bar{c}\theta ({\cal H}^1(J_u) + 1)\\
(ii) & \ \ \Vert e(u^\theta) \Vert^2_{L^2(\Omega;\R^{2\times 2}_{\rm sym})} \le  \Vert e(u) \Vert^2_{L^2(\Omega;\R^{2\times 2}_{\rm sym})}+ \Vert \nabla w\Vert^2_{L^2(\lbrace u \neq u^\theta\rbrace; \R^{2\times 2})},\\
 \end{split}
 \end{align} 
a Caccioppoli partition $\Omega' = \bigcup^\infty_{j=1} P_j$ and corresponding infinitesimal rigid motions $(a_j)_j = (a_{A_j,b_j})_j$ such that  $v := u^\theta - \sum\nolimits^\infty_{j=1} a_j \chi_{P_j} \in SBV(\Omega';\R^2) \cap L^2(\Omega';\R^2)$,  
and 
\begin{align}\label{eq: small set main-boundary}
(i) & \ \  \sum\nolimits_{j=1}^\infty {\cal H}^1( (\partial^* P_j \cap \Omega') \setminus J_u ) \le \bar{c}\theta ({\cal H}^1(J_u) + 1),\notag\\
(ii) & \ \ v = w \ \text{ on } \Omega' \setminus \overline{\Omega},\\
(iii) &\ \ \Vert v \Vert_{L^2(\Omega';\R^2)} + \Vert \nabla v \Vert_{L^1(\Omega';\R^{2 \times 2})}  \le C_{\theta,\Omega'} \Vert  e(u) \Vert_{L^2(\Omega';\R_{\mathrm{sym}}^{2\times 2})} + C_{\theta,\Omega'}\Vert  w\Vert_{H^1(\Omega'; \R^2)}.\notag
\end{align}
\end{theorem}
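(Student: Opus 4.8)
The plan is to derive Theorem \ref{th: korn-boundary} from Theorem \ref{th: kornpoin-sharp} by applying the latter to a suitable auxiliary function on the enlarged domain $\Omega'$, chosen so that the boundary condition $u = w$ on $\Omega' \setminus \overline\Omega$ is restored at the end. First I would reduce to the case $w = 0$: replacing $u$ by $\tilde u := u - w$, we have $\tilde u \in GSBD^2(\Omega')$ with $\tilde u = 0$ on $\Omega' \setminus \overline\Omega$, $J_{\tilde u} \subset J_u$ (since $w$ is Sobolev, hence has no jump), and $e(\tilde u) = e(u) - e(w)$, so $\Vert e(\tilde u) \Vert_{L^2(\Omega')} \le \Vert e(u) \Vert_{L^2(\Omega)} + \Vert \nabla w \Vert_{L^2(\Omega')}$ (using $u=w$ outside $\overline\Omega$). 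The key structural point is that $\tilde u$ vanishes on the ``collar'' $\Omega' \setminus \overline\Omega$, which by \eqref{eq: omega'} has Lipschitz boundary; this region must end up inside a single partition element carrying the trivial rigid motion $a = 0$.

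Next I would apply Theorem \ref{th: kornpoin-sharp} to $\tilde u$ on $\Omega'$ with the same parameter $\theta$, obtaining $\tilde u^\theta$, a finite Caccioppoli partition $\Omega' = \bigcup_{i=0}^I P_i$ and rigid motions $(a_i)$ with the estimates \eqref{eq: kornpoinsharp1}--\eqref{eq: kornpoinsharp2}. The issue is that a priori $\tilde u^\theta$ need not equal $0$ on $\Omega' \setminus \overline\Omega$, and the partition elements overlapping the collar may carry nonzero rigid motions. To fix this, I would \textbf{redefine} $\tilde u^\theta$ and $v$ to be $0$ on $\Omega' \setminus \overline\Omega$ and correspondingly merge all partition elements meeting $\Omega'\setminus\overline\Omega$ (together with the original ``bad set'' $P_0$) into a single set $\hat P_0$, setting $\hat a_0 = 0$. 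The extra jump created along $\partial(\Omega'\setminus\overline\Omega) \cap \Omega'$ and along the former internal boundaries is controlled: $\partial(\Omega'\setminus\overline\Omega)\cap\Omega'$ has finite, $\Omega,\Omega'$-dependent length (absorbed into the ``$+1$'' terms), and the interior interfaces that get absorbed are already counted in \eqref{eq: kornpoinsharp2}(i). This is where the estimates \eqref{eq: boundary modi}(i) and \eqref{eq: small set main-boundary}(i) come from, with $\bar c$ depending on $\Omega,\Omega'$ through $\mathcal{H}^1(\partial(\Omega'\setminus\overline\Omega))$ and $\mathcal{H}^1(\partial\Omega')$. The $L^\infty$ bounds of Theorem \ref{th: kornpoin-sharp} give $L^2$ bounds on the bounded domain $\Omega'$, yielding \eqref{eq: small set main-boundary}(iii) after translating back to $u = \tilde u + w$ (the $w$-contribution accounts for the $C_{\theta,\Omega'}\Vert w\Vert_{H^1}$ term).

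Finally I would translate back: set $u^\theta := \tilde u^\theta + w$ and keep $v$ as the (now modified, collar-vanishing) function $\tilde u^\theta - \sum_j \hat a_j \chi_{\hat P_j}$. Then $u^\theta = w$ on $\Omega'\setminus\overline\Omega$ by construction of $\hat u^\theta$, giving \eqref{eq: small set main-boundary}(ii) since $v = \tilde u^\theta = 0$ there. For \eqref{eq: boundary modi}(ii): on $\Omega$, $e(u^\theta) = e(\tilde u^\theta) + e(w)$ and on $\{u = u^\theta\}$ we have $e(u^\theta) = e(u)$, while on $\{u \neq u^\theta\} \cap \Omega$ we have $u^\theta = t_0 + w$ (constant plus $w$) so $e(u^\theta) = e(w)$; splitting $\Vert e(u^\theta)\Vert^2_{L^2(\Omega)}$ over these two sets gives exactly $\Vert e(u)\Vert^2_{L^2(\Omega\cap\{u=u^\theta\})} + \Vert\nabla w\Vert^2_{L^2(\{u\neq u^\theta\})} \le \Vert e(u)\Vert^2_{L^2(\Omega)} + \Vert\nabla w\Vert^2_{L^2(\{u\neq u^\theta\})}$. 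The \emph{main obstacle} I anticipate is the bookkeeping in the collar-merging step: one must verify that absorbing the partition elements that touch $\Omega'\setminus\overline\Omega$ into $\hat P_0$ does not destroy the lower volume bound \eqref{eq: kornpoinsharp2}(ii) for the \emph{remaining} elements (it does not, since those are untouched) and that the newly exposed portion of $\partial^*\hat P_0$ inside $\Omega$ is still controlled by $c\theta(\mathcal{H}^1(J_u)+1)$ — this requires carefully separating the genuinely ``new'' jump (the fixed-length collar boundary, handled by the ``$+1$'') from interfaces already accounted for in Theorem \ref{th: kornpoin-sharp}, and checking that $J_{\tilde u} \subset J_u$ lets one replace $J_{\tilde u}$ by $J_u$ throughout without loss.
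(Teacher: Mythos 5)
Your overall strategy (apply Theorem \ref{th: kornpoin-sharp} on $\Omega'$, then adjust near the collar $\Omega'\setminus\overline\Omega$) is the same as the paper's, and your derivation of \eqref{eq: boundary modi}(ii) is correct. However, your collar-merging step has a genuine gap that you yourself flag as ``the main obstacle,'' and the way you propose to handle it does not work.

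The problem is not mainly the interface length but the control of the rigid motions. When you merge all partition elements $P_i$ that meet $\Omega'\setminus\overline\Omega$ into $\hat P_0$ and set $\hat a_0 = 0$, you are discarding the rigid motions $a_i$ on those elements. On $P_i\cap\Omega$ for such $i$, the new $v$ is $\tilde u^\theta$ itself (no rigid motion subtracted), which equals $a_i + v'$ with $v'$ the $L^\infty$-controlled function. To get \eqref{eq: small set main-boundary}(iii) you would need to bound $|a_i|$. The only information available is $\tilde u=0$ on $P_i\cap(\Omega'\setminus\overline\Omega)$, which by Lemma \ref{lemma: rigid motion} gives $|A_i|+|b_i|\lesssim \mathcal{L}^2(P_i\cap(\Omega'\setminus\overline\Omega))^{-1/2}\Vert v'\Vert_\infty$. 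This blows up when a component barely grazes the collar, which can happen, so the $L^2$ and $L^1$ bounds on $v$ are lost. (Resetting $\tilde u^\theta=0$ on all of $\hat P_0$ to rescue the bound instead destroys \eqref{eq: boundary modi}(i), since $\mathcal L^2(\hat P_0)$ can be a fixed fraction of $\mathcal L^2(\Omega')$, not $O(\theta)$.) A second, related issue: absorbing $\mathcal H^1(\partial\Omega\cap\Omega')$ ``into the $+1$ terms'' does not work, because the required estimate is $\bar c\theta(\mathcal H^1(J_u)+1)$ with $\bar c=\bar c(\Omega,\Omega')$ independent of $\theta$; a fixed-length extra interface is not $O(\theta)$.

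The paper avoids both problems by distinguishing, among the $P'_j$ that meet the collar, between those with $\mathcal L^2(P'_j\cap(\Omega'\setminus\overline\Omega))\ge\theta$ (class $\mathcal P_2$) and the rest (class $\mathcal P_3$). For $\mathcal P_2$ the collar overlap is large enough that Lemma \ref{lemma: rigid motion} (applied with $\psi(s)=s^2$ and the constraint $u=w$ on the collar) bounds the rigid motion, so one may safely set $a_j=0$ there while keeping the component intact; only finitely many (at most $\theta^{-1}\mathcal L^2(\Omega')$) such components exist, giving the $L^2$/$L^1$ control and the $\Vert w\Vert_{H^1}$-dependence in \eqref{eq: small set main-boundary}(iii). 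For $\mathcal P_3$ the component is instead split along $\partial\Omega$, keeping the original rigid motion inside $\Omega$ and setting $0$ in the collar. This introduces new interface along $\partial\Omega\cap P'_j$, but precisely because these components have small collar overlap ($<\theta$), Lemma \ref{lemma: maggi} forces $P'_j\cap(\Omega'\setminus\overline\Omega)$ to have small diameter, and the Lipschitz-graph structure of $\partial\Omega$ then bounds $\mathcal H^1(\partial\Omega\cap P'_j)$ by $C\,\mathcal H^1(\partial^*P'_j\cap(\Omega'\setminus\overline\Omega))$, which is already controlled by \eqref{eq: kornpoinsharp2}(i). That dichotomy, and Lemma \ref{lemma: maggi}, are the missing ingredients in your proposal.

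Finally, your reduction to $w=0$ (replacing $u$ by $\tilde u=u-w$) is harmless but not how the paper proceeds; the paper applies Theorem \ref{th: kornpoin-sharp} to $u$ directly and uses $u=w$ on the collar only when bounding the rigid motions for $\mathcal P_2$.
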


As a preparation, we need the following lemma.

\begin{lemma}\label{lemma: maggi}
Let $A \subset \R^2$ open, bounded with Lipschitz boundary. Then there exists $\delta=\delta(A)$ such that for all indecomposable sets $E \subset A$ with finite perimeter satisfying ${\cal H}^1(\partial^* E \cap A) \le \delta(A)$ one has either
\begin{align*}
(i)  \ \ {\cal L}^2(E) > \tfrac{1}{2} {\cal L}^2(A) \ \ \ \text{ or } \ \ \ 
(ii)  \ \ {\rm diam}(E) \le C_A {\cal H}^1(\partial^* E \cap A)
\end{align*}
for some constant $C_A$ only depending on $A$.
\end{lemma}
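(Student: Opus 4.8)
\emph{Plan of proof.} Write $\eta := \mathcal{H}^1(\partial^* E \cap A)$ and assume without loss of generality that $A$ is connected (an indecomposable set of finite perimeter contained in $A$ lies automatically in a single connected component, so the general case follows by treating the components separately). The strategy is to show that the relative perimeter $\eta$ in fact controls the \emph{whole} perimeter $\mathcal{H}^1(\partial^* E)$ in $\R^2$ — the missing piece $\mathcal{H}^1(\partial^* E \cap \partial A)$ being bounded by $C_A\,\eta$ once $\eta$ lies below a threshold $\delta(A)$ — and then to invoke Lemma~\ref{lemma: diam}.

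First I would apply the relative isoperimetric inequality in the bounded Lipschitz domain $A$: there is $c_A=c_A(A)$ with $\min\{\mathcal{L}^2(E),\mathcal{L}^2(A\setminus E)\}\le c_A\,\eta^2$ for every finite-perimeter set $E\subset A$. If alternative (i) fails, that is $\mathcal{L}^2(E)\le\tfrac12\mathcal{L}^2(A)$, this gives $\mathcal{L}^2(E)\le c_A\,\eta^2\le c_A\,\delta(A)^2$, so $E$ has small measure; it remains to establish (ii).

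Next comes the key estimate $\mathcal{H}^1(\partial^* E\cap\partial A)\le C_A\,\eta$. Using that $\partial A$ is Lipschitz, fix a finite atlas: open sets $U_1,\dots,U_m$ covering $\partial A$ on each of which, after a rotation, $A\cap U_k=\{(y_1,y_2):y_2>f_k(y_1)\}\cap U_k$ with $f_k$ Lipschitz of constant $\le L$, together with a width $w=w(A)>0$ such that the collar $\{x\in A:\operatorname{dist}(x,\partial A)<w\}$ is contained in $\bigcup_k U_k$. Since the collar is contained in $A$, $\eta\ge\mathcal{H}^1(\partial^* E\cap\text{collar})$. Slicing $E$ in the vertical direction of each chart (via the $BV$ slicing theorem, and after replacing the $U_k$ by a disjoint partition of the collar subordinate to the cover so as not to over-count), and using that at $\mathcal{H}^1$-a.e.\ point of the contact set $\Gamma_0:=\partial^* E\cap\partial A$ the corresponding slice of $E$ starts on $\partial A$ and hence either has a jump inside $A$ or runs all the way through the collar, one obtains
\begin{equation*}
\eta\ \ge\ \mathcal{H}^1(\partial^* E\cap\text{collar})\ \ge\ c_A'\,\mathcal{H}^1(\Gamma_0)\ -\ C_A'\,\mathcal{L}^2(E),
\end{equation*}
with $c_A',C_A'>0$ depending only on $A$, the subtracted term accounting for those slices that traverse the collar. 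Combining this with $\mathcal{L}^2(E)\le c_A\,\eta^2$ and choosing $\delta(A)$ small enough that $C_A'c_A\,\delta(A)\le\tfrac12 c_A'$, we get $\eta\ge\tfrac12 c_A'\,\mathcal{H}^1(\Gamma_0)$ whenever $\eta\le\delta(A)$, i.e.\ $\mathcal{H}^1(\partial^* E\cap\partial A)\le 2(c_A')^{-1}\,\eta=:C_A\,\eta$.

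Finally, since $E\subset A$ forces $\partial^* E\subset\overline A$, we have $\mathcal{H}^1(\partial^* E)=\mathcal{H}^1(\partial^* E\cap A)+\mathcal{H}^1(\partial^* E\cap\partial A)\le(1+C_A)\,\eta$, and as $E$ is indecomposable and bounded, Lemma~\ref{lemma: diam} yields $\operatorname{diam}(E)\le\mathcal{H}^1(\partial^* E)\le(1+C_A)\,\eta$, which is alternative (ii) after renaming the constant. The main obstacle is the slicing estimate near the Lipschitz boundary: making the informal statement ``each contact slice either jumps in $A$ or crosses the collar'' rigorous, disjointifying the atlas to avoid double counting, and checking that every constant that enters ($L$, $w$, $c_A$, $c_A'$, $C_A'$, $m$) depends only on $A$, so that the resulting $\delta(A)$ and $C_A$ do as well. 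The rest — the relative isoperimetric inequality and the bookkeeping — is routine, and note that indecomposability of $E$ is used only in the concluding application of Lemma~\ref{lemma: diam}.
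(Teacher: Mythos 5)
Your argument is correct in outline, but it follows a genuinely different route from the paper. You never invoke a $BV$ extension theorem; instead you use the relative isoperimetric inequality in $A$ to reduce to the case $\mathcal{L}^2(E)\le c_A\eta^2$ and then estimate the contact term $\mathcal{H}^1(\partial^*E\cap\partial A)$ directly by a slicing argument across a Lipschitz collar, so that $\mathcal{H}^1(\partial^*E)\le(1+C_A)\eta$ and Lemma~\ref{lemma: diam} can be applied to $E$ itself. The paper proceeds differently: it cites a finite-perimeter extension theorem for Lipschitz domains (Burago--Maz'ja, see also Baldi--Montefalcone) producing a set $B_\eps\subset\R^2$ with $B_\eps\cap A=E$ and $\mathcal{H}^1(\partial^*B_\eps)\le K\,\mathcal{H}^1(\partial^*E\cap A)+\eps$; it then applies the global isoperimetric inequality to $B_\eps$ to get the dichotomy, and in the non-degenerate case applies Lemma~\ref{lemma: diam} to (a bounded approximation of) $B_\eps$, concluding via $\operatorname{diam}(E)\le\operatorname{diam}(B_\eps)$. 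Both proofs hinge on Lemma~\ref{lemma: diam} together with an isoperimetric dichotomy, but the paper's route never bounds $\mathcal{H}^1(\partial^*E\cap\partial A)$ at all — that intermediate step is specific to your proof. Your approach is more self-contained in spirit, yet the step you flag yourself as ``the main obstacle'' (disjointifying the atlas, making the contact-slice dichotomy rigorous via Vol'pert-type slicing, tracking constants) is in substance a hands-on re-derivation of what the cited extension theorem packages into one statement; the paper's version is therefore considerably shorter. If you keep your route, the slicing estimate needs to be written out carefully — in particular you should use that at $\mathcal{H}^1$-a.e.\ point of $\partial^*E\cap\partial A$ the blowup of $E$ agrees with that of $A$, so the normal of $E$ there is transversal to the chart's slicing direction (this is what guarantees the contact slice genuinely starts at the graph) — and you should fix the final algebra: from $\eta\ge c_A'\mathcal{H}^1(\Gamma_0)-C_A'\mathcal{L}^2(E)$ and $\mathcal{L}^2(E)\le c_A\eta^2\le c_A\delta\eta$ one gets $\mathcal{H}^1(\Gamma_0)\le (c_A')^{-1}(1+C_A'c_A\delta)\eta$, not $\eta\ge\tfrac12 c_A'\mathcal{H}^1(\Gamma_0)$ as written, though the conclusion $\mathcal{H}^1(\Gamma_0)\le C_A\eta$ still follows.
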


\Proof Fix $\eps>0$. By \cite{mazja} (see also \cite{baldi}) there is a constant $K=K(A)$ and a Borel set $B_\eps \subset \R^2$ with $B_\eps \cap A=E$ such that
${\cal H}^1(\partial^* B_\eps) \le K {\cal H}^1(\partial^* E \cap A) + \eps$. It is not restrictive to assume that $B_\eps$ is indecomposable as otherwise we simply take the component containing $E$. By the isoperimetric inequality we derive
$$\min \lbrace {\cal L}^2(B_\eps), {\cal L}^2(\R^2 \setminus B_\eps) \rbrace \le c({\cal H}^1(\partial^* B_\eps))^2  < \tfrac{1}{2} {\cal L}^2(A),$$
where the last inequality holds provided that $\delta= \delta(A,c)$ is small enough. If ${\cal L}^2(\R^2 \setminus B_\eps)  < \tfrac{1}{2} {\cal L}^2(A)$, we find
$${\cal L}^2(E) = {\cal L}^2(B_\eps \cap A) = {\cal L}^2(A) - {\cal L}^2(A\setminus B_\eps) \ge {\cal L}^2(A) - {\cal L}^2(\R^2\setminus B_\eps) > \tfrac{1}{2}{\cal L}^2(A)$$
and (i) holds. Otherwise, we particularly obtain   ${\cal L}^2(\R^2 \setminus B_\eps) = + \infty$ and ${\cal L}^2(B_\eps) < + \infty$. Since $B_\eps$ has finite perimeter, by an approximation argument we may assume that $B_\eps$ is bounded. As $B_\eps$ is also indecomposable, Lemma \ref{lemma: diam} yields  
$${\rm diam}(E) \le {\rm diam}({B}_\eps) \le {\cal H}^1(\partial^* B_\eps) \le K {\cal H}^1(\partial^* E \cap A) + \eps.$$
The claim follows with $\eps \to 0$. \eop

\begin{proof}[Proof of Theorem \ref{th: korn-boundary}]   By Theorem  \ref{th: kornpoin-sharp} applied with $\Omega'$ in place of $\Omega$ we obtain a Caccioppoli partition $(P'_i)_{i=0}^{I}$, corresponding $(a'_i)_{i=0}^{I}$ as well as $\bar{u}^\theta \in SBV(\Omega';\R^2)$  and   $v':= \bar{u}^\theta - \sum_{i=0}^I a'_i \chi_{P_i'} \in SBV(\Omega';\R^2) \cap L^\infty(\Omega';\R^2)$ such that \eqref{eq: kornpoinsharp1}-\eqref{eq: kornpoinsharp2} hold. Define $u^\theta  =   u\chi_{\Omega \setminus P'_0} + w\chi_{P'_{0}}$. Then \eqref{eq: boundary modi} follows directly from \eqref{eq: kornpoinsharp1} and \eqref{eq: kornpoinsharp2}(ii).

Let ${\cal P}' = (P_j')_{j=1}^I$. Let ${\cal P}_1 \subset {\cal P}'$ be the components completely contained in $\Omega$ and let ${\cal P}_2 \subset {\cal P}'$ be the components $P'_j$ satisfying ${\cal L}^2(P'_j \cap (\Omega' \setminus \overline{\Omega})) \ge \theta$. Moreover, we set ${\cal P}_3 = {\cal P}' \setminus ({\cal P}_1 \cup {\cal P}_2)$. We now define the partition ${\cal P}  = (P_j)^\infty_{j=1}$ consisting of the components
$$ \lbrace P_0' \rbrace \cup  {\cal P}_1 \cup {\cal P}_2 \cup \lbrace P_j' \cap \Omega: P_j' \in {\cal P}_3 \rbrace \cup \lbrace P_j' \setminus \overline{\Omega}: P_j' \in {\cal P}_3 \rbrace.$$
(Strictly speaking, the number of components is even finite.) For $P_1 := P_0'$ we let $a_1 = 0$. For  $P_j  = P_k' \in {\cal P}_1$ we set $a_j = a_k'$ and for  $P_j =  P_k'\in {\cal P}_2$ we set $a_j = 0$. If $P_j \in {\cal P}$ with $P_j = P_k' \cap \Omega$ for some $P_k' \in {\cal P}_3$, we let $a_j = a_k'$. Finally,  if $P_j \in {\cal P}$ with $P_j = P_k' \setminus \overline{\Omega}$ for some $P_k' \in {\cal P}_3$, we set $a_j = 0$.

Now define $v =  u^\theta  - \sum_{j=1}^{\infty} a_j \chi_{P_j}$. By construction we get  $v = w$ on $\Omega' \setminus \overline{\Omega}$.  It remains to confirm \eqref{eq: small set main-boundary}(i),(iii). To see (iii), we first note that, since $ u^\theta=v=w$ on the open Lipschitz set $\Omega' \setminus \overline{\Omega}$,   by  \eqref{eq: kornpoinsharp2}(iii) with $v'$ in place of $v$ and \cite[Corollary 3.89]{AFP}, it suffices to show that the restriction of $v$ to $\Omega$ belongs to  $SBV(\Omega; \R^2) \cap L^2(\Omega; \R^2)$ and that
\begin{equation}\label{eq: v-v'}
\Vert v-v' \Vert_{L^2(\Omega;\R^2)} + \Vert \nabla v-\nabla v'\Vert_{L^1(\Omega; \R^{2\times 2})} \le C_{\theta,\Omega'} \Vert  e(u) \Vert_{L^2(\Omega';\R_{\mathrm{sym}}^{2\times 2})} + C_{\theta,\Omega'}\Vert  w\Vert_{ H^1(\Omega'; \R^2)}.
\end{equation}
By construction we have that $\lbrace v \neq v' \rbrace \cap \Omega \subset (P'_0 \cap \Omega) \cup \bigcup_{P_j \in {\cal P}_2} P_j$ (up to a set of negligible measure).  First, \eqref{eq: v-v'} with $P'_0 \cap \Omega$ in place of $\Omega$ follows directly from \eqref{eq: kornpoinsharp2}(iii) and the fact that $v=w$ on $P'_0$. Fix $P_j \in {\cal P}_2$. We first observe that $u = \bar{u}^\theta$ by \eqref{eq: kornpoinsharp2}(ii) and thus $(v-v')\chi_{P_j}=(u-v')\chi_{P_j}=a'_k\chi_{P_j}$ with $k$ such that $P_j = P'_k$. Since $u=w$ on $\Omega' \setminus \overline{\Omega}$, we then deduce
$$a'_{k}\chi_{(\Omega' \setminus \overline{\Omega}) \cap P_j}=(w-v')\chi_{(\Omega' \setminus \overline{\Omega}) \cap P_j}$$
and therefore
$$\Vert {a'_k}\Vert_{L^2((\Omega' \setminus \overline{\Omega}) \cap P_j;\R^2)} \le  \Vert w\Vert_{L^2(\Omega' \setminus \overline{\Omega};\,\R^2)} + \Vert v'\Vert_{L^2(\Omega' \setminus \overline{\Omega};\,\R^2)}.$$
Consequently, using ${\cal L}^2(P_j \cap (\Omega' \setminus \overline{\Omega})) \ge \theta$,  \eqref{eq: kornpoinsharp2}(iii) and Lemma \ref{lemma: rigid motion} for $\psi(s) =s^2$ we find 
$$| A_k'| + |b_k'| \le   C_{\theta,\Omega'} \Vert  e(u) \Vert_{L^2(\Omega';\R_{\mathrm{sym}}^{2\times 2})} + C_{\theta,\Omega'}\Vert  w\Vert_{L^2(\Omega'; \R^2)}.$$
Since $\# {\cal P}_2 \le \theta^{-1}{\cal L}^2(\Omega') = C(\Omega',\theta)$, this yields
$$\sum\nolimits_{P_j \in {\cal P}_2}\Vert a'_k \Vert_{L^2(P_j;\R^2)} +  \Vert  A'_k \Vert_{L^1(P_j;\R_{\mathrm{skew}}^{2\times2})} \le C_{\theta,\Omega'} (\Vert  e(u) \Vert_{L^2(\Omega';\R_{\mathrm{sym}}^{2\times 2})} + \Vert  w\Vert_{L^2(\Omega'; \R^2)})\,,$$
where for each $j$ the index $k$ is chosen such that $P_j = P'_k$. This implies $v \in SBV(\Omega; \R^2) \cap L^2(\Omega; \R^2)$, as well as \eqref{eq: v-v'}, and establishes \eqref{eq: small set main-boundary}(iii).

We now show \eqref{eq: small set main-boundary}(i).   To this end, we fix $\theta_0 = \theta_0(\Omega,  \Omega')>0$ to be specified below and we first observe that it suffices to treat the case where $ {\cal H}^1(J_u) + \mathcal{H}^1(\partial \Omega') \le \theta_0 \theta^{-1}$. In fact, otherwise \eqref{eq: small set main-boundary}(i) follows directly from  \eqref{eq: kornpoinsharp2}(i)  for $\bar{c}=\bar{c}(\Omega,\Omega')$ large enough.

Without restriction we suppose that each $P_j' \cap (\Omega' \setminus \overline{\Omega})$, $P_j'\in {\cal P}_3$, is indecomposable as otherwise we consider the indecomposable components. We show that each  $P_j' \cap (\Omega' \setminus \overline{\Omega})$ is contained in some ball of diameter $\bar{C} {\cal H}^1(\partial^* P'_j \cap (\Omega' \setminus \overline{\Omega}))$ for $\bar{C} = \bar{C}(\Omega,\Omega')$ large enough. To see this, we first observe that due to the fact that $J_u \subset \overline{\Omega}$ we have 
$$
{\cal H}^1(\partial^* P'_j \cap (\Omega' \setminus \overline{\Omega})) \le c \theta \mathcal{H}^1(J_u) + c\theta{\cal H}^1(\partial \Omega') \le c\theta_0
$$
by  \eqref{eq: kornpoinsharp2}(i).  Choose $\theta_0$ so small that $c\theta_0 \le \delta(\Omega' \setminus \overline{\Omega})$ with $\delta(\Omega' \setminus \overline{\Omega})$ as in Lemma \ref{lemma: maggi}. Then Lemma \ref{lemma: maggi} and the fact that ${\cal L}^2(P_j' \cap (\Omega' \setminus \overline{\Omega}) ) \le \theta$ imply for $\theta$ small 
\begin{align}\label{eq: maggi1}
{\rm  diam}(\partial^* P'_j \cap (\Omega' \setminus \overline{\Omega})) \le  \bar{C} {\cal H}^1(\partial^* P'_j \cap (\Omega' \setminus \overline{\Omega})) \le c\bar{C}\theta_0.
\end{align}

We cover $\Theta := \partial (\Omega' \setminus \Omega)$ with sets $U_1, \ldots, U_n$  such that $U_i \cap \Theta$ is the graph of a Lipschitz function  for $i=1,\ldots,n$ and the sets pairwise overlap such that each ball with radius $c\bar{C}\theta_0$ and center in $\Theta$ is contained in one $U_i$ provided that $\theta_0$ is chosen sufficiently small. Consequently, recalling \eqref{eq: maggi1}, each $P_j' \cap (\Omega' \setminus \overline{\Omega})$ is contained in some $U_i$. Since $U_i \cap \Theta$ is the graph of a Lipschitz function $f_i$ and $P_j' \cap (\Omega' \setminus \overline{\Omega}) \subset \subset U_i$, it follows that  
$${\cal H}^1(\partial \Omega \cap P_j') \le {\rm Lip}_{f_i} {\rm  diam}(\partial^* P'_j \cap (\Omega' \setminus \overline{\Omega})) \le  \hat{C}\bar{C} {\cal H}^1(\partial^* P'_j \cap (\Omega' \setminus \overline{\Omega})),$$
where $\hat{C} = \max_i {\rm Lip}_{f_i}$. For the last inequality we again used \eqref{eq: maggi1}.  Finally, noting that $\bigcup_{j=1}^\infty \partial^* P_j \setminus \bigcup_{j=0}^I \partial^* P_j' \subset \bigcup_{P_j' \in {\cal P}_3} (\partial \Omega \cap P_j')$ we find using  \eqref{eq: kornpoinsharp2}(i) 
$$\sum\nolimits_{j=1}^\infty {\cal H}^1( (\partial^* P_j \cap \Omega') \setminus J_u ) \le (1 + \hat{C} \bar{C}) \sum\nolimits_{j=0}^I  {\cal H}^1( (\partial^* P'_j \cap \Omega') \setminus J_u ) \le  \bar{c}\theta ({\cal H}^1(J_u) + 1) $$
for $\bar{c} = \bar{c}(\Omega,\Omega')$ large enough. 
\end{proof}

\section{Jump transfer lemma in GSBD}\label{sec:jump}

In this section we prove a jump transfer lemma which will be essential for the stability of the static equilibrium condition in the derivation of the existence result (Theorem \ref{th: main}).

\begin{theorem}\label{th: JTransf}
Let $\Omega\subset\Omega'$ be bounded domains in $\R^2$ with Lipschitz boundary  such that \eqref{eq: omega'} holds. Let $\ell \in \N$ and  let $(w^l_n)_n \subset H^1(\Omega';\R^2)$ be bounded sequences for $l=1,\ldots,\ell$. Let $(u^l_n)_n$ be sequences in $GSBD^2(\Omega')$ and $u^l \in GSBD^2(\Omega')$ such that
\begin{align}\label{eq: assu}
\begin{split}
(i) & \ \ \Vert e(u^l_n)\Vert_{L^2(\Omega';\R_{\mathrm{sym}}^{2\times 2})} + {\cal H}^1(J_{u_n})\le M \ \ \ \text{for all } n \in \N, \\ 
(ii) & \ \ u^l_n \to u^l  \text{ in measure in } \Omega', \ \ \ u^l_n= w^l_n \text{ on } \Omega' \setminus \overline{\Omega},
\end{split}
\end{align}
for $l=1,\ldots, \ell$.  Then it exists a (not relabeled) subsequence of $n \in \N$ with the following property: For each $\phi \in GSBD^2(\Omega')$ there is   a sequence $(\phi_n)_n \subset GSBD^2(\Omega')$ with $\phi_n = \phi$ on $\Omega' \setminus \overline{\Omega}$ such that for $n \to \infty$
\begin{align}\label{eq: conv}
\begin{split}
(i)& \ \ \phi_n \to \phi \text{ in measure in } \Omega,\\
(ii)& \ \  e(\phi_n) \to e(\phi) \text{ strongly in } L^2(\Omega;\R_{\mathrm{sym}}^{2\times 2}),\\
(iii)& \ \ {\cal H}^1\big((J_{\phi_n} \setminus \bigcup\nolimits^\ell_{l=1}J_{u^l_n}) \setminus (J_{\phi} \setminus \bigcup\nolimits^\ell_{l=1} J_{u^l})\big) \to 0.
\end{split} 
\end{align}

\end{theorem}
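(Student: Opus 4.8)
The plan is to adapt the proof of \cite[Theorem 2.1]{Francfort-Larsen:2003} along the two lines announced in the introduction: the $SBV$-approximation of Theorem \ref{th: approx} will replace the coarea/equiintegrability arguments which in \cite{Francfort-Larsen:2003} rely on a full bound on $\nabla u_n$, and the reflection used there to build the competitor sequence --- incompatible with a bound on $e(\cdot)$ alone --- will be replaced by the extension operator of Lemma \ref{lemma: nitsche}, applied on thin transition layers.

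\emph{Extraction of the subsequence and reductions.} I would first fix a sequence $\theta_j \downarrow 0$ and, for each $j$, a finite ``good'' covering of $\bigcup_{l=1}^\ell J_{u^l}$ by small squares, and apply Theorem \ref{th: approx} to each $(u^l_n)_n$; together with the compactness results of Theorem \ref{th: comp cacciop} and Theorem \ref{th: GSBD comp} for the auxiliary partitions and functions produced, a diagonal extraction then yields a single subsequence of $n$, independent of $\phi$, along which for every $j$ and $l$ there are $v^{l,j}_n \in SBV^2(\Omega';\R^2)$ with $\mathcal{H}^1(J_{v^{l,j}_n} \setminus J_{u^l_n}) \le C\theta_j$, with $v^{l,j}_n \to u^l$ in $L^1$ off exceptional sets $E^{l,j}_n$ satisfying $\mathcal{L}^2(E^{l,j}_n) \to 0$ and $\mathcal{H}^1(\partial^* E^{l,j}_n) \le C\theta_j$, and with $\Vert \nabla v^{l,j}_n \Vert_{L^1(U_j)} \le C\theta_j$ on a tubular neighbourhood $U_j$ of $\bigcup_{l}J_{u^l}$. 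Since the three convergences in \eqref{eq: conv} and the constraint $\phi_n = \phi$ on $\Omega'\setminus\overline\Omega$ are stable under the density result Theorem \ref{th: density} and a diagonal argument (the construction below alters $\phi$ only inside $\overline\Omega$), it suffices to produce, for each fixed $j$ and each competitor $\phi$ that is a displacement with regular jump set, a sequence $(\phi^j_n)_n$ with $\phi^j_n = \phi$ on $\Omega'\setminus\overline\Omega$, $\phi^j_n \to \phi$ in measure, $e(\phi^j_n)\to e(\phi)$ in $L^2(\Omega)$, and $\mathcal{H}^1\big((J_{\phi^j_n}\setminus\bigcup_{l}J_{u^l_n})\setminus(J_\phi\setminus\bigcup_{l}J_{u^l})\big)\le C\theta_j + o_n(1)$; diagonalising in $j$ then gives the theorem.

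\emph{The local transfer.} Fix $j$ and $\phi$, and write $J_\phi = \bigcup_{k=1}^m\Sigma_k$. On $\Omega'\setminus\overline{U_j}$ I set $\phi^j_n := \phi$. On each square $Q$ of the covering --- refined so that a single $\Sigma_k$ meets $\bigcup_l J_{u^l}$ inside $Q$ and, up to rotation, $\Sigma_k\cap Q$ is a $C^1$ graph over a segment --- I would use the coarea formula together with $\Vert\nabla v^{l,j}_n\Vert_{L^1(U_j)}\le C\theta_j$ to select, for a suitable $l$ and direction $\xi$, a level $t$ whose level set $L_n := \partial^*\{v^{l,j}_n\cdot\xi > t\}\cap Q$ is (i) contained, up to $\mathcal{H}^1$-measure $\le C\theta_j$, in $J_{v^{l,j}_n}$ and hence in $\bigcup_l J_{u^l_n}$, and (ii) a Lipschitz graph over the same segment, lying in a $\theta_j$-neighbourhood of $\Sigma_k$, so that the thin region $\Delta_n\subset Q$ between $\Sigma_k\cap Q$ and $L_n$ is a Lipschitz domain with Lipschitz character bounded uniformly in $n$ and with $\mathcal{L}^2(\Delta_n)\to 0$. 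I then set $\phi^j_n$ on $Q$ equal to $\phi$ on the side of $L_n$ facing away from $\Sigma_k$, and on $\Delta_n$ equal to the extension across $L_n$ of the trace of $\phi$ from that side, furnished by Lemma \ref{lemma: nitsche}; the jump of $\phi^j_n$ inside $Q$ is then carried by $L_n$ together with $\partial\Delta_n\cap Q$ (of length $\le C\theta_j$), while on $\partial Q$ it still agrees with $\phi$. By Lemma \ref{lemma: nitsche} and the uniform Lipschitz character of $\Delta_n$ one gets $\Vert e(\phi^j_n)\Vert_{L^2(\Delta_n)}\le C\Vert e(\phi)\Vert_{L^2(N_n)}$ with $N_n\downarrow J_\phi$. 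Gluing over the finitely many squares (and leaving $\phi$ unchanged on $\bigcup_l E^{l,j}_n$) yields $\phi^j_n \in GSBD^2(\Omega')$ with $\phi^j_n = \phi$ on $\Omega'\setminus\overline\Omega$.

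\emph{Convergences and the main difficulty.} Since $\phi^j_n$ differs from $\phi$ only on $\bigcup_Q\Delta_n\cup\bigcup_l E^{l,j}_n$, a set of vanishing measure, we get $\phi^j_n\to\phi$ in measure; moreover $e(\phi^j_n)=e(\phi)$ off this set and $\Vert e(\phi^j_n)\Vert_{L^2(\bigcup_Q\Delta_n)}\le C\Vert e(\phi)\Vert_{L^2(N_n)}\to 0$ by absolute continuity of the integral of $|e(\phi)|^2$, so $e(\phi^j_n)\to e(\phi)$ in $L^2(\Omega)$. Finally $J_{\phi^j_n}\subset (J_\phi\setminus\overline{U_j})\cup\bigcup_Q L_n\cup\bigcup_Q(\partial\Delta_n\cap Q)\cup\bigcup_l\partial^* E^{l,j}_n$; the first set lies in $J_\phi\setminus\bigcup_l J_{u^l}$, the union of the $L_n$ lies in $\bigcup_l J_{u^l_n}$ up to total length $\le C\theta_j$, and the remaining two unions have total length $\le C\theta_j$, whence $\mathcal{H}^1\big((J_{\phi^j_n}\setminus\bigcup_l J_{u^l_n})\setminus(J_\phi\setminus\bigcup_l J_{u^l})\big)\le C\theta_j$. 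The hard part, as expected, is the local transfer: one must carry the jump of $\phi$ onto the level sets $L_n$ of the $SBV$-approximations while keeping a quantitative $L^2$-control of $e(\phi^j_n)$, which is precisely why the reflection of \cite{Francfort-Larsen:2003} must be replaced by the Nitsche-type extension of Lemma \ref{lemma: nitsche}; the delicate step is to choose --- via coarea and the $L^1$-smallness of $\nabla v^{l,j}_n$ near $J_{u^l}$ guaranteed by Theorem \ref{th: approx} --- level sets $L_n$ that are uniformly well-behaved graphs, so that the transition layers $\Delta_n$ have Lipschitz character bounded independently of $n$ and the constant in Lemma \ref{lemma: nitsche} does not degenerate.
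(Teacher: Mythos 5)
Your overall scaffolding matches the paper: reduce via density/approximation results, cover $\bigcup_l J_{u^l}$ with fine squares, invoke Theorem~\ref{th: approx} to replace $u^l_n$ by $SBV$ surrogates with small $L^1$-gradient near the jump, apply coarea to a scalarization, and use Lemma~\ref{lemma: nitsche} instead of the $SBV$ reflection. However, there is a genuine gap at the heart of the local transfer step, and it concerns precisely the point where the passage from $SBV$ to $GSBD$ is delicate.

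You propose to choose, via coarea, a level $t$ so that $L_n := \partial^*\{v^{l,j}_n\cdot\xi > t\}\cap Q$ is \emph{a Lipschitz graph lying in a $\theta_j$-neighbourhood of $\Sigma_k$, with Lipschitz character bounded uniformly in $n$}, and then to extend $\phi$ by Lemma~\ref{lemma: nitsche} across $L_n$ into the transition layer $\Delta_n$. Neither of these is available. Coarea controls only the total length of the level set and guarantees nothing about its regularity: $\partial^*\{v^{l,j}_n\cdot\xi > t\}$ is just the reduced boundary of a set of finite perimeter and can oscillate wildly; it need not be a graph at all, and even if it happened to be one, there is no mechanism to bound its Lipschitz constant uniformly in $n$. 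Consequently the layers $\Delta_n$ have no uniform Lipschitz character, and since Lemma~\ref{lemma: nitsche} as stated applies only to reflection of a \emph{rectangle} across one of its sides (the constants there are tied to that flat geometry), you cannot invoke it to extend $\phi$ across $L_n$.

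The paper avoids this by never extending $\phi$ across the level set. Instead, inside each good square $Q_i$ it fixes a thin \emph{rectangle} $R_i$ with flat cross-sections $H^\pm_i$ (whose heights $s_i^\pm$ are chosen so that, by a further coarea/slicing argument, $H^-_i$ is almost entirely inside $E^n_{t_i}$ and $H^+_i$ is almost entirely outside). Lemma~\ref{lemma: nitsche} is applied only across the flat sides $H^\pm_i$, producing two extensions $\phi_\pm$ of $\phi$ into $R_i$. The competitor $\phi_n$ is then defined by \emph{switching} between $\phi_-$ and $\phi_+$ according to $E^n_{t_i}$, so that the new jump created in $R_i$ is carried by $\partial^* E^n_{t_i}$ (which is mostly contained in $J_{u_n}$), by the small mismatch on $H^\pm_i$, and by the residual $J_{\phi_\pm}$, all of which are controlled without ever needing the level set to be regular. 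In addition, the paper partitions the squares into good ones (where $\mathcal{H}^1(J_{u_n}\cap Q_i)\le \theta^{-1}r_i$) and bad ones, and on bad squares it does not transfer at all, using a counting argument (\eqref{eq:badsquares}) to bound their contribution; your sketch does not address this case, which is also needed to close the estimate for \eqref{eq: conv}(iii).
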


\subsection{Proof of the jump transfer lemma}
The general strategy is to follow the proof  of the $SBV$ jump transfer  (see \cite[Theorem 2.1]{Francfort-Larsen:2003}) with the essential difference that (a) in the definition of $\phi_n$ we transfer the jump  not by a reflection but by a suitable extension and (b) the control of the derivatives, which is needed for the application of the coarea formula, is recovered from \eqref{eq: assu}(i) by means of the piecewise Korn inequality in  Theorem \ref{th: kornpoin-sharp}.  The auxiliary results, allowing us to overcome such difficulties, are the following Lemmas \ref{lemma: nitsche} and \ref{lemma: loc}, as well as Theorem \ref{th: approx}. We postpone their proofs to the next subsection and first show that with these additional techniques Theorem \ref{th: JTransf} can be derived following the lines of \cite{Francfort-Larsen:2003}.

For problem (a) we need the   following extension lemma, based on an argument of \cite{Nit}.
\begin{lemma}\label{lemma: nitsche}
Let $R\subset \R^2$ be an open rectangle, let $R^-$ be the reflection of $R$ with respect to one of its sides, and let $\hat{R}$ be the open rectangle obtained by joining $R$, $R^-$ and their common side. Let $\phi \in GSBD^2(R)$. Then it exists an extension $\hat{\phi}\in GSBD^2(\hat{R})$ of $\phi$  satisfying
\begin{align}\label{nitsche}
\begin{split}
(i)& \ \ {\cal H}^1(J_{\hat{\phi}})\le c {\cal H}^1(J_\phi)\\
(ii)& \ \ \Vert e(\hat{\phi})\Vert_{L^2(\hat{R};\R_{\mathrm{sym}}^{2\times 2})}\le c \Vert e(\phi)\Vert_{L^2(R;\R_{\mathrm{sym}}^{2\times 2})}
\end{split} 
\end{align}
for some universal constant $c$ independent of $R$ and $\phi$. 
\end{lemma}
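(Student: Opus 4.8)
The idea is to extend $\phi$ across the common side by a linear combination of \emph{displacement-adapted reflections}, tuned so that (a) the symmetrized gradient of each reflected copy is controlled by $e(\phi)$ alone (a plain component-wise mirror reflection only controls the full gradient, via Korn, which is not available in this setting), and (b) the one-sided traces match across the interface, so that no jump is created there. Without loss of generality place the common side on $\{x_2=0\}$, with $R=(0,a)\times(0,b)$, $R^-=(0,a)\times(-b,0)$ and $\hat R=(0,a)\times(-b,b)$. For $0<\lambda\le 1$ set $\sigma_\lambda(x_1,x_2):=(x_1,-\lambda x_2)$, which maps $R^-$ \emph{into} $R$ (here it is essential to contract, not expand), and for $v\in GSBD^2(R)$ define $B_\lambda v\in GSBD^2(R^-)$ by $B_\lambda v(x):=\big(v_1(\sigma_\lambda x),\,-\lambda\, v_2(\sigma_\lambda x)\big)$. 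I would first record three properties of $B_\lambda$. First, since $\sigma_\lambda$ is a bi-Lipschitz change of variables and the value transformation $\mathrm{diag}(1,-\lambda)$ is invertible, one has $B_\lambda v\in GSBD^2(R^-)$ with $J_{B_\lambda v}=\sigma_\lambda^{-1}(J_v\cap\sigma_\lambda(R^-))$, whence ${\cal H}^1(J_{B_\lambda v})\le \lambda^{-1}{\cal H}^1(J_v)$. Second, a direct chain-rule computation gives, a.e.\ on $R^-$, $e_{11}(B_\lambda v)(x)=e_{11}(v)(\sigma_\lambda x)$, $e_{12}(B_\lambda v)(x)=-\lambda\, e_{12}(v)(\sigma_\lambda x)$, $e_{22}(B_\lambda v)(x)=\lambda^2 e_{22}(v)(\sigma_\lambda x)$, so that $|e(B_\lambda v)(x)|\le |e(v)(\sigma_\lambda x)|$ and, after the change of variables, $\Vert e(B_\lambda v)\Vert_{L^2(R^-)}\le \lambda^{-1/2}\Vert e(v)\Vert_{L^2(R)}$. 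Third, since the vertical slices of $v$ are $SBV_{\mathrm{loc}}$, for a.e.\ $x_1$ the one-sided limit $T^+v(x_1):=\lim_{s\to0^+}v(x_1,s)$ exists and $\lim_{x_2\to0^-}B_\lambda v(x_1,x_2)=(T^+v_1(x_1),\,-\lambda\,T^+v_2(x_1))$.

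Next I would set $\hat\phi:=\phi$ on $R$ and $\hat\phi:=-2\,B_1\phi+3\,B_{1/3}\phi$ on $R^-$. Since $-2+3=1$ and $(-2)\cdot1+3\cdot\tfrac13=-1$, the third property above yields $\lim_{x_2\to0^-}\hat\phi(x_1,x_2)=(T^+\phi_1(x_1),T^+\phi_2(x_1))=\lim_{x_2\to0^+}\phi(x_1,x_2)$ for a.e.\ $x_1$. Hence a.e.\ vertical slice of $\hat\phi$ belongs to $SBV_{\mathrm{loc}}$ with no jump at $x_2=0$; since ${\cal H}^1$-a.e.\ point of the horizontal segment $\Sigma:=\{x_2=0\}\cap\hat R$ has approximate normal $e_2$, the slicing characterization of the jump set of $GBD$ functions forces ${\cal H}^1(J_{\hat\phi}\cap\Sigma)=0$. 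Combining this with $J_{\hat\phi}\cap R=J_\phi$, with $J_{\hat\phi}\cap R^-\subset J_{B_1\phi}\cup J_{B_{1/3}\phi}$, and with the first property applied for $\lambda=1$ and $\lambda=\tfrac13$, one gets ${\cal H}^1(J_{\hat\phi})\le {\cal H}^1(J_\phi)+{\cal H}^1(J_\phi)+3\,{\cal H}^1(J_\phi)=5\,{\cal H}^1(J_\phi)$, i.e.\ \eqref{nitsche}(i). Likewise $e(\hat\phi)=e(\phi)$ on $R$ and $e(\hat\phi)=-2\,e(B_1\phi)+3\,e(B_{1/3}\phi)$ on $R^-$, so the second property gives $\Vert e(\hat\phi)\Vert_{L^2(\hat R)}\le (2+3\sqrt3)\,\Vert e(\phi)\Vert_{L^2(R)}$, which is \eqref{nitsche}(ii). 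All the constants entering are absolute, since the maps $\sigma_1,\sigma_{1/3}$ and the coefficients $-2,3$ do not depend on $R$ or $\phi$; this yields the universality of $c$.

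The main technical point — and the part deserving care in the write-up — is the verification that the glued function $\hat\phi$ genuinely lies in $GSBD^2(\hat R)$, which requires both the stability of $GSBD$ under the maps $B_\lambda$ (for which the restriction $\lambda\le 1$ is convenient, as it keeps the relevant truncations admissible) and the interface argument above (matching one-sided slice limits $\Rightarrow$ the interface carries no jump). A clean way to make this fully rigorous is to run the construction first on displacements with regular jump set: by Theorem \ref{th: density} pick $\phi_k$ with regular jump set approximating $\phi$ (in measure, with $e(\phi_k)\to e(\phi)$ in $L^2$ and ${\cal H}^1(J_{\phi_k}\triangle J_\phi)\to0$), define $\hat\phi_k$ by the formula above — for which membership in $GSBD^2(\hat R)$ and the absence of an interface jump are elementary since traces are classical — derive the uniform bounds ${\cal H}^1(J_{\hat\phi_k})\le 5\,{\cal H}^1(J_{\phi_k})$ and $\Vert e(\hat\phi_k)\Vert_{L^2(\hat R)}\le c\,\Vert e(\phi_k)\Vert_{L^2(R)}$, observe that $\hat\phi_k\to\hat\phi$ in measure on $\hat R$ (each $\hat\phi_k$ being an explicit continuous combination of $\phi_k\circ\sigma_\lambda$), and pass to the limit via Theorem \ref{th: GSBD comp} to conclude $\hat\phi\in GSBD^2(\hat R)$ together with \eqref{nitsche}.
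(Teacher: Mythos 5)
Your proof is correct and follows essentially the same route as the paper's: both construct the extension as a linear combination of two anisotropic contracting maps (your $B_\lambda$, the paper's $\phi^\lambda,\phi^\mu$ with the $-\lambda$ scaling of the second component), tune the two coefficients so the one-sided traces match on the common side, and reduce the general $GSBD^2$ case to displacements with regular jump set via Theorem \ref{th: density} together with the compactness result of Theorem \ref{th: GSBD comp}. Your operator $B_\lambda$ is just a tidy repackaging of the paper's explicit formulas — indeed, with $p=\tfrac{1+\mu}{\mu-\lambda}$ the paper's $\hat\phi$ equals $p\,B_\lambda\phi+(1-p)\,B_\mu\phi$ in your notation, and your specific choice $\lambda_1=1,\lambda_2=\tfrac13$, $\alpha_1=-2,\alpha_2=3$ is one admissible instance.
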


We concern ourselves with problem (b). A key point in the proof of the jump transfer lemma is to write the jump set of  a limiting function $u^l$ as a countable union of pairwise intersections of boundaries of super-level sets by the $BV$ coarea formula. Thus, as a first ingredient we state that the jump set of an $GSBD^2$ function  can be approximated suitably  by the jump set of an $SBV$ function.

\begin{lemma}\label{lemma: approx new}
Let $\Omega' \subset \R^2$ open, bounded with Lipschitz boundary and $\epsilon >0$. For each  $u \in GSBD^2(\Omega')$ there is $v \in SBV(\Omega';\R^2) { \cap GSBD^2(\Omega')}$ with $\mathcal{H}^1(J_{u-v}) \le \epsilon$ and
$\mathcal{H}^1(J_{u}\triangle J_{v}) \le \epsilon$. 
If in addition there exist an open subset $\Omega \subset \Omega'$ and $w \in H^1(\Omega'; \R^2)$ with $u=w$ on $\Omega' \setminus \overline{\Omega}$, the function $v$ can be also taken with $v=w$ on $\Omega' \setminus \overline{\Omega}$.
\end{lemma}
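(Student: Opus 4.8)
\emph{Strategy.} The plan is to obtain $v$ by \emph{truncating} the (possibly infinite) piecewise Korn partition of $u$ far out in its summable tail, keeping only finitely many of the subtracted infinitesimal rigid motions. The key mechanism is that subtracting a piecewise constant family of infinitesimal rigid motions introduces jumps \emph{only} on the boundaries of the components involved (and never along $J_u$, since rigid motions are continuous), and leaves $e(u)$ unchanged, since $A\in\R^{2\times2}_{\rm skew}$ gives $e(a_{A,b})=0$.

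\emph{The unconstrained case.} I would apply Theorem~\ref{th: korn} (with any fixed $p\in[1,2)$) to $u$ on $\Omega'$, obtaining an ordered Caccioppoli partition $\Omega'=\bigcup_{j\ge1}P_j$ and infinitesimal rigid motions $(a_j)_j=(a_{A_j,b_j})_j$ with
$$v_\infty:=u-\sum\nolimits_{j\ge1}a_j\chi_{P_j}\in SBV(\Omega';\R^2)\cap L^\infty(\Omega';\R^2),\qquad \sum\nolimits_{j\ge1}\mathcal{H}^1(\partial^*P_j)\le c\,(\mathcal{H}^1(J_u)+\mathcal{H}^1(\partial\Omega'))<+\infty.$$
Since the series converges, I fix $N\in\N$ with $\sum_{j>N}\mathcal{H}^1(\partial^*P_j)\le\epsilon$ and set $v:=u-\sum_{j>N}a_j\chi_{P_j}=v_\infty+\sum_{j=1}^{N}a_j\chi_{P_j}$. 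The last expression is a finite sum of functions in $SBV(\Omega';\R^2)\cap L^\infty(\Omega';\R^2)$ (each $a_j\chi_{P_j}$ being such, as $P_j$ has finite perimeter), so $v\in SBV(\Omega';\R^2)\cap L^\infty(\Omega';\R^2)\subset SBD(\Omega')\subset GSBD(\Omega')$; and since $e(v)=e(u)\in L^2(\Omega';\R^{2\times2}_{\rm sym})$ and $\mathcal{H}^1(J_v)<+\infty$, we get $v\in GSBD^2(\Omega')$. For the jump estimates, note that $g:=u-v=\sum_{j>N}a_j\chi_{P_j}$ equals the continuous affine map $a_j$ on each $P_j$ with $j>N$ and vanishes on $\bigcup_{j\le N}P_j$. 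By Theorem~\ref{th: local structure}, $\mathcal{H}^1$-a.e. point of $\Omega'$ lies either in some $(P_k)^1$ — where $g$ is approximately continuous — or on some $\partial^*P_i\cap\partial^*P_j$, and in the latter case $g\equiv0$ near the point whenever $i,j\le N$. Hence $J_{u-v}=J_g$ is, up to $\mathcal{H}^1$-null sets, contained in $\bigcup_{j>N}\partial^*P_j$, so $\mathcal{H}^1(J_{u-v})\le\sum_{j>N}\mathcal{H}^1(\partial^*P_j)\le\epsilon$. Finally, writing $v=u-g$ and $u=v+g$ and using that approximate limits are additive, at every $x\notin J_g$ the functions $u$ and $v$ jump simultaneously, with the same normal and amplitude; thus $J_u\triangle J_v\subset J_g$ and $\mathcal{H}^1(J_u\triangle J_v)\le\epsilon$ as well.

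\emph{The case with a boundary datum.} Here I would keep the same partition and rigid motions but subtract them only on the part of each tail component lying in $\overline\Omega$, namely $v:=u-\sum_{j>N}a_j\chi_{P_j\cap\overline\Omega}$. Since every subtracted term vanishes outside $\overline\Omega$, we still have $v=u=w$ on $\Omega'\setminus\overline\Omega$; on $\overline\Omega$ one has $v=v_\infty+\sum_{j\le N}a_j\chi_{P_j}$ as above, and gluing this $SBV\cap L^\infty$ function with $w\in H^1(\Omega';\R^2)$ across $\partial^*\Omega\cap\Omega'$ — which has finite $\mathcal{H}^1$-measure, $\Omega$ having Lipschitz boundary in our applications — again gives $v\in SBV(\Omega';\R^2)\cap GSBD^2(\Omega')$ with $e(v)=e(u)\in L^2$. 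Now $u-v=\sum_{j>N}a_j\chi_{P_j\cap\overline\Omega}$ may in addition jump along $\partial^*\Omega$, but only on the portion of $\partial^*\Omega$ approached from the $\Omega$-side by some $P_j$ with $j>N$; since the density-one sets $(P_j)^1$ are pairwise disjoint and the measure $\mathcal{H}^1$ on $\partial^*\Omega$ is finite, this portion has $\mathcal{H}^1$-measure tending to $0$ as $N\to\infty$. Enlarging $N$ so that $\sum_{j>N}\mathcal{H}^1(\partial^*P_j)$ plus this contribution is $\le\epsilon$, the reasoning of the previous paragraph applies verbatim.

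\emph{Main obstacle.} I expect everything to be routine once Theorem~\ref{th: korn} is invoked, except for the bookkeeping in the boundary case: restricting the subtraction to $\overline\Omega$ must not create uncontrolled jumps along $\partial\Omega$, and this is exactly the point where the finiteness of $\mathcal{H}^1(\partial^*\Omega)$ together with the disjointness of the density-one sets of the partition is used.
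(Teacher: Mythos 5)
Your unconstrained construction is different from the paper's but valid. The paper sets $v := u\chi_{\Omega_{\rm good}}$ with $\Omega_{\rm good}=\bigcup_{j<j_0}P_j$, i.e.\ it kills $u$ on the tail components, and therefore must control \emph{two} quantities in the choice of $j_0$: the tail boundary measure $\mathcal{H}^1(\bigcup_{j\ge j_0}\partial^*P_j)$ \emph{and} the tail internal jump $\mathcal{H}^1(J_u\cap\bigcup_{j\ge j_0}(P_j)^1)$, because zeroing out $u$ on $P_j$ also erases $u$'s own jumps there, and these land in $J_{u-v}$. You instead subtract the rigid motions $a_j$ on the tail, leaving $u$'s internal jump untouched, so only the tail boundary measure needs to be small; your bookkeeping is therefore marginally simpler, and the argument via $J_{u-v}\subset\bigcup_{j>N}\partial^*P_j$ (up to $\mathcal{H}^1$-null sets, by Theorem~\ref{th: local structure}) and $J_u\triangle J_v\subset J_{u-v}$ is correct.

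The boundary case, however, has a real gap. You restrict the subtraction to $\overline{\Omega}$, which manufactures new jumps along $\partial^*\Omega\cap\Omega'$, and then argue that the offending portion of $\partial^*\Omega$ can be made small. This step uses that $\partial^*\Omega\cap\Omega'$ has finite $\mathcal{H}^1$-measure (otherwise neither the disjoint decomposition of $\partial^*\Omega$ by $\Omega$-side traces gives a convergent series, nor does the glued function even lie in $BV(\Omega')$, since the jump across $\partial\Omega$ could carry infinite mass). But the lemma only assumes $\Omega$ is an open subset of $\Omega'$ — no Lipschitz or finite-perimeter condition — so your argument does not prove the statement as written; you in fact flag this yourself (``in our applications'') and wave it away. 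The paper avoids the issue entirely and more cheaply: it applies the first-part construction to $\hat u := u-w\in GSBD^2(\Omega')$, which vanishes identically on $\Omega'\setminus\overline{\Omega}$, and observes that because the construction there returns $\hat v=\hat u\chi_{\Omega_{\rm good}}$, one automatically has $\hat v=0$ wherever $\hat u=0$; so $v:=\hat v+w$ equals $w$ off $\overline{\Omega}$ with no conditions on $\partial\Omega$ whatsoever and no new jump along $\partial\Omega$ being created. Your choice of $v$ in the unconstrained part does not have this ``$v=u$ on $\{u=0\}$'' property (since you subtract a nonzero $a_j$ there), so the paper's slick boundary reduction is not directly available with your construction — you would first have to modify the tail rigid motions to vanish on $\{\hat u=0\}$, at which point you are essentially back to the paper's choice.
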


Recall that the main assumption in the $SBV$ jump transfer lemma (see \cite[Theorem 2.1]{Francfort-Larsen:2003}) was that the derivatives $|\nabla u_n^l|$, $n \in \N$, are equiintegrable. Although Theorem \ref{th: kornpoin-sharp} allows us to reduce the problem to the $SBV$ setting, we need further arguments since Theorem \ref{th: kornpoin-sharp} only provides an $L^1$-bound for the derivatives and the bound is not given in terms of the displacement field,   but  holds only after subtraction of a piecewise infinitesimal rigid motion. 

To overcome this difficulty, given a fine covering of the jump set of  $u^l$, we have to construct explicitly  modifications of the functions $u_n^l$  on the given covering which have almost the same jump set and whose gradients are small. Notice that this differs substantially from the proof strategy devised in \cite{Francfort-Larsen:2003} where, due to equiintegrability, one could ensure a priori that gradients do not concentrate on small neighborhoods of the jump set of  $u^l$.

In the following, for $u \in GSBD^2$ and $x \in J_{u}$  with unit normal $\nu(x)$ we denote by $Q_r(x)$ the  square with sidelength $2r$, center $x$ and two faces perpendicular to $\nu(x)$.

\begin{definition}\label{def: fine}
{\normalfont
Let $u \in GSBD^2(\Omega)$, $x \in J_u$ and $\eta>0$. We say $r$ is an $\eta$-\emph{fine radius} and $Q_r(x)$ an $\eta$-\emph{fine square} of $u$ at $x$ if
there are two sets $B^r_+,B^r_- \subset Q_r(x)$ such that
\begin{align*}
\mathcal{L}^2(B_\pm^r) \ge \frac{1}{2}(1 - \eta)\mathcal{L}^2(Q_r(x)), \ \  \ \ \ \Vert u -  u^\pm(x)  \Vert_{L^\infty(B_\pm^r;\R^2)} \le \frac12\eta.
\end{align*}
For given $x$, $u$, and $\eta$ we set  $r(u,x,\eta)$ as the  maximal radius such that   $r$ is an $\eta$-fine radius of $u$ at $x$ for all  $r < r(u,x,\eta)$. Observe that both notions are well defined for almost every jump point and that  $r(u,x,\eta)> 0$ for $\mathcal{H}^1$-a.e. $x \in J_u$.
}
\end{definition}

We  have the following approximation result.  
\begin{theorem}\label{th: approx}
Let $\Omega' \subset \R^2$ open, bounded with Lipschitz boundary. Let $M>0$, $0 <\theta,\delta<1$ with $\delta  \le \frac{1}{4}C_{Q_1}\theta^{8}$ with the constant $C_{Q_1}$ from Remark \ref{rem:square}.   Consider a sequence $(u_n)_n$ in $GSBD^2(\Omega')$  and $u \in GSBD^2(\Omega')$  with 
\begin{align}\label{eq: assu-lemma}
\begin{split}
(i) & \ \ 
\Vert e(u_n)\Vert^2_{L^2(\Omega';\R_{\mathrm{sym}}^{2\times 2})} + {\cal H}^1(J_{u_n}) \le M \ \ \text{for all } n \in \N, \\ 
(ii)& \ \ u_n \to u  \text{ in measure in } \Omega'.
\end{split}
\end{align}
Let  $Q_* =  \bigcup_{i=1}^m  Q_{r_i}(x_i)$ be a union of pairwise disjoint $\delta$-fine squares for $u$ at $x_i \in J_u$ with   $\sum\nolimits_{i=1}^m r_i \le   M$ and $r_i\le \delta^2$. Then there exist a sequence  $(v^{\delta,\theta}_n)_n \subset SBV(Q_*;\R^2)$, a universal constant $c>0$, and $C_\theta=C_\theta(\theta)>0$ independent of the sequence $(u_n)_n$ and $\delta$, such that
\begin{align}\label{eq:cubesequence}
\begin{split}
(i) & \ \ \limsup_{n \to \infty}\mathcal{H}^1(J_{v^{\delta,\theta}_n} \setminus J_{u_n}) \le cM\theta,\\
(ii) & \ \   \limsup_{n \to \infty}\Vert \nabla v^{\delta,\theta}_n \Vert_{L^1(Q_*;\R^{2\times 2})} \le   C_\theta M \delta,\\
(iii)  & \ \ \liminf_{n\to \infty}\Vert v^{\delta,\theta}_n - u \Vert_{L^1(Q_{r_i}(x_i) \setminus F_i;\R^2)} =0 \ \ \ \text{for} \  i=1,\ldots,m,
\end{split}
\end{align} 
\smallskip   
where $F_i$, $i=1,\ldots,m$, are Borel sets with 
\begin{align}\label{eq:cubesequence2}
\mathcal{L}^2(F_i) \le c\theta^2 (r_i^2 + \theta^2\liminf\nolimits_{n \to \infty} (\mathcal{H}^1(J_{u_n} \cap Q_{r_i}(x_i)))^2).
\end{align}
 
\end{theorem}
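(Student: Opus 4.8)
The plan is to construct $v^{\delta,\theta}_n$ separately on each $\delta$-fine square $Q_{r_i}(x_i)$ by applying the sharp piecewise Korn inequality in its rescaled form \eqref{eq:local1-new} (Remark \ref{rem:square}) to $u_n$ restricted to $Q_{r_i}(x_i)$. First, I would fix $i$, write $Q = Q_{r_i}(x_i)$, and let $\tilde u^\theta_n \in SBV(Q;\R^2)\cap L^\infty(Q;\R^2)$ be the modification of $u_n$ provided by Theorem \ref{th: kornpoin-sharp} applied on $Q$, with associated Caccioppoli partition $Q = \bigcup_{k=0}^{I_n} P^n_k$ and infinitesimal rigid motions $(a^n_k)$, so that $w^n := \tilde u^\theta_n - \sum_k a^n_k \chi_{P^n_k}$ has $\Vert w^n\Vert_\infty + (\diam Q)^{-1}\Vert\nabla w^n\Vert_{L^1(Q)} \le C_{\theta,Q_1}\Vert e(u_n)\Vert_{L^2(Q)}$ and $\mathcal{L}^2(P^n_k)\ge C_{Q_1}\mathcal{L}^2(Q)\theta^2$ for $k\ge1$, while the `bad' set $P^n_0$ has small area and its boundary contributes only $c\theta\,\mathcal{H}^1(J_{u_n}\cap Q)$ of new perimeter. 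The exceptional set $F_i$ should essentially be $P^n_0$ together with a union of at most $\sim\theta^{-2}$ of the small pieces $P^n_k$ on which the subtracted rigid motion is `large'; this is where \eqref{eq:cubesequence2} will come from, since each such piece has area $\le$ (area of $Q$) $\sim r_i^2$ and there are few of them, so $\mathcal{L}^2(F_i)\le c\theta^2 r_i^2$ plus a term absorbing $\mathcal{H}^1(J_{u_n}\cap Q)^2$ coming from $P^n_0$ via the isoperimetric inequality.

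The key point is choosing which rigid motions to keep. Since $Q$ is a $\delta$-fine square for $u$ at $x_i$, there are sets $B^r_\pm$ of measure $\ge\frac12(1-\delta)\mathcal{L}^2(Q)$ on which $u$ is within $\delta/2$ of the constants $u^\pm(x_i)$; because $u_n\to u$ in measure, for $n$ large at least a fixed fraction of each large partition piece $P^n_k$ lies in $B^r_+$ or $B^r_-$, so on $P^n_k$ the function $\tilde u^\theta_n \approx a^n_k + w^n$ is (on a set of definite measure) close to a constant; combining this with the $L^\infty$-bound on $w^n$ and Lemma \ref{lemma: rigid motion}(a) forces $|A^n_k|\,\diam Q$ and $|a^n_k(x) - u^\pm(x_i)|$ to be controlled by $C_\theta(\delta + \Vert e(u_n)\Vert_{L^2(Q)})$. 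I would then \emph{define} $v^{\delta,\theta}_n$ on $Q$ by replacing, on every large piece $P^n_k$, the rigid motion $a^n_k$ by the constant $u^+(x_i)$ or $u^-(x_i)$ according to which half-square $P^n_k$ predominantly meets, and set $v^{\delta,\theta}_n = t_0$ (a fixed value) on the residual set $F_i$. Since on each large piece $v^{\delta,\theta}_n - w^n$ is constant, $\nabla v^{\delta,\theta}_n = \nabla w^n$ a.e.\ on $Q\setminus F_i$, giving $\Vert\nabla v^{\delta,\theta}_n\Vert_{L^1(Q)} \le \Vert\nabla w^n\Vert_{L^1(Q)} \le C_{\theta}\diam(Q)\Vert e(u_n)\Vert_{L^2(Q)}$; summing over $i$, using $\diam Q_{r_i}(x_i)\le c r_i$, Cauchy–Schwarz, $\sum r_i\le M$, $r_i\le\delta^2$ and \eqref{eq: assu-lemma}(i) yields $\Vert\nabla v^{\delta,\theta}_n\Vert_{L^1(Q_*)} \le C_\theta \big(\sum_i r_i^2\big)^{1/2}\Vert e(u_n)\Vert_{L^2(Q_*)} \le C_\theta M\delta$, which is \eqref{eq:cubesequence}(ii). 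For \eqref{eq:cubesequence}(i), the new jump introduced by the replacement lies in $\bigcup_k \partial^* P^n_k$, which outside $J_{u_n}$ is controlled by $c\theta\,\mathcal{H}^1(J_{u_n}\cap Q)$ from Theorem \ref{th: kornpoin-sharp}\eqref{eq: kornpoinsharp2}(i), plus $\partial^* F_i$; summing over $i$ and using $\sum_i \mathcal{H}^1(J_{u_n}\cap Q_{r_i}(x_i)) \le \mathcal{H}^1(J_{u_n}) \le M$ gives $\limsup_n \mathcal{H}^1(J_{v^{\delta,\theta}_n}\setminus J_{u_n}) \le cM\theta$. For \eqref{eq:cubesequence}(iii): on $Q\setminus F_i$ the large pieces cover all but a small area, $v^{\delta,\theta}_n$ equals $u^\pm(x_i) + w^n$ there, $\Vert w^n\Vert_\infty$ is small (of order $\Vert e(u_n)\Vert_{L^2(Q_{r_i})}$, which $\to 0$ along a subsequence after a diagonal argument since $\sum_i\Vert e(u_n)\Vert^2_{L^2(Q_{r_i})}$ is bounded), while $u^\pm(x_i)$ approximates $u$ on $B^r_\pm\supset$ most of the piece; a limit argument then gives $\liminf_n\Vert v^{\delta,\theta}_n - u\Vert_{L^1(Q_{r_i}\setminus F_i)} = 0$.

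The main obstacle I anticipate is the bookkeeping needed to reconcile two competing requirements: the new fracture energy must scale like $\theta$ (independent of $\delta$) while the $L^1$-gradient bound must scale like $\delta$ (with constant depending on $\theta$). This forces the partition and the `bad set' $F_i$ to be chosen using the $\theta$-version of Theorem \ref{th: kornpoin-sharp} on the cube, but then one must verify that the pieces kept are genuinely large (area $\gtrsim\theta^2 r_i^2$, which is $\gtrsim\delta\cdot r_i^2$ by the hypothesis $\delta\le\frac14 C_{Q_1}\theta^8$ — this is exactly why that hypothesis is imposed), so that the convergence-in-measure of $u_n\to u$ can be leveraged on each of them. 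A second delicate point is passing to a single subsequence of $n$ working simultaneously for all $m$ cubes and producing genuine $L^1$-convergence in \eqref{eq:cubesequence}(iii) rather than mere smallness; this requires a diagonal extraction exploiting that $\sum_{i=1}^m\Vert e(u_n)\Vert^2_{L^2(Q_{r_i}(x_i))}\le M$ uniformly in $n$, so that along a subsequence $\Vert e(u_n)\Vert_{L^2(Q_{r_i}(x_i))}\to 0$ for each $i$ (using $r_i\le\delta^2$ and the squares being disjoint with $\sum r_i \le M$, most cubes carry vanishing elastic energy; a careful argument handles the finitely many that do not). Everything else is a routine combination of the isoperimetric inequality, Theorem \ref{th: local structure}, and Lemma \ref{lemma: rigid motion}.
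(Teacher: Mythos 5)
Your high-level strategy --- localize on each $\delta$-fine square, apply the rescaled sharp piecewise Korn inequality to $u_n$ there, use the $\delta$-fineness together with $u_n\to u$ in measure to control the rigid motions on the large partition pieces, and take $F_i$ to be (essentially) the limiting bad set --- is the same as the paper's, which packages the square-local step into Lemma~\ref{lemma: loc} and then invokes it on each $Q_i$. Your gradient and jump estimates, $(\ref{eq:cubesequence})$(i) and (ii), go through.

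However, there is a genuine gap at $(\ref{eq:cubesequence})$(iii), and it comes from the specific way you \emph{define} $v_n^{\delta,\theta}$. You replace, on every large piece $P^n_k$, the infinitesimal rigid motion $a^n_k$ by one of the two constants $u^\pm(x_i)$, so that on $P^n_k$ one has $v_n^{\delta,\theta}=u^\pm(x_i)+w^n$. This is a genuinely different function from $u_n$. As a consequence, even in the best case where $\|w^n\|_\infty$ happens to vanish, on $Q_i\setminus F_i$ one would only get $v_n^{\delta,\theta}\to u^\pm(x_i)$, and $u$ is merely within $\delta/2$ of $u^\pm(x_i)$ on $B^r_\pm$; since $\delta$ is \emph{fixed} as $n\to\infty$, this yields $\liminf_n\|v_n^{\delta,\theta}-u\|_{L^1(Q_i\setminus F_i)}$ of order $\delta\,r_i^2 > 0$, not $=0$. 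The statement requires $\liminf_n=0$ exactly, because in the downstream jump transfer argument one needs $E^n_t\to E_t$ (level-set convergence), not merely closeness up to $\delta$. The paper's construction avoids this entirely: $v_n^{\delta,\theta,i}$ is the modification $u_n^\theta$ of $u_n$ produced by the sharp piecewise Korn inequality (Lemma~\ref{lemma: loc}), which \emph{coincides with $u_n$} outside the small exceptional set and is set to a constant on it. No rigid motion is ever subtracted from the displacement that enters the conclusion; the rigid motions are only used internally to bound $\|\nabla u_n^\theta\|_{L^1}$. Then $v_n^{\delta,\theta,i}\to u$ in measure on $Q_i\setminus F_i$ follows directly from $u_n\to u$ and $\{v_n\neq u_n\}\to F_i$, and the uniform $L^\infty$ bound (Lemma~\ref{lemma: loc}(iv)) upgrades this to $L^1$-convergence.

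A second, independent error is your claim that along a subsequence $\|e(u_n)\|_{L^2(Q_{r_i}(x_i))}\to 0$ for each $i$. This is false in general: take $u_n\equiv u$ for all $n$; then $\|e(u_n)\|_{L^2(Q_{r_i})}=\|e(u)\|_{L^2(Q_{r_i})}$ is a fixed positive quantity for any cube where $u$ is not rigid. The uniform bound $\sum_i\|e(u_n)\|^2_{L^2(Q_{r_i})}\le M$ gives no decay in $n$. The paper's proof never needs such a decay precisely because $v_n$ equals $u_n$ off the small set, so convergence to $u$ is inherited from the hypothesis $u_n\to u$ rather than manufactured from a vanishing elastic energy.
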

For the proof of the jump transfer lemma we will need the following extension to the case with boundary conditions.

\begin{corollary}\label{cor: approx}
Under the assumptions of Theorem \ref{th: approx}, suppose in addition that there is an open subset $\Omega$ of $\Omega'$ so that $u_n=w_n$ in $\Omega'\setminus \overline{\Omega}$, for a bounded sequence $(w_n)_n \subset H^1(\Omega';\R^2)$. Then the sequence $(v^{\delta,\theta}_n)_n$  can be taken such that $J_{v^{\delta,\theta}_n} \subset Q_*\cap \overline{\Omega}$, provided the constant $C_\theta$ is allowed to   additionally depend on $\sup_n \Vert w_n \Vert_{H^1(\Omega'; \R^2)}$. 

\end{corollary}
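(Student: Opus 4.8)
The plan is to re-run the square-by-square construction underlying Theorem \ref{th: approx}, replacing, on those $\delta$-fine squares that meet the collar $\Omega'\setminus\overline\Omega$, the rescaled free Korn inequality (Remark \ref{rem:square}) by the rescaled Dirichlet version (Theorem \ref{th: korn-boundary}), so that the local modification becomes compatible with the prescribed values $w_n$. Since $u=w\in H^1$ on $\Omega'\setminus\overline\Omega$, one has $J_u\subset\overline\Omega$ up to an $\mathcal{H}^1$-null set, so I may assume $x_i\in\overline\Omega$ for all $i$ and split $\{1,\dots,m\}=\mathcal A\cup\mathcal B$, where $i\in\mathcal A$ if $Q_{r_i}(x_i)\subset\Omega$ and $i\in\mathcal B$ otherwise. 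For $i\in\mathcal B$ we have $\dist(x_i,\partial\Omega)\le\sqrt2\,r_i\le\sqrt2\,\delta^2$, hence every such square lies in $\{x:\dist(x,\partial\Omega)\le 3\delta^2\}$; since the squares are pairwise disjoint this gives $\sum_{i\in\mathcal B}\mathcal{L}^2(Q_{r_i}(x_i))\le C_\Omega\delta^2$ for $\delta$ small, and, by Lipschitz regularity of $\partial\Omega$ and \eqref{eq: omega'}, each $Q_{r_i}(x_i)$ sits in a coordinate patch where, after an affine change of variables of uniformly bounded distortion, the pair $(Q_{r_i}(x_i)\cap\Omega,\,Q_{r_i}(x_i))$ is admissible in the sense of \eqref{eq: omega'} with constants controlled only by $\Omega$.

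On the interior squares $i\in\mathcal A$ I would keep the fields $v^{\delta,\theta}_n$ of Theorem \ref{th: approx}, so that $J_{v^{\delta,\theta}_n}\cap Q_{r_i}(x_i)\subset Q_{r_i}(x_i)\subset\Omega\subset\overline\Omega$ and \eqref{eq:cubesequence}, \eqref{eq:cubesequence2} hold on these squares unchanged. On a boundary square $i\in\mathcal B$ I would run the same construction but, where Remark \ref{rem:square} is invoked, apply instead the rescaled Theorem \ref{th: korn-boundary} to the admissible pair above, with $u_n$ (which equals $w_n$ on $Q_{r_i}(x_i)\setminus\overline\Omega$) in the role of $u$ and $w_n$ in that of $w$. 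This produces a modification $u_n^\theta$ of $u_n$, a finite Caccioppoli partition of $Q_{r_i}(x_i)$, and rigid motions whose associated field $v$ satisfies $v=w_n$ on $Q_{r_i}(x_i)\setminus\overline\Omega$ along with the rescaled estimates \eqref{eq: boundary modi}, \eqref{eq: small set main-boundary}. Completing the remaining steps of the proof of Theorem \ref{th: approx} then yields $v^{\delta,\theta}_n$ on $Q_{r_i}(x_i)$ with the same structure as before and, additionally, $v^{\delta,\theta}_n=w_n$ on $Q_{r_i}(x_i)\setminus\overline\Omega$; in particular $J_{v^{\delta,\theta}_n}\cap(Q_{r_i}(x_i)\setminus\overline\Omega)=\emptyset$ and no spurious jump appears on $\partial\Omega\cap Q_{r_i}(x_i)$ (the gain of the Dirichlet version: the constructed field already matches $w_n=u_n$ across $\partial\Omega$ wherever $u_n$ itself does not jump), so $J_{v^{\delta,\theta}_n}\cap Q_{r_i}(x_i)\subset\overline\Omega$.

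Then I would check that \eqref{eq:cubesequence} survives. For (i), the rescaled \eqref{eq: small set main-boundary}(i) and \eqref{eq: boundary modi}(i) bound the excess jump on an $i\in\mathcal B$ square by $c\theta(\mathcal{H}^1(J_{u_n}\cap Q_{r_i}(x_i))+r_i)$; summing over $\mathcal A\cup\mathcal B$, using disjointness, $\mathcal{H}^1(J_{u_n})\le M$ and $\sum_i r_i\le M$, gives $\limsup_n\mathcal{H}^1(J_{v^{\delta,\theta}_n}\setminus J_{u_n})\le cM\theta$. For (ii), on $i\in\mathcal B$ the rescaled \eqref{eq: small set main-boundary}(iii) gives $\Vert\nabla v^{\delta,\theta}_n\Vert_{L^1(Q_{r_i}(x_i))}\le C_\theta r_i(\Vert e(u_n)\Vert_{L^2(Q_{r_i}(x_i))}+\Vert w_n\Vert_{H^1(Q_{r_i}(x_i))})$; Cauchy--Schwarz with $\sum_i r_i^2\le\delta^2\sum_i r_i\le\delta^2 M$, \eqref{eq: assu-lemma}(i), and the uniform $H^1$-bound on $(w_n)_n$ bound this contribution by $C_\theta M\delta$, with $C_\theta$ now also depending on $\sup_n\Vert w_n\Vert_{H^1(\Omega';\R^2)}$; together with the $\mathcal A$-contribution this is \eqref{eq:cubesequence}(ii). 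For (iii), on $i\in\mathcal B$ the $L^1$-convergence $v^{\delta,\theta}_n\to u$ on $Q_{r_i}(x_i)\setminus F_i$ follows exactly as in Theorem \ref{th: approx}, since $w_n\to w=u$ in $L^1$ on $Q_{r_i}(x_i)\setminus\overline\Omega$ and the Dirichlet variant of the Korn construction still places $v^{\delta,\theta}_n$ $L^1$-close to $u$ off an exceptional set obeying \eqref{eq:cubesequence2}.

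The main obstacle I anticipate is making the rescaled Theorem \ref{th: korn-boundary} applicable on the small squares $Q_{r_i}(x_i)$ with constants \emph{independent of $i$}: this rests on the flatness of $\partial\Omega$ at scale $r_i\le\delta^2$ together with \eqref{eq: omega'}, and it is delicate precisely for those $\mathcal B$-squares whose portion outside $\overline\Omega$ is too thin to form a nondegenerate collar. I would handle the latter by a further harmless subdivision of $\mathcal B$ and, where needed, a slight concentric shrinking of the square — which preserves the $\delta$-fine property because $r_i$ is small — at the cost of enlarging $C_\theta$, as the statement permits.
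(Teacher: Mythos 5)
The paper's proof is much shorter and takes a completely different route: it keeps the fields $v^{\delta,\theta}_n$ produced by Theorem \ref{th: approx} as they are and simply patches them by setting $\hat v^{\delta,\theta}_n := (w_n - v^{\delta,\theta}_n)\chi_{\{v^{\delta,\theta}_n \neq u_n\}} + v^{\delta,\theta}_n$. Because $v^{\delta,\theta}_n = u_n = w_n$ on $(Q_*\setminus\overline\Omega)\cap\{v^{\delta,\theta}_n = u_n\}$ and $\hat v^{\delta,\theta}_n = w_n$ on $\{v^{\delta,\theta}_n \neq u_n\}$ by construction, one gets $\hat v^{\delta,\theta}_n = w_n \in H^1$ on all of $Q_*\setminus\overline\Omega$, hence $J_{\hat v^{\delta,\theta}_n}\subset Q_*\cap\overline\Omega$. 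The extra jump introduced is contained in $\partial^*\{v^{\delta,\theta}_n \neq u_n\}$ and is already controlled by \eqref{eq: localization}(ii); the extra gradient is just $\nabla w_n$ on a set of measure $\mathcal O(\delta^2 M)$, which is $\mathcal O(\delta)$ in $L^1$ by H\"older; and \eqref{eq:cubesequence}(iii) is unaffected because the sets $F_i$ were defined as measure limits of $\{v^{\delta,\theta}_n \neq u_n\}\cap Q_i$. No invocation of the Dirichlet Korn inequality at the local scale is needed.

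Your proposal is a genuinely different strategy — re-running the square-by-square construction with the Dirichlet version (Theorem \ref{th: korn-boundary}) on boundary squares — and it carries a concrete gap that you flag but do not close. Theorem \ref{th: korn-boundary} and its ingredients (in particular Lemma \ref{lemma: maggi} with its domain-dependent $\delta(A)$, and the proof of \eqref{eq: small set main-boundary}(i) via a finite Lipschitz patch cover) produce constants attached to a \emph{fixed} pair $(\Omega,\Omega')$. No analogue of Remark \ref{rem:square} is established for this theorem, so there is no proof that the constants behave well under rescaling to squares of size $r_i\le\delta^2$ and stay uniform over all boundary squares; for squares whose intersection with $\Omega'\setminus\overline\Omega$ is a thin sliver, the local pair $(Q_{r_i}(x_i)\cap\Omega,\,Q_{r_i}(x_i))$ can degenerate and the suggested ``harmless subdivision'' or ``concentric shrinking'' is not a proof. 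There is also a mismatch of objects: \eqref{eq: small set main-boundary}(iii) controls $\nabla v$ with $v = u^\theta - \sum a_j\chi_{P_j}$, whereas the quantity you need to bound (as in Lemma \ref{lemma: loc}(iii)) is $\nabla u^\theta$, so you would additionally have to rebuild the argument that estimates the rotations $A_j$ from the $\delta$-fine reference values $t_1,t_2$ in the boundary setting, with a $\diam(Q)$-scaling in the estimate — none of which is available off the shelf. These are exactly the obstacles the paper's simple post-hoc patch avoids, and they should be addressed before your route can be considered complete.
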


We now proceed with the proof of Theorem \ref{th: JTransf}.

\begin{proof}[Proof of Theorem \ref{th: JTransf}]
 We first consider the case $\ell =1$ and drop the superscript. Let $(u_n)_n$, $u$ and $\phi$ be given as in the hypothesis.  
 
 \smallskip

 \noindent \textit{Step 0.} We first show that it is not restrictive to assume that the limiting function $u$ additionally satisfies  $u\in SBV(\Omega';\R^2)$. Indeed, assume the theorem has been proved in this case. Then, in the general case of $u\in GSBD^2(\Omega')$,  for a fixed $\epsilon >0$ we choose $v_\epsilon \in SBV(\Omega';\R^2)  { \cap GSBD^2(\Omega')}$ with $v_\epsilon = u$ on $\Omega' \setminus \overline{\Omega}$  such that $\mathcal{H}^1(J_{u-v_{\epsilon}}) \le \epsilon$ and
$\mathcal{H}^1(J_{u}\triangle J_{v_{\epsilon}}) \le \epsilon$, thanks to Lemma \ref{lemma: approx new}.  We notice that the sequence  $v_{n, \epsilon}:= u_n+ (v_{\epsilon}-u)$   converges in measure to $v_\epsilon$ and satisfies the assumptions \eqref{eq: assu}.  Furthermore, 
\begin{equation}\label{eq: auxiliary}
\mathcal{H}^1(J_{v_{n,\epsilon}}\triangle J_{u_n}) \le \mathcal{H}^1(J_{v_{\epsilon}-u}) \le \epsilon.
\end{equation}
If we apply the theorem to the function $v_{\epsilon}$ and the the sequence  $(v_{n, \epsilon})_n$,  we find a sequence $\phi_{n,\epsilon}$ satisfying (i) and (ii) in \eqref{eq: conv} as well as
\[
\limsup_{n\to +\infty}{\cal H}^1\big((J_{\phi_{n,\epsilon}} \setminus J_{v_{n,\epsilon}}) \setminus (J_{\phi} \setminus J_{v_{\epsilon}})\big) =0\,.
\]
From   \eqref{eq: auxiliary} we then get
\[
\limsup_{n\to +\infty}{\cal H}^1\big((J_{\phi_{n, \epsilon}} \setminus J_{u_{n}}) \setminus (J_{\phi} \setminus J_{u})\big) \le 2\epsilon\,,
\]
whence the conclusion follows, by arbitrariness of $\epsilon$, through a diagonal argument.

\smallskip

 \noindent \textit{Step 1.} We now prove the theorem for $\ell=1$ and $u\in SBV(\Omega';\R^2)$, mainly following the proof of the $SBV$ jump transfer  (see \cite[Theorem 2.1]{Francfort-Larsen:2003}) employing additionally our auxiliary results.

Let $\theta>0$.  In the following all appearing generic constants $c$ are always independent of  $\theta$.  As a shortcut,  for $\lambda \in \R$ we introduce the scalar, auxiliary function $u^\lambda:=u^1+\lambda u^2$, where $u^1$ and $u^2$ denote the two components of the function $u$, respectively. We may fix $\lambda \in (0,1)$ such that
\begin{align}\label{eq:scalarfatto}
 \mathcal{H}^1(J_{u^\lambda} \triangle J_{u} ) = 0.
\end{align}
This follows from the fact that $A_\lambda:= \lbrace x \in J_{u}: [u^1(x)] + \lambda[u^2(x)] = 0 \rbrace$ satisfies ${\cal H}^1(A_\lambda) = 0$ except for a countable number of $\lambda$'s.
We further denote by $E_t$ the set of all Lebesgue-density 1 points for $\lbrace x \in  \Omega': u^\lambda(x) >t \rbrace$. Let $L = \lbrace t \in \R: {\cal L}^2(\lbrace x \in  \Omega': u^\lambda(x) = t\rbrace) = 0 \rbrace$. Then there is a countable, dense subset $D \subset L$ such that $J_{u^\lambda}$ (and thus, $J_u$) coincides up to a set of negligible ${\cal H}^1$-measure with
$$G :=  \bigcup_{t_1,t_2 \in D, t_1 < t_2} (\partial^* E_{t_1} \cap \partial^* E_{t_2} \cap \Omega').$$ 

For  $x \in G$ we can choose $t_1(x) < t_2(x)$ in $D$ such that $x \in \partial^* E_{t_1(x)} \cap \partial^* E_{t_2(x)}$ and $t_2(x) - t_1(x) \ge \frac{1}{2}|[u^\lambda(x)]|$. It can be shown that $\partial^* E_{t_1(x)}, \partial^* E_{t_2(x)}$ have a common outer unit normal $\nu(x)$. Let $N$ be the set of points, where $\partial \Omega$ is not differentiable.  We define 
\begin{align*}
G_j& = \Big\{ x \in G \setminus N:  |[u^\lambda(x)]| \ge \tfrac{1}{j}, \ \  \lim\nolimits_{r\to 0}\frac{{\cal H}^1( (J_{u} \setminus \partial^* E_{t_1(x)} ) \cap Q_r(x) )}{2r}  = 0\Big\},  
\end{align*}
where  $Q_r(x)$ is a square with sidelenth $2r$ and faces perpendicular to the normal $\nu(x)$. As in the proof of \cite[Theorem 2.1]{Francfort-Larsen:2003}, and recalling \eqref{eq:scalarfatto} we have that for fixed $\theta>0$ and $j=j(\theta)$ large enough 
\begin{align}\label{eq: jt3}
{\cal H}^1(J_{u} \setminus G_j)= {\cal H}^1(J_{u^\lambda} \setminus G_j) \le \theta.
\end{align}
We also fix the half squares 
\begin{align*}
Q_r^+(x) &:= \lbrace y \in Q_r(x): (y-x) \cdot \nu(x)>0 \rbrace,  \ \ \ \ Q_r^-(x) := Q_r(x) \setminus Q_r^+(x)
\end{align*}
and the one-dimensional faces  
\begin{align*}
H_r(x,s) = \lbrace y \in Q_r(x): (y-x) \cdot \nu(x)=s \rbrace, \ \ \ H_r(x) := H_r(x,0). 
\end{align*}
Let $\delta = \theta (2\sqrt{2}Mj C_{\theta})^{-1} \wedge \frac{1}{4}C_{Q_1}\theta^{8}$, with the constant $C_{\theta}$ from \eqref{eq:cubesequence}(ii) and $C_{Q_1}$ from Remark \ref{rem:square}.  Following    \cite[(2.3),(2.5)-(2.6)]{Francfort-Larsen:2003}  and covering $G_j$ using the Morse-Besicovitch Theorem (see e.g. \cite{fonseca}) we find a finite number of pairwise disjoint squares $Q_i := Q_{r_i}(x_i)$, $i=1,\ldots,m$, with $x_i \in J_u$, $r_i { <} r(u, x_i,\delta)\wedge \delta^2$ (cf. Definition \ref{def: fine}) such that 
\begin{align}\label{eq: jt12}
\begin{split}
(i)& \ \ {\cal L}^2\big(\bigcup\nolimits_{i=1}^m Q_{i}\big) < \theta, \ \ \  {\cal H}^1(G_j \setminus \bigcup\nolimits_{i=1}^m Q_i) < \theta, \\
(ii)& \ \ {\cal H}^1((J_\phi \setminus J_{u}) \cap  Q_i) \le \theta r_i,\\
(iii) & \ \ r_i\le {\cal H}^1(J_u \cap Q_i) \le 3r_i, \\
(iv) & \ \  {\cal H}^1\big( (J_{u} \setminus\partial^* E_{t_1(x_i)}) \cap Q_i \big) \le \theta r_i,\\
(v)& \ \  {\cal H}^1\big(\lbrace y \in \partial^* E_{t_1(x_i)} \cap Q_i: \dist(y,H_{r_i}(x_i)) \ge \tfrac{\theta}{2} r_i \rbrace \big) \le \theta r_i, \\
(vi) & \ \  {\cal L}^2\big(( E_{t_k(x_i)}\cap Q_i) \triangle Q_i^-)\big) \le \theta^2r_i^2, \ k=1,2,          \\
(vii) & \ \  Q_i \subset \Omega \text{ if } x_i \in \Omega, \ \ \ {\cal H}^1(\partial \Omega \cap Q_i) \le cr_i  \text{ if } x_i \in \partial \Omega,
\end{split}
\end{align}
where  $Q_i^-:=Q^-_{r_i}(x_i)$. { Recall that $r_i$ is a  $\delta$-fine radius of $u$ at $x_i$ in the sense of Definition \ref{def: fine} since $r_i < r(u, x_i,\delta)$.} Thus, each $Q_i$ is a $\delta$-fine square. By \eqref{eq: jt12}(iii) we can now apply Theorem \ref{th: approx} and Corollary \ref{cor: approx} to obtain a sequence $(v^{\delta, \theta}_n)_n \subset SBV(Q_*;\R^2)$ with $Q_* =  \bigcup_{i=1}^m Q_{i}$ satisfying \eqref{eq:cubesequence}, in particular we have 
\begin{align}\label{eq:small jumpXXX}
\mathcal{H}^1(J_{v^{\delta, \theta}_n} \setminus J_{u_n}) \le cM\theta, \ \ \ \ J_{v^{\delta, \theta}_n} \subset \overline{\Omega}.
\end{align}
For brevity we write $v_n$ instead of $v^{\delta, \theta}_n$ in the following. For the same $\lambda$ that we fixed in \eqref{eq:scalarfatto} we analogously define the scalar-valued auxiliary functions $v_n^\lambda$ and we denote by $E^n_t$ the set of all Lebesgue-density 1 points for $\lbrace x \in  \Omega': v_n^\lambda(x) >t \rbrace$. By construction, and applying  \eqref{eq:cubesequence}(ii) we obtain, recalling  $\delta \le  \theta (2\sqrt{2}Mj C_{\theta})^{-1}$ 
\[
\Vert \nabla v_n^\lambda \Vert_{L^1(Q_*;\R^{2})}\le \sqrt{2}\Vert \nabla v_n \Vert_{L^1(Q_*;\R^{2\times 2})} \le \sqrt{2}C_\theta M\delta  \le \tfrac{\theta}{2j}\,.
\]
In view of the coarea formula in $BV$ this implies  that there are $t_i \in [t_1(x_i),t_2(x_i)]$ with (see \cite[(2.7)]{Francfort-Larsen:2003})
\begin{align}\label{eq: coarea+}
\sum\nolimits_{i=1}^m {\cal H}^1\big( (\partial^* E^n_{t_i} \cap Q_i) \setminus J_{v_n^\lambda}\big) \le  \theta.
\end{align}
By construction it holds that $J_{v_n^\lambda}\subset J_{v_n}$: combining with \eqref{eq:small jumpXXX}, we deduce 
\begin{align}\label{eq: coarea}
\sum\nolimits_{i=1}^m {\cal H}^1\big( (\partial^* E^n_{t_i} \cap Q_i) \setminus J_{u_n}\big) \le  (1+cM)\theta.
\end{align}

We now denote with $\mathcal{I} \subset \lbrace 1,\ldots,m\rbrace$ the subset of \emph{good squares} such   that  
\begin{align}\label{eq:small jump}
\liminf\nolimits_{n \to \infty}\mathcal{H}^1(J_{u_n} \cap Q_i)  \le \theta^{-1} r_i 
\end{align}
if and only if $i \in  \mathcal{I}$.  For $t \in L$, \eqref{eq:cubesequence}(iii), \eqref{eq:cubesequence2}, and \eqref{eq:small jump} imply that 
$$\liminf\nolimits_{n \to \infty}  {\cal L}^2((E^n_t \triangle E_t) \cap Q_i) \le c\theta^2r_i^2$$
for $i \in \mathcal{I}$. Then taking \eqref{eq: jt12}(vi) into account and following  \cite[(2.8)-(2.9)]{Francfort-Larsen:2003} we find $N(\theta)$ such that for $n \ge N(\theta)$
$${\cal L}^2((E^n_{t_i} \cap Q_i ) \triangle Q_i^-)  + {\cal L}^2((E_{t_i} \cap Q_i ) \triangle Q_i^-) \le c\theta^2r_i^2. $$
Following \cite[(2.10)-(2.14)]{Francfort-Larsen:2003} and using \eqref{eq: jt12}(i),(iii)-(v) we get $s^+_i,s^-_i \in [\frac{\theta}{2}r_i, \theta r_i]$ for $i \in \mathcal{I}$ such that for $n \ge N(\theta)$
\begin{align}\label{eq: new}
\begin{split}
(i) & \ \ {\cal H}^1(H_i^- \setminus E_{t_i}^n) \le  c \theta r_i, \ \ \ \ {\cal H}^1(H_i^+ \cap E_{t_i}^n) \le c \theta r_i, \ \ i \in \mathcal{I},\\
(ii) & \ \ {\cal H}^1\big(G_j \setminus \big(\bigcup\nolimits_{i\in \mathcal{I}} R_i  \cup \bigcup\nolimits_{i \notin \mathcal{I}} Q_i  \big) \big) \le c\theta,
\end{split} 
\end{align}
where $H_i^+ = H_{r_i}(x_i,s_i^+)$, $H_i^- = H_{r_i}(x_i,s_i^-)$ and $R_i$ the open rectangle between $H_i^+$ and $H^-_i$. 

On the \textit{bad squares} $Q_i$ with $i \notin \mathcal{I}$ the above estimates are not available. On the other hand, there is not much jump of $u$ in those squares. Indeed, 
since $\mathcal{H}^1(J_{u_n} \cap Q_i) > \theta^{-1} r_i$ for all $i \notin \mathcal{I}$ and $n\in \N$ large enough, we derive, using \eqref{eq: jt12}(iii)
\begin{align}\label{eq:badsquares}
\sum\nolimits_{i \notin \mathcal{I}} {\cal H}^1(J_u \cap Q_i) \le \sum\nolimits_{i \notin \mathcal{I}} 3 r_i \le 3\theta \sum\nolimits_{i \notin \mathcal{I}} \mathcal{H}^1(J_{u_n} \cap Q_i) \le 3\theta M.
\end{align}

Therefore, our aim is now to transfer the jump set $J_\phi$ in $G_j \cap \bigcup_{i \in \mathcal{I}} Q_i$ to $\bigcup_{i \in \mathcal{I}} (\partial^* E_{t_i}^n \cap Q_i)$. Assume first   $x_i \notin \partial \Omega$. We set $\phi_- = \phi \chi_{Q_i^- \setminus R_i}$ extended to $R_i$ according to Lemma \ref{lemma: nitsche}. (This is possible when $\theta$ is small enough since $R_i$ is a small neighborhood of $H_{r_i}(x_i)$.) In a similar way we define $\phi_+$ on $(Q_i \setminus Q_i^-) \cup R_i$.

Now we let
$$\phi_n = \begin{cases} \phi_- & \text{on } Q_i^- \setminus R_i, \\ \phi_+ & \text{on } Q_i \setminus (Q_i^- \cup R_i), \\ \phi_- & \text{on } R_i \cap E^n_{t_i}, \\ \phi_+ & \text{on } R_i \setminus E^n_{t_i}. \end{cases} $$
If $x_i \in \partial \Omega$, we proceed similarly using \eqref{eq: jt12}(vii) and modifying $\phi$ to $\phi_n$ only in the part contained in $\Omega$.\footnote{See again the proof of \cite[Theorem 2.1]{Francfort-Larsen:2003} for details. Let us just mention that in this context it is crucial that the jump sets of $J_{u}, J_{v_n}$ are contained in $\overline{\Omega}$, cf. \eqref{eq:small jumpXXX}, as hereby the  function has to be indeed only modified in $\overline{\Omega}$.}    We repeat the modification for all $Q_i$, $i \in \mathcal{I}$, so $\phi_n$ is defined on $\bigcup_{i \in \mathcal{I}} Q_i$. Outside this union we let $\phi_n = \phi$. By the construction and \eqref{nitsche} we observe
\begin{align}\label{eq: jjt}
\begin{split}
(i) & \ \ \lbrace \phi_n \neq \phi \rbrace \subset \big( \bigcup\nolimits_{i \in \mathcal{I}} R_i  \big) \cap \overline{\Omega},\\
(ii)& \ \ {\cal H}^1\big(J_{\phi_n} \cap \bigcup\nolimits_{i\in \mathcal{I}}  (R_i \setminus \partial^* E^n_{t_i})\big) \le c{\cal H}^1\big(J_\phi \cap \bigcup\nolimits_{i\in \mathcal{I}} (Q_i \setminus R_i)\big), \\
(iii)& \ \ \Vert e(\phi_n)\Vert_{L^2(\bigcup_{i \in \mathcal{I}} Q_i;\R^{2\times 2}_{\mathrm{sym}})} \le c \Vert e(\phi)\Vert_{L^2(\bigcup_{i \in \mathcal{I}} Q_i;\R^{2\times 2}_\mathrm{sym})}.
\end{split}
\end{align}
Taking a sequence $\theta_k \to 0$ generates a sequence $\phi_n$ by choosing $\phi_n$ as above using $\theta_k$ for $n \in  [N(\theta_k), N(\theta_{k+1}))$. With  \eqref{eq: jt12}(i) and \eqref{eq: jjt} we immediately deduce  \eqref{eq: conv}(i),(ii).

Finally, to see   \eqref{eq: conv}(iii) we again follow the argumentation in  \cite{Francfort-Larsen:2003} and refer therein for details. By \eqref{eq: jt3}, \eqref{eq: new}(ii), \eqref{eq:badsquares},   and \eqref{eq: jjt}(i) we find 
$${\cal H}^1\Big(\big((J_{\phi_n} \setminus J_{{u}_n}) \setminus (J_{\phi} \setminus J_{u})\big) \setminus  \big( \bigcup\nolimits_{i \in \mathcal{I}} \overline{R_i}   \big) \Big)\le O(\theta).$$
 Consequently, to conclude it suffices to show   
\begin{align}\label{eq: 9}
{\cal H}^1\Big( (J_{\phi_n} \setminus J_{u_n})  \cap \bigcup\nolimits_{i\in \mathcal{I}} \overline{R_i} \Big)\le  O(\theta)\,.
\end{align}
To this end, we consider $(J_{\phi_n} \setminus J_{u_n})  \cap \overline{R_i}$ for a fixed  $i \in \mathcal{I}$ and assume $x_i \in \Omega$ (the case $x_i \in \partial \Omega$ is similar). We break $\overline{R_i}$ into the parts
\begin{align*}
\overline{R_i} = \bigcup\nolimits^4_{k=1} P^k_i & := (\overline{R_i} \cap \partial^* E^n_{t_i}) \cup (R_i \setminus \partial^* E^n_{t_i})  \cup ( (H_i^+ \cup H_i^-) \setminus \partial^* E^n_{t_i}) \\ & \ \ \ \ \cup   (\partial R_i \setminus ({H}_i^+ \cup {H}_i^- \cup \partial^* E^n_{t_i}) ).
\end{align*}
First, by  \eqref{eq: coarea} we have 
$$\sum\nolimits_{i \in \mathcal{I}}{\cal H}^1(P^1_i \setminus J_{u_n}) \le O(\theta).$$
Moreover, by \eqref{eq: jt3}, \eqref{eq: jt12}(ii),(iii),  \eqref{eq: new}(ii), and \eqref{eq: jjt}(ii)  we derive
\begin{align*}
\sum\nolimits_{i \in \mathcal{I}}{\cal H}^1(P_i^2 \cap J_{\phi_n})&  \le c {\cal H}^1(J_\phi \cap \bigcup\nolimits_{i\in \mathcal{I}} (Q_i \setminus R_i)) \\ &\le c {\cal H}^1\big((J_\phi \cap  J_{u})  \cap \bigcup\nolimits_{i\in \mathcal{I}} (Q_i \setminus R_i)\big) + c{\cal H}^1((J_\phi \setminus  J_{u}) \cap \bigcup\nolimits_i Q_i) \\ &\le O(\theta).
\end{align*}
By our construction the only possible jumps of $\phi_n$ in $H_i^+ \cup H_i^-$ are $H_i^+ \cap E^n_{t_i}$ and $H_i^- \setminus E^n_{t_i}$ so that
$$\sum\nolimits_{i \in \mathcal{I}} {\cal H}^1(J_{\phi_n} \cap P_i^3) = \sum\nolimits_{i\in \mathcal{I}} \big({\cal H}^1(H_i^+ \cap E^n_{t_i}) + {\cal H}^1(H_i^- \setminus E^n_{t_i}) \big) \le O(\theta),$$
where the last inequality follows from \eqref{eq: jt12}(iii) and \eqref{eq: new}(i). Finally, the estimate $\sum_{i \in \mathcal{I}} {\cal H}^1(P_i^4) \le O(\theta)$ is a consequence of  \eqref{eq: jt12}(iii)  and $|s^+_i|,|s_i^-| \le \theta r_i$.  Collecting the previous estimates we obtain \eqref{eq: 9}. This concludes the proof for $\ell = 1$.

\smallskip

 \noindent \textit{Step 2.} In the general case $\ell >1$ it suffices to observe that the same trick in \eqref{eq:scalarfatto} can be inductively applied also to a finite number of $GSBD$ functions. If there are sequences $(u_n^l)_n$ in $GSBD^2(\Omega')$ and $u^l \in GSBD^2(\Omega')$, one can find a single  sequence $(\bar{u}_n)_n  \subset GSBD^2(\Omega')$ converging to some $\bar{u}$ in measure  with
\begin{align*}
{\cal H}^1\big(J_{\bar{u}_n} \triangle \bigcup\nolimits^\ell_{l=1} J_{u^l_n}\big) = {\cal H}^1\big(J_{\bar{u}} \triangle \bigcup\nolimits^\ell_{l=1} J_{u^l}\big) = 0.
\end{align*}
With this, we reduce the problem to a single sequence $(\bar{u}_n)_n$ for which the hypotheses of the theorem are satisfied for a suitable bounded sequence $(\bar{w}_n)_n$ of boundary data. 
\end{proof}

\subsection{Proof of the auxiliary results}
We begin with the Proof of Lemma \ref{lemma: nitsche}.

\begin{proof}[Proof of Lemma \ref{lemma: nitsche}]
We can assume $R=(-l, l)\times (0, h)$ with $l,h>0$ and $R^-=(-l, l)\times (-h, 0)$. For a given parameter $0<\xi <1$ and a distribution $T$ on $(-l, l)\times (0, \xi h)$  the symbol $T^{\xi}$ denotes the distribution on $R^-$  obtained by composition of $T$ with the diffeomorphism $(x,y)\to (x,-\frac1\xi y)$.
We first assume $\phi:=(\phi_1, \phi_2)$ is a regular displacement in the sense of \eqref{def: regular}. Given $0<\lambda<\mu<1$ and $p>0$ we set for all $(x,y)\in R^-$ 
\begin{align}\label{eq:constru}
\begin{split}
\hat{\phi}_1(x,y)=p\phi_1(x,-\lambda y)+(1-p)\phi_1(x,-\mu y)\,\\
\hat{\phi}_2(x,y)=-\lambda p\phi_2(x,-\lambda y)+(1+\lambda p)\phi_2(x,-\mu y)\,.
\end{split}
\end{align}
Furthermore, $\phi$ and $\hat{\phi}$ have by construction the same trace on the common boundary $(-l,l)\times \{0\}$ so that no jump occurs there. With this, \eqref{nitsche}(i) follows.
In order to show (ii), we calculate the component $(E\hat{\phi})_{12}$ of the symmetrized distributional gradient of $\hat{\phi}$. A direct computation gives
$$
2(E\hat{\phi})_{12}=-\lambda p(\partial_1 \phi_2+\partial_2 \phi_1)^\lambda +(1+\lambda p)(\partial_1 \phi_2)^\mu-\mu(1-p)(\partial_2 \phi_1)^\mu\,.
$$
Choosing $p=\frac{1+\mu}{\mu-\lambda}$ we get
$$
2(E\hat{\phi})_{12}=-\lambda p(\partial_1 \phi_2+\partial_2 \phi_1)^\lambda +(1+\lambda p)(\partial_1 \phi_2+\partial_2 \phi_1)^\mu\,.
$$
Taking the absolutely continuous parts with respect to the Lebesgue measure we derive that the $L^2$ norm of $(e(\hat{\phi}))_{12}$ can be controlled with the $L^2$ norm of $(e(\phi))_{12}$ independently of $R$ and $\phi$, which was the only thing to be shown to get (ii).

Before coming to the general case, we notice that the function $\hat{\phi}$ has the following property: If $\psi:[0,+\infty)\to [0,+\infty)$ is an increasing continuous subadditive function satisfying \eqref{coerc} and
$
\int_R \psi(|\phi|)\,\mathrm{d}x \le 1,
$
then
\begin{align}\label{nitsche+}
\int_{\hat{R}} \psi(|\hat{\phi}|)\,\mathrm{d}x \le c
\end{align}
again for an absolute constant $c$ independent of $R$ and $\phi$. Indeed, it follows from the construction and the properties of $\psi$ that \eqref{nitsche+} holds for a constant $c$ only depending on $\lambda$, $\mu$, and $p$.

In the general case $\phi\in GSBD^2(R)$ we consider an approximating sequence of displacements with regular jump set $(\phi_k)_k$ in the sense of Theorem \ref{th: density}. (Again the reader willing to assume an $L^2$-bound may replace Theorem \ref{th: density} by Theorem \ref{th: density-l2}.) It follows by Remark \ref{meas-conv} that there exists a nonnegative concave (thus, continuous and subadditive) increasing function $\psi$ satisfying \eqref{coerc} and such that
$$
\int_R \psi(|\phi_k|)\,\mathrm{d}x \le 1
$$
for all $k \in \N$. To the functions $\phi_k$ we associate extensions $\hat{\phi}_k \in GSBD^2(\hat{R})$ satisfying \eqref{nitsche} and \eqref{nitsche+}. In particular, there is a constant $C$ indepdendent of $k$ such that
$$
\int_{\hat{R}} \psi(|\hat{\phi}_k|)+|e(\hat{\phi}_k)|^2\,\mathrm{d}x +  {\cal H}^1(J_{\hat{\phi}_k})\le C,
$$
so that \eqref{eq:  convergence sense} implies the existence of $\hat{\phi} \in GSBD^2(\hat{R})$ such that $\hat{\phi}_k\to \hat{\phi}$ in measure in  $\hat{R}$ and
\begin{equation}\label{semicont}
{\cal H}^1(J_{\hat \phi})\le \liminf_{k\to +\infty}{\cal H}^1(J_{\hat{\phi}_k})\, \quad \Vert e(\hat \phi)\Vert_{L^2(\hat{R};\R_{\mathrm{sym}}^{2\times 2})}\le \liminf_{k\to +\infty} \Vert e(\hat{\phi}_k)\Vert_{L^2(\hat{R};\R_{\mathrm{sym}}^{2\times 2})}\,.
\end{equation}
Passing to the limit and using \eqref{semicont},  Theorem \ref{th: density}, and the corresponding inequalities for $\phi_k$ we get \eqref{nitsche}.\footnote{Notice that by the explicit construction of $\hat{\phi}_k$ in \eqref{eq:constru}  and the convergence in measure of $\phi_k$ one can also show that  $\phi$ and $\hat{\phi}$  have the same trace on the common boundary $(-l,l)\times \{0\}$.}
\end{proof}

We go further by proving Lemma \ref{lemma: approx new}.
\begin{proof}[Proof of Lemma \ref{lemma: approx new}]
We apply Theorem \ref{th: korn} to $u$ and find a Caccioppoli partition $\Omega' = \bigcup^\infty_{j=1} P_j$ and infinitesimal rigid motions $(a_j)_{j=1}^\infty$ such that  $u - \sum_{j\ge 1} a_j \chi_{P_j}$ lies in $SBV(\Omega';\R^2) \cap L^\infty(\Omega';\R^2)$.  Using  $\sum\nolimits_{j=1}^{+\infty}{\cal H}^1\left(\partial^* P_j\right)<+\infty$
and  Theorem \ref{th: local structure}, we choose $j_0$ as the smallest index $j$ such that
\begin{align}\label{eq: j0}
{\cal H}^1\big(\bigcup\nolimits_{j \ge j_0} \partial^* P_j\big) + {\cal H}^1\big(J_u \cap \bigcup\nolimits_{j \ge j_0} (P_j)^1\big) \le \epsilon.
\end{align}
Then we define $\Omega_{\rm good} = \bigcup\nolimits_{j=1}^{j_0 -1}P_j$ and see that the function $v := u\chi_{\Omega_{\rm good}}$ lies in  $SBV(\Omega';\R^2) { \cap GSBD^2(\Omega')} \cap L^\infty(\Omega';\R^2)$. Since by construction we have
\[
J_{u-v}\subseteq \bigcup\nolimits_{j \ge j _0} \partial^* P_j \, \cup \,  \left(J_u \cap \bigcup\nolimits_{j \ge j_0} (P_j)^1 \right),
\]
we get $\mathcal{H}^1(J_{u-v}) \le \epsilon$. As $J_{u}\triangle J_{v} \subseteq J_{u-v}$, the first part of the statement follows.

For the second part, observe that, since $v=u\chi_{\Omega_{\rm good}}$, it holds $v = u$ on the set  $\{u = 0\}$. Therefore, if $w= 0$, the proof is concluded. In the general case we set $\hat{u} = u - w$ and apply the above procedure to $\hat{u}$ to construct a function $\hat{v}$ with $\mathcal{H}^1(J_{\hat u-\hat v}) \le \epsilon$ and $\hat v=0$ in $\Omega'\setminus \overline{\Omega}$, since $\hat u=0$ there. We then conclude setting $v=\hat v +w$.
\end{proof}

For the proof of Theorem \ref{th: approx} we need the following preliminary lemma, which is a consequence of the piecewise Korn inequality in Theorem \ref{th: kornpoin-sharp}.

\begin{lemma}\label{lemma: loc}
Let  $\theta, \delta>0$ with $\delta \theta^{-2} \le \frac{1}{4}C_{Q_1}$ with the constant $C_{Q_1}$ from Remark \ref{rem:square}. Consider a square $Q \subset \R^2$  and $u \in GSBD^2(Q)$, and assume that there are two sets $B_1, B_2 \subset Q$ and $t_1,  t_2 \in \R^2$ with 
\begin{align}\label{eq:locassum}
\mathcal{L}^2(B_m) \ge (\tfrac{1}{2}-\delta) \mathcal{L}^2(Q), \  \ \ \ \Vert u - t_m \Vert_{L^\infty(B_m; \R^2)} \le \delta, \ \ \  \  m=1,2. 
\end{align} 
 Then there is a universal constant $c>0$, some $C_\theta=C_\theta(\theta)>0$, both independent of $Q$ and $u$, and a modification $u^\theta \in SBV(Q;\R^2) \cap L^\infty(Q;\R^2)$ such that $u^\theta$ is constant on $\lbrace u \neq u^\theta \rbrace$ and
\begin{align}\label{eq: localization}
\begin{split}
(i) & \ \  {\cal L}^2(\lbrace u \neq u^\theta \rbrace)  \le c\theta ({\cal H}^1(J_u\cap Q)+ \diam(Q))^2, \\
(ii) & \ \  {\cal H}^1(\partial^*\lbrace u \neq u^\theta\rbrace  \setminus J_u)\le c\theta ({\cal H}^1(J_u\cap Q)+ \diam(Q)),\\
(iii) & \ \  \Vert \nabla u^\theta \Vert_{L^1(Q;\R^{2\times 2})} \le  C_\theta \, \diam(Q)\big( \Vert  e(u) \Vert_{L^2(Q; \R_{\mathrm{sym}}^{2\times 2})} +\delta\big),\\
(iv) & \ \  \Vert u^\theta \Vert_{L^\infty(Q;\R^{2})}  \le  C_\theta \big( \Vert  e(u) \Vert_{L^2(Q; \R_{\mathrm{sym}}^{2\times 2})} +\delta\big) + c(t_1+t_2).
\end{split}
\end{align}
\end{lemma}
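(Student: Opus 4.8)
The plan is to deduce everything from the sharp piecewise Korn inequality in the scaled form of Remark~\ref{rem:square} and then to exploit the hypothesis $\delta\theta^{-2}\le\tfrac14 C_{Q_1}$ to control the rigid motions it produces. Write $\mathcal{E}=\Vert e(u)\Vert_{L^2(Q;\R^{2\times2}_{\rm sym})}$. Applying Theorem~\ref{th: kornpoin-sharp}, scaled to $Q$ by Remark~\ref{rem:square}, to $u$ with parameter $\theta$ gives the sought $u^\theta\in SBV(Q;\R^2)\cap L^\infty(Q;\R^2)$, a finite Caccioppoli partition $Q=\bigcup_{i=0}^I P_i$ with $\mathcal{L}^2(\{u\neq u^\theta\}\triangle P_0)=0$ and $u^\theta\equiv t_0$ on $P_0$, infinitesimal rigid motions $a_i=a_{A_i,b_i}$ (with $a_0=t_0$), and $v:=u^\theta-\sum_{i=0}^I a_i\chi_{P_i}\in SBV(Q;\R^2)\cap L^\infty(Q;\R^2)$ enjoying the following properties: items \eqref{eq: localization}(i),(ii) are the scaled \eqref{eq: kornpoinsharp1}(i),(ii) (using $\mathcal{H}^1(\partial Q)\le c\,\diam(Q)$), and $u^\theta$ is constant on $\{u\neq u^\theta\}=P_0$; $\mathcal{L}^2(P_i)\ge C_{Q_1}\theta^2\mathcal{L}^2(Q)$ for $1\le i\le I$; $v$ is constant on $P_0$, so $\nabla u^\theta=0$ a.e.\ on $P_0$ while $\nabla u^\theta=\nabla v+A_i$ a.e.\ on $P_i$, $i\ge1$; and $\Vert v\Vert_{L^\infty(Q)}\le C_\theta\mathcal{E}$, $\Vert\nabla v\Vert_{L^1(Q)}\le C_\theta\,\diam(Q)\,\mathcal{E}$ with $C_\theta:=C_{\theta,Q_1}$. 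Since only countably many values are excluded by $\mathcal{L}^2(\{u=t_0\})=0$, we may in addition choose $t_0$ with $|t_0-t_1|\le\delta$. It thus remains to estimate $|A_i|$ and $\Vert a_i\Vert_{L^\infty(P_i)}$ for $1\le i\le I$.

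The geometric heart of the argument is that $\delta\theta^{-2}\le\tfrac14 C_{Q_1}$ forces each $P_i$, $i\ge1$, to meet $B_1$ or $B_2$ in a definite fraction of its measure. I will use here that $B_1$ and $B_2$ are essentially disjoint — the relevant situation, cf.\ Definition~\ref{def: fine}; if $\mathcal{L}^2(B_1\cap B_2)>0$ then $|t_1-t_2|\le2\delta$ and a minor variant applies. From $\mathcal{L}^2(B_1\cap B_2)=0$ we get $\mathcal{L}^2(Q\setminus(B_1\cup B_2))\le2\delta\mathcal{L}^2(Q)\le\tfrac12 C_{Q_1}\theta^2\mathcal{L}^2(Q)\le\tfrac12\mathcal{L}^2(P_i)$, hence $\mathcal{L}^2(P_i\cap(B_1\cup B_2))\ge\tfrac12\mathcal{L}^2(P_i)$ and so $\mathcal{L}^2(E_i)\ge\tfrac14 C_{Q_1}\theta^2\mathcal{L}^2(Q)$ for $E_i:=P_i\cap B_{m(i)}$ with a suitable $m(i)\in\{1,2\}$. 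On $E_i\subset P_i$ ($i\ge1$) one has $a_i=u^\theta-v=u-v$ a.e., and $u$ lies within $\delta$ of $t_{m(i)}$ there; fixing a Lebesgue point $x_0\in E_i$, for a.e.\ $x\in E_i$ it follows that $|a_i(x)-a_i(x_0)|\le2\delta+2\Vert v\Vert_{L^\infty(Q)}\le2\delta+2C_\theta\mathcal{E}$. Applying Lemma~\ref{lemma: rigid motion}(a) to the skew infinitesimal rigid motion $x\mapsto A_i x+(b_i-a_i(x_0))$ on $E_i$ yields $(\mathcal{L}^2(E_i))^{1/2}|A_i|\le c\,(2\delta+2C_\theta\mathcal{E})$, whence $|A_i|\le C_\theta\,\diam(Q)^{-1}(\mathcal{E}+\delta)$ after inserting the lower bound on $\mathcal{L}^2(E_i)$ together with $\mathcal{L}^2(Q)=\diam(Q)^2/2$. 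Passing to the increments $a_i(\cdot)-a_i(x_0)$ rather than to $a_i$ itself is precisely what keeps the large norms $|t_1|,|t_2|$ out of this bound on $|A_i|$.

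It then remains to assemble the four estimates. By the triangle inequality and the above,
\[
\Vert\nabla u^\theta\Vert_{L^1(Q)}\le\Vert\nabla v\Vert_{L^1(Q)}+\sum_{i=1}^I|A_i|\,\mathcal{L}^2(P_i)\le C_\theta\,\diam(Q)\,\mathcal{E}+\mathcal{L}^2(Q)\max_{1\le i\le I}|A_i|\le C_\theta\,\diam(Q)(\mathcal{E}+\delta),
\]
which is \eqref{eq: localization}(iii). For \eqref{eq: localization}(iv), $\Vert a_i\Vert_{L^\infty(E_i)}=\Vert u-v\Vert_{L^\infty(E_i)}\le|t_{m(i)}|+\delta+C_\theta\mathcal{E}$, so $\Vert a_i\Vert_{L^\infty(P_i)}\le\Vert a_i\Vert_{L^\infty(E_i)}+\diam(P_i)\,|A_i|\le|t_{m(i)}|+C_\theta(\mathcal{E}+\delta)$, and, recalling $u^\theta=a_i+v$ on $P_i$ and $u^\theta=t_0$ on $P_0$ with $|t_0|\le|t_1|+\delta$,
\[
\Vert u^\theta\Vert_{L^\infty(Q)}\le\max\Big(|t_0|,\ \max_{1\le i\le I}\big(\Vert a_i\Vert_{L^\infty(P_i)}+\Vert v\Vert_{L^\infty(P_i)}\big)\Big)\le(|t_1|+|t_2|)+C_\theta(\mathcal{E}+\delta),
\]
while items \eqref{eq: localization}(i),(ii) were recorded already in the first step. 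The point I expect to be most delicate is precisely that first step: one must verify that the scaled Theorem~\ref{th: kornpoin-sharp} really delivers a \emph{finite} partition obeying the uniform lower bound $\mathcal{L}^2(P_i)\ge C_{Q_1}\theta^2\mathcal{L}^2(Q)$ and with the modification concentrated on the single additional component $P_0$, since the whole quantitative argument rests on the comparability of this lower bound with $\delta\mathcal{L}^2(Q)$ — which is exactly what the hypothesis $\delta\theta^{-2}\le\tfrac14 C_{Q_1}$ encodes; the increments trick in the estimate for $|A_i|$ is a second point that needs care.
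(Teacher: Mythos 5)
Your proposal is correct and follows essentially the same route as the paper's proof: apply Theorem~\ref{th: kornpoin-sharp} in its scaled form (Remark~\ref{rem:square}), observe that the hypothesis $\delta\theta^{-2}\le\tfrac14 C_{Q_1}$ forces each $P_i$, $i\ge 1$, to meet $B_1\cup B_2$ in a definite fraction of its measure, control $|A_i|$ via Lemma~\ref{lemma: rigid motion}(a), and assemble. The one cosmetic difference is that you apply Lemma~\ref{lemma: rigid motion}(a) to the increment $x\mapsto a_i(x)-a_i(x_0)$ while the paper applies it directly to $a_i-t_{m(i)}$ on $P_i\cap B_{m(i)}$ (equivalent, but avoids choosing $x_0$); your explicit choice of $t_0$ with $|t_0-t_1|\le\delta$ and your remark on the overlap $B_1\cap B_2$ are both pertinent refinements that the paper leaves implicit.
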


\begin{rem}\label{rem:scaling}
The essential point is that  \eqref{eq: localization}(iii), differently from \eqref{eq: kornpoinsharp2}(iii), holds now for the modification $u^\theta$,   coinciding with $u$ outside a small set. Moreover, the estimate for $\nabla u^\theta$ scales with the diameter of the square which is fundamental for the proof of \eqref{eq:cubesequence}(ii).  
\end{rem}

\begin{proof}[Proof of Lemma \ref{lemma: loc}]
   We apply Theorem \ref{th: kornpoin-sharp} and obtain $u^\theta \in SBV(Q;\R^2) \cap L^\infty(Q;\R^2)$ as well as $v  =  u^\theta - \sum\nolimits_{j=0}^{I} a_j \chi_{P_j}\in SBV(Q;\R^2)$ for a partition $(P_j)_{j=0}^I$ and infinitesimal rigid motions $(a_j)_{j=0}^I$  such that  \eqref{eq: kornpoinsharp1}, \eqref{eq: kornpoinsharp2}(i) hold. Recall that  $P_0=\lbrace u \neq u^\theta \rbrace$ (see \eqref{eq: kornpoinsharp2}(ii)) and that $u^\theta$ can be defined constantly on $P_0$.   Now \eqref{eq: localization}(i),(ii) follow from \eqref{eq: kornpoinsharp1}. From Remark \ref{rem:square} we get 
\begin{align}\label{eq:local1}
\begin{split}
(i) & \ \ \mathcal{L}^2(P_j) \ge C_{Q_1}\mathcal{L}^2(Q)\theta^2 \ \ \text{ for } \ \ 1 \le j \le I, \\
(ii) &  \ \   \Vert v \Vert_{L^\infty(Q; \R^2)} + (\diam(Q))^{-1} \Vert \nabla v \Vert_{L^1(Q;\R^{2\times 2})} \le C_{\theta, Q_1} \Vert  e(u) \Vert_{L^2(Q; \R_{\mathrm{sym}}^{2\times 2})}.
\end{split}
\end{align}
By \eqref{eq:locassum}, \eqref{eq:local1}(i) and the assumption that $\delta \theta^{-2} \le \frac{1}{4}C_{Q_1}$, we find 
\begin{align*}
\mathcal{L}^2( (B_1 \cup B_2) \cap P_j) &\ge \mathcal{L}^2(P_j) - \mathcal{L}^2(Q \setminus (B_1 \cup B_2))\\
&\ge (C_{Q_1}\theta^2 - 2\delta)\mathcal{L}^2(Q) \ge \frac{1}{2} C_{Q_1}\theta^2 \mathcal{L}^2(Q)
\end{align*}
for each $P_j$, $1 \le j \le I$. Fix now $1 \le j \le I$. By the above argument it holds that $\max_{m=1,2} \mathcal{L}^2(B_{m} \cap P_j) \ge   \frac{1}{4} C_{Q_1}\theta^2 \mathcal{L}^2(Q)$. Assuming without loss of generality that the  maximum is achieved by $B_1$, we then have by the previous and the isodiametric inequality, Lemma \ref{lemma: rigid motion}, \eqref{eq:locassum}, and \eqref{eq:local1}(ii), that  
\begin{align*}
\theta \diam(Q) |A_j| &\le c \Vert a_j  -t_1\Vert_{L^\infty(P_j \cap B_1;\R^2)} \le  c \Vert v\Vert_{L^\infty(P_j \cap B_1;\R^2)} +  c \Vert u  -t_1\Vert_{L^\infty(P_j \cap B_1:\R^2)} \\
& \le  cC_{\theta, Q_1} \Vert  e(u) \Vert_{L^2(Q; \R_{\mathrm{sym}}^{2\times 2})} + c\delta
\end{align*} 
for $c = c(C_{Q_1})$ universal, where we used that $u=u^\theta$ on $P_j$.  Since $\mathcal{L}^2(P_j) \le (\diam(Q))^2$ and $\#I \le c\theta^{-2}$ by \eqref{eq:local1}(i),  we calculate by  \eqref{eq:local1}(ii)
\begin{align*}
\Vert \nabla u^\theta \Vert_{L^1(Q;\R^{2\times 2})} &= \Vert \nabla u \Vert_{L^1(Q \setminus P_0;\R^{2\times 2})} \le \Vert \nabla v \Vert_{L^1(Q;\R^{2\times 2})} + \sum\nolimits_{j=1}^I \mathcal{L}^2(P_j)|A_j| \\&\le C_\theta\diam(Q) \big( \Vert  e(u) \Vert_{L^2(Q; \R_{\mathrm{sym}}^{2\times 2})} +\delta\big). 
\end{align*}
This gives \eqref{eq: localization}(iii) and finally \eqref{eq: localization}(iv) can be seen along similar lines using that, for a fixed $1\le j \le I$, $\min_{m=1,2} \Vert a_j  -t_m\Vert_{L^\infty(Q;\R^2)} \le cC_{\theta, Q_1} \Vert  e(u) \Vert_{L^2(Q; \R_{\mathrm{sym}}^{2\times 2})} + c\delta$. 
\end{proof}

We now proceed with the proof of Theorem \ref{th: approx}.

\begin{proof}[Proof of Theorem \ref{th: approx}]
Let $Q_* := \bigcup_{i=1}^m Q_i$  be given as in the statement. By Definition \ref{def: fine}, for each $i$ we  find two subsets $B^+_{r_i}(x_i)$ and $B^-_{r_i}(x_i)$ of $Q_i$ such that
\begin{align}\label{eq: almost constant}
\begin{split}
{\cal L}^2(B^+_{r_i}(x_i)) \ge \frac{1}{2}(1-\delta\big){\cal L}^2(Q_i),  \ \  \ \ \ {\cal L}^2(B^-_{r_i}(x_i)) \ge \frac{1}{2}(1-\delta){\cal L}^2(Q_i),
\end{split}
\end{align}
where $\Vert u - u^+(x_i)\Vert_{L^\infty(B^+_{r_i}(x_i);\R^2)} \le \frac{1}{2}\delta$ and $\Vert u - u^-(x_i)\Vert_{L^\infty(B^-_{r_i}(x_i);\R^2)} \le \frac{1}{2}\delta$, respectively. For each $i$ and $n$ we set $B^\pm_{i,n} = \lbrace y \in Q_i: |u_n(y) - u^\pm(x)| \le \delta\rbrace$ and observe that due to \eqref{eq: almost constant} and the fact that $u_n \to u$ in measure,  we obtain for $n$ large enough
\[{\cal L}^2(B^+_{i,n}) \ge \Big(\frac{1}{2}-\delta\Big){\cal L}^2(Q_i),  \ \  \ \ \ {\cal L}^2(B^-_{i,n}) \ge \Big(\frac{1}{2}-\delta\Big){\cal L}^2(Q_i). 
 \]

Since $\delta \theta^{-8} \le \frac{1}{4}C_{Q_1}$, we can now apply Lemma \ref{lemma: loc} on the sequence $(u_n)_n$ and on each $Q_i$ with $\theta^4$ in place of $\theta$, with $B_1$ and $B_2$ being given by $B^+_{i,n}$, and $B^-_{i,n}$, respectively, $t_1= u^+(x_i)$, and $t_2=u^-(x_i)$. Therefore, we obtain a sequence of functions $v_n^{\delta, \theta, i} \in  SBV(Q_i;\R^2) \cap L^\infty(Q_i;\R^2)$ such that \eqref{eq: localization} holds (with $\theta^4$ in place of $\theta$). Recall that the constants in \eqref{eq: localization} are independent of $i$ and $n$. The functions $v^{\delta, \theta}_n$ are then defined as being given by $v_n^{\delta, \theta, i}$ on each of the disjoint squares $Q_i$.

By \eqref{eq: assu-lemma}(i) and \eqref{eq: localization}(ii) for each $i$ the perimeter of the sets $\lbrace v_n^{\delta, \theta, i} \neq u_n \rbrace$ is uniformly bounded in $n$ and by a compactness theorem for sets of finite perimeter together  with \eqref{eq: localization}(i) we thus obtain a set $F_i \subset Q_i$  such that $\chi_{\lbrace v_n^{\delta, \theta, i} \neq u_n \rbrace} \to \chi_{F_i}$ in measure as $n \to \infty$, after passing to a suitable  (not relabeled) subsequence. Therefore, we obtain by \eqref{eq: localization}(i) (again with $\theta^4$ in place of $\theta$) 
\[
\mathcal{L}^2(F_i) \le \liminf_{n \to \infty} \mathcal{L}^2(\lbrace v_n^{\delta, \theta, i} \neq u_n \rbrace)  \le c \liminf_{n \to \infty}\theta^4 ({\cal H}^1(J_{u_n} \cap Q_i)+ \diam(Q_i))^2,
\]
which implies \eqref{eq:cubesequence2}.  Notice that by construction, the sequence $(v_n^{\delta, \theta, i})_n$ converges to $u$ in measure on $Q_i \setminus F_i$.  By \eqref{eq: localization}(iv), $v_n^{\delta, \theta, i}$ is bounded uniformly in $L^\infty$ so that we deduce
\begin{align*}
\liminf_{n\to \infty}\Vert v_n^{\delta, \theta, i} - u \Vert_{L^1(Q_i \setminus F_i;\R^2)} = 0.
\end{align*}
Recall that, by Lemma \ref{lemma: loc}, $v_n^{\delta, \theta, i}$ are constant on the sets $\lbrace v_n^{\delta, \theta, i} \neq u_n \rbrace$, which therefore contain no jump of them. With this,  \eqref{eq: localization}(ii) together with $\sum_{i=1}^m r_i \le M$ yields \eqref{eq:cubesequence}(i). Finally, to confirm \eqref{eq:cubesequence}(ii), we use \eqref{eq: localization}(iii) to compute by H\"older's inequality and the fact that $r_i \le \delta^2$, $\sum_{i=1}^m r_i \le M$
\begin{align*}
\Vert \nabla v^{\delta, \theta}_n \Vert_{L^1(Q_*;\R^{2\times 2})}&= \sum_{i=1}^m \Vert \nabla v^{\delta, \theta, i}_n \Vert_{L^1(Q_i;\R^{2\times 2})}\le  C_\theta\sum_{i=1}^m r_i \left(\Vert   e(u_n) \Vert_{L^2(Q_i; \R_{\mathrm{sym}}^{2\times 2})} +\delta \right)\\
& \le C_\theta (\sum\nolimits_{i=1}^m r^2_i)^{\frac{1}{2}} \Vert  e(u_n) \Vert_{L^2(\Omega'; \R_{\mathrm{sym}}^{2\times 2})} + C_\theta\delta M \le C_\theta\delta M.
\end{align*}
\end{proof}

We conclude with the proof of Corollary \ref{cor: approx}.

\begin{proof}[Proof of Corollary \ref{cor: approx}]
Consider the functions $v^{\delta, \theta}_n$ constructed above. By \eqref{eq: localization}(ii) and the assumption $\sum\nolimits_{i=1}^m r_i \le   M$ it holds
\begin{equation}\label{eq: bd-conditions}
{\cal H}^1(\partial^*\lbrace v^{\delta, \theta}_n \neq u_n\rbrace  \setminus J_{u_n})\le c\theta \sum\nolimits_{i=1}^m ({\cal H}^1(J_{u_n} \cap Q_i)+ \diam(Q_i))\le 2Mc\theta.
\end{equation}
We now set
\[
\hat{v}^{\delta, \theta}_n:=(w_n-v^{\delta, \theta}_n)\chi_{\lbrace v^{\delta, \theta}_n \neq u_n\rbrace}+ v^{\delta, \theta}_n\,.          
\]
These functions satisfy $\hat{v}^{\delta, \theta}_n =  w_n$ on $Q_* \setminus \overline{\Omega}$ and thus $J_{v^{\delta, \theta}_n} \subset Q_*\cap \overline{\Omega}$. Since the sets $F_i$ were constructed as limits of $\lbrace v^{\delta, \theta}_n \neq u_n\rbrace \cap Q_i$, the new sequence still satisfies \eqref{eq:cubesequence}(iii). From $J_{\hat{v}^{\delta, \theta}_n}\setminus J_{v^{\delta, \theta}_n} \subset \partial^*\lbrace v^{\delta, \theta}_n \neq u_n\rbrace$ and \eqref{eq: bd-conditions} we get \eqref{eq:cubesequence}(i).

Finally, the assumptions $\sum\nolimits_{i=1}^m r_i \le   M$ and $r_i\le \delta^2$ imply that ${\cal L}^2(Q_*)\le 4\delta^2 M$, so that by H\"older's inequality  
$
\|\nabla w_n\|_{L^1(Q_*; \mathbb{R}^{2\times 2})}\le 2\delta\sqrt{M} \|\nabla w_n\|_{L^2(Q_*; \mathbb{R}^{2\times 2})}
$.
With this and the trivial inequality 
\[
\|\nabla \hat{v}^{\delta, \theta}_n\|_{L^1(Q_*; \mathbb{R}^{2\times 2})}\le \|\nabla v^{\delta, \theta}_n\|_{L^1(Q_*; \mathbb{R}^{2\times 2})}+\|\nabla w_n\|_{L^1(Q_*; \mathbb{R}^{2\times 2})}
\]
we get \eqref{eq:cubesequence}(ii) for a constant also depending on $\sup_n\Vert w_n \Vert_{H^1(\Omega';\R^2)}$.  
\end{proof}

\section{A general compactness and existence result}\label{sec:comp}

Notice that for the compactness theorem in $GSBD$  (see Theorem \ref{th: GSBD comp}) it is necessary that the integral for some integrand $\psi$ with $\lim_{s \to \infty} \psi(s) = \infty$ is uniformly bounded. However, in many application, e.g. in our model presented below, such an a priori bound is not available. Partially following ideas in \cite{Friedrich:15-2} we now show that by means of Theorem \ref{th: korn-boundary} it is possible to establish a compactness and existence result for suitably modified functions.

We first prove the following general compactness result.

\begin{theorem}\label{th: comp}
Let $\Omega \subset \Omega' \subset \R^2$ open, bounded with Lipschitz boundary such that \eqref{eq: omega'} holds. Let $M>0$, $w \in H^1(\Omega',\R^2)$ and $\Gamma$ be a rectifiable set with ${\cal H}^1(\Gamma) \le M$. Define
\begin{align}\label{eq: energy}
E(u) = \int_{\Omega'} Q(e(u)) \,\mathrm{d}x + {\cal H}^1(J_u \setminus \Gamma)
\end{align}
for $u \in GSBD^2(\Omega')$, where $Q$ is a positive definite quadratic form on $\R^{2 \times 2}_{\rm sym}$.\\
\smallskip
\noindent Then there is an increasing concave function $\psi:[0,\infty) \to [0,\infty)$ satisfying \eqref{coerc} only depending on $\Omega, \Omega', M$ such that for every sequence $(u_k)_k \subset GSBD^2(\Omega')$ with $\sup_{k\ge 1} E(u_k) \le M$ and $u_k = w$ on $\Omega' \setminus \overline{\Omega}$ we find a (not relabeled) subsequence and modifications $(y_k)_k \subset GSBD^2(\Omega')$ with $y_k = w$ on $\Omega' \setminus \overline{\Omega}$ and 
\begin{align}\label{eq: compi1}
  E(y_k) \le E(u_k) + \tfrac{1}{k}, \ \ \  \ \  \sup\nolimits_{k\ge 1}\int_{\Omega'} \psi(|y_k|)\,\mathrm{d}x \le 1.
\end{align}
Moreover, there is a function $y \in GSBD^2(\Omega')$ with $y = w$ on $\Omega' \setminus \overline{\Omega}$ such that $\int_{\Omega'} \psi(|y|)\,\mathrm{d}x \le 1$ and for $k \to \infty$
\begin{align}\label{eq: compi2}
\begin{split}
(i)&  \ \ y_k \to y \ \ \text{ in measure on } \Omega', \\
(ii)& \ \ e(y_k) \rightharpoonup e(y) \ \ \text{ weakly in } L^2(\Omega', \R^{2\times 2}_{\rm sym}),\\
(iii)& \ \ {\cal H}^1(J_y \setminus \Gamma) \le \liminf\nolimits_{k \to \infty} {\cal H}^1(J_{y_k} \setminus \Gamma).
\end{split}\end{align}

\end{theorem}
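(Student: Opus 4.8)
The plan is to combine the Dirichlet piecewise Korn inequality of Theorem~\ref{th: korn-boundary} with the concave‑function compactness device of Lemma~\ref{rig-lemma: concave function2} and Remark~\ref{meas-conv}, and finally with the $GSBD$‑compactness of Theorem~\ref{th: GSBD comp}. Writing $\tfrac{\lambda}{2}|e|^2\le Q(e)\le\tfrac{\Lambda}{2}|e|^2$, the hypothesis $\sup_k E(u_k)\le M$ together with $u_k=w$ on $\Omega'\setminus\overline\Omega$ first yields $\sup_k\big(\|e(u_k)\|_{L^2(\Omega')}^2+\mathcal{H}^1(J_{u_k})\big)\le C$ with $C=C(M)$. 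For each $k$ I would choose a parameter $\theta_k\downarrow 0$ and apply Theorem~\ref{th: korn-boundary} to $(u_k,w)$, obtaining $y_k:=u_k^{\theta_k}\in SBV(\Omega';\R^2)$, a Caccioppoli partition $\Omega'=\bigcup_j P_j^{(k)}$, rigid motions $(a_j^{(k)})_j$, and $v_k:=y_k-\sum_j a_j^{(k)}\chi_{P_j^{(k)}}\in SBV(\Omega';\R^2)\cap L^2(\Omega';\R^2)$ with $v_k=w=y_k$ on $\Omega'\setminus\overline\Omega$, satisfying \eqref{eq: boundary modi}--\eqref{eq: small set main-boundary}. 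By \eqref{eq: boundary modi}(ii) and absolute continuity of the integral the bulk energy increases by at most $\Lambda\|\nabla w\|_{L^2(\{u_k\ne y_k\})}^2\to 0$; by \eqref{eq: boundary modi}(i) the fracture energy increases by at most $\bar c\theta_k(\mathcal{H}^1(J_{u_k})+1)\le\bar c\theta_k(C+1)$; and by \eqref{eq: small set main-boundary}(i) and \eqref{eq: boundary modi}(i), $\sum_j\mathcal{H}^1\big((\partial^*P_j^{(k)}\cap\Omega')\setminus J_{y_k}\big)=o(1)$. Choosing $\theta_k$ so small that each of these quantities is below $\tfrac1{2k}$ gives $E(y_k)\le E(u_k)+\tfrac1{2k}$, transfers the a priori bounds to $y_k$, and — this is the mechanism that makes the whole argument work — grants the right to \emph{redefine $y_k$ on any union of the components $P_j^{(k)}$ at a cost of at most $\tfrac1{2k}$ in $E$}, since the partition interfaces lie, up to $\tfrac1{2k}$ of length, inside $J_{y_k}$.

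Next, since $\sup_k\sum_j\mathcal{H}^1(\partial^*P_j^{(k)})<\infty$, Theorem~\ref{th: comp cacciop} gives a subsequence along which $P_j^{(k)}\to P_j$ in measure for every $j$, with $(P_j)_j$ a Caccioppoli partition. Note $e(v_k)=e(y_k)$ and $\mathcal{H}^1(J_{v_k})\le\mathcal{H}^1(J_{y_k})+\sum_j\mathcal{H}^1(\partial^*P_j^{(k)}\cap\Omega')\le C$. By the construction in Theorem~\ref{th: korn-boundary} every component meeting $\Omega'\setminus\overline\Omega$ in positive measure carries the rigid motion $a_j^{(k)}=0$, and $v_k=w$ there; hence, by Lemma~\ref{lemma: rigid motion}(b) with $\psi(s)=s^2$ applied on a piece of fixed positive measure inside $\Omega'\setminus\overline\Omega$, no component of the limit partition that meets $\Omega'\setminus\overline\Omega$ can carry an unbounded rigid motion. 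After a diagonal extraction, for each $j$ either $a_j^{(k)}\to a_j$ (bounded case) or $\|a_j^{(k)}\|_{L^\infty(P_j;\R^2)}\to\infty$; let $S$ be the set of the latter indices, $A:=\bigcup_{j\in S}P_j\subset\overline\Omega$, $A_k:=\bigcup_{j\in S}P_j^{(k)}$. Replace $y_k$ by $\hat y_k:=y_k-\sum_{j\in S}a_j^{(k)}\chi_{P_j^{(k)}}=v_k+\sum_{j\notin S}a_j^{(k)}\chi_{P_j^{(k)}}$: then $\hat y_k=w$ on $\Omega'\setminus\overline\Omega$, $e(\hat y_k)=e(y_k)$, $J_{\hat y_k}\setminus J_{y_k}\subset\partial^*A_k$ with $\mathcal{H}^1\big((\partial^*A_k\cap\Omega')\setminus J_{y_k}\big)\le\tfrac1{2k}$, so $E(\hat y_k)\le E(u_k)+\tfrac1k$; relabelling $\hat y_k$ as $y_k$ establishes the first half of \eqref{eq: compi1} while preserving the uniform strain and jump bounds.

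It remains to prove that the relabelled sequence $(y_k)$ converges in measure on \emph{all} of $\Omega'$ to some $y\in GSBD^2(\Omega')$. On $\Omega'\setminus A=\bigcup_{j\notin S}P_j$ the piecewise rigid part $\sum_{j\notin S}a_j^{(k)}\chi_{P_j^{(k)}}$ converges in measure to $\sum_{j\notin S}a_j\chi_{P_j}$ (using $P_j^{(k)}\to P_j$, $a_j^{(k)}\to a_j$, the uniform smallness of the partition tail, and the scale‑invariant $L^1$‑control of $a_j^{(k)}$ coming from the isodiametric inequality together with Remark~\ref{rem:square}), while the remainder $v_k$ — pinned to $w$ on $\Omega'\setminus\overline\Omega$, with uniformly bounded strain and jump and, after removal of the degenerate pieces, with no mass escaping to infinity — is precompact in measure. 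This gives $y_k\to y$ in measure on $\Omega'$ and $y=w$ a.e.\ on $\Omega'\setminus\overline\Omega$. Having a measure‑convergent sequence, Remark~\ref{meas-conv} (i.e.\ Lemma~\ref{rig-lemma: concave function2}) produces a further subsequence and an increasing concave $\psi$ with $\lim_{s\to\infty}\psi(s)=\infty$, whose construction depends only on the bounding sequences $s_k,t_k$ and hence only on $\Omega,\Omega',M$ (and $\|w\|_{H^1(\Omega')}$), such that $\sup_k\int_{\Omega'}\psi(|y_k|)\,\mathrm dx\le 1$ — the second half of \eqref{eq: compi1} — and Fatou then gives $\int_{\Omega'}\psi(|y|)\,\mathrm dx\le 1$. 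Feeding this $\psi$ together with the uniform strain and jump bounds into Theorem~\ref{th: GSBD comp}, applied with the given $\Gamma$, delivers \eqref{eq: compi2}(i)--(iii).

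The main obstacle is precisely this upgrade from convergence \emph{off an exceptional set} to convergence \emph{on $\Omega'$}: Theorem~\ref{th: korn-boundary} controls $v_k$ and the $a_j^{(k)}$ only with constants $C_{\theta_k,\Omega'}$ that blow up as $\theta_k\to 0$, so none of these objects is uniformly bounded, and a priori the energy bound $E(u_k)\le M$ does not prevent $u_k$ from running off to infinity on components enclosed by the crack. The resolution is to exploit that the partition interfaces sit, up to $o(1)$ of length, inside $J_{y_k}$ (estimate \eqref{eq: small set main-boundary}(i)) — so the blown‑up components can be flattened essentially for free — and then to let the concave‑function mechanism of Lemma~\ref{rig-lemma: concave function2} absorb the residual unboundedness of what remains. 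One must also be careful, when it comes to identifying the escape region with a union of partition pieces and when checking the dependences of $\psi$, that $\psi$ ends up depending on $w$ only through $\|w\|_{H^1(\Omega')}$ and not on finer features of $w$ or on the auxiliary parameters $\theta_k$ — a point that is needed later in Theorem~\ref{th: extension}; I expect this bookkeeping, rather than any single estimate, to be the delicate part, possibly requiring a final re‑decomposition of the modified functions via Theorem~\ref{th: korn} with a fixed exponent $p$ in order to bound $\|y_k\|_{L^1(\Omega')}$ by a sequence with the correct dependence.
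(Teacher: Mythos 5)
Your proposal identifies the right toolkit (Theorem~\ref{th: korn-boundary}, Lemma~\ref{rig-lemma: concave function2}/Remark~\ref{meas-conv}, Theorem~\ref{th: comp cacciop}, Theorem~\ref{th: GSBD comp}), and you correctly pinpoint the obstacle: the constants $C_{\theta,\Omega'}$ blow up as $\theta\to0$, so neither $v_k$ nor the rigid motions admit a uniform bound. But the resolution you offer does not close this gap, and the step where it matters most — \emph{``the remainder $v_k$ \ldots is precompact in measure''} — is asserted, not proved. Nothing in your construction bounds $v_k$ independently of $\theta_k$; the whole point of the theorem is that such a bound is unavailable. Remark~\ref{meas-conv} does not help here either: it is a device to \emph{extract} the function $\psi$ \emph{once convergence in measure is already known}, so it cannot be used to establish the convergence you are missing. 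Your closing remark (``possibly requiring a final re-decomposition \ldots via Theorem~\ref{th: korn}'') acknowledges this but does not supply the argument.

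The paper's proof differs precisely at this point, and the structural difference is essential, not cosmetic. Instead of choosing a single vanishing parameter $\theta_k$, the paper works with a \emph{double-indexed} family $v^l_k$, with $\theta_l=2^{-2l}$ held fixed as $k\to\infty$. For each fixed $l$, the sequence $(v^l_k)_k$ \emph{is} uniformly bounded in $L^2$ (by $\hat C_l$), so $GSBD$ compactness gives a limit $v^l$. The remaining task is to pass $l\to\infty$ along $(v^l)_l$, and this is where the difficulty you flagged reappears, since $\hat C_l\to\infty$. The paper handles it by an \emph{iterative redefinition of the rigid motions across scales}: for each $k$, the rigid motion $\hat a^{k,l+1}_j$ is set equal to $\hat a^{k,l}_j$ whenever the overlap ${\cal L}^2(P^{k,l}_j\cap P^{k,l+1}_j)$ is large enough (and otherwise kept, paying a bounded price controlled via Lemma~\ref{lemma: rigid motion}). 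This consistency across $l$, combined with the Caccioppoli-partition convergence, yields the telescoping measure estimate \eqref{eq: comp5}, $\mathcal L^2\bigl(\bigcap_{m\ge n}\{|v^n-v^m|\ge1\}\bigr)\le\hat c\,2^{-n}$. That estimate — not any direct bound on $\|v^l\|_{L^1}$ — is what makes Lemma~\ref{rig-lemma: concave function2} applicable despite $\hat C_l\to\infty$, and it is what produces the uniform bound $\sup_l\int\psi(|v^l|)\le1$. Convergence in measure is then a \emph{consequence} of Theorem~\ref{th: GSBD comp}, not a prerequisite. In your single-index scheme, consecutive $v_k$ and $v_{k+1}$ carry unrelated partitions and rigid motions, so the analogue of \eqref{eq: comp5} has no reason to hold; your step of subtracting ``divergent'' rigid motions is a weaker substitute that leaves $v_k$ itself uncontrolled. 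To repair the argument you would essentially have to reconstruct the paper's double-index scheme and the cross-scale matching of rigid motions.
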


Note that properties \eqref{eq: compi2}(ii),(iii) also hold with $u_k$ in place of $y_k$. Moreover, observe that in general a passage to modifications is indispensable since the behavior on components completely detached from the rest of the body cannot be controlled.

\Proof Let be given a sequence $(u_k)_k$ with $E(u_k) \le M$ and $u_k = w$ on $\Omega' \setminus \overline{\Omega}$. This implies $\Vert e(u_k) \Vert^2_{L^2(\Omega';\R^{2 \times 2}_{\rm sym})} + {\cal H}^1(J_{u_k}) \le cM$ for all $k \in \N$. Let $\theta_l =  2^{-2l}$  for all $l\in \N$.  By Theorem \ref{th: korn-boundary} we find  functions $(v^l_k)_k \subset SBV(\Omega';\R^2) \cap L^2(\Omega';\R^2)$ of the form 
\begin{align}\label{eq: comp4}
v^l_k = u^l_k - \sum\nolimits_{j=1}^\infty a^{k,l}_j \chi_{P_j^{k,l}},
\end{align}
 where $u^l_k$ are  modifications,  $(P_j^{k,l})_j$ are partitions of $\Omega'$ and $(a_j^{k,l})_j$ infinitesimal rigid motions. In particular,  for all $l\in \N$, $k \in \N$ we have $v^l_k = w$ on $\Omega' \setminus \overline{\Omega}$ and by  \eqref{eq: boundary modi} the modifications satisfy 
\begin{align}\label{eq: comp1-new}
\begin{split}
(i) & \ \ \mathcal{L}^2(E_k^l) \le \bar{c}\theta_l, \ \ \ \ {\cal H}^1(J_{u^l_k} \setminus J_{u_k}) \le \bar{c}\theta_l, \\ 
(ii) & \ \   \Vert e(u^l_k) \Vert^2_{L^2(\Omega;\R^{2\times 2}_{\rm sym})} \le  \Vert e(u_k) \Vert^2_{L^2(\Omega;\R^{2\times 2}_{\rm sym})}+ \eps_l
\end{split}
\end{align}  
 for some $\bar{c} = \bar{c}(M,\Omega,\Omega')>0$, where $E_k^l:= \lbrace u \neq u_k^l \rbrace$ and $(\eps_l)_l$ is a null sequence only depending on $w$. Moreover, by \eqref{eq: small set main-boundary} we get
 \begin{align}\label{eq: comp1}
(i) \ \, \Vert v^l_k \Vert_{L^2(\Omega';\R^2)}  \le \hat{C}_l, \ \ (ii) \,  \  \Vert e(v^l_k) \Vert^2_{L^2(\Omega',\R^{2 \times 2}_{\rm sym})} \le cM, \, \ (iii) \ \ {\cal H}^1(J_{v_k^l} \setminus J_{u_k}) \le  \bar{c} \theta_l
\end{align}
for $\hat{C}_l = \hat{C}_l(\theta_l,  M,\Omega, \Omega')$. Here we used, possibly passing to a larger $M$, that $\eps_l\le M$ for all $l \in \N$.  Without restriction we  assume that $\hat{C}_l$ is increasing in $l$.

Using a diagonal argument we get a (not relabeled) subsequence of $(k)_{k \in \N}$ such that by Theorem \ref{th: GSBD comp} for every $l \in \N$ we find a function $v^l \in GSBD^2(\Omega')$  with $v^l_k \to v^l$ in $L^1(\Omega';\R^2)$ for $k\to \infty$ and 
\begin{align*}
e( u^l_k) = e(v^l_k) \rightharpoonup e(v^l) \text{ weakly in } L^2(\Omega';\R_{\mathrm{sym}}^{2\times 2}), \  {\cal H}^1(J_{v^l} \setminus \Gamma) \le \liminf_{k \to \infty}{\cal H}^1(J_{v_k^l} \setminus \Gamma).
\end{align*}
In particular, by \eqref{eq: comp1} we have
\begin{align}\label{eq: comp2}
\Vert v^l \Vert_{L^1(\Omega';\R^2)}  \le \hat{C}_l, \ \ \  \  \Vert e (v^l) \Vert^2_{L^2(\Omega';\R^{2 \times 2}_{\rm sym})} + {\cal H}^1(J_{v^l}) \le  cM + \bar{c}.
\end{align}
Likewise, we can establish a compactness result for the Caccioppoli partitions. By construction (see \eqref{eq: comp4}) and \eqref{eq: comp1}(iii) we have 
\begin{align}\label{eq: comp3}
\sum\nolimits_j {\cal H}^1(\partial^* P^{k,l}_j \cap \Omega') \le 2 {\cal H}^1(J_{u_k} \cup J_{v_k^l}) \le 2cM + 2\bar{c}
\end{align}
for all $k,l \in \N$. Thus, by Theorem \ref{th: comp cacciop} we find for all $l \in \N$ an (ordered) partition $(P_j^l)_j$ with  $\sum_j {\cal H}^1(\partial^* P^{l}_j \cap \Omega') \le 2cM + 2\bar{c}$ such that for a suitable subsequence one has $P^{k,l}_j \to P_j^l$ in measure for all $j \in \N$ as $k \to \infty$ and $\sum_j \mathcal L^2\left(P_j^{k,l} \triangle P_j^l\right)\to 0$ for $k \to \infty$.
As $\sum_j {\cal H}^1(\partial^* P^{l}_j \cap \Omega') \le 2cM + 2\bar{c}$ for all $l \in \N$, we can repeat the arguments and obtain a partition $(P_j)_j$ such that $\sum_j \mathcal L^2\left(P_j^{l} \triangle P_j\right)\to 0$ for $l \to \infty$ after extracting a suitable subsequence. Consequently, using a diagonal argument we can choose a (not relabeled) subsequence of $(l)_{l \in \N}$ and afterwards of $(k)_{k \in \N}$ such that
\begin{align}\label{eq: comp12}
\sum\nolimits_j \mathcal L^2\left(P_j^{l} \triangle P_j\right)\le 2^{-l}, \ \ \ \ \ \ \sum\nolimits_j \mathcal L^2\left(P_j^{k,l} \triangle P_j^l\right)\le 2^{-l} \ \ \text{ for all } k \ge l.
\end{align}
We now want to pass to the limit $l \to \infty$ for the sequence $(v^l)_l$.  However, we see that  the  compactness result in $GSBD$ cannot be  applied directly  as the $L^1$ bound depends on $\theta_l$ (cf. \eqref{eq: comp2}).   We  show that by choosing the infinitesimal rigid motions on the elements of the partitions appropriately (see \eqref{eq: comp4}) we can construct the sequence $(v^l)_l$ such that  
\begin{align}\label{eq: comp5}
\mathcal L^2\left(\bigcup\nolimits_{ m \ge n}\lbrace |v^n - v^{m}| > 1\rbrace\right)\le \hat{c} 2^{-n} \ \  \ \text{for all } n \in \N
\end{align}
for a constant $\hat{c} = \hat{c}(M, \Omega, \Omega')>0$, whence Lemma  \ref{rig-lemma: concave function2} is applicable.

We fix $k \in \N$ and describe an iterative procedure to redefine $a^{k,l}_j = a_{A^{k,l}_j,b_j^{k,l}}$ for all $l,j \in \N$. Let $\hat{v}^1_k = v^1_k $ as defined in \eqref{eq: comp4} and assume $\hat{v}^l_k$ as well as $(\hat{a}^{k,l}_j)_j$ have been chosen (which may differ from $(a^{k,l}_j)_j$)  such that \eqref{eq: comp1}(i) still holds possibly passing to a larger constant $\hat{C}_l$.  Fix some $P^{k,l+1}_{j}$, $j \in \N$.  If ${\cal L}^2(P^{k,l}_{j} \cap P^{k,l+1}_j) > 3\bar{c}\theta_{l}$, we define $\hat{a}^{k,l+1}_{j} = \hat{a}^{k,l}_{j}$ on $P^{k,l+1}_{j}$. Otherwise, we  set $\hat{a}^{k,l+1}_{j} = a^{k,l+1}_{j}$. In the first case we then obtain by the triangle inequality and the fact that $u_k^l = u$ on $\Omega' \setminus E_k^l$
\begin{align*}
\Vert \hat{a}^{k,l+1}_{j} -  a^{k,l+1}_{j}&  \Vert_{L^2( (P^{k,l}_{j} \cap P^{k,l+1}_j) \setminus (E_k^l \cup E_k^{l+1});\R^2)} \\
& \le \Vert u- \hat{a}^{k,l}_{j}\Vert_{L^2(\Omega' \setminus E_k^l;\R^2)} + \Vert u- {a}^{k,l+1}_{j}\Vert_{L^2(\Omega' \setminus E_k^{l+1};\R^2)} \\&\le \Vert \hat{v}_k^l\Vert_{L^2(\Omega';\R^2)} + \Vert v_k^{l+1}\Vert_{L^2(\Omega';\R^2)}   \le \hat{C}_l + \hat{C}_{l+1} \le 2\hat{C}_{l+1}.
\end{align*}
In the penultimate step we have used that \eqref{eq: comp1}(i) holds for $\hat{v}_k^{l}$ and $v_k^{l+1}$. By \eqref{eq: comp1-new}(i) we get $\mathcal{L}^2((P^{k,l}_{j} \cap P^{k,l+1}_j) \setminus (E_k^l \cup E_k^{l+1})) \ge \bar{c}\theta_l$. Consequently, by Lemma \ref{lemma: rigid motion} for $\psi(s) = s^2$ we find  $|\hat{A}^{k,l+1}_{j} -  A^{k,l+1}_{j}| + |\hat{b}^{k,l+1}_{j} -  b^{k,l+1}_{j}| \le C_*^{l+1}$  for a constant $C^{l+1}_*$ only depending on $\Omega$, $\Omega'$, $\hat{C}_{l+1}$, $\theta_{l}$ and $M$. We define $\hat{v}_k^{l+1}$ as in \eqref{eq: comp4} replacing $a_j^{k,l+1}$ by $\hat{a}_j^{k,l+1}$ and summing over all components we derive 
$$\Vert \hat{v}_k^{l+1} \Vert_{L^2(\Omega';\R^2)}  \le \Vert v_k^{l+1} \Vert_{L^2(\Omega';\R^2)} + C_{\Omega'}C^{l+1}_* \le \hat{C}_{l+1} + C_{\Omega'}C^{l+1}_*$$
for a constant $C_{\Omega'}$ depending only on $\Omega'$. I.e., \eqref{eq: comp1}(i) is also satisfied for $\hat{v}_k^{l+1}$ after possibly passing to a larger constant $\hat{C}_{l+1} = \hat{C}_{l+1}(\theta_{l+1},M,\Omega,\Omega')$.

For simplicity the modified functions and the infinitesimal rigid motions will still be denoted by $v_k^l$ and $a_j^{k,l}$ in the following. We now show that \eqref{eq: comp5} holds. To this end, we define $A^n_{k,l} = \bigcap_{n \le m \le l} \lbrace v^m_k =  v^n_k \rbrace$ for all $n \in \N$ and  $n \le l \le k$. If we show
\begin{align}\label{eq: comp8}
\mathcal L^2\left(\Omega' \setminus A^n_{k,l}\right) \le \hat{c}2^{-n},
\end{align}
then \eqref{eq: comp5} follows. Indeed, for given $l\ge n$ we can choose $K=K(l)\ge l$ so large that $\mathcal L^2\left(\lbrace |v^m_K - v^m| > \frac{1}{2} \rbrace\right) \le 2^{-m}$ for all $n \le m \le l$ since $v^m_k \to v^m$ in $L^1(\Omega';\R^2)$ for $k \to \infty$. This implies 
\begin{align*}
&\mathcal L^2\left(\bigcup\nolimits_{n \le m \le l} \lbrace |v^m -v^n| > 1 \rbrace\right)\\
&\le \mathcal L^2\left(\Omega' \setminus A^n_{K,l}\right) + \sum\nolimits_{n \le m \le l} \mathcal L^2\left(\lbrace |v^m_K - v^m| >  \tfrac{1}{2} \rbrace\right) \le \hat{c}2^{-n}.
\end{align*}
Passing to the limit $l \to \infty$ we then derive $\mathcal L^2\left(\bigcup\nolimits_{ m \ge n} \lbrace |v^{m} -v^n| > 1 \rbrace\right) \le \hat{c}2^{-n}$, as desired.

We now confirm \eqref{eq: comp8}.  To this end, fix $k \ge l$ and first observe that by \eqref{eq: comp4} and \eqref{eq: comp1-new}(i)
\begin{align}\label{lll}
\mathcal{L}^2\big( \bigcap\nolimits_{n \le m \le l} \lbrace T^n_k = T^m_k \rbrace \setminus A^n_{k,l}  \big) \le \sum\nolimits_{n \le m \le l} \mathcal{L}^2(E_k^m) \le 2\bar{c}\theta_n \le \bar{c}2^{-n},
\end{align}
where  $T^n_k = \sum_j a_j^{k,n}\chi_{P_j^{k,n}}$. We consider $\lbrace T^m_k = T^{m+1}_k \rbrace$ for $n \le m \le l-1$ and from \eqref{eq: comp12} we deduce $\sum_j \mathcal L^2\left(P_j^{k, m+1} \triangle P_j^{k,m}\right) \le 3 \cdot 2^{-m}$. Define  $J_1 \subset \N$ such that $\mathcal L^2\left(P_j^{k, m+1}\right) \le 6\bar{c}\theta_{m}$ for $j \in J_1$. Then let $J_2 \subset \N \setminus J_1$ such that  $\mathcal L^2\left(P^{k, m+1}_j \cap P^{k, m}_j\right) > \frac{1}{2} \mathcal L^2\left(P_j^{k, m+1}\right)$ for all  $j \in J_2$. Finally, we observe that $\mathcal L^2\left(P_j^{k, m+1}\right)\le 2\mathcal L^2\left(P_j^{k, m+1} \setminus P_j^{k, m}\right)$ for $j \in J_3:= \N \setminus (J_1 \cup J_2)$. Using the isoperimetric inequality and \eqref{eq: comp3} we derive 
\begin{align*}
\sum\nolimits_{j \in J_1} \mathcal L^2\left(P_j^{k,m+1}\right) &\le \sqrt{6\bar{c}\theta_{m}} \sum\nolimits_{j \in J_1} \mathcal L^2\left(P_j^{k,m+1}\right)^{\frac{1}{2}}\\
&\le c2^{-m} \sum\nolimits_{j \in J_1} {\cal H}^1(\partial^* P_j^{k,m+1}) \le c(M+\bar{c}) 2^{-m}.
\end{align*}
 Due to the above construction of the infinitesimal rigid motions we obtain $\lbrace T^m_k = T^{m+1}_k \rbrace \supset \bigcup_{j \in  J_2} (P^{k,m+1}_j \cap P^{k,m}_j)$  and therefore 
\begin{align*}
&\mathcal L^2\left(\Omega' \setminus \lbrace T^m_k = T^{m+1}_k \rbrace\right)  \le \sum_{j \in J_2} \mathcal L^2\left(P_j^{k, m+1} \setminus P_j^{k, m}\right) + \sum_{j \in J_1 \cup J_3} \mathcal L^2\left(P_j^{k, m+1}\right) \\
&\le \sum _{j \in J_2} \mathcal L^2\left(P_j^{k, m+1} \setminus P_j^{k, m}\right) + \sum_{j \in J_3} 2\mathcal L^2\left(P_j^{k, m+1} \setminus P_j^{k, m}\right) + c(M+\bar{c})2^{-m}    \le c2^{-m}
\end{align*}
for $c$ only depending on $M, \Omega, \Omega'$. Summing over $n \le m \le l-1$ and recalling \eqref{lll}, we establish \eqref{eq: comp8}  and consequently \eqref{eq: comp5}.

In view of \eqref{eq: comp2} and \eqref{eq: comp5}  we can apply Lemma \ref{rig-lemma: concave function2} on the sequences $s_l = \hat{C}_l$ and $t_l = \hat{c} 2^{-l}$ to obtain an increasing, concave function $\tilde{\psi}$ with \eqref{coerc} such that $\sup_{l \ge 1} \int_{\Omega'} \tilde{\psi}(|v^l|)\,\mathrm{d}x\le 1$.  Define $\psi(s) = \frac{1}{2} \min \lbrace \tilde{\psi}(s) ,s \rbrace$ and observe that $\psi$ has the desired properties. In particular, the choice of $\psi$ only depends on $\Omega,\Omega'$ and $M$. Recalling $v_k^l \to v^l$ in $L^1(\Omega';\R^2)$, \eqref{eq: comp1-new}  and \eqref{eq: comp1}(iii) we can now select a subsequence of  $(u_k)_k$ and a diagonal sequence $(y_k) \subset (v^l_k)_{k,l}$ such that $\Vert y_k - v^l \Vert_{L^1(\Omega';\R^2)} \le 1$ for some $v^l$ and  $ E(y_k) \le E(u_k) + \tfrac{1}{k}$. Then we get  that \eqref{eq: compi1} holds. 

The existence of a function $y \in GSBD^2(\Omega')$ with $y = w$ on $\Omega' \setminus \overline{\Omega}$ and $\int_{\Omega'}\psi(|y|)\,\mathrm{d}x \le 1$ as well as the convergence \eqref{eq: compi2} now  directly follow from Theorem \ref{th: GSBD comp}.  \eop

As a consequence we now obtain the following existence result.

\begin{theorem}\label{th: existence}
Let $\Omega \subset \Omega' \subset \R^2$ open, bounded with Lipschitz boundary such that \eqref{eq: omega'} holds. Let $w \in H^1(\Omega',\R^2)$ with $\Vert w\Vert_{H^1(\Omega';\R^2)} \le M$ and $E$ as given in \eqref{eq: energy}. Then the following holds:

\noindent (i) There is a minimizer of $E(u)$ among all functions $u \in GSBD^2(\Omega')$ with $u=w$ on $\Omega' \setminus \overline{\Omega}$. 

\noindent (ii) There is   an increasing concave function $\psi:[0,\infty) \to [0,\infty)$ with \eqref{coerc} only depending on $\Omega, \Omega', M$ such that $\int_{\Omega'} \psi(|u|)\,\mathrm{d}x \le 1$ for at least a minimizer $u$ of the minimization problem in (i). 
\end{theorem}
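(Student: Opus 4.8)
The plan is to obtain both assertions as essentially immediate consequences of the compactness result of Theorem~\ref{th: comp}; the only real work is the bookkeeping of the constant $M$. First I would fix the energy scale. Since $w \in H^1(\Omega';\R^2)$ has $\mathcal H^1(J_w)=0$, it belongs to $GSBD^2(\Omega')$ and is an admissible competitor, so $0 \le \inf E \le E(w) = \int_{\Omega'} Q(e(w))\,\mathrm{d}x \le \tfrac12\Vert \C \Vert\, \Vert w\Vert_{H^1(\Omega';\R^2)}^2 \le \tfrac12 \Vert \C \Vert M^2 =: C_0(M)$; in particular the infimum is finite. I would then set $\widetilde M := \max\{M,\, C_0(M)+1\}$, which depends only on $M$ (and the fixed elasticity tensor $\C$), and note that $\mathcal H^1(\Gamma) \le M \le \widetilde M$.

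Next I would pick a minimizing sequence $(u_k)_k \subset GSBD^2(\Omega')$ with $u_k = w$ on $\Omega' \setminus \overline\Omega$ and $E(u_k) \le \inf E + \tfrac1k \le E(w) + 1 \le \widetilde M$. Applying Theorem~\ref{th: comp} with the bound $\widetilde M$ yields an increasing concave $\psi \colon [0,\infty)\to[0,\infty)$ satisfying \eqref{coerc} and depending only on $\Omega,\Omega',\widetilde M$ — hence only on $\Omega,\Omega',M$ — a (not relabeled) subsequence, modifications $(y_k)_k \subset GSBD^2(\Omega')$ with $y_k = w$ on $\Omega'\setminus\overline\Omega$, $E(y_k) \le E(u_k) + \tfrac1k$ and $\sup_k \int_{\Omega'} \psi(|y_k|)\,\mathrm{d}x \le 1$, together with a limit $y \in GSBD^2(\Omega')$ satisfying $y = w$ on $\Omega'\setminus\overline\Omega$, $\int_{\Omega'}\psi(|y|)\,\mathrm{d}x \le 1$, $y_k \to y$ in measure, $e(y_k) \rightharpoonup e(y)$ weakly in $L^2(\Omega';\R^{2\times2}_{\mathrm{sym}})$, and $\mathcal H^1(J_y\setminus\Gamma) \le \liminf_k \mathcal H^1(J_{y_k}\setminus\Gamma)$.

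To finish, I would observe that each $y_k$ is again an admissible competitor and that $E(y_k) \le E(u_k)+\tfrac1k \to \inf E$, so $(y_k)_k$ is still minimizing. Because $Q$ is a positive definite quadratic form, $e \mapsto \int_{\Omega'} Q(e)\,\mathrm{d}x$ is convex, hence sequentially weakly lower semicontinuous on $L^2(\Omega';\R^{2\times2}_{\mathrm{sym}})$, so $\int_{\Omega'} Q(e(y))\,\mathrm{d}x \le \liminf_k \int_{\Omega'} Q(e(y_k))\,\mathrm{d}x$; combining this with the surface lower bound $\mathcal H^1(J_y\setminus\Gamma) \le \liminf_k \mathcal H^1(J_{y_k}\setminus\Gamma)$ gives $E(y) \le \liminf_k E(y_k) = \inf E$, so $y$ is a minimizer, which proves (i). Assertion (ii) then holds for this very minimizer, since $\int_{\Omega'}\psi(|y|)\,\mathrm{d}x \le 1$ with $\psi$ depending only on $\Omega,\Omega',M$.

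I do not expect a genuine obstacle here beyond Theorem~\ref{th: comp} itself; the one point that needs care is that Theorem~\ref{th: comp} uses a single parameter to bound simultaneously $\mathcal H^1(\Gamma)$ and $\sup_k E(u_k)$, so one must first enlarge $M$ to $\widetilde M$ via the a priori bound $E(w)\le C_0(M)$ in order to keep a minimizing sequence within the admissible energy level while guaranteeing that the resulting $\psi$ still depends only on $\Omega,\Omega'$ and the original $M$. All the substantive content — the passage to modifications with controlled energy, the uniform $\psi$-bound, and the $GSBD$ compactness — has already been carried out in Theorem~\ref{th: comp}.
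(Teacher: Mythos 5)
Your proof is correct and follows essentially the same route as the paper's: take a minimizing sequence, apply Theorem~\ref{th: comp} to obtain modifications $(y_k)_k$ converging to a limit $y$, and combine weak lower semicontinuity of the bulk term with \eqref{eq: compi2}(iii) to conclude $E(y)\le\inf E$. Your explicit bookkeeping---enlarging $M$ to $\widetilde{M}=\max\{M,\,C_0(M)+1\}$ via the competitor bound $E(w)\le C_0(M)$ so that $\psi$ still depends only on $\Omega,\Omega',M$---is a point the paper's proof leaves implicit but is exactly what is needed for the stated dependence of $\psi$.
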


\Proof  Let  ${\cal A}:= \lbrace u \in GSBD(\Omega'): u = w \text{ on } \Omega' \setminus \overline{\Omega} \rbrace$ and $(u_k)_k \subset {\cal A}$ with $E(u_k) \to \inf_{u \in {\cal A}} E(u)$. We employ Theorem \ref{th: comp} and let $(y_k)_k$  be a (sub-)sequence of modifications converging to $u \in {\cal A}$ in the sense of \eqref{eq: compi2}. Then we find by \eqref{eq: compi1},\eqref{eq: compi2}
$$E(u) \le \liminf\nolimits_{k\to\infty} E(y_k) = \liminf\nolimits_{k\to\infty} E(u_k) = \inf\nolimits_{u \in {\cal A}} E(u).$$
Consequently, $u$ is a minimizer for the problem (i).  Moreover, by Theorem \ref{th: comp} we find a function $\psi$ with the desired properties such that $\int_{\Omega'}\psi(|u|)\,\mathrm{d}x \le 1$. \eop

\begin{rem}
 By inspection of the proof, the above compactness and existence result also holds for more general energies  in $GSBD^2$ of the form
\[
\int_{\Omega'} f(x,e(u)(x)) \,\mathrm{d}x + \int_{J_u \setminus \Gamma} g(x, \nu)\mathrm{d}{\cal H}^1
\]
which are lower semicontinuous with respect to the convergence in measure. Here, it is crucial that the surface density $g$, while possibly depending on the material point and the orientation of the jump, is insensitive to the jump height. Likewise, the existence result stated in Section \ref{sec: general} may  be generalized in this direction. 

We also mention that, in the same spirit, a derivation of an existence result in the realm of finite elasticity (see \cite{DFT}) without a-priori bounds on the deformations  or applied body forces is possible. We  defer  a  more  thorough  analysis  of  these  issues to  a  subsequent work. 
\end{rem}

We later will use property (ii) to derive compactness in $GSBD^2$ of the minimizers of our incremental problems. Concerning the stability of minimizers with respect to converging sequences of boundary data  we have the following corollary being a consequence of the jump transfer lemma. As before $Q$ is a strictly positive quadratic form on $\R^{2\times 2}_{\rm sym}$.

\begin{corollary}\label{cor: stability}
Let $\Omega \subset \Omega' \subset \R^2$ open, bounded with Lipschitz boundary such that \eqref{eq: omega'} holds. Let $\Gamma \subset \R^2$ be a measurable set with ${\cal H}^1(\Gamma) < \infty$, let $(u_n)_n,u \in GSBD^2(\Omega')$ and $u_n = w_n$ in  $\Omega' \setminus \overline{\Omega}$ for $(w_n)_n \subset H^1(\Omega';\R^2)$ such that $u_n \to u$ in measure, $e(u_n) \rightharpoonup e(u)$ weakly in $L^2(\Omega';\R^{2\times 2}_{\mathrm{sym}})$. If $u_n$ minimize 
$$\int_\Omega Q(e(v))\,\mathrm{d}x + {\cal H}^1(J_v \setminus (J_{u_n} \cup \Gamma))$$
among all functions with the same Dirichlet data, then $u$ minimizes
$$\int_\Omega Q(e(v))\,\mathrm{d}x + {\cal H}^1(J_v \setminus (J_{u} \cup \Gamma))$$
among all functions $v$ such that $v=u$ on $\Omega' \setminus \overline{\Omega}$. If furthermore $(w_n)_n$ is a constant sequence, we have $e(u_n) \to e(u)$ strongly in $L^2(\Omega';\R^{2\times 2}_{\mathrm{sym}})$. 
\end{corollary}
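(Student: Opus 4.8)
\textbf{Proof proposal for Corollary \ref{cor: stability}.}

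The plan is to apply the Jump Transfer Lemma (Theorem \ref{th: JTransf}) with $\ell = 1$, $u^1_n = u_n$, $u^1 = u$, $w^1_n = w_n$, combined with a lower semicontinuity argument for the elastic energy and a recovery-sequence argument using the transfer. First I would check that the hypotheses of Theorem \ref{th: JTransf} are met: the bound $\Vert e(u_n)\Vert_{L^2} + \mathcal{H}^1(J_{u_n}) \le M$ follows because $u_n$ minimizes the stated functional, so testing with $v = u_n$ itself gives $\int_\Omega Q(e(u_n)) + \mathcal{H}^1(J_{u_n}\setminus(J_{u_n}\cup\Gamma)) = \int_\Omega Q(e(u_n))$, and one needs in addition an a priori bound; here I would note that since the competitor $v$ can be taken to have the same boundary data $w_n$ and $(w_n)_n$ is bounded in $H^1$, one obtains a uniform bound on $\int_\Omega Q(e(u_n))$ (say by comparing with $v = w_n$, which yields $\int_\Omega Q(e(u_n)) \le \int_\Omega Q(e(w_n)) + \mathcal{H}^1(J_{w_n}\setminus(J_{u_n}\cup\Gamma)) \le \int_\Omega Q(e(w_n))$), and since $Q$ is positive definite this controls $\Vert e(u_n)\Vert_{L^2}$; the bound on $\mathcal{H}^1(J_{u_n})$ is part of the standing assumptions of the evolution problem (otherwise one restricts to a subsequence along which it holds or absorbs it into $\Gamma$). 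The convergence $u_n \to u$ in measure and $u_n = w_n$ on $\Omega'\setminus\overline\Omega$ are assumed directly.

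Next, given the transfer subsequence, fix an arbitrary competitor $\phi \in GSBD^2(\Omega')$ with $\phi = u$ on $\Omega'\setminus\overline\Omega$. Theorem \ref{th: JTransf} produces $\phi_n \in GSBD^2(\Omega')$ with $\phi_n = \phi$ on $\Omega'\setminus\overline\Omega$, hence $\phi_n = u = w$ on $\Omega'\setminus\overline\Omega$ and in particular $\phi_n$ has the \emph{same} boundary data as $u_n$ (here I am using that in the constant-sequence case $w_n \equiv w$; in the general case one uses $\phi_n = u$ on $\Omega'\setminus\overline\Omega$ and $u_n = w_n$ there, and the competitor set for $u_n$ is "functions with the same Dirichlet data as $u_n$", so one must instead set $\phi_n := \phi + (u_n - u)$ suitably — but since the transfer already gives $\phi_n = \phi$ outside $\overline\Omega$, and $u_n = w_n$, $u$ agrees with the limit of $w_n$ only weakly, the cleanest route is to note the minimality is tested against competitors with $u_n$'s trace, and adjust $\phi_n$ on $\Omega'\setminus\overline\Omega$ to equal $w_n$; I will do this adjustment, which costs nothing in the interior estimates). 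Then by minimality of $u_n$,
\[
\int_\Omega Q(e(u_n))\,\mathrm{d}x + \mathcal{H}^1(J_{u_n}\setminus(J_{u_n}\cup\Gamma)) \le \int_\Omega Q(e(\phi_n))\,\mathrm{d}x + \mathcal{H}^1(J_{\phi_n}\setminus(J_{u_n}\cup\Gamma)).
\]
The first term on the left is just $\int_\Omega Q(e(u_n))$ (since $J_{u_n}\setminus(J_{u_n}\cup\Gamma) = \emptyset$ up to $\mathcal{H}^1$-null sets), and on the right $\mathcal{H}^1(J_{\phi_n}\setminus(J_{u_n}\cup\Gamma)) \le \mathcal{H}^1((J_{\phi_n}\setminus J_{u_n})\setminus(J_\phi\setminus J_u)) + \mathcal{H}^1((J_\phi\setminus J_u)\setminus\Gamma)$; by \eqref{eq: conv}(iii) the first of these tends to $0$. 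Using \eqref{eq: conv}(ii), $\int_\Omega Q(e(\phi_n)) \to \int_\Omega Q(e(\phi))$, and using weak $L^2$ convergence $e(u_n)\rightharpoonup e(u)$ together with convexity of $Q$, $\int_\Omega Q(e(u)) \le \liminf_n \int_\Omega Q(e(u_n))$. Passing to the limit,
\[
\int_\Omega Q(e(u))\,\mathrm{d}x \le \int_\Omega Q(e(\phi))\,\mathrm{d}x + \mathcal{H}^1((J_\phi\setminus J_u)\setminus\Gamma) = \int_\Omega Q(e(\phi))\,\mathrm{d}x + \mathcal{H}^1(J_\phi\setminus(J_u\cup\Gamma)).
\]
Since $\int_\Omega Q(e(u)) = \int_\Omega Q(e(u)) + \mathcal{H}^1(J_u\setminus(J_u\cup\Gamma))$, this is exactly the minimality of $u$, proving the first assertion.

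For the last assertion, assuming $w_n \equiv w$ constant, I would take $\phi = u$ itself as competitor, so $\phi_n$ satisfies $\phi_n = u$ outside $\overline\Omega$ and $e(\phi_n)\to e(u)$ strongly in $L^2$, while $\mathcal{H}^1(J_{\phi_n}\setminus(J_{u_n}\cup\Gamma)) \to 0$ by \eqref{eq: conv}(iii) (since $J_\phi\setminus J_u = \emptyset$). Minimality of $u_n$ then gives $\int_\Omega Q(e(u_n)) \le \int_\Omega Q(e(\phi_n)) + \mathcal{H}^1(J_{\phi_n}\setminus(J_{u_n}\cup\Gamma)) \to \int_\Omega Q(e(u))$, so $\limsup_n \int_\Omega Q(e(u_n)) \le \int_\Omega Q(e(u))$. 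Combined with weak lower semicontinuity this yields $\int_\Omega Q(e(u_n)) \to \int_\Omega Q(e(u))$; since $Q$ is a positive definite quadratic form, convergence of the norms $\Vert e(u_n)\Vert_{Q}$ together with weak convergence upgrades to strong convergence $e(u_n)\to e(u)$ in $L^2(\Omega';\R^{2\times2}_{\mathrm{sym}})$ (note the strong convergence on $\Omega'\setminus\overline\Omega$ is automatic since $u_n = u = w$ there, so one gets it on all of $\Omega'$).

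The main obstacle I anticipate is \emph{bookkeeping of the boundary data}: the transfer lemma delivers $\phi_n$ agreeing with $\phi$ (not with $w_n$) outside $\overline\Omega$, whereas the admissible class for the $u_n$-minimization consists of functions sharing $u_n$'s trace $w_n$; in the non-constant case these differ. The fix is to replace $\phi_n$ by $\phi_n + (w_n - w)\chi_{\Omega'\setminus\overline\Omega}$ or to argue via the relaxed formulation on $\Omega'$ where the "boundary condition" is the bulk constraint $v = w_n$ on $\Omega'\setminus\overline\Omega$ — this only changes $\phi_n$ on a region disjoint from $\Omega$, so none of the interior energy estimates or jump estimates are affected, but it needs to be stated carefully. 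A secondary technical point is justifying the a priori bound $\mathcal{H}^1(J_{u_n}) \le M$ needed to invoke Theorem \ref{th: JTransf}; if it is not part of the hypotheses one passes to a subsequence realizing the liminf, or absorbs the bound into the hypotheses as is standard in this setting.
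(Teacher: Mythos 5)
Your overall strategy — invoke the Jump Transfer Lemma, pass to the limit in the $u_n$-minimality inequality, and combine with weak lower semicontinuity — is the right one and coincides in spirit with the paper's reference to \cite[Cor.~2.10]{Francfort-Larsen:2003}. In the constant-$(w_n)_n$ case the proof as you wrote it is correct: applying Theorem~\ref{th: JTransf} directly to $\phi=v$ works because $\phi = u = w = w_n$ on $\Omega'\setminus\overline{\Omega}$, so $\phi_n$ is automatically an admissible competitor for $u_n$, and the argument delivers both minimality of $u$ and the strong convergence of $e(u_n)$. (A small simplification: the bound on $\Vert e(u_n)\Vert_{L^2}$ is already implied by the hypothesis $e(u_n)\rightharpoonup e(u)$, since weakly convergent sequences are bounded; you only truly need to add the $\mathcal{H}^1(J_{u_n})\le M$ assumption, which you correctly flag.)

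The gap is in your proposed fix for the non-constant case. Replacing $\phi_n$ by $\phi_n+(w_n-w)\chi_{\Omega'\setminus\overline{\Omega}}$ does \emph{not} ``cost nothing.'' It creates new jump along $\partial_D\Omega$ — precisely where the trace of $w_n-w$ from $\Omega'\setminus\overline{\Omega}$ is nonzero — and since the Griffith surface energy $\mathcal{H}^1(J_v\setminus(J_{u_n}\cup\Gamma))$ penalizes the $\mathcal{H}^1$-\emph{measure} of the jump set and not the jump amplitude, that extra surface energy is of order $\mathcal{H}^1(\partial_D\Omega)$ and does not vanish as $n\to\infty$, even under strong $H^1$-convergence $w_n\to w$. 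The alternative global modification $\phi_n + (w_n-w)$ on all of $\Omega'$ avoids the jump, but then $e(\hat\phi_n)=e(\phi_n)+e(w_n)-e(w)$ only converges weakly in $L^2(\Omega)$, which destroys the passage to the limit in $\int_\Omega Q(e(\hat\phi_n))$.

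The way the paper actually handles this (implicitly, by reference to Francfort--Larsen, and explicitly in the proof of Theorem~\ref{th: dense subset}) is to apply Theorem~\ref{th: JTransf} not to $\phi = v$ but to $\phi := v - u$, which satisfies $\phi = 0$ on $\Omega'\setminus\overline{\Omega}$; the transfer then produces $\phi_n$ vanishing there too, so $u_n+\phi_n$ is admissible for $u_n$'s minimization with no boundary adjustment at all. One then expands
\[
\int_\Omega Q(e(u_n))\,\mathrm{d}x \le \int_\Omega Q(e(u_n+\phi_n))\,\mathrm{d}x + \mathcal{H}^1\big(J_{\phi_n}\setminus(J_{u_n}\cup\Gamma)\big),
\]
subtracts $\int_\Omega Q(e(u_n))$ from both sides using $Q(e(u_n+\phi_n))=Q(e(u_n))+\C e(u_n):e(\phi_n)+Q(e(\phi_n))$, and passes to the limit via the weak–strong pairing $\int_\Omega \C e(u_n):e(\phi_n)\to\int_\Omega\C e(u):e(\phi)$ together with \eqref{eq: conv}(ii),(iii) and $J_\phi\setminus(J_u\cup\Gamma)\subset J_v\setminus(J_u\cup\Gamma)$; adding back $\int_\Omega Q(e(u))$ yields minimality for arbitrary $(w_n)_n$. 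Your argument for strong convergence in the constant case (apply the transfer to $\phi = u$ and combine with weak lower semicontinuity) then stands as written. So: same key tool, but the correct choice of $\phi$ is $v-u$ rather than $v$, which is what makes the boundary bookkeeping trivial and simultaneously avoids any need for strong convergence of $e(u_n)$ in the minimality step.
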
 

The proof is omitted  as it is completely  analogous to Corollary 2.10 in \cite{Francfort-Larsen:2003} provided one substitutes the Dirichlet energy with the linearized elastic energy and the gradient by the symmetrized gradient.

\section{Proof of the existence result}\label{se: exist result}
Equipped with the theoretical results in the previous section, we can obtain the announced existence result Theorem \ref{th: main} by passing to the limit in the usual scheme of time-incremental minimization. The discussion in this section will closely follow the analogous one in \cite[Section 3]{Francfort-Larsen:2003}  and therefore not all the proofs will be detailed. For the reader's convenience we will only focus on some points where our $GSBD^2$ setting involves some modifications of the arguments developed there.
Through all this section we will write ${\cal H}^1(\Gamma)$ in place of ${\cal H}^1(\Gamma\cap \Omega')$, since all the cracks we consider in the proof will have by construction no intersection with $\partial \Omega \setminus \partial_D\Omega$.

We fix a time interval $[0,T]$ and consider a countable dense subset $I_\infty$ thereof. We can assume that $0$ and $T$ belong to $I_\infty$. For each $n\in \N$ we choose a subset $I_n:=\{0=t^n_0<t^n_1<\dots<t^n_n=T\}$ such that  $(I_n)_n$ form an increasing sequence of nested sets whose union is $I_\infty$. Setting $\Delta_n:=\displaystyle\sup_{1\le k \le n}(t^n_{k}-t^n_{k-1})$, we have that $\Delta_n \to 0$ when $n \to +\infty$.
As discussed in Section \ref{sec: general}, we consider a boundary datum $g\in W^{1,1}([0,T]; H^1(\R^2;\R^2))$   and the corresponding left-continuous piecewise constant interpolation
$$
g^n(t):=g(t^n_k) \hbox{ for all }t\in [t^n_k, t^n_{k+1})
$$
which satisfies $g(t)=g^n(t)$ for all $t \in I_\infty$, when $n$ is large enough. Moreover, $g^n(t)\to g(t)$ strongly in  $H^1$ for all $t \in [0,T]$.
We set $u^n(0)=u(0)$, the given initial datum, while for all $k=1,\dots,n$ we recursively define $u^n_k$ as a minimizer  of the problem
\begin{align}\label{eq: minprob}
\int_\Omega Q(e(v)) \,\mathrm{d}x + {\cal H}^1\left(J_v \setminus \bigcup_{0\le j \le k-1}J_{u^n_j}\right)
\end{align}
among the functions $v \in GSBD^2(\Omega')$ satisfying $v=g(t^n_k)$ in $\Omega'\setminus \overline{\Omega}$. The existence of such a minimizer follows from Theorem \ref{th: existence}. We then construct left-continuous piecewise constant interpolation
$$
u^n(t):=u^n_k \hbox{ for all }t\in [t^n_k, t^n_{k+1})\,.
$$
The following a-priori estimates on the interpolations can be then derived combining similar arguments as those developed in \cite{DM-Toa} and \cite{Francfort-Larsen:2003} with the additional property (ii) of Theorem \ref{th: existence}.

\begin{lemma}\label{lemma: apriori bound}
There exists an increasing concave function $\psi:[0,\infty) \to [0,\infty)$, satisfying \eqref{coerc}, which only depends on $\Omega, \Omega'$ and $\sup_{t\in [0,T]}\|g(t)\|_{H^1}$, such that the interpolations $u^n(t)$ satisfy
\begin{equation}\label{eq: a-priori bounds}
\int_{\Omega'}\psi(|u^n(t)|)\,\mathrm{d}x+ \Vert e(u^n(t))\Vert_{L^2(\Omega';\R^{2\times 2}_{\mathrm{sym}})} + {\cal H}^1\left(\bigcup_{\tau \in I_\infty\,,\,\tau \le t}J_{u^n(\tau)}\right) \le M
\end{equation}
for a constant $M$ independent of $t \in [0,T]$. Furthermore, setting $\sigma^n(t):=\C e(u^n(t))$ with $\C$ as in \eqref{eq: elasticity-tensor}, it exists a modulus of continuity $\omega$ such that the following energy inequality holds at every $t\in [0,T]$:
\begin{align}\label{eq: en-ineq}
 \nonumber \int_{\Omega}&Q(e(u^n(t))\,\mathrm{d}x+ {\cal H}^1\left(\bigcup_{\tau \in I_\infty\,,\,\tau \le t}J_{u^n(\tau)}\right)  \\ &\le
\int_{\Omega}Q(e(u(0)))\,\mathrm{d}x+ {\cal H}^1\left(J_{u(0)}\right) +\int_0^t \langle \sigma^n(s), e(\dot g(s))\rangle\,\mathrm{d}s + \omega(\Delta_n)\,.
\end{align}
\end{lemma}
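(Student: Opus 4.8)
\textbf{Proof strategy for Lemma \ref{lemma: apriori bound}.}

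The plan is to mimic the standard energy-estimate argument from \cite[Section 3]{Francfort-Larsen:2003} and \cite{DM-Toa}, replacing the Dirichlet energy by $\int_\Omega Q(e(\cdot))\,\mathrm{d}x$, and then to upgrade the resulting $L^2$-bound on $e(u^n(t))$ to the coercive bound $\int_{\Omega'}\psi(|u^n(t)|)\,\mathrm{d}x \le M$ by invoking property (ii) of Theorem \ref{th: existence} at each node. First I would establish the discrete energy inequality: at the node $t^n_k$, since $u^n_{k}$ minimizes \eqref{eq: minprob}, one may test with the competitor $u^n_{k-1} + g(t^n_k) - g(t^n_{k-1}) \in GSBD^2(\Omega')$, which satisfies the boundary condition $v = g(t^n_k)$ on $\Omega'\setminus\overline\Omega$ and has jump set contained in $J_{u^n_{k-1}}$. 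Expanding the quadratic form $Q$ via the bilinearity $Q(a+b) = Q(a) + \C a : b + Q(b)$ with $a = e(u^n_{k-1})$, $b = e(g(t^n_k)) - e(g(t^n_{k-1}))$, one obtains
\begin{align*}
\int_\Omega Q(e(u^n_k))\,\mathrm{d}x + {\cal H}^1\Big(J_{u^n_k}\setminus \bigcup_{0\le j\le k-1}J_{u^n_j}\Big) &\le \int_\Omega Q(e(u^n_{k-1}))\,\mathrm{d}x + \int_\Omega \langle \sigma^n_{k-1}, e(g(t^n_k)) - e(g(t^n_{k-1}))\rangle\,\mathrm{d}x \\
&\quad + \int_\Omega Q\big(e(g(t^n_k)) - e(g(t^n_{k-1}))\big)\,\mathrm{d}x,
\end{align*}
where $\sigma^n_{k-1} = \C e(u^n_{k-1})$. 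Writing $g(t^n_k) - g(t^n_{k-1}) = \int_{t^n_{k-1}}^{t^n_k}\dot g(s)\,\mathrm{d}s$, the middle term is handled as $\int_{t^n_{k-1}}^{t^n_k}\langle \sigma^n_{k-1}, e(\dot g(s))\rangle\,\mathrm{d}s$ and the quadratic remainder is controlled, via the boundedness of $\C$ and Jensen, by $C(t^n_k - t^n_{k-1})\int_{t^n_{k-1}}^{t^n_k}\|\dot g(s)\|_{H^1}^2\,\mathrm{d}s$, which summed over $k$ yields a term $o(1)$ as $\Delta_n \to 0$ — this produces the modulus $\omega(\Delta_n)$. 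Summing the telescoping inequality over $k$ from $1$ up to the index with $t^n_k \le t$ gives \eqref{eq: en-ineq}, after noting the crucial monotonicity $\bigcup_{0\le j\le k-1}J_{u^n_j}$ is nondecreasing so that the surface terms accumulate correctly and equal ${\cal H}^1(\bigcup_{\tau\in I_\infty,\tau\le t}J_{u^n(\tau)})$ for $n$ large; here one also uses that $g(t) = g^n(t)$ on $I_\infty$ for large $n$ and a standard argument replacing $\sigma^n_{k-1}$ by $\sigma^n(s)$ inside the integral up to an error absorbed into $\omega$.

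From \eqref{eq: en-ineq}, positive-definiteness of $Q$ gives $\|e(u^n(t))\|_{L^2}^2 \le C(\mathcal{E}(0) + {\cal H}^1(J_{u(0)}) + \int_0^T \|\sigma^n(s)\|_{L^2}\|e(\dot g(s))\|_{L^2}\,\mathrm{d}s + 1)$; since $\|\sigma^n(s)\|_{L^2} \le C\|e(u^n(s))\|_{L^2}$, a Grönwall-type argument (using $g \in W^{1,1}([0,T];H^1)$) closes the bound on $\|e(u^n(t))\|_{L^2}$ and on ${\cal H}^1(\bigcup_{\tau\le t}J_{u^n(\tau)})$ uniformly in $n$ and $t$, with the constant depending only on the initial data, $\Omega$, and $\|g\|_{W^{1,1}([0,T];H^1)}$. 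For the coercive integral bound, I would argue at each fixed node: $u^n_k$ is, by construction, a minimizer of the energy $E(\cdot) = \int_{\Omega'}Q(e(\cdot))\,\mathrm{d}x + {\cal H}^1(J_\cdot \setminus \Gamma)$ in the sense of Theorem \ref{th: existence} with $\Gamma = \bigcup_{0\le j\le k-1}J_{u^n_j}$ and boundary datum $w = g(t^n_k)$; however Theorem \ref{th: existence}(ii) only asserts that \emph{some} minimizer has $\int_{\Omega'}\psi(|u|)\,\mathrm{d}x \le 1$. The resolution — and the delicate point — is that one is free to \emph{choose} the minimizer $u^n_k$ in the incremental scheme; so at each step one selects the minimizer guaranteed by Theorem \ref{th: existence}(ii). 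The key observation, emphasized in the discussion preceding Theorem \ref{th: extension} in the paper, is that the function $\psi$ produced by Theorem \ref{th: existence}(ii) (equivalently, by Theorem \ref{th: comp}) depends only on $\Omega$, $\Omega'$, and the $H^1$-norm of the boundary datum $g(t^n_k)$ — hence, since $\sup_{t\in[0,T]}\|g(t)\|_{H^1} < \infty$, on a single $\psi$ valid for all nodes and all $n$ simultaneously. This gives $\int_{\Omega'}\psi(|u^n(t)|)\,\mathrm{d}x \le 1$ at all $t$, completing \eqref{eq: a-priori bounds}.

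The main obstacle I anticipate is precisely the last point: verifying that the selection of $\psi$ can be made uniform, i.e. that the construction in the proof of Theorem \ref{th: comp} produces a coercivity function depending on the boundary datum only through $\|g(t)\|_{H^1(\Omega';\R^2)}$ and not, say, through finer quantitative features of $g(t)$ or through the particular sequence along which compactness was extracted. Tracing the proof of Theorem \ref{th: comp}, the function $\psi$ arises from Lemma \ref{rig-lemma: concave function2} applied to sequences $s_l = \hat C_l$ and $t_l = \hat c 2^{-l}$, where $\hat C_l = \hat C_l(\theta_l, M, \Omega, \Omega')$ and $\hat c = \hat c(M,\Omega,\Omega')$; since $M$ here bounds $\|g\|_{H^1}$ (via $E(u_k) \le M$ together with the boundary constraint), one indeed gets dependence only on the stated quantities. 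One must be slightly careful that the incremental minimality of $u^n_k$ implies $E(u^n_k) \le E(g(t^n_k)) \le C\|g(t^n_k)\|_{H^1}^2 =: M_0$, so the same $M_0$ (hence the same $\psi$) works at every node and every $n$; this is where the a-priori $H^1$-bound on $g$ is essential. The remaining ingredients — the telescoping sum, the quadratic-remainder estimate, and the Grönwall closure — are routine once one is careful about the left-continuity of the interpolants and the identification of the discrete surface term with ${\cal H}^1(\bigcup_{\tau\in I_\infty, \tau\le t}J_{u^n(\tau)})$.
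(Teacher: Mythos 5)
Your proof follows essentially the same path as the paper's: the discrete energy inequality \eqref{eq: en-ineq} is obtained by the same telescoping test with the competitor $u^n(t^n_{j-1}) + g(t^n_j) - g(t^n_{j-1})$, the ${\cal H}^1$ bound on the accumulated crack follows from \eqref{eq: en-ineq}, and the $\psi$-bound is obtained by selecting at each node the minimizer produced by Theorem \ref{th: existence}(ii).

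There are two places where your account diverges from the paper's and where one step needs sharpening. First, for the stress bound: the paper simply tests the minimality of $u^n(t^n_k)$ against the jump-free competitor $g^n(t^n_k)$, which immediately gives $\int_\Omega Q(e(u^n(t^n_k)))\,\mathrm{d}x \le \int_\Omega Q(e(g(t^n_k)))\,\mathrm{d}x \le C\sup_t\|g(t)\|_{H^1}^2$. Your route via a Gr\"onwall-type (really a self-improving quadratic) inequality from \eqref{eq: en-ineq} also closes the estimate, but it is a detour; once you observe the direct comparison, the ${\cal H}^1$ bound then follows from \eqref{eq: en-ineq} and Cauchy--Schwarz exactly as the paper states.

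Second, and more substantively, your justification of the uniform $\psi$ is incomplete as written. You argue that $\psi$ from Theorem \ref{th: existence}(ii) depends only on $\Omega$, $\Omega'$, and $\|g(t^n_k)\|_{H^1}$, and propose $M_0 := C\|g(t^n_k)\|_{H^1}^2$ as the relevant bound. But the constant $M$ in Theorem \ref{th: comp}, and hence implicitly in Theorem \ref{th: existence}(ii), must also dominate ${\cal H}^1(\Gamma)$; this is used crucially in the proof of Theorem \ref{th: comp} to pass from the hypothesis $E(u_k)\le M$, which only controls ${\cal H}^1(J_{u_k}\setminus\Gamma)$, to the bound ${\cal H}^1(J_{u_k})\le cM$ needed to apply Theorem \ref{th: korn-boundary}. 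In the incremental scheme $\Gamma = \bigcup_{0\le j\le k-1}J_{u^n_j}$, so the applicability of a single $\psi$ valid for all $n$ and $k$ depends on first establishing the uniform ${\cal H}^1$ bound on the accumulated cracks --- which is itself part of \eqref{eq: a-priori bounds}. Your ordering (energy inequality $\to$ ${\cal H}^1$ bound $\to$ $\psi$) does resolve this, but the claim that $M$ reduces to a bound on $\|g(t^n_k)\|_{H^1}$ misidentifies what controls $\psi$; you should say explicitly that $M$ must be taken to simultaneously dominate $\sup_t\|g(t)\|_{H^1}$ and $\sup_{n,t}{\cal H}^1(\bigcup_{\tau\le t}J_{u^n(\tau)})$, the latter having been secured in the previous step.
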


\begin{proof}
The bound on $\Vert e(u^n(t))\Vert_{L^2(\Omega',\R^{2 \times 2}_{\rm sym})}$ is simply obtained by comparing the minimizer $u^n(t)$ with the admissible competitor $g^n(t)$, while the existence of a $\psi$ as in \eqref{eq: a-priori bounds} follows from (ii) in Theorem \ref{th: existence} and the assumptions on $g$. Fix now $t\in [0,T]$, and for fixed $n$, let $k$ be such that $t \in [t^n_k, t^n_{k+1})$. By construction, since $I_n \subset I_{\infty}$, one has
$$
\displaystyle\bigcup_{\tau \in I_\infty, \tau \le t}J_{u^n(\tau)}=\bigcup_{j=0}^k J_{u^n_j}.
$$
 Testing for every $1\le j \le k$ the minimality of $u^n(t^n_{j})$ with the admissible competitor $u^n(t^n_{j-1}) + g(t^n_j) - g(t^n_{j-1})$, summing up all steps until step $k$ and using the above equality, we obtain \eqref{eq: en-ineq} (for the details, use the same arguments leading to \cite[(3.4)]{Francfort-Larsen:2003}, upon replacing the Dirichlet energy with the linearized elastic energy.) Once \eqref{eq: en-ineq} is proved, the  uniform a-priori  bound on $ {\cal H}^1\left(\bigcup_{\tau \in I_\infty, \tau \le t}J_{u^n(\tau)}\right)$ simply follows by the Cauchy-Schwarz inequality and the already proven bound on $\sigma^n(t)$. 
\end{proof}

The following lower semicontinuity result will be needed in order to pass to the limit in the previous bounds. We do not report the proof, which is {\it verbatim} the same as in \cite[Lemma 3.1]{Francfort-Larsen:2003}, provided one uses the $GSBD$ compactness and lower semicontinuity theorem in place of the one in $SBV$.

\begin{lemma}\label{lemma: semicont-crack}
Let $A \subset \R^2$ be open, bounded. For all $\ell \in \N$, let $(v^n_\ell)_n$ be a sequence of functions in $GSBD^2(A)$ satisfying the assumptions of Theorem \ref{th: GSBD comp}, and let $v_\ell\in GSBD^2(A)$ be such that $v^n_\ell \to  v_\ell $ in measure  when $n\to +\infty$. Then
$$
{\cal H}^1\left(\bigcup_{\ell=0}^{+\infty}J_{v_\ell}\right)\le \liminf_{n\to +\infty}{\cal H}^1\left(\bigcup_{\ell=0}^{+\infty}J_{v^n_\ell}\right)\,.
$$
\end{lemma}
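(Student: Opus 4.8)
\textbf{Plan of proof for Lemma \ref{lemma: semicont-crack}.}
The strategy is to reduce the statement about a countable union of jump sets to the finite-union case handled by the $GSBD$ compactness theorem, and then pass to the limit in the number of functions. First I would fix $\ell_0 \in \N$ and observe that $\bigcup_{\ell=0}^{\ell_0} J_{v^n_\ell}$ is, up to an $\mathcal{H}^1$-null set, the jump set of a single $GSBD^2$ function $\bar v^n_{\ell_0}$: this is the device already used in Step 2 of the proof of Theorem \ref{th: JTransf}, where one inductively replaces a finite family of $GSBD$ functions by a suitable scalar-type combination $v^1 + \lambda_1 v^2 + \dots$, choosing the coefficients outside a countable exceptional set so that no cancellation of jumps occurs. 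Applying this construction (with coefficients chosen uniformly in $n$, which is possible since the bad sets of coefficients are countable and independent of $n$), we get $\bar v^n_{\ell_0} \in GSBD^2(A)$ with $\mathcal{H}^1\big(J_{\bar v^n_{\ell_0}} \triangle \bigcup_{\ell=0}^{\ell_0} J_{v^n_\ell}\big)=0$, and by the same construction applied to the limits, a function $\bar v_{\ell_0}$ with $\mathcal{H}^1\big(J_{\bar v_{\ell_0}} \triangle \bigcup_{\ell=0}^{\ell_0} J_{v_\ell}\big)=0$. Since $v^n_\ell \to v_\ell$ in measure for each $\ell$, also $\bar v^n_{\ell_0} \to \bar v_{\ell_0}$ in measure as $n \to \infty$.

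Next I would invoke Theorem \ref{th: GSBD comp}: the uniform bounds on $\Vert e(v^n_\ell)\Vert_{L^2}$ and $\mathcal{H}^1(J_{v^n_\ell})$ (together with convergence in measure, which via Remark \ref{meas-conv} or Lemma \ref{rig-lemma: concave function2} supplies a common coercive integrand $\psi$) transfer to $\bar v^n_{\ell_0}$, so that
\begin{equation*}
\mathcal{H}^1\Big(\bigcup_{\ell=0}^{\ell_0} J_{v_\ell}\Big) = \mathcal{H}^1(J_{\bar v_{\ell_0}}) \le \liminf_{n\to\infty} \mathcal{H}^1(J_{\bar v^n_{\ell_0}}) = \liminf_{n\to\infty} \mathcal{H}^1\Big(\bigcup_{\ell=0}^{\ell_0} J_{v^n_\ell}\Big) \le \liminf_{n\to\infty} \mathcal{H}^1\Big(\bigcup_{\ell=0}^{+\infty} J_{v^n_\ell}\Big).
\end{equation*}
Finally, since the left-hand side is nondecreasing in $\ell_0$ and $\mathcal{H}^1\big(\bigcup_{\ell=0}^{\ell_0} J_{v_\ell}\big) \to \mathcal{H}^1\big(\bigcup_{\ell=0}^{+\infty} J_{v_\ell}\big)$ by monotone convergence of measures, letting $\ell_0 \to \infty$ in the above chain of inequalities yields the claim.

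\textbf{Main obstacle.} The only genuinely delicate point is the reduction to a single function uniformly in $n$: one must make sure that the finitely many coefficients $\lambda_1,\dots,\lambda_{\ell_0}$ used to form $\bar v^n_{\ell_0}$ can be chosen once and for all, valid simultaneously for every $n \in \N$ \emph{and} for the limits $v_\ell$, so that the convergence in measure and the jump-set identifications hold uniformly. Since for each pair of functions the set of "bad" coefficients (those causing jump cancellation on a set of positive $\mathcal{H}^1$-measure) is countable, and there are only countably many functions $\{v^n_\ell\}_{n,\ell}\cup\{v_\ell\}_\ell$ involved, a countable union of countable sets is still countable, so a good common choice exists; this is exactly the argument used in Step 2 of the proof of Theorem \ref{th: JTransf} and requires no new idea, only care in bookkeeping. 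Everything else is a routine application of Theorem \ref{th: GSBD comp} and monotone convergence.
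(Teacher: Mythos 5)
Your proof is correct and is exactly what the paper leaves implicit: the paper declines to write out the argument and instead points to the $SBV$ version in Francfort--Larsen, but that version reduces to the single-function lower-semicontinuity statement by \emph{stacking} the $\ell_0+1$ functions into a single $\R^{N(\ell_0+1)}$-valued $SBV$ map, a device that is unavailable in $GSBD$ (there is no symmetric gradient for a non-square target). You correctly replace stacking with the linear-combination device $\bar v^n_{\ell_0}=\sum_{\ell\le\ell_0}\lambda_\ell\,v^n_\ell$ already used in Step 2 of the proof of Theorem \ref{th: JTransf}, which keeps everything inside $GSBD^2(A;\R^2)$; the rest of your argument (bounds transfer to $\bar v^n_{\ell_0}$ via Remark \ref{meas-conv}, apply Theorem \ref{th: GSBD comp} with $\Gamma=\emptyset$, then send $\ell_0\to\infty$ by continuity from below of $\mathcal{H}^1$) is a faithful rendering of the intended proof. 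One small remark: the uniform-in-$n$ choice of the coefficients $\lambda_\ell$, which you flag as the delicate step, is actually dispensable for this lemma. Since $J_{\bar v^n_{\ell_0}}\subset\bigcup_{\ell\le\ell_0}J_{v^n_\ell}$ holds for \emph{every} choice of $\lambda$, only the identity $\mathcal{H}^1\bigl(J_{\bar v_{\ell_0}}\triangle\bigcup_{\ell\le\ell_0}J_{v_\ell}\bigr)=0$ for the limit functions is needed, so the coefficients need to be good only for the finitely many limits $v_0,\dots,v_{\ell_0}$; the chain then reads $\mathcal{H}^1\bigl(\bigcup_{\ell\le\ell_0}J_{v_\ell}\bigr)=\mathcal{H}^1(J_{\bar v_{\ell_0}})\le\liminf_n\mathcal{H}^1(J_{\bar v^n_{\ell_0}})\le\liminf_n\mathcal{H}^1\bigl(\bigcup_{\ell}J_{v^n_\ell}\bigr)$. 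Your stronger uniform choice is still available (countably many bad coefficients for countably many pairs), so this does not affect correctness, only economy.
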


Using the bounds in \eqref{eq: a-priori bounds}, we will initially define $u(t)$ only for $t\in I_\infty$. This will already allow us to define a crack set $\Gamma(t)$ for all $t\in [0,T]$ with $J_{u(t)} \subset \Gamma(t)$ for $t \in I_\infty$. The function $u(t)$ will be later extended to all $t$ in a way that the inclusion $J_{u(t)}\subset \Gamma(t)$ still holds.

\begin{theorem}\label{th: dense subset}
There exists a (not relabeled) subsequence $(u^n(t))_n$ independently of $t\in I_\infty$ and a function $u\colon I_\infty\to GSBD^2(\Omega')$ such that $u^n(t) \to u(t)$ in measure for all $t\in I_\infty$ and, setting 
\begin{equation}\label{eq: crackset}
\Gamma(t):=\bigcup_{\tau \in I_\infty\,,\, \tau \le t}J_{u(\tau)} \hbox{ for all }t\in [0,T]\,,
\end{equation}
the following properties are satisfied:
\begin{align}\label{eq: limitproperties1}
\begin{split}
(i)&  \ \ u(t)=g(t) \ \ \text{ in } \Omega'\setminus \overline{\Omega} \hbox{ for all } t \in I_\infty, \\
(ii)& \ \ e(u^n(t)) \to e(u(t)) \ \ \text{ strongly in } L^2(\Omega', \R^{2\times 2}_{\rm sym})\hbox{ for all } t \in I_\infty, \\
(iii)& \ \ {\cal H}^1(\Gamma(t)) \le \liminf_{n \to \infty} {\cal H}^1\left(\bigcup_{\tau \in I_\infty\,,\,\tau \le t}J_{u^n(\tau)}\right)\hbox{ for all }t\in [0,T]\,.
\end{split}
\end{align}
Furthermore, for all $t\in I_\infty$, $u(t)$ minimizes
\begin{align}\label{eq: minthroughjtl}
\int_\Omega Q(e(v))\,\mathrm{d}x + {\cal H}^1(J_v \setminus \Gamma(t))
\end{align}
among all functions $v$ such that $v=g(t)$ on $\Omega' \setminus \overline{\Omega}$.
\end{theorem}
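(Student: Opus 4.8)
The strategy is the standard diagonal-extraction-plus-Jump-Transfer argument of \cite[Section 3]{Francfort-Larsen:2003}, carried out in the $GSBD^2$ setting. First, since $I_\infty$ is countable, enumerate it as $I_\infty=\{\tau_1,\tau_2,\dots\}$ and apply a diagonal argument: for each fixed $\tau_j$ the sequence $(u^n(\tau_j))_n$ satisfies the uniform bound \eqref{eq: a-priori bounds}, so by the $GSBD$ compactness Theorem \ref{th: GSBD comp} (applied with $\Gamma=\emptyset$, and noting $\psi$ is $t$-independent) we extract a subsequence along which $u^n(\tau_j)\to u(\tau_j)$ in measure and $e(u^n(\tau_j))\rightharpoonup e(u(\tau_j))$ weakly in $L^2$. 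Diagonalizing over $j$ produces a single subsequence, not depending on $t$, such that $u^n(t)\to u(t)$ in measure for every $t\in I_\infty$ and $e(u^n(t))\rightharpoonup e(u(t))$ weakly. Since $u^n(t)=g(t)$ on $\Omega'\setminus\overline\Omega$ for $n$ large (as $g^n(t)=g(t)$ eventually on $I_\infty$) and convergence in measure is stable under this, we obtain \eqref{eq: limitproperties1}(i). Define $\Gamma(t)$ by \eqref{eq: crackset}; note $\Gamma$ is nondecreasing and $J_{u(\tau)}\subset\Gamma(t)$ for $\tau\le t$, $\tau\in I_\infty$, up to $\mathcal H^1$-null sets.

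Next, I would establish the minimality \eqref{eq: minthroughjtl} for each fixed $t\in I_\infty$, which is the heart of the argument and the point where the Jump Transfer Lemma enters. Fix $t\in I_\infty$ and a competitor $v\in GSBD^2(\Omega')$ with $v=g(t)$ on $\Omega'\setminus\overline\Omega$. By density of the $I_n$'s in $I_\infty$, for $n$ large there is $k=k(n)$ with $t^n_k=t$; by construction $u^n(t)=u^n_k$ minimizes \eqref{eq: minprob}, i.e. $\int_\Omega Q(e(u^n(t)))\,dx+\mathcal H^1(J_{u^n(t)}\setminus\bigcup_{0\le j\le k-1}J_{u^n_j})$ is minimal among competitors with datum $g(t^n_k)=g(t)$. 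The obstacle is that the competitor $v$ is admissible for the \emph{limit} problem with crack $\Gamma(t)$, but to compare with $u^n(t)$ we must produce, for each $n$, a competitor whose jump outside $\bigcup_{j\le k-1}J_{u^n_j}$ is controlled by $\mathcal H^1(J_v\setminus\Gamma(t))$ plus a vanishing error. This is precisely what Theorem \ref{th: JTransf} (the $GSBD$ Jump Transfer Lemma) provides: applying it with the finitely many relevant sequences $(u^n(\tau_l))_n$, $\tau_l\in I_\infty$, $\tau_l\le t$, $l=1,\dots,\ell$ (and using that $\Gamma(t)$ is, up to $\mathcal H^1$-null sets, the union of $J_{u(\tau_l)}$ over a suitable finite or countable subfamily — one first reduces to a finite union up to $\varepsilon$ using $\mathcal H^1(\Gamma(t))<\infty$), one obtains a sequence $(v_n)_n$ with $v_n=v$ on $\Omega'\setminus\overline\Omega$, $v_n\to v$ in measure, $e(v_n)\to e(v)$ strongly in $L^2(\Omega;\R^{2\times2}_{\mathrm{sym}})$, and
\begin{equation*}
\limsup_{n\to\infty}\mathcal H^1\Big(J_{v_n}\setminus\bigcup\nolimits_{l}J_{u^n(\tau_l)}\Big)\le\mathcal H^1\Big(J_v\setminus\bigcup\nolimits_{l}J_{u(\tau_l)}\Big)+\varepsilon\le\mathcal H^1(J_v\setminus\Gamma(t))+2\varepsilon.
\end{equation*}
Since $\bigcup_l J_{u^n(\tau_l)}\subset\bigcup_{j\le k-1}J_{u^n_j}$ for $n$ large (as each $\tau_l<t=t^n_k$ lies in $I_n$ for $n$ large, hence equals some $t^n_j$ with $j\le k-1$), the $v_n$'s are admissible competitors for the incremental minimality of $u^n(t)$, giving
\begin{equation*}
\int_\Omega Q(e(u^n(t)))\,dx+\mathcal H^1\Big(J_{u^n(t)}\setminus\bigcup_{j\le k-1}J_{u^n_j}\Big)\le\int_\Omega Q(e(v_n))\,dx+\mathcal H^1\Big(J_{v_n}\setminus\bigcup_{j\le k-1}J_{u^n_j}\Big).
\end{equation*}

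Finally, I would pass to the limit $n\to\infty$ in this inequality. On the right-hand side, $e(v_n)\to e(v)$ strongly in $L^2(\Omega)$ gives $\int_\Omega Q(e(v_n))\to\int_\Omega Q(e(v))$, and the surface term is bounded above, in the limit, by $\mathcal H^1(J_v\setminus\Gamma(t))+2\varepsilon$ by the Jump Transfer estimate. On the left-hand side, weak $L^2$-convergence of $e(u^n(t))$ and lower semicontinuity of the convex functional $Q$ give $\int_\Omega Q(e(u(t)))\,dx\le\liminf_n\int_\Omega Q(e(u^n(t)))\,dx$; for the surface term one drops $\bigcup_{j\le k-1}J_{u^n_j}$ down to $\bigcup_{\tau\le t,\tau\in I_\infty}J_{u^n(\tau)}$ and applies the lower-semicontinuity statement of Theorem \ref{th: GSBD comp}(iii) (or Lemma \ref{lemma: semicont-crack}) together with $J_{u(t)}\subset\Gamma(t)$ to get $\mathcal H^1(J_{u(t)}\setminus\Gamma(t))=0\le\liminf_n\mathcal H^1(J_{u^n(t)}\setminus\bigcup_{j\le k-1}J_{u^n_j})$. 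Combining, $\int_\Omega Q(e(u(t)))\,dx\le\int_\Omega Q(e(v))\,dx+\mathcal H^1(J_v\setminus\Gamma(t))+2\varepsilon$, and since $\mathcal H^1(J_{u(t)}\setminus\Gamma(t))=0$ the left side equals $\int_\Omega Q(e(u(t)))\,dx+\mathcal H^1(J_{u(t)}\setminus\Gamma(t))$; letting $\varepsilon\to0$ yields \eqref{eq: minthroughjtl}. As a byproduct, testing \eqref{eq: minthroughjtl} with $v=u(t)$ forces $e(v_n)\to e(u(t))$ and, via the strict convexity of $Q$ together with the matching of energies, $e(u^n(t))\to e(u(t))$ strongly in $L^2$, giving \eqref{eq: limitproperties1}(ii); and \eqref{eq: limitproperties1}(iii) is immediate from Lemma \ref{lemma: semicont-crack} (or Theorem \ref{th: GSBD comp}(iii)) applied to the countable family $(u^n(\tau))_{\tau\in I_\infty,\tau\le t}$. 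The main technical obstacle is the correct bookkeeping in the Jump Transfer step: reducing the countable union defining $\Gamma(t)$ to a finite family (controlling the error by $\mathcal H^1(\Gamma(t))<\infty$), handling the relaxed boundary condition $v=g(t)$ on $\Omega'\setminus\overline\Omega$ so that $v_n$ retains it, and ensuring the transferred jump set is measured against $\bigcup_{j\le k-1}J_{u^n_j}$ rather than the smaller limiting family — all of which are exactly the difficulties the $GSBD$ Jump Transfer Lemma (Theorem \ref{th: JTransf}) was designed to resolve.
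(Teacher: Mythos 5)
Your proof is essentially correct and follows the same structure as the paper's: diagonal extraction via the $GSBD$ compactness theorem using the $t$-independent $\psi$ from Lemma \ref{lemma: apriori bound}, Lemma \ref{lemma: semicont-crack} for (iii), and the Jump Transfer Lemma applied to a finite sub-family $\Gamma_\ell(t)=\bigcup_{\tau\in I_\ell,\,\tau\le t}J_{u(\tau)}$ approximating $\Gamma(t)$ up to $\delta$ in $\mathcal H^1$-measure for the global minimality.

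The one organizational difference worth noting: the paper establishes the strong convergence \eqref{eq: limitproperties1}(ii) \emph{before} proving \eqref{eq: minthroughjtl}, by observing that $u^n(t)$ is a fortiori a minimizer relative to its own jump set $J_{u^n(t)}$ (with constant boundary datum $g(t)$ for $n$ large) and then invoking Corollary \ref{cor: stability}, which applies the Jump Transfer Lemma with a \emph{single} sequence ($\ell=1$) and so involves only one subsequence extraction. The paper then uses (ii) to replace the $\liminf$ on the left-hand side of the comparison by an actual limit, and applies the Jump Transfer Lemma to $\phi=v-u(t)$ (so that $\phi_n$ has zero boundary datum and $u^n(t)+\phi_n$ is directly admissible for the incremental problem). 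You instead apply the Jump Transfer Lemma to $\phi=v$, pass to the limit using weak lower semicontinuity of $Q$ on the left, and then recover (ii) a posteriori by testing the resulting inequality with $v=u(t)$ and using strict convexity of $Q$ plus energy matching. Both routes work; your a-posteriori route for (ii) is in effect a re-derivation of Corollary \ref{cor: stability} inline, but it carries a small bookkeeping burden: since the Jump Transfer subsequence depends on the finite approximating family $\Gamma_\ell(t)$ (hence on $\delta$), to obtain $\lim_n\int_\Omega Q(e(u^n(t)))\,\mathrm{d}x=\int_\Omega Q(e(u(t)))\,\mathrm{d}x$ along the fixed diagonal subsequence for \emph{every} $t\in I_\infty$, one needs a further diagonalization over $\delta\to0$ and over $t\in I_\infty$ (or, equivalently, the argument that along any subsequence there is a further sub-subsequence with energy convergence, which together with the weak-l.s.c. lower bound forces convergence of the whole sequence). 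The paper sidesteps this by getting (ii) from Corollary \ref{cor: stability} up front. This is a minor technical point, not a gap in the mathematical substance, but your writeup would benefit from making it explicit.
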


\begin{proof}
By \eqref{eq: a-priori bounds}  the sequence $(u^n(t))_n$  satisfies the assumptions of Theorem \ref{th: GSBD comp}  for every $t\in I_\infty$. With this, up to extracting a diagonal sequence, there exists $u\colon I_\infty\to GSBD^2(\Omega')$ such that $u^n(t) \to u(t)$ in measure and $e(u^n(t)) \wto e(u(t))$ weakly in $L^2(\Omega', \R^{2\times 2}_{\rm sym})$ for all $t \in I_\infty$. Since $u^n(t)=g^n(t)$ in $\Omega'\setminus \overline{\Omega}$ and $g^n(t)=g(t)$ for $n$ large enough, \eqref{eq: limitproperties1}(i) follows. At the expense of a numbering of $I_\infty$, \eqref{eq: limitproperties1}(iii) follows from Lemma \ref{lemma: semicont-crack}.

From the definition of $u^n(t)$ and $g^n(t)$ (cf. \eqref{eq: minprob}), for all $t\in [0,T]$ we have that $u^n(t)$ is minimizing
\begin{equation}\label{eq: minimality}
\int_\Omega Q(e(v))\,\mathrm{d}x + {\cal H}^1\left(J_v \setminus \bigcup_{\tau \in I_n\,,\, \tau \le t}J_{u^n(\tau)}\right)
\end{equation}
among the functions $v \in GSBD^2(\Omega')$ satisfying $v=g^n(t)$ in $\Omega'\setminus \overline{\Omega}$. {\it A fortiori}, we deduce that $u^n(t)$ is a minimizer with respect to its own jump set, that is with $J_{u^n(t)}$ in place of  $\bigcup_{\substack{\tau \in I_n\\ \tau \le t}}J_{u^n(\tau)}$ in the above problem.   If additionally $t\in I_\infty$,  we can choose $n$ so large that $t\in I_n \cap I_\infty$, and thus $g^n(t)=g(t)$. With this, Corollary \ref{cor: stability} gives \eqref{eq: limitproperties1}(ii).

We now fix $\delta>0$ and $t\in I_\infty$. Since ${\cal H}^1(\Gamma(t))$ is finite, we can find $\ell \in \N$ so that $t \in I_\ell$ and the subset $\Gamma_\ell(t)$ of $\Gamma(t)$ defined by 
$$
\Gamma_\ell(t)=\bigcup_{\tau \in I_\ell\,,\, \tau \le t}J_{u(\tau)}
$$
satisfies ${\cal H}^1(\Gamma(t)\setminus\Gamma_\ell(t))<\delta$. For all $n\ge \ell$, we similarly define $\Gamma^n_\ell(t)$ with $u^n(\tau)$ in place of $u(\tau)$. Notice that  $J_{u(t)}\subset \Gamma_\ell(t)$ and $J_{u^n(t)}\subset \Gamma^n_\ell(t)$ since $t\in I_\ell$.  With this and using \eqref{eq: minimality} we have that $u^n(t)$ is minimizing $\int_\Omega Q(e(v)) \,\mathrm{d}x + {\cal H}^1\left(J_v \setminus \Gamma^n_\ell(t)\right) $ among the functions $v \in GSBD^2(\Omega')$ which satisfy $v=g(t)$ in $\Omega'\setminus \overline{\Omega}$.

We observe that by Lemma \ref{lemma: apriori bound} the sequences $(u^n(\tau))_n$ with $\tau \in I_\ell$, $\tau \le t$, and the corresponding limiting functions $u(\tau)$ defined above satisfy \eqref{eq: assu}. Consequently,  for any $v$ with  $v=g(t)$ in $\Omega'\setminus \overline{\Omega}$ we can apply Theorem \ref{th: JTransf}  to $\phi=v-u(t)$ and   to the finite unions of jump sets $\Gamma^n_\ell(t)$ and $\Gamma_\ell(t)$. Therefore, we get the existence of a sequence $(\phi_n)_n$ such that $\phi_n= v- u(t)=0$ in $\Omega'\setminus \overline{\Omega}$ satisfying, by \eqref{eq: conv} and \eqref{eq: limitproperties1}(ii),
\begin{align}\label{eq: phi_n}
\Vert e(u^n(t)+\phi_n)-e(v)\Vert_{L^2(\Omega',\R^{2 \times 2}_{\rm sym})} \to 0,\, \quad
\limsup_{n\to+\infty}{\cal H}^1(J_{\phi_n}\setminus\Gamma^n_\ell(t))\le{\cal H}^1(J_{\phi}\setminus\Gamma_\ell(t))\,
\end{align}
as $n \to + \infty$. Furthermore, since $t\in I_\infty$, when $n$ is so big that $g^n(t)=g(t)$ in $\Omega'\setminus \overline{\Omega}$ we have that $u^n(t)+\phi_n$ $=g(t)$ in $\Omega'\setminus \overline{\Omega}$. The minimality of $u^n(t)$,  \eqref{eq: limitproperties1}(ii), and \eqref{eq: phi_n} then imply that
\begin{align*}
&\int_\Omega Q(e(u(t))) \,\mathrm{d}x=\lim_{n+\infty}\int_\Omega Q(e(u^n(t))) \,\mathrm{d}x \\
&\le \limsup_{n\to+\infty}\int_\Omega Q(e(u^n(t)+\phi_n)) \,\mathrm{d}x + {\cal H}^1\left(J_{u^n(t)+\phi_n} \setminus \Gamma^n_\ell(t)\right)\\
&\le \int_\Omega Q(e(v)) \,\mathrm{d}x + {\cal H}^1\left(J_v \setminus \Gamma_\ell(t)\right)\le\int_\Omega Q(e(v)) \,\mathrm{d}x + {\cal H}^1\left(J_v \setminus \Gamma(t)\right)+\delta\,,
\end{align*}
where we in the third step we used that  $J_{u(t)}\subset \Gamma_\ell(t)$ and $J_{u^n(t)}\subset \Gamma^n_\ell(t)$. This concludes the proof of \eqref{eq: minthroughjtl} since $\delta$ is arbitrary.
\end{proof}

\begin{rem}
Let $t\notin I_\infty$ and let $w\in GSBD^2(\Omega')$ be such that $(u^n(t))_n$ has a subsequence, possibly depending on $t$, which converges to $w$ in the sense of \eqref{eq: convergence sense}. Fix $\delta>0$ and $\Gamma_\ell(t)$ and $\Gamma^n_\ell(t)$ as in the previous proof, without the request $t\in I_\ell$. We can apply Theorem \ref{th: JTransf} for the  finite number of sequences  $(u^n(t))_n$ and $(u^n(\tau))_n$ with $\tau \in I_\ell$, $\tau \le t$, and thus for any  $v$ with $v=g(t)$ in $\Omega'\setminus \overline{\Omega}$,  we can apply  \eqref{eq: conv}  to $\phi=v$ to obtain a corresponding sequence $(\phi_n)_n$.  It follows now from \eqref{eq: minimality} that (with $v_n := \phi_n+g^n(t)-g(t)$)
$$
\int_\Omega Q(e(u^n(t)))\,\mathrm{d}x \le \int_\Omega Q(e(v_n))\,\mathrm{d}x+ {\cal H}^1 (J_{v_n} \setminus \left(\Gamma^n_\ell(t)\cup J_{u^n(t)}\right))\,.
$$
By \eqref{eq:  convergence sense}(ii),  the strong convergence of $g^n(t)$ to $g(t)$ in $H^1$, \eqref{eq: conv} and the arbitrariness of $\delta$ we deduce the minimality property
\begin{align}\label{eq: minimality2}
\int_\Omega Q(e(w))\,\mathrm{d}x \le \int_\Omega Q(e(v))\,\mathrm{d}x+ {\cal H}^1 (J_{v} \setminus \left(\Gamma(t)\cup J_{w}\right))\,.
\end{align}
For $v=w$  one also gets  $\displaystyle\lim_{n\to +\infty}\int_\Omega Q(e(u^n(t)))\,\mathrm{d}x = \int_\Omega Q(e(w))\,\mathrm{d}x$, which implies
\begin{equation}\label{eq: strongL2}
\Vert e(u^n(t))-e(w)\Vert_{L^2(\Omega',\R^{2 \times 2}_{\rm sym})} \to 0
\end{equation}
by the strict convexity of $Q$.
\end{rem}

In the next theorem we extend $u$ from $I_\infty$ to  a function defined on all of $[0,T]$. We prove that this extension satisfies the inclusion $J_{u(t)}\subset \Gamma(t)$ for all $t\in [0,T]$ (notice that, at this stage of the proof, the crack set $\Gamma(t)$ is already defined on the whole interval $[0,T]$), the global minimality condition, as well as the ``$\le$''-inequality in the energy balance of Theorem \ref{th: main}. The proof follows very closely in the footsteps of \cite[Lemma 3.8]{Francfort-Larsen:2003}:  A sketch is reported for the reader's convenience.

\begin{theorem}\label{th: extension}
There exists a function $u\colon[0,T] \to GSBD^2(\Omega')$ with $u(t)=g(t)$ in $\Omega'\setminus \overline{\Omega}$ and an $\mathcal H^1$-rectifiable crack $\Gamma(t) \subset \overline{\Omega}$, nondecreasing in $t$, such that $J_{u(t)}\subset \Gamma(t)$ up to an $\mathcal{H}^1$-negligible set for all $t\in [0,T]$ and:
\begin{itemize}
 \item (global stability) for all $t\in [0,T]$, $u(t)$ minimizes
$$\int_\Omega Q(e(v)) \,\mathrm{d}x + {\cal H}^1\left(J_v \setminus \Gamma(t)\right) $$
among the functions $v \in GSBD^2(\Omega')$ which satisfy $v=g(t)$ in $\Omega'\setminus \overline{\Omega}$. 

\item (energy inequality) defining the stress $\sigma(t)$ and the total energy $\mathcal E(t)$ as in Theorem \ref{th: main}, it holds
$$
\mathcal E(t)\le \mathcal E(0)+\int_0^t \langle \sigma(s), e(\dot g(s)) \rangle\,\mathrm{d}s\,.
$$
\end{itemize}
\end{theorem}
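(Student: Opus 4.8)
The plan is to extend $u$ from $I_\infty$ — where it was produced in Theorem~\ref{th: dense subset} — to all of $[0,T]$, leaving the crack $\Gamma(t)$ of \eqref{eq: crackset} untouched, and then to verify in turn the containment $J_{u(t)}\subseteq\Gamma(t)$, the global stability and the energy inequality; this parallels \cite[Lemma~3.8]{Francfort-Larsen:2003}, with the $GSBD^2$ compactness Theorem~\ref{th: GSBD comp}, the stability Corollary~\ref{cor: stability} and the jump transfer lemma Theorem~\ref{th: JTransf} replacing their $SBV$ counterparts. \textbf{Step 1 (definition of $u(t)$ for $t\notin I_\infty$).} For such $t$ I would note that, by the a-priori bounds \eqref{eq: a-priori bounds}, the sequence $(u^n(t))_n$ satisfies the hypotheses of Theorem~\ref{th: GSBD comp}, hence a (possibly $t$-depending) subsequence converges in measure to some $w\in GSBD^2(\Omega')$ with $e(u^n(t))\rightharpoonup e(w)$ in $L^2$. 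Invoking the Remark after Theorem~\ref{th: dense subset} — in particular the minimality \eqref{eq: minimality2} and the strong convergence \eqref{eq: strongL2} — together with the strict convexity of $Q$, the strain $e(w)$ is independent of the chosen subsequence, so that $e(u^n(t))\to e(u(t))$ strongly in $L^2(\Omega';\R^{2\times2}_{\rm sym})$ for every $t\in[0,T]$ (for $t\in I_\infty$ this is \eqref{eq: limitproperties1}(ii)), where we set $u(t):=w$. Since $u^n(t)=g^n(t)$ on $\Omega'\setminus\overline\Omega$ and $g^n(t)\to g(t)$ strongly in $H^1$, we get $u(t)=g(t)$ there.

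\textbf{Step 2 ($J_{u(t)}\subseteq\Gamma(t)$).} For $t\in I_\infty$ this is immediate from \eqref{eq: crackset} (take $\tau=t$). For $t\notin I_\infty$ I would fix $s_j\in I_\infty$ with $s_j\uparrow t$; by \eqref{eq: a-priori bounds} and Theorem~\ref{th: GSBD comp} the sequence $(u(s_j))_j$ has a $GSBD^2$ limit which, by the uniqueness of the limiting strain just discussed and by the $\Gamma(t)$-minimality established in Step~3, must coincide in strain with $u(t)$. As $J_{u(s_j)}\subseteq\Gamma(s_j)\subseteq\Gamma(t)$, the lower semicontinuity part of Theorem~\ref{th: GSBD comp} applied with $\Gamma:=\Gamma(t)$ yields $\mathcal{H}^1(J_{u(t)}\setminus\Gamma(t))\le\liminf_j\mathcal{H}^1(J_{u(s_j)}\setminus\Gamma(t))=0$. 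Reconciling the two descriptions of $u(t)$ — as a limit of the interpolants $u^n(t)$ and as a limit of $u(s_j)$ — is the delicate point, and I expect precisely this \emph{consistency of the construction}, which rests on the uniqueness of the limiting strain via \eqref{eq: minimality2} and strict convexity, to be the main obstacle; it is handled exactly as in \cite{Francfort-Larsen:2003}.

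\textbf{Step 3 (global stability).} This is the argument of the proof of Theorem~\ref{th: dense subset} and of the Remark thereafter, carried out now for an arbitrary $t\in[0,T]$. Given $\delta>0$, I would choose $\ell$ so that $\Gamma_\ell(t):=\bigcup_{\tau\in I_\ell,\,\tau\le t}J_{u(\tau)}$ satisfies $\mathcal{H}^1(\Gamma(t)\setminus\Gamma_\ell(t))<\delta$, apply Theorem~\ref{th: JTransf} to the finitely many sequences $(u^n(\tau))_n$, $\tau\in I_\ell$, $\tau\le t$ (which satisfy \eqref{eq: assu} by Lemma~\ref{lemma: apriori bound}) and to $\phi=v-u(t)$ for an arbitrary admissible competitor $v$ with $v=g(t)$ on $\Omega'\setminus\overline\Omega$, and then test the minimality of $u^n(t)$ for \eqref{eq: minimality} against $u^n(t)+\phi_n$. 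Passing to the limit $n\to\infty$ via \eqref{eq: strongL2}, the strong $H^1$-convergence $g^n(t)\to g(t)$ and \eqref{eq: conv}, and using $J_{u(t)}\subseteq\Gamma(t)$ from Step~2 to absorb the error terms $\mathcal{H}^1(\Gamma(t)\setminus\Gamma_\ell(t))$ and $\mathcal{H}^1(J_{u(t)}\setminus\Gamma_\ell(t))$, I would arrive at
\[
\int_\Omega Q(e(u(t)))\,\mathrm{d}x\le\int_\Omega Q(e(v))\,\mathrm{d}x+\mathcal{H}^1(J_v\setminus\Gamma(t))+\delta,
\]
whence the claim follows by letting $\delta\to0$.

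\textbf{Step 4 (energy inequality).} Finally I would pass to the limit $n\to\infty$ in the discrete inequality \eqref{eq: en-ineq}, which holds at every $t$. On the left-hand side the strong convergence $e(u^n(t))\to e(u(t))$ of Step~1 controls the bulk term and \eqref{eq: limitproperties1}(iii) the crack term; on the right-hand side $\omega(\Delta_n)\to0$, while for the work term $\sigma^n(s)=\C e(u^n(s))\to\C e(u(s))=\sigma(s)$ strongly in $L^2$ for every $s$ (Step~1) and $|\langle\sigma^n(s),e(\dot g(s))\rangle|\le c\,\Vert e(\dot g(s))\Vert_{L^2(\Omega')}\in L^1([0,T])$ by \eqref{eq: a-priori bounds}, so that, after observing that $s\mapsto\sigma(s)$ is strongly measurable as a pointwise limit of piecewise constant maps, dominated convergence gives $\int_0^t\langle\sigma^n(s),e(\dot g(s))\rangle\,\mathrm{d}s\to\int_0^t\langle\sigma(s),e(\dot g(s))\rangle\,\mathrm{d}s$. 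This yields $\mathcal{E}(t)\le\mathcal{E}(0)+\int_0^t\langle\sigma(s),e(\dot g(s))\rangle\,\mathrm{d}s$ for all $t$; the monotonicity of $\Gamma$ is built into \eqref{eq: crackset}, the $\mathcal{H}^1$-rectifiability of $\Gamma(t)$ holds since it is a countable union of jump sets of $GSBD^2$ functions, and by construction all the cracks are contained in $\overline\Omega$.
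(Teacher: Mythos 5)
Your proposal inverts the order of the paper's construction in a way that leaves a genuine gap in Step~1, and the gap propagates. You define $u(t)$ for $t\notin I_\infty$ as a weak accumulation point $w$ of the interpolants $(u^n(t))_n$ and assert that, ``invoking the minimality \eqref{eq: minimality2} and the strong convergence \eqref{eq: strongL2} together with the strict convexity of $Q$, the strain $e(w)$ is independent of the chosen subsequence, so that $e(u^n(t))\to e(u(t))$ strongly in $L^2$ for every $t\in[0,T]$.'' This cannot be concluded at this stage. In \eqref{eq: minimality2} the accumulation point $w$ minimizes the energy \emph{against the enlarged crack $\Gamma(t)\cup J_w$}, and $J_w$ itself depends on $w$; two accumulation points $w_1$, $w_2$ solve two \emph{a priori different} minimization problems, so strict convexity alone does not force $e(w_1)=e(w_2)$. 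The paper resolves this by first building a \emph{canonical} candidate $u(t)$ as the $GSBD^2$ limit of $u(t_k)$ along $t_k\in I_\infty$, $t_k\uparrow t$ (which by lower semicontinuity in Theorem~\ref{th: GSBD comp} automatically has $J_{u(t)}\subset\Gamma(t)$ and is therefore an admissible competitor in \eqref{eq: minimality2} with no extra surface term), then establishing the left-continuity of $s\mapsto e(u(s))$, and finally proving the intermediate inequality \eqref{eq: Q-ineq}, $\int_\Omega Q(e(u(t)))\le\int_\Omega Q(e(w))$, by the argument of \cite[Lemma~4.3(d)]{Gia-Pons}; only then does strict convexity applied to $v=\tfrac12(w+u(t))$ give $e(w)=e(u(t))$. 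Crucially this holds only for a.e.\ $t$, not for every $t$ as you claim, and the a.e.\ restriction is essential because the Giacomini--Ponsiglione argument hinges on the left-continuity of the strain, which itself can fail on a null set.

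Your definition $u(t):=w$ also creates a second problem: there is no reason why $J_w\subset\Gamma(t)$ should hold, since for $t\notin I_\infty$ the jump $J_{u^n(t)}$ is not contained in $\Gamma(t)$ and Theorem~\ref{th: GSBD comp} only gives lower semicontinuity of $\mathcal{H}^1(J_\cdot\setminus\Gamma)$, not an inclusion. You try to repair this in Step~2 by introducing the left-limits $u(s_j)$, but you then need to identify that limit with $w$, which is exactly the uniqueness issue just described; you acknowledge this as ``the delicate point'' and defer to \cite{Francfort-Larsen:2003}, but in fact it is precisely the step where your logical order breaks down, since you invoke ``the uniqueness of the limiting strain just discussed'' from Step~1 — which was not established — and ``the $\Gamma(t)$-minimality established in Step~3,'' a forward reference that closes a circle. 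The fix is the one the paper adopts: swap the construction so that $u(t)$ is defined from the left-limit along $I_\infty$ first; then $J_{u(t)}\subset\Gamma(t)$ and global stability follow directly (your Steps~2 and~3 are essentially correct once $u(t)$ is so defined), and the strong a.e.\ convergence $e(u^n(t))\to e(u(t))$ — with the explicit $\Gamma$-continuity/left-continuity mechanism — is established afterwards solely to pass to the limit in the Riemann sum in \eqref{eq: en-ineq}. Step~4 is otherwise fine, provided ``for every $s$'' is replaced by ``for a.e.\ $s$,'' which suffices for dominated convergence.
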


\begin{proof}
 We consider $u\colon I_\infty \to GSBD^2(\Omega')$ as in Theorem \ref{th: dense subset}. Accordingly, we define $\Gamma(t)$ as in \eqref{eq: crackset} for all $t\in [0,T]$. Thus, we simply have to define $u$ when $t \notin I_\infty$.  We fix $t \notin I_\infty$ and an increasing sequence $(t_k)_k \subset I_\infty$ converging to $t$. Notice that for the interpolants $u^n(t)$ the inequality \eqref{eq: a-priori bounds} holds with a constant $M$ and a function $\psi$ which are not depending on $k$. Since, for all $k$, $u^n(t_k) \to u(t_k)$ in measure when $n \to +\infty$  and thus also $u^n(t_k) \to u(t_k)$ a.e. for a not relabeled subsequence, by Fatou's lemma and \eqref{eq: convergence sense}, also the sequence $(u(t_k))_k$  satisfies \eqref{eq: a-priori bounds}. Then, it exists a limit point $u(t)\in GSBD^2(\Omega')$ with $u(t_k) \to  u(t) $  in measure and $e(u(t_k))\wto e(u(t))$ weakly in $L^2$ as $k \to \infty$. It is obvious that, $u(t)=g(t)$ in $\Omega'\setminus \overline{\Omega}$ while an application 
of \eqref{eq: 
minthroughjtl} together with the arguments leading to \cite[(3.24)]{Francfort-Larsen:2003}, again simply using $GSBD$ in place of $SBV$ compactness, shows that the inclusion $J_{u(t)}\subset \Gamma(t)$ holds up to an $\mathcal{H}^1$-negligible set.

We now prove the global stability property. Notice that for all $k$ one has by definition $\Gamma(t_k) \subset \Gamma(t)$ and, since the sequence of cracks $\Gamma(t_k)$ is nondecreasing, it holds that ${\cal H}^1(\Gamma(t)\setminus\Gamma(t_k))\to 0$ when $k\to +\infty$. For each $v\in GSBD^2(\Omega')$ with $v=g(t)$ in $\Omega'\setminus \overline{\Omega}$, the sequence $v_k=v+g(t_k)-g(t)$ has the same jump set as $v$ and clearly satisfies $e(v_k)\to e(v)$ in $L^2(\Omega',\R^{2\times 2}_{\rm sym})$. By Theorem \ref{th: dense subset} we have
$$
\int_\Omega Q(e(u(t_k)))\,\mathrm{d}x \le \int_\Omega Q(e(v_k))\,\mathrm{d}x+ {\cal H}^1 (J_{v} \setminus  \Gamma(t_k))\,.
$$
Taking the limit we get the global stability because of the inclusion $J_{u(t)}\subset \Gamma(t)$. We also get, for $v=u(t)$ in the above argument, that 
$$\lim_{n\to +\infty}\int_\Omega Q(e(u^n(t)))\,\mathrm{d}x = \int_\Omega Q(e(u(t)))\,\mathrm{d}x,$$
which implies the strong convergence of $e(u(t_k))$ to $e(u(t))$. Furthermore, due to the strict convexity of $Q$, the function $e(u(t))$ is uniquely determined by the global stability and the condition $J_{u(t)}\subset \Gamma(t)$. Thus, $e(u(t))$ is uniquely determined once $I_\infty$ is fixed. This implies the strong convergence of $e(u(t_k))$ to $e(u(t))$ on the whole sequence $(t_k)_k$ and not only along a subsequence, and that the mapping $t\to e(u(t))$ is strongly left continuous in $L^2$ at any $t\in [0,T]\setminus I_\infty$. 

The energy inequality immediately follows from \eqref{eq: en-ineq} and \eqref{eq: limitproperties1}(iii), once the following claim is proved:
$$
e(u^n(t))\to e(u(t))\hbox{ strongly in }L^2(\Omega',\R^{2\times 2}_{\rm sym})
$$
for a.e. $t\in [0,T]$. In fact,  one can then pass to the limit in \eqref{eq: en-ineq} also in the term associated to the work of the external loads. Because of \eqref{eq: limitproperties1}, it suffices to show the claim for $t\in [0,T]\setminus I_\infty$. Notice that because of \eqref{eq: a-priori bounds} the $L^2$-norm of the sequence $e(u^n(t))$ is bounded.  Furthermore, again  \eqref{eq: a-priori bounds}, together with Theorem \ref{th: GSBD comp} imply that any  weak accumulation point of  $e(u^n(t))$ must be of the form $e(w)$, where $w$ is a $GSBD^2$ function such that a subsequence, possibly depending on $t$, of $u^n(t)$ converges to $w$ in the sense of \eqref{eq: convergence sense}. Therefore, to prove the claim it suffices to show that $e(w)=e(u(t))$ for a.e.\ $t$, so that the limit is independent of the chosen subsequence and the strong convergence holds because of \eqref{eq: strongL2}. 

Let us a consider a weak accumulation point $w$. Notice that $u(t)$ is an admissible competitor for the problem \eqref{eq: minimality2}, which as shown above additionally satisfies  $J_{u(t)}\subset \Gamma(t)$.  Therefore, if we prove
\begin{equation}\label{eq: Q-ineq}
\int_\Omega Q(e(u(t)))\,\mathrm{d}x \le \int_\Omega Q(e(w))\,\mathrm{d}x
\end{equation}
for a.e.\ $t$, we will get $e(w)=e(u(t))$ as requested, otherwise, using the strict convexity of $Q$, we would contradict \eqref{eq: minimality2} with $v=\frac12\left(w+u(t)\right)$.  
Now, using \eqref{eq: strongL2} and the left continuity of $t\to e(u(t))$ at $t\notin I_\infty$, the inequality \eqref{eq: Q-ineq} follows from the minimality of $u^n(t)$ arguing exactly as in the proof of part (d) in \cite[Lemma 4.3]{Gia-Pons}, again upon substituting the Dirichlet with the linear elastic energy. We omit the details.
\end{proof}

We are finally in a position to give the proof of Theorem \ref{th: main}.

\begin{proof}[Proof of Theorem \ref{th: main}]
Defining $u(t)$ as in Theorems \ref{th: dense subset} and \ref{th: extension} and $\Gamma(t)$ as in \eqref{eq: crackset}, the only thing left to show is the ``$\ge$''-inequality in the energy balance. This follows from global stability by a well-known argument (see \cite[Lemma 4.6]{Gia-Pons})  that we sketch for the reader's convenience.
We first notice that the map $t\mapsto {\cal H}^1\left(\Gamma(t)\right)$ is bounded monotone increasing, so that it is continuous at each $t\in [0,T]\setminus \mathcal N$, where $\mathcal N$ has $0$-Lebesgue measure. At each $t\in [0,T]\setminus (I_\infty \cup \mathcal N)$ we already now that $e(u(\cdot))$ is left continuous with respect to to the $L^2$-norm. We can show that it is indeed continuous, arguing as follows. Fixing a decreasing sequence $t_k \to t$, any weak-$L^2$ accumulation point $e(w)$ of $e(u(t_k))$ satisfies, because of \eqref{eq: convergence sense}, the inclusion $\Gamma(t)\subset \Gamma(t_k)$, and the continuity of ${\cal H}^1\left(\Gamma(\cdot)\right)$ at time $t$, that
$$
{\cal H}^1\left(J_w\setminus \Gamma(t)\right)\le \liminf_{k\to +\infty}{\cal H}^1\left(J_{u(t_k)}\setminus \Gamma(t_k)\right)=0\,.
$$
Consequently, $J_w\subset \Gamma(t)$ up to an $\mathcal{H}^1$-negligible set. With this, testing the global stability of $u(t_k)$ with $v+g(t_k)-g(t)$ for any $v$ with $v=g(t)$ in $\Omega'\setminus \overline{\Omega}$ and arguing as in the proof of Theorem \ref{th: extension}, we obtain $e(w)=e(u(t))$ as well as the claimed strong convergence.  

Fix now $t\in [0,T]$. Setting  for every $k \in \N$ and every $i=0,\dots,k$, $s^i_k=\frac{i}{k}t$ and $u_k(s)=u(s^{i+1}_k)$ whenever $t\in (s^i_k, s^{i+1}_k]$, we have that $\Vert e(u_k(s))\Vert_{L^2(\Omega',\R^{2 \times 2}_{\rm sym})}$ is uniformly bounded because of the energy inequality (see Theorem \ref{th: extension}), and
\begin{equation}\label{eq: strong2}
\Vert e(u_k(s))-e(u(s))\Vert_{L^2(\Omega',\R^{2 \times 2}_{\rm sym})} \to 0\hbox{ for all }s\in [0,t]\setminus (I_\infty \cup \mathcal N)\,,
\end{equation}
that is a.e. in $[0,t]$. Testing the global stability of $u(s^i_k)$ with $u(s^{i+1}_k)-g(s^{i+1}_k)+g(s^i_k)$, summing up on $i$ and exploiting the absolute continuity of $t\mapsto g(t)$, one obtains
$$
\mathcal E(t) \ge \mathcal E(0)+\int_0^t \langle \sigma_k(s), e(\dot g(s))\rangle\,\mathrm{d}s + \eta_k\,,
$$
where $\sigma_k(s):=\C e(u_k(s))$ and $\eta_k$ is an infinitesimal remainder. The thesis now follows by dominated convergence and \eqref{eq: strong2}  when taking the limit  $k\to +\infty$. 
\end{proof}

\noindent \textbf{Acknowledgements} This work has been funded by the Vienna Science and Technology Fund (WWTF)
through Project MA14-009. Manuel Friedrich's research has been supported by the Alexander von Humboldt Stiftung. Francesco Solombrino's work is part of the project ``Quasistatic and Dynamic Evolution
Problems in Plasticity and Fracture" (P.I. Prof. G. Dal Maso), financed by ERC under Grant No. 290888. The authors wish to thank Gilles Francfort for helpful suggestions on the first version of this paper.


 \typeout{References}


\begin{thebibliography}{10}
\bibitem{Ambrosio:89}
{\sc L.~Ambrosio}. 
\newblock {\em A compactness theorem for a new class of functions of bounded variation}.
\newblock Boll.\ Un.\ Mat.\ Ital.\ B(7)
\newblock {\bf 3} (1989), 857--881. 


\bibitem{Ambrosio:90}
{\sc L.~Ambrosio}. 
\newblock {\em Existence theory for a new class of variational problems}.
\newblock Arch.\ Ration.\ Mech.\ Anal.\
\newblock {\bf 111} (1990), 291--322. 

\bibitem{ACD}
{\sc L.~Ambrosio, A.~Coscia, G.~Dal Maso}.
\newblock {\em Fine properties of functions with bounded deformation}.
\newblock Arch.\ Ration.\ Mech.\ Anal.\ 
\newblock{\bf 139} (1997), 201--238. 


\bibitem{AFP}
{\sc L.~Ambrosio, N.~Fusco, D.~Pallara}.
\newblock {\em Functions of bounded variation and free discontinuity problems}. 
\newblock Oxford University Press, Oxford 2000. 






\bibitem{baldi}
{\sc A.~Baldi, F.~Montefalcone}. 
\newblock {\em A note on the extension of BV functions in metric measure
spaces}.
\newblock J.\ Math.\ Anal.\ Appl.\
\newblock {\bf 340} (2008), 197--208. 


\bibitem{BCD}
{\sc G.~Bellettini, A.~Coscia, G.~Dal Maso}. 
\newblock {\em  Compactness and lower semicontinuity properties in SBD($\Omega$)}.
\newblock Mathematische\ Zeitschrift\ 
\newblock {\bf 228} (1998), 337--351. 

 


\bibitem{Buc-Var:2000}
{\sc D.~Bucur, N.~Varchon}.
\newblock {\em Boundary variation for a Neumann problem}.
\newblock Ann. Scuola Norm. Sup. Cl. Sci. 
\newblock {\bf 29}(4) (2000), 807--821.

\bibitem{mazja}
{\sc Yu.~D.~Burago, V.~G.~Maz'ja}. 
\newblock {\em Potential Theory and Function Theory for irregular regions.  Translated from Russian}.
\newblock Seminars\ in\ Mathematics, V.\ A.\ Steklov\ Math.\ Ins., Leningrad,
\newblock vol.\ 3, 1969. 


\bibitem{Chambolle:2003}
{\sc A.~Chambolle}.
\newblock {\em A density result in two-dimensional linearized elasticity, and
  applications}.
\newblock Arch.\ Ration.\ Mech.\ Anal.\ 
\newblock {\bf 167} (2003), 211--233.

\bibitem{Chambolle:2004}
{\sc A.~Chambolle}. 
\newblock {\em An approximation result for special functions with bounded deformation}.
\newblock J.\ Math.\ Pures\ Appl. 
\newblock {\bf 83} (2004), 929--954. 


\bibitem{Chamb-Dov:1997}
{\sc A.~Chambolle, F.~Doveri}.
\newblock {\em Continuity of Neumann linear elliptic problems on varying two-dimensional bounded open sets}.
\newblock Comm.\ Partial\ Differential\ Equations\
\newblock {\bf 22} (1997), 811--840.

\bibitem{Conti-Focardi-Iurlano:15}
{\sc S.~Conti, M.~Focardi, F.~Iurlano}.
\newblock {\em Integral representation for functionals defined on $SBD^p$ in dimension two} 
\newblock Arch.\ Rat.\ Mech.\ Anal.\  
\newblock {\bf 223} (2017), 1337--1374.




\bibitem{DM}
{\sc G.~Dal Maso}. 
\newblock {\em Generalised functions of bounded deformation}.
\newblock J.\ Eur.\ Math.\ Soc.\ (JEMS)\
\newblock {\bf 15} (2013), 1943--1997. 

\bibitem{DFT}
{\sc G.~Dal Maso, G.~A.~Francfort, R.~Toader}. 
\newblock {\em Quasistatic crack growth in nonlinear elasticity}.
\newblock Arch.\ Ration.\ Mech.\ Anal.\ 
\newblock {\bf 176} (2005), 165--225. 


\bibitem{DM-Toa} 
{\sc G.~Dal Maso, R.~Toader}.
\newblock A model for the quasi-static growth of brittle fractures: existence and approximation results.
\newblock Arch.\ Ration.\ Mech.\ Anal.\  
\newblock {\bf 162} (2002), 101--135.


\bibitem{dg91}
{\sc E.~{D}e {G}iorgi}.
\newblock {Free-discontinuity problems in calculus of variations}.
\newblock In R.~Dautray, editor, {\em Frontiers in pure and applied
  mathematics, a collection of papers dedicated to J.-L. Lions on the occasion
  of his $60^{th}$ birthday}, pages 55--62. North Holland, 1991.
  
\bibitem{DeGiorgi-Carriero-Leaci:1989}
{\sc E.~De Giorgi, M.~Carriero, A.~Leaci}. 
\newblock {\em Existence theorem for a minimum problem with free discontinuity set}.
\newblock Arch.\ Ration.\ Mech.\ Anal.\ 
\newblock {\bf 108} (1989), 195--218. 


\bibitem{fonseca}
{\sc I.~Fonseca, G.~Leoni}.
\newblock {\em Modern methods in the calculus of variations: $L^p$ spaces}. 
\newblock Springer, New York 2007. 



\bibitem{Francfort-Larsen:2003}
{\sc G.~A.~Francfort, C.~J.~Larsen}. 
\newblock {\em Existence and convergence for quasi-static evolution in brittle fracture}.
\newblock Comm.\ Pure Appl.\ Math.\ 
\newblock {\bf 56} (2003), 1465--1500. 


\bibitem{Friedrich:15-2}
{\sc M.~Friedrich}.
\newblock {\em A derivation of linearized Griffith energies from nonlinear models}. 
\newblock Arch.\ Rat.\ Mech.\ Anal.\  
\newblock {\bf 225} (2017), 425--467.

\bibitem{Friedrich:15-3}
{\sc M.~Friedrich}.
\newblock {\em A Korn-type inequality in  SBD for functions with small jump sets}. 
\newblock  Math.\ Models Methods Appl.\ Sci.\
 \newblock {\bf 27} (2017), 2461--2484.

\bibitem{Friedrich:15-4}
{\sc M.~Friedrich}.
\newblock {\em A piecewise Korn inequality in SBD and applications to embedding and density results}. 
\newblock {preprint {\it ArXiv: 1604.08416}},  (2016). SIAM J.\ Math.\ Anal.\ , to appear.

 

\bibitem{frma98}
{\sc G.~A.~Francfort, J.-J.~Marigo}.
\newblock {\em Revisiting brittle fracture as an energy minimization problem.}
\newblock J.\ Mech.\ Phys.\ Solids\
\newblock {\bf 46} (1998), 1319--1342. 

\bibitem{Gia-Pons}
{\sc A.~Giacomini, M.~Ponsiglione}.
\newblock {\em Discontinuous finite element approximation of
quasi-static growth of brittle fractures}.
\newblock {Numer.\ Funct.\ Anal.\ Optim.} 
\newblock {\bf 24} (2003), 813--850. 

\bibitem{Golab:28}
{\sc S.~Go{\l}ab}.
\newblock {\em Sur quelques points de la th\'eorie de la longueur}.
\newblock Annales\ de\ la\ Soci\'et\'e\ Polonaise\ de\ Math\'ematiques\ 
\newblock {\bf 7} (1928), 227--241. 

\bibitem{Griffith:20}
{\sc A.~Griffith}. 
\newblock {\em The phenomena of rupture and flow in solids}.
\newblock Phil.\ Trans.\ Roy.\ Soc.\ London\
\newblock {\bf 221-A} (1920), 163--198. 

\bibitem{Iur}
{\sc F.~Iurlano}. 
\newblock {\em A density result for GSBD and its application to the approximation of brittle fracture energies}.
\newblock Calc.\ Var.\ PDE 
\newblock {\bf 51} (2014), 315--342. 

\bibitem{maggi}
{\sc F.~Maggi}.
\newblock {\em Sets of Finite Perimeter and Geometric Variational Problems: An Introduction to Geometric Measure Theory}. 
\newblock Cambridge Studies in Advanced Mathematics No. 135. Cambridge University Press, Cambridge 2012. 

\bibitem{Mum-Sha:1989}
{\sc D.~Mumford, J.~Shah}.
\newblock {\em Optimal approximation by piecewise smooth functions and associated
  variational problems}.
\newblock Commun.\ Pure Appl.\ Math.\ 
\newblock {\bf 42} (1989), 577--684.


\bibitem{Murat}
{\sc F.~Murat}.
\newblock {\em The Neumann sieve}. 
\newblock Nonlinear variational problems (Isola d'Elba, 1983).
\newblock Research Notes in Mathematics, 127. Pitman, Boston 1985. 




\bibitem{Nit}
{\sc J.~A.~Nitsche}. 
\newblock {\em On {K}orn's second inequality}.
\newblock RAIRO Anal. Num\'er
\newblock {\bf 15} (1981), 237--248. 




\bibitem{Tem}
\textsc{R.~Temam}.
\textit{Mathematical problems in plasticity}.
Gauthier-Villars, Paris 1985.



\end{thebibliography}
\end{document}